\newtheorem{thm}{Theorem}[section]
\newtheorem{lem}[thm]{Lemma}
\newtheorem{cor}[thm]{Corollary}
\newtheorem{prop}[thm]{Proposition}
\newtheorem{defn}{Definition}[section]
\newtheorem{rem}{Remark}[section]
\numberwithin{equation}{section}
\renewcommand{\a}{\alpha}
\renewcommand{\b}{\beta}
\newcommand{\e}{\varepsilon}
\newcommand{\de}{\delta}
\newcommand{\fa}{\varphi}
\newcommand{\ga}{\gamma}
\newcommand{\la}{\lambda}
\newcommand{\si}{\sigma}
\renewcommand{\t}{\tau}
\newcommand{\om}{\omega}
\newcommand{\De}{\Delta}
\newcommand{\Ga}{\Gamma}
\newcommand{\La}{\Lambda}
\newcommand{\Om}{\Omega}
\newcommand{\lan}{\langle}
\newcommand{\ran}{\rangle}
\newcommand{\U}{v}
\newcommand{\V}{\psi}
\newcommand{\W}{\xi}
\def\R{{\mathbb{R}}}
\def\N{{\mathbb{N}}}
\def\Z{{\mathbb{Z}}}
\def\T{{\mathbb{T}}}
\title{Schauder estimate for \\ quasilinear discrete PDEs
of parabolic type}
\author{Tadahisa Funaki$^\ast$ and Sunder Sethuraman$^\dagger$}
\date{\today } 
\begin{document}
\maketitle

\begin{abstract} 
We investigate quasilinear discrete PDEs $\partial_t u = \De^N \fa(u)+ Kf(u)$
of reaction-diffusion type with nonlinear diffusion term 
defined on an $n$-dimensional unit torus discretized with mesh size $\tfrac1N$ for
$N\in \N$, where $\De^N$ is the discrete Laplacian, $\varphi$ is a strictly 
increasing $C^5$ function and $f$ is a $C^1$ function.
We establish $L^\infty$ bounds and space-time H\"older estimates, 
both uniform in $N$, of the first and second spatial discrete derivatives of the solutions.
In the equation, $K>0$ is a large constant and we show how these estimates depend
on $K$.  The motivation for this work stems originally from the study of hydrodynamic
scaling limits of interacting particle systems.

Our method is a two steps approach in terms of the H\"older estimate and Schauder
estimate, which is known for continuous parabolic PDEs.  We first show the discrete
H\"older estimate uniform in $N$ for the solutions of the associated linear discrete
PDEs with continuous coefficients, based on the Nash estimate.
We next establish the discrete Schauder estimate for linear discrete PDEs 
with uniform H\"older coefficients.  The link between discrete and continuous 
settings is given by the polylinear interpolations.  Since this operation has a 
non-local nature, the method requires proper modifications.

We also discuss another method based on the study of the 
corresponding fundamental solutions.
\end{abstract}

\footnote{ \hskip -6.5mm
$^\ast$Department of Mathematics,
Waseda University,
3-4-1 Okubo, Shinjuku-ku,
Tokyo 169-8555, Japan. \\
e-mail: funaki@ms.u-tokyo.ac.jp \\
$^\dagger$Department of Mathematics,
University of Arizona,
621 N.\ Santa Rita Ave.,
Tucson, AZ 85750, USA.
e-mail: sethuram@math.arizona.edu

}

\noindent {\it Keywords.} Schauder estimate, H\"older estimate, Nash estimate,
quasilinear discrete PDE, Allen-Cahn equation, polylinear interpolation, fundamental solution.

\vskip 1mm
\noindent {\it 2010 Mathematics Subject Classification.} 39A14, 35K10, 35K57, 35K59, 35B65.

\tableofcontents

\section{Introduction}\label{sec:GS}

In this article, we investigate quasilinear discrete PDEs (partial differential equations)
defined on an $n$-dimensional unit torus $\T^n \cong [0,1)^n$ with periodic boundary 
discretized with mesh size $\tfrac1N$ for $N\in \N$.  The goal is to establish
gradient estimates for the solutions, especially to show $L^\infty$ bounds and also space-time 
H\"older estimates, both uniform in $N$, of the first and second spatial discrete 
derivatives of the solutions.  As a model equation, we take a type of discrete
reaction-diffusion equation with nonlinear diffusion term, which is a variant of
Allen-Cahn equation;  see \eqref{eq:2.1-X} below.  Such a type of equation arises in 
the study of the hydrodynamic scaling limit of interacting particle systems on the $n$-dimensional
periodic integer lattice of size $N$; see \cite{EFHPS-1}, \cite{EFHPS-2} and \cite{FMST}, which
motivate the present article.
We expect that one can 
extend our results to a wide class of quasilinear discrete PDEs.

Our method is based on the two steps approach due to the H\"older estimate and Schauder
estimate, which is well-known and originally exploited for continuous linear and
quasilinear parabolic PDEs.  We first show the discrete H\"older estimate
uniform in $N$ for the solutions of linear discrete
PDEs with continuous coefficients, based on the so-called Nash estimate.
This applies to our quasilinear equations.  Once the H\"older estimate is
in hand, the second step is to improve
the regularity by establishing the discrete Schauder estimate for linear discrete PDEs 
with uniform H\"older coefficients, cf.\ Lieberman \cite{Li96} in the continuous setting.
The link between discrete and continuous settings will be given by the 
polylinear interpolations.  Since this operation has a non-local nature, the method requires
proper modifications.

Another method known to derive gradient estimates is based on the study of the 
corresponding fundamental solutions.  We will also briefly discuss this approach.

\subsection{Quasilinear discrete PDEs}

Let $\T_N^n\equiv (\Z/N\Z)^n=\{1,2,\ldots,N\}^n$ be
the integer lattice of size $N$ with periodic boundary.
Let $u=u^N(t,\tfrac{x}N), x \in \T_N^n$, be a solution of the quasilinear discrete PDE 
\begin{align}  \label{eq:2.1-X}
\partial_t u = \De^N \fa(u)+ Kf(u), \quad \tfrac{x}N \in \tfrac1N\T_N^n,
\end{align}
where $\De^N$ is the discrete Laplacian on $\frac1N\T_N^n$ (see 
\eqref{eq:Lap-N} for precise definition), $\varphi$ is a strictly increasing $C^5$ 
function on $\R$ and $f$ is a $C^1$ function on $\R$ satisfying that
$u\cdot f(u)<0$ for $u\in \R$ with large enough $|u|$.  In our problem, we are interested in how the estimates depend on $K>0$, a large constant.
The goal is to give both $L^\infty$ and H\"older estimates on first and second 
discrete derivatives $\nabla_e^N u^N(t,\tfrac{x}N)$ and $\nabla_{e_1}^N \nabla_{e_2}^Nu^N(t,\tfrac{x}N)$
of the solution uniformly in $N$; see \eqref{eq:2.nablaeN} for the definition of
discrete derivatives $\nabla^N = \{\nabla_e^N\}_{e\in \Z^n: |e|=1}$.
Roughly, in a stationary situation, for example for a traveling wave type solution,
the second derivative would behave as
$\De^N \fa(u^N) = O(K)$ so that one would expect $\nabla_e^Nu^N= O(\sqrt{K})$
and $\nabla_{e_1}^N \nabla_{e_2}^N u^N = O(K)$, but actually our results are weaker than this, given the quasilinear nature of the problem.

Note that the equation \eqref{eq:2.1-X} is a system of ODEs.  Given an
initial value $u^N(0,\tfrac{x}N)$ which is uniformly bounded in $N, x$ and $K$,
by the comparison argument for the nonlinear Laplacian $\De^N\fa(u)$ and
noting the condition on $f$, one can show that the solution $u$ exists uniquely
and globally in time.  In addition, one can show that there exist $-\infty<u_-<u_+<\infty$
such that
\begin{align} \label{eq:1.u_pm}
u_- \le u^N (t,\tfrac{x}N)\le u_+
\end{align}
holds for all $t\ge 0$ and $x \in \T_N^n$; see Section \ref{sec:1-comparison}. 
In particular, if $f(0)>0$ and $u^N(0,\frac{x}N)$
is uniformly positive, one can take $u_->0$ so that, in this case, we can discuss 
\eqref{eq:2.1-X} for $f$ and $\fa$ defined only on $[0,\infty)$.  Such an equation
appears in the setting of the particle system, in which
the positive solution $u^N$ describes the macroscopic particle density.

\subsection{Linear discrete PDEs}

The following  linear discrete PDE \eqref{eq:1.2-linear}
of divergence form is associated to the nonlinear equation \eqref{eq:2.1-X}.  
To describe the equation, we introduce some notation.  For $0<c_-<c_+<\infty$,
let $\mathcal{A}(c_-,c_+)$ be the class of functions $a=\{a_{x,e}\}_{x\in \T_N^n,
e\in \Z^n: |e|=1}$ satisfying the symmetry, uniform positivity (nondegeneracy)
and boundedness conditions:
\begin{align} \label{eq:a-1-N}
& a_{x,e}=a_{x+e,-e}, \\
\label{eq:a-2-N}
& c_-\le a_{x,e}\le c_+.
\end{align}
For $a\in \mathcal{A}(c_-,c_+)$,
we consider the linear discrete diffusion operator $L_a^N$ with coefficient $a$
defined by
\begin{align} \label{eq:1-L_a}
L_a^N  u(\tfrac{x}N) & = N^2 \sum_{e: |e|=1} a_{x,e} \{u(\tfrac{x+e}N)-u(\tfrac{x}N)\}\\
&= - \tfrac12\sum_{e:|e|=1} \nabla^{N,*}_e \big(a_{x,e}\nabla^N_e u \big)(\tfrac{x}N),
\notag
\end{align}
where we have used \eqref{eq:a-1-N} for the second line; see \eqref{eq:2.4}
for $\nabla^{N,*}_e$ and also \eqref{eq:2.6} below.  In particular, if $a\equiv 1$,
then $L_a^N= \De^N$.  Note that the operator $L^N_a$ is symmetric: $(L^N_a)^* = L^N_a$
with respect to the natural $L^2$-inner product for functions on $\tfrac1N \T_N^n$;
see \eqref{eq:2.5} below.  For given $a(t) \in \mathcal{A}(c_-,c_+)$ and a bounded
function $g(t,\tfrac{x}N)$, both continuous in $t\ge 0$, we consider
the following linear discrete PDE for $u = u^N(t,\tfrac{x}N)$:
\begin{align}  \label{eq:1.2-linear}
\partial_t u = L_{a(t)}^N u+ g(t,\tfrac{x}N), \quad \tfrac{x}N \in \tfrac1N\T_N^n.
\end{align}
Note that \eqref{eq:1.2-linear} has a unique solution globally in time.

\subsection {Relation between the quasilinear equation \eqref{eq:2.1-X} and the linear equation
\eqref{eq:1.2-linear}}  \label{sec:1.3}

Once we a priori  know the solution $u=u^N$ of \eqref{eq:2.1-X}, the nonlinear equation \eqref{eq:2.1-X}
is reduced to the linear equation \eqref{eq:1.2-linear} by taking $a(t)\equiv
a^N(t)$ and $g(t,\tfrac{x}N) \equiv g^N(t,\tfrac{x}N)$ as follows.
The nonlinear Laplacian $\De^N\fa(u)$ is regarded as the linear operator
$L_{a(u^N(t))}^Nu$, i.e., 
\begin{align}  \label{eq:De-Lta}
\De^N\fa(u^N(t))(\tfrac{x}N) = L_{a(u^N(t))}^N  u^N(t,\tfrac{x}N)
\end{align}
holds, if we define $a(u)=\{a_{x,e}(u)\}$ for $u=\{u(\tfrac{x}N)\}$ as
a discrete gradient of $\fa(u)$ in direction $e$ determined by
\begin{align}  \label{eq:a_xe}
a_{x,e}(u) := \left\{  \begin{aligned}
& \frac{\fa(u(\tfrac{x+e}N))-\fa(u(\tfrac{x}N))}{u(\tfrac{x+e}N)-u(\tfrac{x}N)},
\quad \text{if }\;  u(\tfrac{x+e}N)\not=u(\tfrac{x}N), \\
& \fa'(u(\tfrac{x}N)),
\hskip 25.5mm \text{if }\; u(\tfrac{x+e}N)=u(\tfrac{x}N).
\end{aligned} \right.
\end{align}
Note that $a(u)$ satisfies the symmetry condition \eqref{eq:a-1-N}
and,  by \eqref{eq:1.u_pm}, $a(u^N(t))\in \mathcal{A}(c_-,c_+)$ holds with
\begin{align} \label{eq:c_pm-N}
c_-= \min_{u\in [u_-,u_+]} \fa'(u), \quad c_+= \max_{u\in [u_-,u_+]} \fa'(u).
\end{align}
Moreover, we take $g(t,\tfrac{x}N)=K f(u^N(t,\tfrac{x}N))$.  Note that
$g$ is bounded by \eqref{eq:1.u_pm}.

Note also that, from \eqref{eq:De-Lta} and \eqref{eq:a_xe}
combined with the `mean-value' lemma (cf.\ Lemma \ref{lem:mvt}), the
H\"older estimate of the solution of \eqref{eq:2.1-X}, shown by the Nash
bound, implies a like estimate for the coefficient $a(u^N(t))$ when we view \eqref{eq:2.1-X} 
as a linear equation \eqref{eq:1.2-linear}.

\subsection{Overview of the main results}

Our main results are the $L^\infty$ bounds and the space-time H\"older estimates,
both uniform in $N$,  for the first and second discrete derivatives, 
$\nabla_e^N u^N(t,\tfrac{x}N)$ and $\nabla_{e_1}^N \nabla_{e_2}^N u^N(t,\tfrac{x}N)$,
of the solution of the equation \eqref{eq:2.1-X}.
To illustrate part of our main estimates, let us introduce discrete $C^k$-norms
for $u^N=\{u^N(\tfrac{x}N)\}$ and for $k=0,1,2,\ldots$ by
\begin{align} \label{eq:norm-uN}
\|u^N\|_{C_N^k} := \sum_{i=0}^k \sum_{e_1, \ldots, e_i} \max_{x\in \T_N^n}
\big|\nabla_{e_1}^N \cdots \nabla_{e_i}^N u^N(\tfrac{x}N)\big|
\end{align}
where when $k=0$, the norm reduces to $\|u^N\|_\infty= \|u^N\|_{L^\infty(\T_N^n)}$.

To derive the estimates for the first derivatives, we first study the linear equation 
\eqref{eq:1.2-linear} with bounded $g(t)$ and the coefficient $a(t)$, which satisfies
the uniform $\a$-H\"older estimate; see the assumption (A.2) in Section \ref{sec:4.1}.
This assumption is reasonable by the last comment in Section \ref{sec:1.3}.
We show the uniform estimate on the first derivative of the solution $u^N(t)$
of \eqref{eq:1.2-linear}:
\begin{align}\label{eq:main-1}
\|u^N(t)\|_{C_N^1} \le \frac{C}{\sqrt{t}}, \quad 0<t\le T,
\end{align}
where $C=C(c_\pm)$ depends on $c_\pm$, H\"older constant and exponent $\a
\in (0,1)$ of $a(t)$, $\|g\|_\infty$ and $\|u^N(\cdot)\|_\infty = \sup_{t\in [0,T]}\|u^N(t)\|_\infty$; 
see \eqref{eq:4.5} of Theorem \ref{thm:Li-T4.8}.  We note that the constant
$C$ is linearly growing in $\|g\|_\infty$ and $\|u^N(\cdot)\|_\infty$, in particular,
in $\|u^N(0)\|_\infty$ due to the linearity of the equation \eqref{eq:1.2-linear}.
A further H\"older estimate on $\nabla_e^N u^N(t,\tfrac{x}N)$ in $(t,\tfrac{x}N)$
is given in  \eqref{eq:4.4}:
\begin{align}  \label{eq:1.14=holder}
|\nabla_e^Nu^N(t_1,\tfrac{x_1}N) - \nabla_e^Nu^N(t_2,\tfrac{x_2}N)|
\le C\frac{ \big| t_1-t_2 \big|^{\frac{\a}2}
+ \big| \tfrac{x_1}N - \tfrac{x_2}N \big|^\a }{(t_1\wedge t_2)^{\frac{1+\a}2}},
\quad 0<t_1, t_2\le T.
\end{align}

By analyzing how $C$ depends on $a(t)$ in \eqref{eq:main-1},
we deduce the following estimate for the solution of the quasilinear equation \eqref{eq:2.1-X}:
\begin{align}\label{eq:main-2}
\|u^N(t)\|_{C_N^1} \le \frac{C(K^{\frac1\si}+1)}{\sqrt{t}}, \quad 0<t\le T,
\end{align}
where $\si\in (0,1)$ is the H\"older exponent in Nash estimate; 
see Corollary \ref{cor:K^3}.  The nonlinearity of the above estimate in $K$ is due to that
of the equation \eqref{eq:2.1-X}.  The constant $C$ in \eqref{eq:main-1}
depends on the H\"older property of the coefficient $a(t)$, given in Corollary \ref{cor:2.3} 
applied to the equation \eqref{eq:2.1-X}, in terms of the Nash estimate.

The equation \eqref{eq:2.1-X} can also be expressed in non-divergence form 
for the variable $\V=\fa(u)$:
\begin{align}
\label{nondivergenceform}
\partial_t \V &= \fa'(u)\big\{ \De^N \V + K f(u)\} \\
&= \fa'(\fa^{-1}(\V))\big\{ \De^N \V + K f(\fa^{-1}(\V))\}.
\notag
\end{align}
To derive $L^\infty$ and H\"older estimates on 
the second derivatives $\nabla_{e_1}^N \nabla_{e_2}^N u^N(t)$, it turns out to be efficient to use 
the equation \eqref{nondivergenceform}.
As above, we first consider a linear system of equations associated to the equation
satisfied by $\xi=\nabla^N\psi \equiv \{\nabla_e^N\psi\}_e$ and derive the estimate for
$\|\xi\|_{C_N^1}$ as well as the H\"older estimate for $\nabla_e^N\xi$; see the equation
\eqref{eq:we}, Theorem \ref{second_Schauder_thm} and Corollary \ref{alternative_rmk}.

These results are applied to the nonlinear equation \eqref{nondivergenceform} and we obtain
the uniform estimate on the second derivatives of the solution $u^N(t)$ of \eqref{eq:2.1-X}:
\begin{align}\label{eq:main-3}
\|u^N(t)\|_{C_N^2} \le \frac{C(K^{\frac2\si}+1)}{t},\quad 0<t\le T;
\end{align}
see \eqref{eq:5.29} in Corollary \ref{second_Schauder_cor} combining with
\eqref{eq:main-2}.  The H\"older estimate on 
$\nabla_{e_1}^N \nabla_{e_2}^N u^N(t,\tfrac{x}N)$ in $(t,\tfrac{x}N)$
is also given in \eqref{eq:holder-uu} of this corollary.

The Schauder estimates \eqref{eq:main-1}, \eqref{eq:main-2} and \eqref{eq:main-3}
improve the regularity, but these yield a diverging factor near $t=0$, which is 
inherited from the H\"older estimate we find for $u^N(t)$ in Theorem \ref{thm:Holder-u^N}
and therefore for $a(u^N(t))$ defined by \eqref{eq:a_xe}.  To avoid this, we show that
such a diverging factor does not appear in the $L^\infty$ and H\"older estimates
if we assume regularity for the initial value; see  Section \ref{sec:Regt=0},
Theorems \ref{thm:4.2} and \ref{second_Schauder_time_thm}.
Based on these observations, we obtain
\begin{align}\label{eq:main-4}
& \|u^N(t)\|_{C_N^1} \le CK_0^{\frac1\si}, \quad t\in [0,T],\\
\label{eq:main-5}
& \|u^N(t)\|_{C_N^2} \le C\big[\bar K_0^{\frac{2}{\si}} (1+\bar C_0^{24}) + 
\bar K_0^{\frac{2}{\si}-1} \bar C_0^{48}\big]
,\quad t\in [0,T],
\end{align}
in \eqref{eq:cor4.4-2} and \eqref{eq:5:36} in
Corollaries \ref{extended_cor} and \ref{second_Schauder_time_cor} by
improving \eqref{eq:main-2} and \eqref{eq:main-3} under the assumptions that 
$C_0=\sup_N \|u^N(0)\|_{C_N^2}<\infty$ and $\bar C_0=\sup_N \|u^N(0)\|_{C_N^4}<\infty$,
respectively, where we set
\begin{equation}  \label{eq:K_0}
K_0 = K + C_0 +1\quad \text{ and } \quad \bar K_0 = K + \bar C_0 +1.
\end{equation}
These estimates clarify the dependence on the initial values.  Such a clarification is useful, since,
as we mentioned above, we may consider the situation when $\nabla^Nu^N(0)
= O(\sqrt{K})$ among others.  An intermediate estimate on the second derivatives
\begin{align}\label{eq:main-inter}
\|u^N(t)\|_{C_N^2} \le \frac{C}{\sqrt{t}} K_0^{\frac{2}{\si}}, \quad t\in [0,T]
\end{align}
is given in Corollary \ref{cor:5.7} under the assumption $C_0<\infty$,
milder than that for \eqref{eq:main-5}.
We also have an improvement of the H\"older estimates; see \eqref{eq:cor4.4-3},
\eqref{cor 5.9 holder} and \eqref{cor 5.10 holder}.

The estimate on $\|u^N(t)\|_{C_N^2}$ can be obtained via the parametrix method 
applied for the fundamental solution associated with the operator $L_{a(u^N(t))}
-\partial_t$; see Proposition \ref{prop.6.4}.  But the result obtained in this way
is much weaker than \eqref{eq:main-5} in terms of $K$ so that this approach has a disadvantage. 
We note that, in the semilinear case with $\fa(u)=cu$ with $c>0$,  
the gradient estimates of the discrete heat kernel leads to 
the bound on $\|u^N(t)\|_{C_N^1}$, which is however reasonable in $K$; see  
\cite{FT}, Proposition 4.3 and Remark \ref{rem:2.sigma} below.

\subsection{Methods and contents of the article}

In the continuous setting, the Schauder estimate for linear parabolic PDEs is well-known; 
see Lieberman \cite{Li96} Theorems 4.8 and 6.48, cf.\ \cite{GT} for elliptic PDEs.
The basic requirement to derive the
Schauder estimate is the H\"older continuity of the coefficients.

In Section \ref{sec:Nash}, as we mentioned above, we show the discrete H\"older estimate 
for the solution $u^N$ of the equation \eqref{eq:1.2-linear} uniformly in $N$.  We recall
Nash continuity estimate for \eqref{eq:1.2-linear} with $g\equiv 0$ and then apply
Duhamel's formula to treat the term $g$.  This applies to the 
quasilinear equation \eqref{eq:2.1-X}.  Such a step is fundamental to move to the next step
to establish the discrete Schauder estimate.  In this context, we also state the maximum principle.
See \cite{Li96} Theorem 6.29 in the continuous setting.

In Section \ref{sec:Schauder-1}, recalling the last comment in Section \ref{sec:1.3},
we work on the linear equation \eqref{eq:1.2-linear} with $\a$-H\"older coefficient $a(t)$
and bounded $g(t)$ in general and derive both $L^\infty$ and H\"older estimates 
for $\nabla^N u$.  The results are applied to the quasilinear equation \eqref{eq:2.1-X} 
and we obtain \eqref{eq:main-1}, \eqref{eq:1.14=holder} and \eqref{eq:main-2}.

Section \ref{sec:Li-Schauder} is a preparatory section.  Some H\"older seminorms
unweighted or weighted near $t=0$ are introduced.  To apply the continuous method,
especially two basic lemmas explained later, we introduce 
a polylinear interpolation as a link between discrete and continuous 
settings by embedding discrete functions $u^N$ into continuous ones $\tilde u^N$;
see \eqref{eq:poli-A} below.  However, the discrete setting has a minimum unit
of size $\frac1N$ and the continuous PDE method does not cover this regime.
We have to treat distances less than this minimum unit from different viewpoint.

With the $\a$-H\"older assumption (A.2) on $a(t)$ in mind, we couple and compare
the solution $u^N$ of \eqref{eq:1.2-linear} with the solution $v=v^N$ of 
a simpler equation called the discrete heat equation
\begin{align}  \label{eq:dHeq}
\partial_t v = \De_a^N v,
\end{align}
where $\De_a^N := L_a^N$ with a coefficient $a=\{a_e\}_{|e|=1}$, which is
constant in space and time, satisfying $a_e=a_{-e}$ and $c_-\le a_e \le c_+$;
see \eqref{eq:Lap-a-N} and \eqref{eq:heat-D} below.

Differently from the continuous setting, the interior estimates of the polylinear 
interpolation $\tilde v^N$ of $v^N$  
on the parabolic ball $Q(r)$ of size $r>0$ (see \eqref{Q_defn}), given the behavior of
$\U^N$ on a wider discrete parabolic ball $Q_N(R)$,
can be discussed only under the condition $r+\tfrac{\sqrt{n}}N\le R$.
We call this gap the \lq\lq non-local nature'' of our discrete problem.
One reason for this gap is to guarantee the definability of the polylinear interpolation 
$\tilde v^N$ of $v^N$ on $Q(r)$.
The other is to derive the interior estimate, severing the influence from 
the outside area.  Thus, we need to consider a band area, that is, a gap region
$Q_N(R)\setminus Q(r)$, which is unnecessary in the continuous setting.

The main idea is to apply two basic lemmas used in \cite{Li96} in suitably modified forms.
The first is that an integral estimate for oscillation of $F$ (we take $F=\nabla^N\tilde u^N$)
on $Q(r)$  implies H\"older estimates for $F$ (cf.\ Lemma \ref{lem:Li-L4.3-G}),
and the other is an iteration lemma (cf.\ Lemmas \ref{lem:Li-L4.6} and \ref{lem:Li-L4.6-N}).

To derive the integral estimate required
for  Lemma \ref{lem:Li-L4.3-G}, as we mentioned, we couple $u^N$ with the 
solution $v^N$ of the discrete heat equation \eqref{eq:dHeq} with properly chosen $a$
and derive the discrete energy inequality for $\nabla^N(u^N-v^N)$ based on the summation 
by parts formula; see Lemmas \ref{lem:D-energy-a-Holder} and \ref{lem:estimate-D-energy}.
Here, the H\"older property of $a(t)$ plays a role.  The oscillation estimate for
$\nabla^N \tilde v^N$ in a smaller domain by that in a wider domain is prepared
in Proposition \ref{lem:Li-L4.5}, especially \eqref{eq:Li-L4.5-3}.  This is shown essentially
by the maximum principle; cf.\ Lemmas \ref{lem:Li-L4.4} and \ref{lem:thm2.13}.
Note that the estimate on the oscillation in time follows
from that in space and, for the small region of size of minimum unit such as $r\le \frac{c}N$, 
the polylinearity plays a role.   Collecting these estimates, the modified iteration lemma
 (Lemma \ref{lem:Li-L4.6-N})
implies the integral estimate for oscillation of $\nabla^N\tilde u^N$  on
$Q(r)$ in the $L^2$-sense required for Lemma \ref{lem:Li-L4.3-G}; see Proposition 
\ref{spatial-variation-prop}.

However, as formulated in the assumption in Lemma \ref{lem:Li-L4.3-G}, 
the integral estimate can be shown only with $r_N=r+\frac{c}N$ instead of simply $r\ge 0$
due to the \lq\lq non-local nature'' of our discrete problem. 
As a result, we obtain the uniform H\"older estimate only for two points
$X=(t_1,\frac{x_1}N)$ and $Y=(t_2,\frac{x_2}N)$ such that $|X-Y|\ge \tfrac1{MN}$, 
i.e., excluding the short distance regime, in terms of 
an additional small factor of weighted H\"older seminorm of $\nabla^Nu^N$.

To fill the gap, we need a short distance regime estimate
for $|X-Y|\le \tfrac1{MN}$ for large enough $M$.  This will be shown separately
in Lemma \ref{lem:4.13}.
In addition, to complete the program, we need a discrete version of the interpolation inequality and
an estimate on the time varying norm.

In Section \ref{sec:Schauder-second}, to derive the estimates on 
the second derivatives $\nabla^N \nabla^N u^N(t)$, we consider the equation
\eqref{nondivergenceform} of non-divergence form.  Setting $\bar a(t,\frac{x}N):=
\fa'(u^N(t,\frac{x}N))$ and $g(t,\frac{x}N):= K \bar a(t,\frac{x}N) f(u^N(t,\frac{x}N))$,
the equation \eqref{nondivergenceform} is regarded as a linear discrete PDE with
coefficients $\bar a(t)$ and $g(t)$.  We actually study the system of equations obtained by
applying the gradient operator $\nabla^N$ to \eqref{nondivergenceform}; see
\eqref{eq:we}.  Then, the method of the discrete energy inequality works well for
this system (see Lemma \ref{lem:discreteEnergy-We}), and one can mostly repeat
similar arguments given in Section \ref{sec:Schauder-1} to obtain
Theorem \ref{second_Schauder_thm} and Corollary \ref{alternative_rmk}.

To derive the estimate such as \eqref{eq:main-5} avoiding the singularity
near $t=0$, it turns out to be useful to work at the level of the equation
\eqref{nondivergenceform} for $\psi$, especially to apply the maximum principle;
see  Theorem \ref{second_Schauder_time_thm}.  In other words, the system of
equations \eqref{eq:we} has a gradient structure.  When we apply this result
for the discrete PDE \eqref{eq:2.1-X}, we require the regularity of the
coefficient:  $\fa\in C^5$.  In each statement in Corollaries \ref{cor:K^3}, 
\ref{extended_cor},
\ref{second_Schauder_cor}, \ref{cor:5.7} and \ref{second_Schauder_time_cor},
we will make clear the regularity assumptions on $\fa$ ($\fa\in C^2, C^3$
or $C^5$) by indicating the dependence of the constants $C$ on the 
derivatives of $\fa$.

It is well-known that the Schauder estimate is shown also by gradient 
estimates on the associated fundamental solutions, in particular derived by the E.E.\ Levi's
parametrix method; see Friedman \cite{Fri64}, Il'in et al.\ \cite{IKO}, 
Lady\v{z}enskaja et al.\ \cite{LSU} and Eidel'man \cite{Ei}.  
We discuss the parametrix method in the discrete setting in Section \ref{sec:EELevi}.
To complete this procedure, the H\"older estimate with singularity at $t=0$
obtained in Theorem \ref{thm:Holder-u^N} is not enough and we need to avoid it 
by using the H\"older estimate in Section \ref{sec:Regt=0}.

\subsection{Further comments}

Finally, we give some further comments.
Lady\v{z}enskaja et al.\ \cite{LSU} took a different method to show the
H\"older estimate.  They first derive a local energy inequality with 
a cut-off function.  This implies
the parabolic Harnack inequality which then leads to the H\"older estimate.
We replaced this route by use of the Nash estimate.

Approaches directly applicable to nonlinear PDEs are also known.
Lieberman \cite{Li96} discusses gradient bounds for
quasilinear parabolic PDEs in Chapter XI.  Evans \cite{Evans}
explains the method of (global) energy inequality.  However, the nonlinearity
in our equation impedes in the application of these methods.

Discrete PDEs on complex (random) graphs are well studied in the probability literature,
and the parabolic Harnack inequality, the Gaussian bound called Aronson estimate 
on heat kernels, and H\"older estimates
are known; see \cite{BH}, \cite{Del}, \cite{DD}, \cite{Kuma}, \cite{BB}.  
Semilinear equations with linear discrete Laplacian on general graphs are
discussed by \cite{Gri}.  In our case, the graph is
$\T_N^n$ and it is much simpler than those studied by these authors.
See Section \ref{sec:7} for further comments.  
For $a(t) \in \mathcal{A}(c_-,c_+)$, the  H\"older estimate of the fundamental
solution $p^N$ of $L_{a(t)}^N-\partial_t$ is available,
but we do not have its pointwise gradient estimate in general; see \cite{DD}.

Related to the stochastic homogenization, a weighted integral estimate
on the gradient of $p^N$ in case $a(t)\equiv a$ is obtained in \cite{GNO-15},
Theorem 3.  Especially, their estimate is global in time with optimal decay in $t$.
See \cite{AN}, Proposition 2.1 for the gradient estimate of $p^N$
in a random conductance model and \cite{DG}, Theorem 1.3 for that on
percolation clusters.  See also \cite{GNO-14}, Lemmas 3.2 and 3.3 for estimates in
quenched or annealed sense for the first and second derivatives of
elliptic Green's function on $\T_N^n$.

The convergence as $N\to\infty$ of the solution of linear discrete PDEs 
with boundary condition is discussed in Section 10.5 of \cite{Krylov}, 
in which the time parameter is also discretized.

\section{H\"older estimate for the solution of a discrete PDE}
\label{sec:Nash}

We first define spatial gradients and other notation in Section \ref{sec:2.0}.
Then, in Section \ref{sec:2.1}, recalling Appendix B of Giacomin, Olla and Spohn
\cite{GOS}, we formulate a Nash estimate for the solution of a discrete
linear PDE \eqref{eq:1.2-linear} with $g\equiv 0$ on $\tfrac1N \Z^n$ instead of
$\tfrac1N\T_N^n$.   Based on this,
we derive H\"older regularity of the solution $u^N$ of  \eqref{eq:1.2-linear}
with $g$ and therefore \eqref{eq:2.1-X} on $\tfrac1N\T_N^n$
in Section \ref{sec:HolderForSolution}.  In Section \ref{sec:Regt=0}, we show
that the singularity in H\"older estimate at $t=0$ can be removed under a
regularity assumption on the initial value $u^N(0)$.
In Section \ref{sec:1-comparison}, we show \eqref{eq:1.u_pm} and formulate
the maximum principle.

\subsection{Discrete derivatives and Laplacian}
\label{sec:2.0}

For functions $u$ and $v$ on $\frac1N\T_N^n$,
we define the inner product by
\begin{align}  \label{eq:2.innerproduct}
\lan u,v\ran_N= \tfrac1{N^n} \sum_{x\in \T_N^n} u(\tfrac{x}N)v(\tfrac{x}N).
\end{align}
For $e\in \Z^n: |e|=1$, we define the discrete derivative $\nabla_e^N$ to the 
direction $e$ and $\nabla_e^{N,*}$ by
\begin{align}  \label{eq:2.nablaeN}
\nabla_e^Nu(\tfrac{x}N) := N(u(\tfrac{x+e}N)-u(\tfrac{x}N)), \quad
\nabla_e^{N,*}u(\tfrac{x}N) := N(u(\tfrac{x-e}N)-u(\tfrac{x}N)).
\end{align}
Note that $\nabla_e^{N,*} = \nabla_{-e}^N$ is the dual operator of $\nabla_e^N$
with respect to the inner product $\lan\;,\;\ran_N$.  

We also define the discrete Laplacian by 
\begin{align}  \label{eq:Lap-N}
\De^N := -\tfrac12 \sum_{|e|=1} \nabla_e^{N,*} \nabla_e^N
= -\sum_{|e|=1,e>0} \nabla_e^{N,*} \nabla_e^N
=N\sum_{|e|=1} \nabla_e^N.
\end{align}
Here, $e>0$ for $e:|e|=1$ means that its nonzero component is $1$. 
To see the second and third identities in \eqref{eq:Lap-N}, we note the following simple fact.  
Let $\t_y, y\in \Z^n,$ be the shift operator
defined by $\t_yu(\tfrac{x}N)=u(\tfrac{x+y}N)$.  Then, we have
\begin{align}  \label{eq:2.4}
\nabla_e^{N,*}=\nabla_{-e}^N=-\t_{-e}\nabla_e^N, \quad
[\t_y,\nabla_e^N]=[\nabla_e^N,\nabla_{e'}^N] = [\nabla_e^N,\nabla_{e'}^{N,*}]=0
\end{align}
for every $e,e':|e|=|e'|=1$, where $[A,B] = AB-BA$ denotes the commutator of two
operators $A$ and $B$.  

The second identity in \eqref{eq:Lap-N} follows from
$\nabla_e^{N,*} \nabla_e^N = \nabla_{-e}^{N} \nabla_{-e}^{N,*} = 
\nabla_{-e}^{N,*} \nabla_{-e}^N$, while the third identity $\De^N = N\sum_{|e|=1} \nabla_e^N$
follows from $\De^N u(\tfrac{x}N) 
= N\sum_{|e|=1,e>0}$ $ \big(\nabla_e^N u(\tfrac{x}N)- \nabla_e^N u(\tfrac{x-e}N)\big)$ and
$\nabla_e^N u(\tfrac{x-e}N)= \t_{-e}\nabla_e^N u(\tfrac{x}N) = -\nabla_{-e}^Nu(\tfrac{x}N)$.

Given $a=\{a_{x,e}\}_{x\in \T_N^n, e\in\Z^n:|e|=1} \in \mathcal{A}(c_-,c_+)$
(recall two conditions \eqref{eq:a-1-N} and \eqref{eq:a-2-N}),
we consider the discrete diffusion operator $L^{N}_{a}$ defined by \eqref{eq:1-L_a}.
As an extension of \eqref{eq:Lap-N},
with the relations \eqref{eq:2.4} in hand and by 
\eqref{eq:a-1-N}, it can be rewritten as 
\begin{align}\label{eq:2.6}
L^N_{a} u(\tfrac{x}N) \; \Big(\!\! = N \sum_{|e|=1} a_{x,e}\nabla_e^N u(\tfrac{x}N) \Big)
= - \sum_{|e|=1,e>0} \nabla_e^{N,*}\big( a_{x,e}\nabla_e^N u\big)(\tfrac{x}N),
\end{align}
since we have
\begin{align}  \notag
\nabla_{-e}^{N,*}\big( a_{x,-e}\nabla_{-e}^N u\big)(\tfrac{x}N)
& = \nabla_{-e}^{N,*}\big( a_{x-e,e}\nabla_{-e}^N u\big)(\tfrac{x}N)  \\
& = -\nabla_{-e}^{N,*}\big(\t_{-e} a_{x,e} \t_{-e}\nabla_e^N u\big)(\tfrac{x}N) \notag\\
&=  -\nabla_{-e}^{N,*}\big(\t_{-e}( a_{x,e}\nabla_e^N u)\big)(\tfrac{x}N) \notag \\
& = \nabla_e^{N,*}\big( a_{x,e}\nabla_e^N u\big)(\tfrac{x}N).  \notag
\end{align}
Moreover, $L^{N}_{a}$ is symmetric with respect to $\lan\;,\;\ran_N$
and the corresponding Dirichlet form is given by
\begin{align}  \label{eq:2.5}
\mathcal{E}_a^N(u,v) := - \lan L_{a}^N u, v\ran_N
= \tfrac{1}{2N^n} \sum_{x\in \T_N^n} \sum_{|e|=1}
a_{x,e} \nabla_e^N u (\tfrac{x}N) \nabla_e^N v (\tfrac{x}N).
\end{align}
Given $a(t)\in \mathcal{A}(c_-,c_+)$ for $t\ge 0$, we will consider the time-dependent
operator $L_{a(t)}^N$.

The following will be used in Section \ref {sec:Li-Schauder} and later.
We denote $\De_a^N := L_a^N$ in case that $a=\{a_e\}_{|e|=1}$ does not
depend on $x$ and call $\De_a^N$ the discrete Laplacian with constant
coefficients.  Namely, for given constants $a=\{a_e\}_{|e|=1}$ 
satisfying
\begin{align}  \label{eq:3.5-a}
a_e=a_{-e}, \quad c_-\le a_e\le c_+,
\end{align}
we set
\begin{align}  \label{eq:Lap-a-N}
\De_a^N := - \sum_{|e|=1,e>0} a_e \nabla_e^{N,*}\nabla_e^N 
\ = \  - \tfrac12 \sum_{|e|=1}  \nabla_e^{N,*}a_e\nabla_e^N
\ = \ N\sum_{|e|=1}a_e\nabla_e^N.
\end{align}
The following formulas for discrete derivatives will be useful:
\begin{align}  \label{eq:disc-der-D}
\nabla_e^N(uv)(\tfrac{x}N) &= v(\tfrac{x+e}N)\nabla_e^N u(\tfrac{x}N) + u(\tfrac{x}N) \nabla_e^Nv(\tfrac{x}N) \\
& = \tfrac12(v(\tfrac{x}N)+v(\tfrac{x+e}N)) \nabla_e^Nu(\tfrac{x}N)
+ \tfrac12 (u(\tfrac{x}N)+u(\tfrac{x+e}N))\nabla_e^N v(\tfrac{x}N).
\notag
\end{align}
In particular,
\begin{align}  \label{eq:disc-der-E}
\nabla_e^N u^2(\tfrac{x}N) 
= (u(\tfrac{x}N)+u(\tfrac{x+e}N)) \nabla_e^Nu(\tfrac{x}N).
\end{align}
Note also that
\begin{align}  \label{eq:2.11}
\De_a^N(uv)(\tfrac{x}N)
= v(\tfrac{x}N) \De_a^N u(\tfrac{x}N)
+ u(\tfrac{x}N) \De_a^N v(\tfrac{x}N)
+ \sum_{|e|=1} a_e \nabla_{e}^N u(\tfrac{x}N)\nabla_e^N v(\tfrac{x}N).
\end{align}

\subsection{Nash continuity estimate for linear discrete PDEs on $\tfrac1N\Z^n$}
\label{sec:2.1}

In order to apply the results in \cite{GOS}  and \cite{SZ}, we consider the linear discrete 
PDE \eqref{eq:1.2-linear} with $g\equiv 0$ for $\tfrac{x}N\in \tfrac1N\Z^n$ instead of 
$\tfrac1N\T_N^n$:
\begin{align}  \label{eq:2.7}
\partial_t u = L_{a(t)}^N u, \quad \tfrac{x}N \in \tfrac1N\Z^n.
\end{align}
Here, the coefficient $a(t)=\{a_{x,e}(t)\}_{x\in \Z^n, e\in\Z^n:|e|=1} 
\in \mathcal{A}(c_-,c_+)$ satisfying \eqref{eq:a-1-N} and \eqref{eq:a-2-N}
is given for $t\ge 0$ and for $x\in \Z^n$ instead of $x\in \T_N^n$.
We assume $a(t)$ is continuous in $t$.
Then, the operator $L^{N}_{a(t)}$ is defined by \eqref{eq:1-L_a} (or \eqref{eq:2.6})
for $x\in \Z^n$.
The inner product $\lan\, ,\, \ran_N$ in \eqref{eq:2.innerproduct},
discrete gradients $\nabla_e^N, \nabla_e^{N,*}$ in \eqref{eq:2.nablaeN}
and discrete Laplacian $\De^N$ in \eqref{eq:Lap-N} are easily extended to
$\frac1N\Z^n$.

We take $\a=N^{-1}$ as the scaling parameter $\a$ in \cite{GOS}, p.1167-- 
and in \cite{SZ}.  We note that the factor $N^2 (=\a^{-2})$
from the time change is hidden in $\nabla_e^N$ and $\nabla_e^{N,*}$
in our case.

The  temporally inhomogeneous semigroup generated by $L^{N}_{a(t)}$ is denoted by
$P_{s,t}^N$ for $0\leq s<t<\infty$, that is, $u(t,\tfrac{x}N)=P_{s,t}^N \bar u (\tfrac{x}N)$ 
is the solution of \eqref{eq:2.7} for $t\ge s$ satisfying $u(s,\cdot)=\bar u (\cdot)$.  
The corresponding fundamental solution is given by
$$
p^N(s,\tfrac{y}N;t,\tfrac{x}N) = N^n (P_{s,t}^N 1_{\tfrac{y}N})(\tfrac{x}N).
$$

Note that $p^N$ satisfies the forward equation $\partial_t p^N = L_{a(t), x}^Np^N$ for $t \in 
(s,\infty)$ with $p^N(s,\tfrac{y}N;s,\tfrac{x}N)=N^n \de_{x,y}$ and the backward equation
$\partial_s p^N = - L_{a(s),y}^N p^N$, for $s \in (0,t)$ with $p^N(t,\tfrac{y}N;t,\tfrac{x}N)=N^n \de_{x,y}$;
see Definition \ref{defn:fund} below and recall the symmetry of $L_{a(t)}^N$.

The following proposition is a consequence of the parabolic Harnack inequality.

\begin{prop}  \label{prop:Nash}
(Nash continuity estimate)
There exist $\si\in (0,1)$ and $C=C(n, c_\pm)$
(in particular,
uniform in $N$) such that for every $N\in \N$ and $u_0\in L^\infty(\frac1N\Z^n)$
$$
|P_{0,t}^N u_0(\tfrac{x}N)- P_{0,s}^N u_0(\tfrac{y}N)|
\le C \|u_0\|_{\infty} \Big( \frac{|t-s|^{\frac12}\vee |\tfrac{x}N
- \tfrac{y}N|}{(t\wedge s) ^{\frac12}} \Big)^\si,
$$
for $t,s>0$ and $x,y\in \Z^n$,
where $\|u_0\|_{\infty}= \sup_{x\in \Z^n} |u_0(\tfrac{x}N)|$.  
In particular, we have
$$
\big| p^N(0,\tfrac{y'}N; t', \tfrac{x'}N)
- p^N(0,\tfrac{y}N; t, \tfrac{x}N)\big|
\le C \Big( |t'-t|^{\frac12} + |\tfrac{x}N
- \tfrac{x'}N| + |\tfrac{y}N - \tfrac{y'}N| \Big)^\si 
  (t'\wedge t)^{-\frac{n+\si}2}.
$$
\end{prop}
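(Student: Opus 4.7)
The plan is to follow the classical Nash--De~Giorgi--Moser program in the discrete setting, as carried out in \cite{GOS}, Appendix B and \cite{SZ}. The key point is that, with constants depending only on $n$ and $c_\pm$ (uniform in $N$), the parabolic Harnack inequality (PHI) holds for non-negative solutions of $\partial_t u = L^N_{a(t)} u$ on $\tfrac1N\Z^n$; from this, oscillation decay and then H\"older continuity follow by iteration.

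First, I would establish a discrete Nash inequality on $\tfrac1N\Z^n$,
\begin{equation*}
\|u\|_{L^2}^{2+4/n} \le C(n)\, \mathcal{E}^N_a(u,u)\, \|u\|_{L^1}^{4/n} \quad \text{(taking } a\equiv 1\text{)},
\end{equation*}
for finitely supported $u$. By the ellipticity \eqref{eq:a-2-N} this transfers to $\mathcal{E}^N_{a(t)}$ up to the factor $c_-$, and the standard $L^1$--$L^\infty$ ultracontractivity argument then yields the on-diagonal bound $p^N(s,\tfrac{y}N;t,\tfrac{x}N) \le C(t-s)^{-n/2}$ uniformly in $N$, with $C = C(n,c_\pm)$.

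Second, I would carry out Moser iteration on discrete parabolic cylinders of spatial radius $R \ge c/N$, using discrete cutoff functions whose $\nabla^N$-gradients are bounded by $C/R$. This produces $L^\infty$ bounds on non-negative solutions from their $L^2$-norms and, via Moser's logarithmic lemma, matching lower bounds; combining them yields a discrete PHI of the form $\sup_{Q^-} u \le C_H \inf_{Q^+} u$ on suitable shifted sub-cylinders. Applied to $u-\min u$ and $\max u - u$ on a parabolic cylinder $Q(R)$, PHI gives the oscillation decay
\begin{equation*}
\mathrm{osc}(u, Q(R/2)) \le \theta \cdot \mathrm{osc}(u, Q(R)), \qquad \theta = \theta(n,c_\pm) \in (0,1),
\end{equation*}
provided $R/2 \ge c/N$. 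Iterating $k$ times from the maximal available cylinder of radius $R_0 = (t \wedge s)^{1/2}$ down to the parabolic separation $d = |t-s|^{1/2} \vee |\tfrac{x}N - \tfrac{y}N|$ delivers
\begin{equation*}
|P^N_{0,t}u_0(\tfrac{x}N) - P^N_{0,s}u_0(\tfrac{y}N)| \le C\Big(\frac{d}{R_0}\Big)^{\sigma}\|u_0\|_\infty,
\end{equation*}
with $\sigma = \log_2(1/\theta)$. The regime $d \ge R_0$ is trivial from the maximum principle, and the lattice-scale regime $d \lesssim 1/N$ is handled separately by the a~priori bound $\|\partial_t P^N_{0,t}u_0\|_\infty \le CN^2\|u_0\|_\infty$ (read off \eqref{eq:2.7}) combined with interpolation against the trivial $L^\infty$ bound.

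For the pointwise estimate on $p^N$, I would use the symmetry of $L^N_{a(t)}$, which implies that $p^N$ solves the forward equation in $(t,x)$ and the backward equation in $(s,y)$. The first part then applies separately in each variable pair, and combining the two applications with the on-diagonal bound $\|p^N(s,\cdot;t,\cdot)\|_\infty \le C(t-s)^{-n/2}$ playing the role of ``$\|u_0\|_\infty$'' delivers the claimed prefactor $(t'\wedge t)^{-(n+\sigma)/2}$. The main obstacle is the uniform-in-$N$ character of the Moser iteration: the discrete cutoffs, the summation-by-parts manipulations with the shift-bearing operator $\nabla^{N,*}_e$, and the termination of the iteration at the lattice scale $1/N$ all require care to avoid constants that degenerate as $N\to\infty$. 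This is precisely what is carried out in \cite{GOS}, Appendix B and \cite{SZ}.
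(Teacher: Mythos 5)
Your proposal reconstructs exactly the argument that the paper defers to by citation: the paper's proof of Proposition~\ref{prop:Nash} simply points to Proposition~B.6 of \cite{GOS} and Theorem~1.31 of \cite{SZ} for the uniform-in-$N$ H\"older estimate via the discrete Nash/Moser/Harnack machinery, and then invokes the symmetry of $L^N_{a(t)}$ (through Lemma~\ref{lem:13.2}) to extend the H\"older estimate to the $y$-variable of $p^N$, with the on-diagonal bound supplying the $(t'\wedge t)^{-n/2}$ prefactor. Your sketch of the Nash inequality $\to$ ultracontractivity $\to$ Moser iteration $\to$ PHI $\to$ oscillation decay chain, your observation that the symmetry handles the backward variable, and your separate treatment of the lattice-scale regime via the crude $\|\partial_t P^N_{0,t}u_0\|_\infty\le CN^2\|u_0\|_\infty$ bound are all consistent with (and a faithful unpacking of) what the cited references carry out; so the approach matches the paper's, just stated at a lower level of abstraction.
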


\begin{proof}
See Proposition B.6 of \cite{GOS} and also Theorem 1.31 of Stroock and Zheng \cite{SZ}.
Note that  \cite{SZ} deals with the temporally homogeneous case.  Note also that the
symmetry of $L_{a(t)}^N$ implies that of $p^N$ in $\tfrac{x}N$ and $\tfrac{y}N$ by
Lemma \ref{lem:13.2} below.
\end{proof}

\begin{rem}  \label{rem:2.sigma}
\rm
The H\"older exponent $\si\in (0,1)$ is implicitly determined.  Its lower bound is 
discussed in \cite{HN} in a continuous setting. 
We note that, in the semilinear case with $\fa(u)=cu$ with $c>0$,  
the bound \eqref{eq:main-4} is shown with $\si=1$ under the assumption  
$\sup_N\|u^N(0)\|_{C_N^1} \le CK$; see \cite{FT}, Proposition 4.3.
See also Remark \ref{rem:3.CH} below.
\end{rem}

\subsection{H\"older estimate for discrete PDEs with an external term on $\tfrac1N\T_N^n$}
\label{sec:HolderForSolution}

In this subsection, we make use of the Nash estimate (Proposition \ref{prop:Nash})
to show H\"older regularity for the solution $u^N(t,\tfrac{x}N)$ of \eqref{eq:1.2-linear} 
with the external term $g=g(t,\tfrac{x}N)$ and accordingly 
\eqref{eq:2.1-X} in $(t,\tfrac{x}N)$, $t>0, \tfrac{x}N\in \tfrac1N\T_N^n$, uniformly in $N$.
Recall that $g$ is bounded.
To apply Proposition \ref{prop:Nash} with respect to $\frac1N\Z^n$, we extend the
solution $u^N(t,\tfrac{x}N)$ to $\frac1N\Z^n$ periodically in $x$, setting  $u^N(t,\tfrac{x}N+e)
=u^N(t,\tfrac{x}N)$ for every $e\in \Z^n$.  Alternatively, we may solve
the discrete PDE \eqref{eq:1.2-linear}  on $\frac1N\Z^n$ with $N$-periodically 
extended coefficients $a(t)=\{a_{x,e}(t)\}$, $g(t,\tfrac{x}N)$ and initial value $u^N(0,\tfrac{x}N)$, 
$x\in \Z^n$. Then, by uniqueness of solution,
$u^N(t,\tfrac{x}N)$ is also periodic. 
 
We now derive a H\"older estimate for the solution $u^N$ of \eqref{eq:1.2-linear}.
The following theorem holds on $\tfrac1N\Z^n$ for non-periodic $a(t)$ and $g$.
But, we state it only on $\tfrac1N\T_N^n$.  We define the distance on $\T^n
\cong [0,1)^n$ and therefore on $\tfrac1N\T_N^n$ as follows.  
For $z_1, z_2\in \T^n$ and for $x_1, x_2\in \T_N^n$,
\begin{align}  \label{eq:distanceTn}
\big| z_1-z_2 \big| := \min_{e\in \Z^n} \big| z_1-z_2+e \big|, \quad
\big| \tfrac{x_1}N - \tfrac{x_2}N \big| :=
\min_{x_2'= x_2 \,{\rm mod} \, N} \big| \tfrac{x_1}N - \tfrac{x_2'}N \big|.
\end{align}

\begin{thm} \label{thm:Holder-u^N}
Let $u^N(t,\tfrac{x}N)$ be the solution of \eqref{eq:1.2-linear} on
$\tfrac1N \T_N^n$ with $a(t)\in \mathcal{A}(c_-,c_+)$ being continuous in $t$
and initial value $u^N(0)$ satisfying $\|u^N(0)\|_\infty <\infty$.
Let $\si \in (0,1)$ be as in Proposition \ref{prop:Nash}.  Then, we have
\begin{align}  \label{eq:thm2.2}
|u^N(t_1,\tfrac{x_1}N) - u^N(t_2,\tfrac{x_2}N)|
\le C(\|g\|_\infty +\|u^N(0)\|_\infty) \frac{ \big| t_2-t_1 \big|^{\frac{\si}2}
+ \big| \tfrac{x_1}N - \tfrac{x_2}N \big|^\si }{(t_1\wedge t_2)^{\frac{\si}2}},
\end{align}
for $0<t_1, t_2\le T$ and $x_1, x_2\in \T_N^n$,
where $C=C(n, c_\pm, T)$
and $\|g\|_\infty :=\|g\|_{L^\infty([0,T]\times \tfrac1N\T_N^n)}$.
\end{thm}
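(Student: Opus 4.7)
The plan is to reduce the theorem to Proposition \ref{prop:Nash} via Duhamel's formula, handling the external term $g$ by integrating the Nash bound against time with careful splitting near the singularity. As the proposition applies on $\tfrac1N\Z^n$, I would first periodically extend $u^N(t,\cdot)$, $a(t)$, and $g(t,\cdot)$ from $\tfrac1N\T_N^n$ to $\tfrac1N\Z^n$; by uniqueness, the extended solution agrees with $u^N$. Denoting by $P_{s,t}^N$ the inhomogeneous semigroup generated by $L_{a(t)}^N$ on $\tfrac1N\Z^n$, Duhamel's representation reads
\begin{equation*}
u^N(t,\tfrac{x}{N}) = P_{0,t}^N u^N(0)(\tfrac{x}{N}) + \int_0^t P_{r,t}^N g(r,\cdot)(\tfrac{x}{N})\, dr.
\end{equation*}
The first term immediately has the desired Hölder bound with constant $C\|u^N(0)\|_\infty$ by applying Proposition \ref{prop:Nash} with $u_0 = u^N(0)$.

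For the Duhamel term, the key observation is that Proposition \ref{prop:Nash}, applied to the shifted semigroup starting at time $r$ with initial datum $g(r,\cdot)$, yields for $0 \le r < t$ the pointwise estimate
\begin{equation*}
\bigl|P_{r,t}^N g(r)(\tfrac{x_1}{N}) - P_{r,t}^N g(r)(\tfrac{x_2}{N})\bigr| \le C\|g\|_\infty \Bigl(\tfrac{|\tfrac{x_1}{N}-\tfrac{x_2}{N}|}{(t-r)^{1/2}}\Bigr)^{\!\sigma},
\end{equation*}
together with the trivial bound $2\|g\|_\infty$. Setting $d = |\tfrac{x_1}{N}-\tfrac{x_2}{N}|$ and splitting $\int_0^t = \int_0^{(t-d^2)_+} + \int_{(t-d^2)_+}^t$ (Nash on the first piece, trivial bound on the second), both contributions are bounded by $C\|g\|_\infty T^{1-\sigma/2} d^\sigma$, giving the spatial part of \eqref{eq:thm2.2}.

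For the temporal Hölder bound, assume $0<t_1<t_2\le T$ and decompose
\begin{equation*}
u^N(t_2,\tfrac{x}{N}) - u^N(t_1,\tfrac{x}{N}) = \bigl(P_{0,t_2}^N - P_{0,t_1}^N\bigr) u^N(0)(\tfrac{x}{N}) + \int_{t_1}^{t_2} P_{r,t_2}^N g(r)(\tfrac{x}{N})\, dr + \int_0^{t_1} \bigl(P_{r,t_2}^N - P_{r,t_1}^N\bigr) g(r)(\tfrac{x}{N})\, dr.
\end{equation*}
The first summand is handled directly by Proposition \ref{prop:Nash}, producing the $(t_1\wedge t_2)^{-\sigma/2}$ factor. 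The second is bounded crudely by $\|g\|_\infty(t_2-t_1)$, which is dominated by $\|g\|_\infty T^{1-\sigma/2}(t_2-t_1)^{\sigma/2}$. The third is the most delicate: applying Proposition \ref{prop:Nash} to the time-shifted semigroup gives
\begin{equation*}
\bigl|(P_{r,t_2}^N - P_{r,t_1}^N) g(r)(\tfrac{x}{N})\bigr| \le C\|g\|_\infty \Bigl(\tfrac{(t_2-t_1)^{1/2}}{(t_1-r)^{1/2}}\Bigr)^{\!\sigma},
\end{equation*}
and splitting $\int_0^{t_1}$ at $r = t_1 - (t_2-t_1)$ (using the trivial $L^\infty$ bound past this point and the Nash bound before) yields a contribution of order $\|g\|_\infty(t_2-t_1)^{\sigma/2}$, bounded uniformly in $N$ on $[0,T]$.

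The main obstacle, which this splitting addresses, is that the integrand $P_{r,t}^N g(r)$ develops a non-integrable Hölder singularity as $r \uparrow t$ (resp.\ as $r \uparrow t_1$ in the time-difference case); the Nash bound alone is not summable, so one must interpolate between the $(t-r)^{-\sigma/2}$ Nash bound and the flat $L^\infty$ bound at the correct threshold. Combining all pieces and using $(t_1\wedge t_2)^{-\sigma/2}\ge T^{-\sigma/2}$ to absorb the $\|g\|_\infty$ contributions into the stated form produces \eqref{eq:thm2.2} with constant $C=C(n,c_\pm,T)$.
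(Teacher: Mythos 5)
Your proposal is correct in outcome and, at its core, uses exactly the same machinery as the paper: periodic extension, Duhamel's formula to split $u^N$ into the free evolution of $u^N(0)$ plus an integral term, and Proposition \ref{prop:Nash} applied with a time-shift to control each piece. The temporal decomposition $\int_0^{t_1}(P_{r,t_2}^N-P_{r,t_1}^N)g(r)\,dr + \int_{t_1}^{t_2}P_{r,t_2}^N g(r)\,dr$ and the use of $P_{s,t}^N\mathbf{1}=\mathbf{1}$ (implicitly, via the trivial $L^\infty$ bound) also mirror the paper's Steps~3.

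However, the ``main obstacle'' you identify --- that ``the integrand develops a non-integrable H\"older singularity as $r\uparrow t$'' and that ``the Nash bound alone is not summable'' --- is not actually present. The Nash estimate gives a factor $(t-r)^{-\sigma/2}$, and since $\sigma\in(0,1)$ one has $\sigma/2<1/2<1$, so $\int_0^t (t-r)^{-\sigma/2}\,dr = \tfrac{t^{1-\sigma/2}}{1-\sigma/2} \le C(T)$ converges with no splitting whatsoever. The paper integrates this directly (``since $\sigma\in(0,1)$''), both in the spatial and the temporal estimate. Your splitting at $r=t-d^2$ (resp.\ $r=t_1-(t_2-t_1)$) and interpolation with the flat $L^\infty$ bound still produce a valid estimate --- and in that sense the proof goes through --- but it is an unnecessary detour motivated by a mistaken premise, and it also introduces a minor side issue: bounding the tail piece $2\|g\|_\infty d^2$ by $Cd^\sigma$ requires $d^{2-\sigma}\le C$, which holds here only because distances on $\T^n$ are uniformly bounded; that step silently uses compactness of the torus and would need to be stated. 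You should excise the interpolation step and simply observe that the exponent $\sigma/2$ is strictly less than $1$, which is all that is needed.
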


\begin{proof}
{\it Step 1.}
Recall that  $P_{s,t}^N$ is the semigroup  associated to the discrete linear PDE
\eqref{eq:2.7} with periodic $a(t)$.
By Duhamel's formula, the periodically extended solution $u^N$ of \eqref{eq:1.2-linear}
with $g$ satisfies
\begin{align}  \label{eq:Duha-Q}
u^N(t,\tfrac{x}N) &= \big(P_{0,t}^Nu^N(0)\big)(\tfrac{x}N)
+ \int_0^t ds \big(P_{s,t}^Ng(s,\cdot)\big)(\tfrac{x}N) \\
& =: I_1^N(t,\tfrac{x}N) +  I_2^N(t,\tfrac{x}N),
\notag
\end{align}
where $I_1^N$ and $I_2^N$ satisfy
\begin{align*}
& \partial_t I_1^N = L_{a(t)}^NI_1^N, \quad I_1^N(0) = u^N(0),\\
& \partial_t I_2^N = L_{a(t)}^NI_2^N +g(t,\tfrac{x}N), \quad I_2^N(0) = 0.
\end{align*}

Observe that the first term $I_1^N(t,\tfrac{x}N)$ is $(\frac{\si}2,\si)$-H\"older
continuous and the difference $|I_1(t_1, \frac{x_1}{N}) - I_1(t_2, \frac{x_2}{N})|$ is 
bounded, with front factor 
$C(n, c_\pm)\|u^N(0)\|_\infty (t_1\wedge t_2)^{-\frac{\si}2}$,  using Proposition \ref{prop:Nash}.

\vskip .1cm

{\it Step 2.}  The second term $I_2^N(t,\tfrac{x}N)$ can be estimated as follows.
First for $x_1, x_2 \in \T_N^n$ (embedded in $\Z^n$), by Proposition \ref{prop:Nash},
shifting time by $s$,
\begin{align*}
| I_2^N&(t,\tfrac{x_1}N) - I_2^N(t,\tfrac{x_2}N)|  
 \le \int_0^t ds \big| \big(P_{s,t}^Ng(s,\cdot)\big)(\tfrac{x_1}N)
-\big(P_{s,t}^Ng(s,\cdot)\big)(\tfrac{x_2}N)\big|  \\
& \le C(n,c_\pm) \|g\|_\infty \big| \tfrac{x_1}N - \tfrac{x_2}N \big|^\si
\int_0^t (t-s)^{-\frac{\si}2}ds \le C(n,c_\pm, T) \|g\|_\infty \big| \tfrac{x_1}N - \tfrac{x_2}N \big|^\si,
\end{align*}
for $t\in [0,T]$, since $\si\in (0,1)$.

\vskip .1cm
{\it Step 3.} Next, let us show the  H\"older estimate for $I_2^N(t,\tfrac{x}N)$
in $t$.  The proof is similar.  For $0<t_1<t_2$,
\begin{align*}
I_2^N(t_2,\tfrac{x}N) - I_2^N(t_1,\tfrac{x}N)
& = \int_0^{t_1} ds \Big\{ \big(P_{s,t_2}^Ng(s,\cdot)\big)(\tfrac{x}N)
-\big(P_{s,t_1}^N g(s,\cdot)\big)(\tfrac{x}N)\Big\}  \\
& \hskip 10mm  + \int_{t_1}^{t_2} ds \big(P_{s,t_2}^Ng(s,\cdot)\big)(\tfrac{x}N) \\  
& =: I_{2,1}^N+ I_{2,2}^N.
\end{align*}
Here, $P_{s,t}^N1 \equiv 1$ since $u(t)\equiv 1$ solves
$\partial_t u= L_{a(t)}^Nu$ for $t>s$ with $u(s)\equiv 1$.  Also, 
$\big|\big(P_{s,t_2}^Ng(s,\cdot)\big)(\tfrac{x}N) \big| \le \| g\|_\infty
\big|P_{s,t_2}^N 1\big| = \| g\|_\infty$.  Hence,
$$
|I_{2,2}^N| \le \|g\|_\infty |t_2-t_1|.
$$
For $I_{2,1}^N$, by Proposition \ref{prop:Nash} shifting time by $s$ again,
\begin{align*}
|I_{2,1}^N| & \le C(n,c_\pm) \|g\|_\infty \big| t_2-t_1 \big|^{\frac{\si}2}
\int_0^{t_1} (t_1-s)^{-\frac{\si}2}ds  \le C(n,c_\pm, T) \|g\|_\infty \big| t_2-t_1 \big|^{\frac{\si}2}.
\end{align*}
Thus,  we obtain
$$
|I_2^N(t_2,\tfrac{x}N) - I_2^N(t_1,\tfrac{x}N)|
\le C(n,c_\pm,T)\|g\|_\infty \big| t_2-t_1 \big|^{\frac{\si}2}, \quad 0\le t_1,t_2\le T.
$$
The theorem is shown by combining these estimates.  
\end{proof}

The diverging denominator in the estimate in Theorem \ref{thm:Holder-u^N}
comes from $I_1^N(t,x)$.  In the next subsection, we remove this singularity when
$u^N(0, \cdot)$ has $C^2$-regularity.

Theorem \ref{thm:Holder-u^N} shows the following corollary for the equation 
\eqref{eq:2.1-X}.

\begin{cor} \label{cor:2.3}
Let $u^N(t,\tfrac{x}N)$ be the solution of \eqref{eq:2.1-X}  on
$\tfrac1N \T_N^n$ satisfying \eqref{eq:1.u_pm}.  Then, we have, with $C=C(n,c_\pm, T)$ that
\begin{align}  \label{eq:cor2.3-1}
|u^N(t_1,\tfrac{x_1}N) - u^N(t_2,\tfrac{x_2}N)|
\le C(K\|f\|_\infty +\|u^N(0)\|_\infty) \frac{ \big| t_2-t_1 \big|^{\frac{\si}2}
+ \big| \tfrac{x_1}N - \tfrac{x_2}N \big|^\si }{(t_1\wedge t_2)^{\frac{\si}2}},
\end{align}
for $0<t_1,t_2\le T$ and $x_1, x_2\in \T_N^n$, where $\|f\|_\infty = 
\|f\|_{L^\infty([u_-,u_+])}$.

Moreover, $a_{x,e}(t) := a_{x,e}(u^N(t))$
defined by \eqref{eq:a_xe} with $u=u^N(t)$ satisfies
\begin{align}  \label{eq:cor2.3-2}
|a_{x_1,e}(t_1) - a_{x_2,e}(t_2)| \le C(K\|f\|_\infty +\|u^N(0)\|_\infty) \frac{ \big| t_2-t_1 \big|^{\frac{\si}2}
+ \big| \tfrac{x_1}N - \tfrac{x_2}N \big|^\si }{(t_1\wedge t_2)^{\frac{\si}2}},
\end{align}
for $0<t_1,t_2\le T$ and $x_1, x_2\in \T_N^n$ where $C=C(n, c_\pm, T, \|\fa''\|_\infty)$
and $\|\fa''\|_\infty =\|\fa''\|_{L^\infty([u_-,u_+])}$.
\end{cor}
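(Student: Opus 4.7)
The plan is to invoke Theorem \ref{thm:Holder-u^N} directly for the first estimate, and then transfer it to the coefficient via the Lipschitz character of the difference-quotient map defined in \eqref{eq:a_xe}. Following Section \ref{sec:1.3}, I would view the quasilinear equation \eqref{eq:2.1-X} as the linear equation \eqref{eq:1.2-linear} with coefficient $a(t) = \{a_{x,e}(u^N(t))\}$ (given by \eqref{eq:a_xe}) and external term $g(t,\tfrac{x}N) = K f(u^N(t,\tfrac{x}N))$, using \eqref{eq:De-Lta}. The hypotheses of Theorem \ref{thm:Holder-u^N} are then routine to verify: the symmetry \eqref{eq:a-1-N} is built into \eqref{eq:a_xe}; the nondegeneracy $a(t) \in \mathcal{A}(c_-,c_+)$ with $c_\pm$ from \eqref{eq:c_pm-N} follows from the a priori bound \eqref{eq:1.u_pm}; continuity of $a(\cdot)$ in $t$ follows from the fact that $u^N$ is $C^1$ in $t$ as a solution of the ODE system, together with $\fa \in C^1$; and $\|g\|_\infty \le K\|f\|_{L^\infty([u_-,u_+])}$. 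Applying Theorem \ref{thm:Holder-u^N} yields \eqref{eq:cor2.3-1}.

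For \eqref{eq:cor2.3-2}, the key ingredient is the ``mean-value'' representation alluded to in Section \ref{sec:1.3}, namely
$$
a_{x,e}(u) = \int_0^1 \fa'\bigl(u(\tfrac{x}N) + s\bigl(u(\tfrac{x+e}N) - u(\tfrac{x}N)\bigr)\bigr)\,ds,
$$
which is valid regardless of whether $u(\tfrac{x+e}N) = u(\tfrac{x}N)$ and so unifies the two cases in \eqref{eq:a_xe}. Setting $F(p,q) := \int_0^1 \fa'(p+s(q-p))\,ds$ and differentiating under the integral, one gets $|\partial_p F| + |\partial_q F| \le \|\fa''\|_{L^\infty([u_-,u_+])}$, so $F$ is Lipschitz on $[u_-,u_+]^2$ with constant bounded by a multiple of $\|\fa''\|_\infty$. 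Applied with $(p_i,q_i) = (u^N(t_i,\tfrac{x_i}N), u^N(t_i,\tfrac{x_i+e}N))$ for $i=1,2$, this bounds $|a_{x_1,e}(t_1) - a_{x_2,e}(t_2)|$ by $C\|\fa''\|_\infty$ times a sum of two space-time increments of $u^N$. Each such increment is controlled by \eqref{eq:cor2.3-1}, and by translation invariance of the distance on $\tfrac1N\T_N^n$ the shift by $e$ leaves $|\tfrac{x_1}N - \tfrac{x_2}N|$ unchanged. Collecting the terms gives \eqref{eq:cor2.3-2}.

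The argument is mechanical, so no step really deserves to be called a ``main obstacle''. The only point requiring a little care is the degenerate branch of \eqref{eq:a_xe}, where one cannot directly write a finite difference quotient; the integral representation above resolves this uniformly. The $C^5$ assumption on $\fa$ is far more than needed here (only $\fa \in C^2$ is used for this corollary), and the resulting constant $C$ in \eqref{eq:cor2.3-2} depends on $\|\fa''\|_\infty$ as stated.
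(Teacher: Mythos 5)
Your proposal is correct and matches the paper's approach: the first estimate follows directly from Theorem \ref{thm:Holder-u^N} via the linearization \eqref{eq:De-Lta}, and the second follows from a Lipschitz estimate for the divided-difference map together with the first estimate. The paper encapsulates that Lipschitz estimate in Lemma \ref{lem:mvt}; your integral representation $a_{x,e}(u)=\int_0^1\fa'\bigl(u(\tfrac{x}N)+s(u(\tfrac{x+e}N)-u(\tfrac{x}N))\bigr)\,ds$ proves the identical bound (with the same constant $\tfrac12\|\fa''\|_\infty$ on each partial derivative) and has the minor advantage of handling the degenerate branch of \eqref{eq:a_xe} uniformly, without a separate limiting argument.
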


\begin{proof} The first estimate \eqref{eq:cor2.3-1} is immediate from
Theorem \ref{thm:Holder-u^N} by noting \eqref{eq:De-Lta}, \eqref{eq:c_pm-N}
and $\|g\|_\infty \le K \|f\|_{L^\infty([u_-,u_+])}$.  Note that the continuity of
$a(t)$ in $t$ follows from that of $u^N(t)$, \eqref{eq:a_xe} and the next Lemma
\ref{lem:mvt}.  The second estimate \eqref{eq:cor2.3-2} follows from \eqref{eq:a_xe},
Lemma \ref{lem:mvt} and \eqref{eq:cor2.3-1}.
\end{proof}

We state the `mean-value' lemma used in the proof of Corollary \ref{cor:2.3}.

\begin{lem}
\label{lem:mvt}
If $\fa\in C^2([u_-,u_+])$, we have for every $a, b, c, d \in [u_-, u_+]$
such that $a\not= b$, $c\not= d$,
$$
\Big| \tfrac{\fa(a)-\fa(b)}{a-b} - \tfrac{\fa(c)-\fa(d)}{c-d} \Big|
\le C(|a-c|+|b-d|),
$$
and also
$$\Big| \tfrac{\fa(a)-\fa(b)}{a-b} - \fa'(c)\Big|
\le C(|a-c|+|b-c|).
$$
where we can take $C=\tfrac12 \|\fa''\|_{L^\infty ([u_-,u_+])}$ and recall \eqref{eq:1.u_pm}
for $u_\pm$.
\end{lem}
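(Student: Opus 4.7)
The plan is to use the integral representation of the divided difference. For any $a\neq b$ in $[u_-,u_+]$, by the fundamental theorem of calculus,
\begin{equation*}
\frac{\fa(a)-\fa(b)}{a-b} = \int_0^1 \fa'\bigl(b+s(a-b)\bigr)\,ds,
\end{equation*}
and analogously for $(c,d)$. Subtracting the two gives
\begin{equation*}
\frac{\fa(a)-\fa(b)}{a-b} - \frac{\fa(c)-\fa(d)}{c-d}
= \int_0^1 \Bigl[\fa'\bigl(b+s(a-b)\bigr) - \fa'\bigl(d+s(c-d)\bigr)\Bigr]\,ds.
\end{equation*}

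The key algebraic observation is that the argument difference is a convex combination:
\begin{equation*}
\bigl(b+s(a-b)\bigr) - \bigl(d+s(c-d)\bigr) = (1-s)(b-d) + s(a-c).
\end{equation*}
Since $\fa\in C^2([u_-,u_+])$, the function $\fa'$ is Lipschitz with constant $\|\fa''\|_\infty$, so the integrand is bounded in absolute value by $\|\fa''\|_\infty\bigl((1-s)|b-d| + s|a-c|\bigr)$. Integrating over $s\in[0,1]$ yields $\tfrac{1}{2}\|\fa''\|_\infty(|a-c|+|b-d|)$, which gives the first inequality with $C=\tfrac12\|\fa''\|_\infty$.

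For the second inequality, I would argue identically, writing
\begin{equation*}
\frac{\fa(a)-\fa(b)}{a-b} - \fa'(c)
= \int_0^1 \Bigl[\fa'\bigl(b+s(a-b)\bigr) - \fa'(c)\Bigr]\,ds,
\end{equation*}
and noting $b+s(a-b)-c = (1-s)(b-c)+s(a-c)$, so the integrand is bounded by $\|\fa''\|_\infty\bigl((1-s)|b-c|+s|a-c|\bigr)$ and integration gives $\tfrac12\|\fa''\|_\infty(|a-c|+|b-c|)$. Alternatively, this falls out of the first estimate by sending $d\to c$ and using continuity of the divided difference. There is really no obstacle here; the lemma is a routine consequence of the Lipschitz property of $\fa'$ combined with the convex-combination identity for the arguments, and both estimates are sharp up to the constant $\tfrac12$.
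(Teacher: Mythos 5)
Your proof is correct and takes a genuinely different route from the paper. The paper first splits via the triangle inequality,
\begin{equation*}
\Big| \tfrac{\fa(a)-\fa(b)}{a-b} - \tfrac{\fa(c)-\fa(d)}{c-d} \Big|
\le \Big| \tfrac{\fa(a)-\fa(b)}{a-b} - \tfrac{\fa(a)-\fa(d)}{a-d} \Big|
+ \Big| \tfrac{\fa(a)-\fa(d)}{a-d} - \tfrac{\fa(c)-\fa(d)}{c-d} \Big|,
\end{equation*}
then observes the two terms are symmetric in form, fixes $a$ and $d$, and studies the function $\theta(b) := \tfrac{\fa(a)-\fa(b)}{a-b} - \tfrac{\fa(a)-\fa(d)}{a-d}$, which vanishes at $b=d$ and whose derivative, computed via the Lagrange form of the Taylor remainder, equals $\tfrac12\fa''(A)$ for some intermediate $A$; the mean value inequality then gives $|\theta(b)|\le \tfrac12\|\fa''\|_\infty|b-d|$. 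Your argument replaces both the triangle-inequality split and the one-variable reduction with a single stroke: the Hermite--Genocchi integral representation $\tfrac{\fa(a)-\fa(b)}{a-b}=\int_0^1\fa'(b+s(a-b))\,ds$ combined with the clean algebraic identity that the two integrands' arguments differ by the convex combination $(1-s)(b-d)+s(a-c)$. The Lipschitz bound on $\fa'$ then does the rest, and integrating $(1-s)$ and $s$ over $[0,1]$ produces exactly the factor $\tfrac12$. This is more direct, treats both variables symmetrically from the start, and lets the second inequality be proved either in parallel or by a $d\to c$ limit (as the paper also does). The two proofs yield the same optimal constant; yours is arguably the more transparent way to see where the $\tfrac12$ comes from.
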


\begin{proof}
We prove the first statement as the second follows from the first by letting $d\to c$.

The left hand side is bounded by
$$
\Big| \tfrac{\fa(a)-\fa(b)}{a-b} - \tfrac{\fa(a)-\fa(d)}{a-d} \Big|
+ \Big| \tfrac{\fa(a)-\fa(d)}{a-d} - \tfrac{\fa(c)-\fa(d)}{c-d} \Big|,
$$
so that we may bound each term by $C|b-d|$ and $C|a-c|$, respectively.
But these are essentially the same by symmetry so that we may prove the first one.
Fix $a$ and $d$, and set
$$
\theta(b):=\tfrac{\fa(a)-\fa(b)}{a-b} - \tfrac{\fa(a)-\fa(d)}{a-d}.
$$
Then, $\theta(d)=0$ and by Taylor's formula
$$
\theta'(b)= \tfrac{-\fa'(b)(a-b)+(\fa(a)-\fa(b))}{(a-b)^2} = \tfrac12 \fa''(A)
$$
for some $A\in [u_-,u_+]$, which implies $|\theta(b)| = |\theta(b)-\theta(d)| \le C |b-d|$.
\end{proof}

\begin{rem}
{\rm  Under the $N$-periodic situation,
the fundamental solution $p^N (s,\tfrac{y}N; t, \tfrac{x}N)$,
$\tfrac{y}N, \tfrac{x}N \in \frac1N\T_N^n,$ on $\frac1N\T_N^n$ 
can be constructed from the fundamental solution 
$\tilde p^N (s,\tfrac{y}N; t, \tfrac{x}N)$ on $\frac1N\Z^n$ as follows:
$$
p^N (s,\tfrac{y}N; t, \tfrac{x}N) := \sum_{x'\in \Z^n: x'=x
\, {\rm mod } \,N} \tilde p^N (s,\tfrac{y}N; t, \tfrac{x'}N).
$$
}
\end{rem}

\subsection{Regularity at $t=0$}
\label{sec:Regt=0}

Here, we improve the H\"older regularity near $t=0$ 
of the solution $u=u^N(t,\tfrac{x}{N})$ for $x\in \T^n_N$
to the equation \eqref{eq:1.2-linear} given in Theorem \ref{thm:Holder-u^N}, 
when the initial condition $u^N(0,\tfrac{x}{N})$ has uniformly bounded second derivatives:  
\begin{align}
\label{discrete-C^2}
\sup_N
\max_{x\in \T^n_N, |e_1|=|e_2|=1}|\nabla^N_{e_1}\nabla^N_{e_2} u^N(0,\tfrac{x}{N})| 
\le C_0<\infty.
\end{align}
A sufficient condition is that $u^N(0,\cdot) = u_0(\cdot)$ for $u_0\in C^2(\T^n)$.
Recall that $a=\{a^N_{x,e}(t)\}$ satisfies \eqref{eq:a-1-N} and \eqref{eq:a-2-N}
for $t\ge 0$.

We show the following theorem.
The idea of the proof is that the regularity of $u^N(0,\cdot)$ allows to extend the solution
backwards in time, say for $t\in [-1,0]$.  Then, after a time-shift by $+1$, one formulates
a new discrete PDE,
whose diffusion coefficient still satisfies symmetry and nondegeneracy/boundedness as
in \eqref{eq:a-1-N} and \eqref{eq:a-2-N}, but which matches equation 
\eqref{eq:1.2-linear} for times $t\geq 1$.  From the H\"older estimate 
(Theorem \ref{thm:Holder-u^N}) to this new formulation for times $t\geq 0$,
one can get a H\"older estimate for the original equation above for times $t\geq 1$
but without a diverging divisor as before.

\begin{thm} \label{extended-Holder-thm}
Let $\si \in (0,1)$ be as in Proposition \ref{prop:Nash} and assume
$u^N(0,\cdot)$ satisfies \eqref{discrete-C^2} with $C_0<\infty$.
Then, for the solution $u^N(t,\tfrac{x}N)$ of \eqref{eq:1.2-linear}
with initial value $u^N(0,\cdot)$, we have
\begin{equation}
\label{extended_Holder_cor}
|u^N(t_1,\tfrac{x_1}N) - u^N(t_2,\tfrac{x_2}N)|
\le C(\|g\|_\infty+\|u^N(0)\|_\infty + C_0)  \Big\{ \big| t_2-t_1 \big|^{\frac{\si}2}
+ \big| \tfrac{x_1}N - \tfrac{x_2}N \big|^\si\Big\},
\end{equation}
for $0\le t_1,t_2\le T$ and $x_1, x_2\in \T_N^n$,
where $C=C(n, c_\pm, T)$.
\end{thm}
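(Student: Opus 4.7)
My plan is to extend the problem \eqref{eq:1.2-linear} backwards in time from $[0,T]$ to $[-1,T]$ in such a way that the extension still satisfies a linear discrete PDE of the same form, with forcing uniformly bounded in $N$ and a coefficient still in $\mathcal{A}(c_-,c_+)$. After a time-shift by $+1$, I apply Theorem \ref{thm:Holder-u^N} on $[0,T+1]$. For original times $t_1,t_2\in[0,T]$, the shifted times $t_i+1$ lie in $[1,T+1]$, so the singular denominator $(t_1\wedge t_2)^{\si/2}$ in \eqref{eq:thm2.2} is bounded below by $1$ and drops out of the estimate, yielding \eqref{extended_Holder_cor}.

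Concretely, I freeze the solution in time on the extension interval and replace the coefficient there by a spatially constant one. On $[-1,0]$ set
\[
\bar u^N(t,\tfrac{x}N) := u^N(0,\tfrac{x}N), \qquad \bar a_{x,e}(t) := c_-, \qquad \bar g(t,\tfrac{x}N) := -c_-\De^N u^N(0,\tfrac{x}N),
\]
and on $[0,T]$ keep $(\bar u^N,\bar a,\bar g):=(u^N,a,g)$. Then $\bar u^N$ is continuous at $t=0$ with $\|\bar u^N(-1)\|_\infty = \|u^N(0)\|_\infty$, and it satisfies $\partial_t \bar u = L_{\bar a(t)}^N\bar u + \bar g(t)$ on $[-1,T]$: on $[-1,0]$ one has $\partial_t\bar u^N = 0$, while $L_{\bar a(t)}^N\bar u^N + \bar g(t) = c_-\De^N u^N(0) - c_-\De^N u^N(0) = 0$; on $[0,T]$ the equation coincides with \eqref{eq:1.2-linear}. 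The coefficient $\bar a$ jumps at $t=0$ from $c_-$ to $a(0)$, but this is harmless: Proposition \ref{prop:Nash}, and hence the Duhamel proof of Theorem \ref{thm:Holder-u^N}, requires only measurability of the coefficient in $t$. Applying \eqref{eq:thm2.2} on $[0,T+1]$ to the time-shifted equation at times $t_i+1\ge 1$ then produces \eqref{extended_Holder_cor} with constant $C(n,c_\pm,T)\cdot(\|g\|_\infty+\|u^N(0)\|_\infty+C_0)$.

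The main subtlety --- and the reason for using a \emph{spatially constant} coefficient on $[-1,0]$ --- is to ensure that $\|\bar g\|_\infty$ is bounded uniformly in $N$. Had I instead chosen $\bar a(t)=a(0)$ on $[-1,0]$ (to make $\bar a$ continuous in $t$), the discrete Leibniz expansion of $L_{a(0)}^N u^N(0)$ would produce a term of the form $\nabla_e^N a_{\cdot,e}(0)\cdot\nabla_e^N u^N(0)$ involving the spatial discrete derivative of the coefficient; for a generic $a\in\mathcal{A}(c_-,c_+)$ this quantity is only bounded by $N(c_+-c_-)$, hence $O(N)$, and the forcing would blow up with $N$. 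Replacing $\bar a$ by the spatial constant $c_-$ kills this cross term and leaves $\bar g = -c_-\De^N u^N(0)$, whose sup-norm is controlled by a dimensional multiple of $C_0$ thanks to \eqref{discrete-C^2} (each symmetric second difference of $u^N(0)$ equals, up to a shift, some $\nabla_{e_1}^N\nabla_{e_2}^N u^N(0)$). The $C^2$-type hypothesis on the initial data is used precisely here, to make the forcing of the backward extension uniformly bounded in $N$.
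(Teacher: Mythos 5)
Your argument is correct and reaches the result by essentially the same strategy the paper uses: extend the evolution to a longer time interval, use a spatially constant coefficient on the extension so that the forcing does not pick up a discrete derivative of the diffusion coefficient, bound that forcing by the $C^2$-type hypothesis \eqref{discrete-C^2}, and then apply Theorem \ref{thm:Holder-u^N} at shifted times $t_i+1\ge 1$ so that the singular denominator drops out. Where you differ concretely is the choice of extension. The paper defines $\hat v(t)=v(1-t)$ on $[0,1)$ where $v$ solves the discrete heat equation $\partial_s v=\De^N v$ with $v(0)=u^N(0)$, observes that $\hat v$ solves $\partial_t\hat v=\De^N\hat v+2\hat h$ with $\hat h=-\De^N\hat v$, and invokes the maximum principle (Lemma \ref{lem:1.1}) to show $\|\hat h\|_\infty\le C_0$. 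You instead freeze $\bar u^N(t)\equiv u^N(0)$ on the extension interval with constant coefficient $c_-$, which makes the forcing $\bar g=-c_-\De^N u^N(0)$ explicit and bounded by $n\,c_-\,C_0$ directly from \eqref{discrete-C^2}, with no intermediate heat flow or maximum principle needed. Both constructions are valid; yours is a little more economical. Your observation that a spatially constant coefficient is required (lest $\nabla_e^N a_{\cdot,e}(0)\cdot\nabla_e^N u^N(0)$ enter with an uncontrolled $O(N)$ factor) is the same consideration that leads the paper to use $\De^N$ (coefficient $\equiv 1$) rather than $L_{a(0)}^N$ here; the paper only switches to $L^{N,0}$ in Theorem \ref{thm:4.2}, where it has the extra assumptions \eqref{eq:Th4.2-B}--\eqref{eq:Th4.2-1} to absorb that term. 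Finally, the jump of $\bar a$ at $t=0$ is no worse than the jump of $\hat a$ at $t=1$ in the paper's own proof, so that point is consistent with how the paper treats Theorem \ref{thm:Holder-u^N}.
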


\begin{proof}
Consider the discrete heat equation on $\tfrac1N \T_N^n$:
\begin{align}  \label{eq:2.12}
\partial_s v = \De^N v,\quad s\in (0,1],
\end{align}
with initial condition $v(0,\tfrac{x}{N}) = u^N(0,\tfrac{x}{N})$ for $x\in \T^n_N$.
Define $\hat v(t) := v(1-t)$ for $0\leq t< 1$ and $\hat h(t,\tfrac{x}{N}) := -\De^N \hat v(t,\tfrac{x}{N})$.
Note, for $0\leq t< 1$, that $\hat v$ satisfies
\begin{align*}
\partial_t \hat v = -\De^N \hat v
= \De^N\hat v + 2\hat h.
\end{align*}

However, by \eqref{eq:2.12}, $h(t):= \hat h(1-t) = -\De^N v(t)$ satisfies the discrete heat equation
$\partial_t h(t) = \De^N h(t)$ with initial value $h(0)=-\De^Nu^N(0)$ and thus,
by the maximum principle for the discrete heat equation (see Lemma \ref{lem:1.1} below)
and by the condition \eqref{discrete-C^2} for $u^N(0)$, we have
\begin{align*}
|h(t,\tfrac{x}{N})| \le \max_y |\De^N u^N(0,\tfrac{y}N)| \le  C_0,
\end{align*}
which implies $\|\hat h\|_\infty \le C_0$.

Define now 
\begin{align}
\label{hat a defn}
\hat a_{x,e}(t)= \left\{ \begin{array}{rl}
a_{x,e}(t-1) & \ {\rm for \ }t \ge 1\\
1& \ {\rm for \ } 0\leq t<1,
\end{array}\right.
\end{align}
and 
\begin{align}
\label{hat g defn}
\hat g(t,\tfrac{x}{N}) = \left\{ \begin{array}{rl}
g(t-1, \tfrac{x}{N}) & \ {\rm  for  \ }t\geq 1\\
2\hat h(t, \tfrac{x}{N}) & \ {\rm for \ } 0\leq t<1.
\end{array}
\right.
\end{align}
Consider the extended system, for $t\geq 0$,
\begin{align}
\label{extended_equation}
\partial_t \hat u^N = L_{\hat a(t)}^N \hat u^N + \hat g(t, \tfrac{x}N).
\end{align}

Since $\hat a$ satisfies symmetry and nondegeneracy/boundedness, conditions
\eqref{eq:a-1-N} and \eqref{eq:a-2-N} (modify $c_\pm$ if necessary), and 
$\|\hat g\|_{\infty} \leq \|g\|_{\infty} +2 C_0$,
Theorem \ref{thm:Holder-u^N} yields the following statement:
$$
|\hat u(t_1,\tfrac{x_1}N) - \hat u(t_2,\tfrac{x_2}N)|
\le C(\|g\|_\infty+\|u^N(0)\|_\infty + C_0) \frac{ \big| t_2-t_1 \big|^{\frac{\si}2}
+ \big| \tfrac{x_1}N - \tfrac{x_2}N \big|^\si }{(t_1\wedge t_2)^{\frac{\si}2}},
$$
for $0<t_1,t_2\le T+1$ and $x_1, x_2\in \T_N^n$,
where $C=C(n, c_\pm)>0$ depends on $n$, $c_\pm$ given in \eqref{eq:a-2-N},
and $T$.
We obtain, by specializing to times $1\leq t \leq T+1$ and noting that 
$\hat u(t, \cdot) = u^N(t-1, \cdot)$, the theorem for the solution 
$u=u^N$ of \eqref{eq:1.2-linear}.
\end{proof}

Theorem \ref{extended-Holder-thm}  immediately implies the following corollary
for the equation \eqref{eq:2.1-X}.  The proof is 
similar to that of Corollary \ref{cor:2.3}.

\begin{cor} \label{cor:2.6}
Let $u^N(t,\tfrac{x}N)$ be the solution of \eqref{eq:2.1-X}
satisfying \eqref{eq:1.u_pm}.  We assume that the initial value 
$u^N(0,\cdot)$ satisfies \eqref{discrete-C^2}.
Then, we have with $C=C(n, c_\pm, T)$ that
\begin{align}  \label{eq:cor2.6-1}
|u^N(t_1,\tfrac{x_1}N) - u^N(t_2,\tfrac{x_2}N)|
\le C(K\|f\|_\infty +\|u^N(0)\|_\infty + C_0) \Big\{ \big| t_2-t_1 \big|^{\frac{\si}2}
+ \big| \tfrac{x_1}N - \tfrac{x_2}N \big|^\si \Big\},
\end{align}
for $0\le t_1, t_2\le T$ and $x_1, x_2\in \T_N^n$.  Moreover, $a_{x,e}(t) := a_{x,e}(u^N(t))$
 satisfies
\begin{align}  \label{eq:cor2.6-2}
|a_{x_1,e}(t_1) - a_{x_2,e}(t_2)| \le C(K\|f\|_\infty +\|u^N(0)\|_\infty+C_0) \Big\{ \big| t_2-t_1 \big|^{\frac{\si}2}
+ \big| \tfrac{x_1}N - \tfrac{x_2}N \big|^\si \Big\},
\end{align}
for $0 \le t_1, t_2\le T$ and $x_1, x_2\in \T_N^n$ where $C=C(n, c_\pm, T, \|\fa''\|_\infty)$.
\end{cor}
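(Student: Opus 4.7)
The plan is to reduce Corollary \ref{cor:2.6} directly to Theorem \ref{extended-Holder-thm} combined with the mean-value lemma (Lemma \ref{lem:mvt}), mirroring the argument already used for Corollary \ref{cor:2.3}. The only substantive change is that we now invoke the sharper H\"older bound of Theorem \ref{extended-Holder-thm}, which has no diverging prefactor $(t_1\wedge t_2)^{-\sigma/2}$ because the initial data satisfies \eqref{discrete-C^2}.

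First, I would rewrite the quasilinear equation \eqref{eq:2.1-X} as the linear equation \eqref{eq:1.2-linear} via the identity \eqref{eq:De-Lta}, setting $a(t):=a(u^N(t))$ with $a_{x,e}(u)$ given in \eqref{eq:a_xe}, and $g(t,\tfrac{x}{N}):=Kf(u^N(t,\tfrac{x}{N}))$. The a priori bound \eqref{eq:1.u_pm} combined with \eqref{eq:c_pm-N} ensures $a(t)\in\mathcal{A}(c_-,c_+)$, while $\|g\|_\infty\le K\|f\|_{L^\infty([u_-,u_+])}$. Continuity of $a(t)$ in $t$ is inherited from that of $u^N(t)$, which is smooth in $t$ since it solves an ODE system, combined with the second bound of Lemma \ref{lem:mvt}. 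With these inputs, Theorem \ref{extended-Holder-thm} applied to $u^N$ (whose initial data satisfies \eqref{discrete-C^2}) yields \eqref{eq:cor2.6-1} directly, with $C=C(n,c_\pm,T)$ absorbing the dependence on $\|f\|_\infty$ and $C_0$ linearly as in \eqref{extended_Holder_cor}.

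For \eqref{eq:cor2.6-2}, I would invoke the first bound of Lemma \ref{lem:mvt}. Abbreviating $u^N_j:=u^N(t_j,\tfrac{x_j}{N})$ and $u^{N,e}_j:=u^N(t_j,\tfrac{x_j+e}{N})$ for $j=1,2$, the lemma gives
\[
|a_{x_1,e}(t_1)-a_{x_2,e}(t_2)| \le \tfrac12\|\varphi''\|_{L^\infty([u_-,u_+])}\bigl(|u^N_1-u^N_2|+|u^{N,e}_1-u^{N,e}_2|\bigr),
\]
where the case of equal arguments is handled by the second bound of Lemma \ref{lem:mvt}. Applying \eqref{eq:cor2.6-1} to each difference on the right (noting $|\tfrac{x_1+e}{N}-\tfrac{x_2+e}{N}|=|\tfrac{x_1}{N}-\tfrac{x_2}{N}|$) yields \eqref{eq:cor2.6-2}, with the new constant absorbing $\|\varphi''\|_\infty$.

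I do not anticipate any genuine obstacle: every ingredient (the reduction in Section \ref{sec:1.3}, the H\"older bound in Theorem \ref{extended-Holder-thm}, and Lemma \ref{lem:mvt}) has already been established, and the template of Corollary \ref{cor:2.3} applies verbatim modulo swapping the singular H\"older estimate for the non-singular one. The mildest point worth care is keeping track of how the constants $C_0$ and $K\|f\|_\infty$ appear linearly in the final bounds, which follows transparently from the linearity of \eqref{extended_Holder_cor} in $\|g\|_\infty+\|u^N(0)\|_\infty+C_0$.
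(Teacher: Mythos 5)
Your proof is correct and follows the paper's own route precisely: reduce \eqref{eq:2.1-X} to the linear form \eqref{eq:1.2-linear} via \eqref{eq:De-Lta} and \eqref{eq:a_xe}, apply Theorem \ref{extended-Holder-thm} (in place of Theorem \ref{thm:Holder-u^N} used for Corollary \ref{cor:2.3}) to obtain \eqref{eq:cor2.6-1}, then pass to \eqref{eq:cor2.6-2} by Lemma \ref{lem:mvt}. This is exactly what the paper does, which itself only remarks that the proof is immediate from Theorem \ref{extended-Holder-thm} and analogous to that of Corollary \ref{cor:2.3}.
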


\subsection{Comparison argument and maximum principle}  
\label{sec:1-comparison}

Here, we show \eqref{eq:1.u_pm} for the quasilinear discrete PDE \eqref{eq:2.1-X} and
formulate the maximum principle for linear discrete PDEs.

To show \eqref{eq:1.u_pm}, take $-\infty<u_-<u_+<\infty$ such that $f(u_-)>0, f(u_+)<0$ and
$u_-<u^N(0, \frac{x}N) <u_+$ for all $N, x,K$.  This is possible by our assumptions for
$f(u)$ and $u^N(0)$.  We only show the upper bound in \eqref{eq:1.u_pm}, since the lower
bound is similar.  

For a fixed $K>0$, take $\e>0$ small enough such that
$Kf(u_+)+\e<0$ and let $u_+(t)$ be the solution of the ODE $\frac{du_+}{dt}(t) =
Kf(u_+(t))+\e$ with the initial value $u_+(0) =u_+$.  We easily see that  $u_+(t) \le u_+$
holds for all $t\ge 0$.  

For the solution $u^N(t)$ of \eqref{eq:2.1-X},
set $\t:=\inf \{t>0; u^N(t,\frac{x}N) = u_+(t) \text{ for some }
x\in \T_N^n\}$ and $\t=\infty$ if the set $\{t>0; \cdots\} = \emptyset$.
If $\t=\infty$, by definition, we have $u^N(t,\frac{x}N)<u_+(t) \le u_+$ for all $t\ge 0$
and $x\in \T_N^n$ so that the upper bound in \eqref{eq:1.u_pm} holds.  Therefore, 
we may assume $\t<\infty$ and clearly $\t>0$.  Then, at $t=\t$, 
$u^N(\t,\frac{x}N)\le u_+(\t)$ for all 
$x\in \T_N^n$ and $u^N(\t,\frac{y}N)= u_+(\t)$ for some $y\in \T_N^n$.
However, by the equation \eqref{eq:2.1-X}, we have
\begin{align*}
\partial_t (u_+-u^N)(\t,\tfrac{y}N) 
& = \De^N \{\fa(u_+)-\fa(u^N)\}(\t,\tfrac{y}N) + K f(u_+(\t))+\e - Kf(u^N(\t,\tfrac{y}N)) \\
& = N^2\sum_{|e|=1} \Big\{ (\fa(u_+)-\fa(u^N))(\t,\tfrac{y+ e}N)
- (\fa(u_+)-\fa(u^N))(\t,\tfrac{y}N) \Big\}  +\e \\
& = N^2\sum_{|e|=1} (\fa(u_+)-\fa(u^N))(\t,\tfrac{y+ e}N) +\e \ge \e,
\end{align*}
where we set $u_+(t,\tfrac{x}N)=u_+(t)$ for all $x$.  Note $\De^N\fa(u_+)=0$ and that
we have used the increasing property of $\fa$.  
This, however, implies that $u^N(t,\tfrac{y}N) >u_+(t)$ for some $t\in (0,\t)$ close to
$\t$, which contradicts the definition of $\t$. Thus, the upper bound in \eqref{eq:1.u_pm}
is shown.

Now we state the maximum principle.
Let $a_{x,e}(t)\ge 0$ be given and consider the linear discrete PDEs of divergence form
\begin{align}  \label{eq:1.21}
\partial_t u = L_{a(t)}^N u, \quad \tfrac{x}N \in \tfrac1N\T_N^n,
\end{align}
and also non-divergence form
\begin{align}  \label{eq:1.22}
\partial_t u = \bar L_{a(t)}^N u
:= \sum_{|e|=1} a_{x,e}(t) \nabla_e^{N,*}\nabla_e^N u, \quad \tfrac{x}N \in \tfrac1N\T_N^n.
\end{align}
Note that the symmetry condition \eqref{eq:a-1-N} is not assumed for $a_{x,e}(t)$.
For completeness, we give a proof.  

\begin{lem}  \label{lem:1.1}
(1) Let $\La \subset \frac1N \T_N^n$ be given.  Assume that $u(t,\frac{x}N)$ is defined
on $[0,T]\times \bar\La$ and satisfy $(L_{a(t)}^N - \partial_t) u \ge 0$
or  $(\bar L_{a(t)}^N - \partial_t) u \ge 0$ on $(0,T]\times \La$. Then, we have
\begin{align}  \label{eq:1.23}
\max_{[0,T]\times \bar \La} u = \max_{\Ga_T} u,
\end{align}
where $\Ga_T=\{0\}\times \bar\La \cup (0,T]\times \partial_N^+\La$, and
see \eqref{eq:3.ob+c} for $\partial_N^+\La$ and $\bar\La$.\\
(2) For the solutions of the equations \eqref{eq:1.21} or \eqref{eq:1.22}, we have
 $\max_{t\ge 0, x\in\T_N^n} u(t,\frac{x}N) = \max_{x\in\T_N^n} u(0,\frac{x}N)$ and
$\min_{t\ge 0, x\in\T_N^n} u(t,\frac{x}N) = \min_{x\in\T_N^n} u(0,\frac{x}N)$.
\end{lem}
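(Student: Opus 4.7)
The plan is to prove Lemma \ref{lem:1.1} by the standard parabolic maximum principle argument: perturb $u$ to obtain a strict inequality, and contradict the maximum of the perturbation being attained at an interior point of $(0,T]\times\La$.

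Fix $\e > 0$ and set $u_\e(t, \tfrac{x}N) := u(t, \tfrac{x}N) - \e t$. Since $L^N_{a(t)}$ and $\bar L^N_{a(t)}$ annihilate the spatially constant function $\e t$ while $\partial_t(\e t) = \e$, we obtain
\begin{align*}
(L^N_{a(t)} - \partial_t) u_\e \ge \e > 0,
\end{align*}
and the same strict inequality with $\bar L^N_{a(t)}$ in place of $L^N_{a(t)}$. The finite set $[0,T]\times\bar\La$ carries a maximum of $u_\e$ at some $(t_\e, \tfrac{x_\e}N)$; suppose for contradiction $(t_\e, \tfrac{x_\e}N) \in (0,T]\times\La$. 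By the definition of $\partial^+_N\La$, every unit neighbor $\tfrac{x_\e + e}N$ lies in $\bar\La$, so $u_\e(t_\e, \tfrac{x_\e+e}N) \le u_\e(t_\e, \tfrac{x_\e}N)$ for each $|e|=1$; since $u$ is $C^1$ in $t$ and $t_\e > 0$ is a time-maximum, $\partial_t u_\e(t_\e, \tfrac{x_\e}N) \ge 0$ (using the left derivative when $t_\e = T$).

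For the divergence form, the first representation in \eqref{eq:1-L_a} writes $L^N_{a(t_\e)} u_\e(t_\e, \tfrac{x_\e}N)$ as $N^2$ times a sum of nonpositive first differences weighted by the nonnegative $a_{x_\e, e}(t_\e)$, so $L^N_{a(t_\e)} u_\e(t_\e, \tfrac{x_\e}N) \le 0$; combined with $\partial_t u_\e \ge 0$ this contradicts the strict positivity displayed above. The parallel argument for $\bar L^N_{a(t)}$ proceeds from the pointwise evaluation of $\nabla_e^{N,*}\nabla_e^N u_\e$ at the spatial maximum $\tfrac{x_\e}N$ together with $a_{x_\e, e}(t_\e) \ge 0$, yielding the required sign at the interior extremum. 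Hence $(t_\e, \tfrac{x_\e}N) \in \Ga_T$, and $\max_{[0,T]\times\bar\La} u \le \max_{\Ga_T} u + \e T$; letting $\e \to 0$ proves (1).

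Part (2) follows by applying (1) with $\La = \tfrac1N\T_N^n$: the torus has no outer boundary, so $\partial^+_N\La = \emptyset$ and $\Ga_T = \{0\}\times\bar\La$, giving the maximum assertion on $[0,T]$ for every $T > 0$; the minimum assertion is obtained by applying the same argument to $-u$, which solves the same linear equation \eqref{eq:1.21} or \eqref{eq:1.22} and hence satisfies the dual inequality with equality. The main delicate point I anticipate is the pointwise sign check at an interior spatial maximum in the non-divergence case; once that is verified, the remainder is a routine adaptation of the continuous parabolic maximum principle to the lattice setting.
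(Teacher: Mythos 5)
Your proof is correct and follows essentially the same approach as the paper: perturb $u$ by $-\e t$ to make the super-solution inequality strict, contradict an interior maximum of the perturbed function $u_\e$ on $(0,T]\times\La$ using the sign of the spatial operator and the time derivative at the extremum, and let $\e\downarrow 0$. Your use of the explicit representation $N^2\sum_e a_{x,e}\{u(\tfrac{x+e}N)-u(\tfrac{x}N)\}$ rather than the divergence form $-\tfrac12\sum_e\nabla_e^{N,*}(a_{x,e}\nabla_e^Nu)$ used in the paper's proof is cosmetic, since both yield $L^N_a u_\e\le 0$ at a spatial maximum when $a_{x,e}\ge 0$.
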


\begin{proof}
(1) First note that, if the function $u$ takes local maximum at $\frac{y}N$ in the sense that
$u(\frac{y}N)\ge u(\frac{y+e}N)$ for every $e: |e|=1$, then we have
$L_{a}^Nu(\frac{y}N)\le 0$ and  $\bar L_{a}^Nu(\frac{y}N)\le 0$.  Indeed, the first
inequality is shown as 
\begin{align*}
L_a^N u(\tfrac{y}N) & = - \tfrac12 \sum_{|e|=1} \nabla_e^{N,*}(a_{y,e}\nabla_e^Nu)(\tfrac{y}N) \\
& = - \tfrac{N}2 \sum_{|e|=1} \Big\{ a_{y-e,e}\nabla_e^Nu(\tfrac{y-e}N) 
- a_{y,e}\nabla_e^Nu(\tfrac{y}N) \Big\} \le 0,
\end{align*}
since $a_{y-e,e}, a_{y,e} \ge 0$ and $\nabla_e^Nu(\tfrac{y-e}N) \ge 0$, $\nabla_e^Nu(\tfrac{y}N) \le 0$.
The second is similar.

Now, set $u^\e=u-\e t$ for $\e>0$ and prove \eqref{eq:1.23}
for $u^\e$ instead of $u$.  Once this is done, \eqref{eq:1.23} is shown for $u$ by
letting $\e\downarrow 0$.  
If \eqref{eq:1.23} does not hold for $u^\e$, there exist
$\frac{y}N\in \La$ and $\t\in (0,T]$ such that $u^\e(\t,\frac{y}N)= \max_{[0,T]\times \bar \La}
 u^\e(t,\frac{x}N)$.
Then, by the above observation, $L_{a(\t)}^Nu^\e(\t,\frac{y}N)\le 0$ and
$\bar L_{a(\t)}^Nu(\t,\frac{y}N)\le 0$ hold.  Thus, if the first condition is satisfied by $u$,
\begin{align}  \label{eq:1.24}
\partial_t u^\e(\t,\tfrac{y}N)
& = \partial_t u(\t,\tfrac{y}N) -\e \le L_{a(\t)}^Nu(\t,\tfrac{y}N) -\e \\
& = L_{a(\t)}^Nu^\e(\t,\tfrac{y}N) -\e \le -\e.  \notag
\end{align}
The same bound holds if the second condition is satisfied.

However, by the definition of $\t$ and $y$, we have $\partial_t u^\e(\t,\frac{y}N)=0$
in case of $0<\t<T$ or $\partial_t u^\e(\t,\frac{y}N)\ge 0$ in case of $\t=T$.
Both contradict \eqref{eq:1.24} so that \eqref{eq:1.23} is shown for $u^\e$.

The assertion (2) follows from (1) by taking $\La=\frac1N\T_N^n$. The statement for the minimum follows 
by considering $-u$.  
\end{proof}

\subsection{Comments on the probabilistic method to show the H\"older estimate}   \label{sec:7}

We consider the equation \eqref{eq:1.2-linear} in the case $g\equiv 0$.
At least in a temporally homogeneous
case and for the generator of the form
\begin{align}\label{eq:2.26}
L u(x) = \mu_x^{-1}\sum_{y:|x-y|=1} \mu_{xy}(u(y)-u(x)),
\quad \mu_x = \sum_y \mu_{xy},
\end{align}
the parabolic Harnack inequality 
is shown (\cite{BH}, \cite{Del}) and, based on it, H\"older continuity, which 
is uniform in $N$, of $u^N(t,\tfrac{x}N)$ in $(t,\tfrac{x}N)$  and the corresponding fundamental
solution is shown; see Proposition 3.2 and below (4.12) of \cite{BH} and
also p.100 below Theorem 8.1.5 of \cite{Kuma}.  Note that they consider the analysis on
a percolation cluster $\mathcal{C}_\infty$ or disordered media, which is quite complicated, 
but if we take the percolation probability $p=1$, we have $\mathcal{C}_\infty=\Z^d$.

The differences from our situation are:\\
\quad (1) Our generator $L^N_{a(t)}$ is temporally inhomogeneous\\
\quad (2)  $L$ in \eqref{eq:2.26} is defined by dividing by $\mu_x$ (cf.\ CSRW and VSRW, 
for example in \cite{Kuma},
for the relations of two generators, one not divided by $\mu_x$).

As we see in Section \ref{sec:EELevi}, if we have the H\"older continuity
in $(t,\tfrac{x}N)$ of the fundamental solution $p^N(s,\tfrac{y}N;t,\tfrac{x}N)$ of 
$L^N_{a(t)}- \partial_t $, by the symmetry of $L^N_{a(t)}$,
it is H\"older continuous also in $(s,\tfrac{y}N)$ (with diverging front factor for $|t-s|$
small as in (4.14) of \cite{BH}).  Then, for the solution of \eqref{eq:1.2-linear}
with $g$, one can apply Duhamel's formula.

A H\"older estimate of the fundamental solution is available in the quenched sense.
However, in general, a gradient estimate of the fundamental solution
is not available--as discussed in \cite{DD} an annealed gradient estimate holds but not
in the quenched sense.  Note that the parabolic Caccioppoli inequality, formulated
in Proposition 4.1 of \cite{DD}, is applied to obtain the gradient estimate.
See also \cite{AKM}, \cite{GNO-20} in a continuous setting.

\section{Preliminary estimates for the discrete heat equation, polylinear interpolation and some norms}
\label{sec:Li-Schauder}

Once the H\"older estimate is established, one can move to the next stage to prove
the Schauder estimate.  We adapt the approach in Chapter IV of \cite{Li96}
originally given for continuous PDEs.  This section summarizes some preparatory facts.

In Section \ref{sec:3.1}, we give bounds on the space-time oscillations of 
the solution of the discrete heat equation \eqref{eq:dHeq}; see Lemmas \ref{lem:Li-L4.4}
and \ref{lem:thm2.13}.  Then, in Section \ref{interior_poly_sec}, we extend 
the approach to continuous space by polylinear interpolations, and rewrite 
these bounds in the continuous setting; see Corollary \ref{cor:Li-L4.4}
and Proposition \ref{lem:Li-L4.5}.

With  Proposition \ref{lem:Li-L4.5} in hand, we can consider versions of the arguments 
in \cite{Li96}.  In Section \ref{sec:DirectAppl}, we introduce 
Campanato-Liberman norms, which are space-time H\"older norms of $u$ 
and $\nabla^N u$ with control of diverging factors near $t=0$.
Then, in Section \ref{sec:three},
we state Lemmas \ref{lem:Li-L4.3-G} and \ref{lem:Li-L4.6},
which correspond to Lemmas 4.3 and 4.6 in \cite{Li96}, respectively,
with a proper modification for Lemma \ref{lem:Li-L4.3-G} in the discrete setting.
Finally, in Section \ref{sec:SbP}, we state
the summation by parts formula, which will be used in the next section.

\subsection{Interior estimates for the discrete heat equation}  \label{sec:3.1}

Let $T>0$ be a fixed time horizon.
Define $\Om = [0,T]\times \T^n$ and $\Om_N=[0,T]\times \tfrac1N\T_N^n$.
For $X=(t,z)\in \Om$, set $|X|= \max\{t^{\frac12}, |z|\}$, where
$|z|$ is usually the $L^2$ (Euclidean) norm on $\T^n \cong [-\tfrac12,\tfrac12)^n$.
In context, to fit in the lattice structure, we will sometimes use the $L^\infty$ norm,
which is denoted by $|z|_{L^\infty}:= \max_{1\le i \le n} |z_i|$ for $z=(z_i)_{i=1}^n$,
instead of $L^2$ norm.

Define the domain $Q(R)= Q(X_0,R)$ in $\Om$ for $X_0=(t_0,z_0)\in\Om$ and 
$0<R \, (<\frac12\wedge \sqrt{t_0})$ as
\begin{align}
\label{Q_defn}
Q(R):=& \{X\in \Om; |X-X_0|<R, t<t_0\}\\
\equiv & (t_0-R^2,t_0)\times \{z\in \T^n; |z-z_0|<R\}.
\notag
\end{align}
We may regard the spatial part of $Q(R)$ as $\T^n$ when $R\ge \tfrac12$
and the temporal part as $[0,T]$ when $R \ge \sqrt{t_0}$.
We also define $Q_N(R) = Q(R) \cap \Om_N$.

We now show a discrete analog of Lemma 4.4 in \cite{Li96} for the solution of
the discrete heat equation \eqref{eq:dHeq}; see Lemma \ref{lem:Li-L4.4}
and Corollary \ref{cor:Li-L4.4}.
If a linear function is defined on a region of width $R$ and takes
values in $[-M,M]$, its slope behaves as $\frac{2M}R$.
The next lemma roughly claims that the solution $\U^N$ of 
\eqref{eq:dHeq} has a similar property.

\begin{lem} (cf.\ Proof of Lemma 4.4 of \cite{Li96})  \label{lem:Li-L4.4}
Let $\U^N=\U^N(t,\tfrac{x}N)$ be a solution of the discrete heat equation \eqref{eq:dHeq}
on $Q_N(R)$ with coefficients $a$ such that \eqref{eq:3.5-a} holds, that is, 
\begin{align}  \label{eq:heat-D}
\partial_t \U^N = \De_a^N \U^N, \quad  (t,\tfrac{x}N) \in Q_N(R),
\end{align}
satisfying $|\U^N| \le M_R$ on $Q_N(R)$ with $R>\frac{c_0}N$ for a constant 
$c_0>\sqrt{n}+1$.  (The condition $R>\frac{c_0}N$ will be removed in the setting
of Corollary \ref{cor:Li-L4.4}).  Then, we have
$$
|\nabla_e^N\U^N(t,\tfrac{x}N)| \le \frac{CM_R}R \quad \text{ in } \; Q_N(r),
$$
for $r\in (0,\tfrac12 c_1R)$ and $e\in\Z^n: |e|=1$, where $c_1=\frac1{\sqrt{n}}
\big( 1-\frac{\sqrt{n}+1}{c_0} \big) \in (0,1)$ and $C=C(n, c_\pm)$.  
Here, we take the spatial center
$z_0$ of $Q(R)$ (and therefore $Q(r)$) in $\frac1N\T_N^n$.
\end{lem}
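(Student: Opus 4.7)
The plan is to use a discrete version of Bernstein's maximum principle technique. The starting observation is that since $a=\{a_e\}$ is spatially constant, the discrete derivative $\nabla_e^N$ commutes with $\De_a^N$, so $\xi:=\nabla_e^N \U^N$ itself solves $\partial_t \xi = \De_a^N \xi$ on a parabolic ball shrunken by one lattice step (to accommodate the forward shift in the definition of $\nabla_e^N$). Combined with the discrete product rule \eqref{eq:2.11}, this gives
\begin{align*}
(\De_a^N - \partial_t)\,\xi^2 &= \sum_{|e'|=1} a_{e'}\,(\nabla_{e'}^N\xi)^2 \ge 0, \\
(\De_a^N - \partial_t)\,(\U^N)^2 &= \sum_{|e'|=1} a_{e'}\,(\nabla_{e'}^N\U^N)^2 \ge 0,
\end{align*}
so both $\xi^2$ and $(\U^N)^2$ are discrete subcaloric functions on $Q_N(R)$ shrunken by $O(1/N)$ in space.

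I would then introduce a discrete spatial cut-off $\zeta=\zeta(\tfrac{x}{N})$, nonnegative, identically $1$ on the spatial projection of $Q_N(r)$, supported strictly inside the spatial projection of $Q_N(R)$, and satisfying $|\nabla^N\zeta|\le C/R$ and $|\De^N\zeta|\le C/R^2$ with constants independent of $N$; an explicit truncated polynomial in $|\tfrac{x}{N}-z_0|^2$ works, and the hypothesis $R>c_0/N$ with $c_0>\sqrt{n}+1$ is precisely what is needed to fit such a $\zeta$ on the lattice. The constant $c_1=\tfrac1{\sqrt n}(1-\tfrac{\sqrt n+1}{c_0})$ then encodes how much slack remains between $Q_N(r)$ and the spatial boundary of $Q_N(R)$ after accounting for the shift caused by $\nabla_e^N$ and the width of the transition annulus. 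Form the auxiliary function
\begin{align*}
W := \zeta^2\xi^2 + \La\,(\U^N)^2,
\end{align*}
where $\La>0$ will be chosen of order $1$ depending only on $n$ and $c_\pm$. Using \eqref{eq:2.11} to expand $\De_a^N(\zeta^2\xi^2)$ and Young's inequality, the cross terms of the schematic form $a_{e'}\nabla_{e'}^N(\zeta^2)\nabla_{e'}^N(\xi^2)$ and their discrete analogues are absorbed into the nonnegative term $\zeta^2\sum_{e'}a_{e'}(\nabla_{e'}^N\xi)^2$, at the cost of residual contributions of order $|\nabla^N\zeta|^2\xi^2\lesssim R^{-2}\xi^2$; since $\xi=\nabla_e^N\U^N$, these residuals are in turn dominated by $\La\sum_{e'}a_{e'}(\nabla_{e'}^N\U^N)^2$ (up to a one-site shift handled by the slack in the cut-off) provided $\La$ is taken large enough depending on $c_\pm$. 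This yields $(\De_a^N-\partial_t)W\ge 0$ on the shrunken parabolic ball, and the discrete maximum principle (Lemma \ref{lem:1.1}) places the maximum of $W$ on the parabolic boundary, where either $\zeta=0$ or $t=t_0-R^2$; since $|\U^N|\le M_R$ throughout $Q_N(R)$, this gives $W\le \La M_R^2$ and hence $\xi^2\le CM_R^2/R^2$ on $Q_N(r)$.

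The main obstacle is not any single computation but the bookkeeping imposed by the \emph{non-locality} of discrete derivatives: every application of $\nabla_{e'}^N$ or $\De_a^N$ shifts the spatial argument by one or two lattice units, so the discrete product rule \eqref{eq:2.11} generates a number of shifted-site terms that do not appear in the continuous Bernstein calculation, and one must verify that each such term either lies in the region where the hypothesis $|\U^N|\le M_R$ is available or is of small enough size to be absorbed. The condition $R>c_0/N$ and the explicit form of $c_1$ are tuned precisely so that (i) the cut-off $\zeta$ can be built with the stated first- and second-derivative bounds uniformly in $N$, and (ii) every shifted argument arising in the expansion of $(\De_a^N-\partial_t)(\zeta^2\xi^2)$ remains inside $Q_N(R)$. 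Verifying the discrete derivative bounds on $\zeta$ and matching the shifts carefully are the delicate routine steps on which the whole argument hinges.
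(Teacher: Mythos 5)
Your scheme is the same Bernstein--type device the paper uses: commute $\nabla_e^N$ past $\De_a^N$, show that $(\nabla_e^N\U^N)^2$ and $(\U^N)^2$ are discrete subcaloric, form a cut-off test function, and invoke the maximum principle. The genuine gap is in your cut-off. You take $\zeta$ to depend on the spatial variable only, but the parabolic boundary of the shrunken cylinder has \emph{two} components: the lateral spatial boundary where $\zeta=0$, and the initial time slice $\{t=t_0-(c_1R)^2\}\times D_N$. Your $\zeta$ does not vanish on the latter, and there the term $\zeta^2\xi^2$ in $W$ is $\zeta^2(\nabla_e^N\U^N)^2$ at the bottom of the cylinder, a quantity the hypothesis $|\U^N|\le M_R$ does not control (it bounds $\U^N$, not its gradient). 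So the boundary estimate $W\le\La M_R^2$ that your argument needs in order to transfer inward via the maximum principle simply does not hold on the initial time slice, and the proof does not close. The paper's cut-off is precisely engineered to avoid this: $\zeta(X)=((c_1R)^2-|z-z_0|_{L^\infty}^2)^+\,((c_1R)^2-|t_0-t|)^+$ carries a temporal factor that vanishes at the initial time as well as at the lateral boundary, so the term $(\zeta\nabla_e^N\U)^2$ disappears on the entire parabolic boundary and only the $\U^2$-contribution, which \emph{is} controlled by $M_R^2$, survives. Adding the missing temporal factor to your $\zeta$ (and tracking the corresponding bound on $\partial_t\zeta$, as the paper does in Step 3) is the fix.

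A secondary quantitative remark: your $\La$ cannot be of order one in $R$. The Young residual you correctly identify is of size $R^{-2}\xi^2$, and to absorb it by $\La\sum_{e'}a_{e'}(\nabla_{e'}^N\U^N)^2\ge \La\,c_-\,\xi^2$ you must take $\La\gtrsim R^{-2}$ (equivalently, in the paper's unnormalized setup where $\zeta\lesssim R^4$, the companion term carries the prefactor $R^6$). This is not fatal --- indeed it is exactly what makes the final bound come out as $CM_R^2/R^2$ rather than $CM_R^2$ --- but as written, with $\La$ constant, the estimate you would obtain on $Q_N(r)$ is $\xi^2\le CM_R^2$, which is weaker than the lemma claims.
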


Note that, for $\De_a^N \U^N$ in \eqref{eq:heat-D} to be defined on $Q_N(R)$,
$v^N$ should be given at least on its closure $\overline{Q_N(R)}$ defined as in
\eqref{eq:4.25-A} below.

\begin{proof}
The main method is to apply the maximum principle for the equation \eqref{eq:heat-D}.
Note that $\nabla_e^Nv^N$ satisfies the same equation, on a smaller domain,
due to the commuting property $[\De_a^N, \nabla_e^N]=0$.
\vskip .1cm

{\it Step 1.}
Let $\mathcal{L}=\De_a^N-\partial_t$ and let $\U=\U^N(t,\tfrac{x}N)$ be the solution of 
$\mathcal{L}v=0$ on $Q_N(R)$.  Then, by a simple computation and writing $\U(\tfrac{x}N)$ for
$\U(t,\tfrac{x}N)$ for simplicity, we have
\begin{align}  \label{eq:L-nablau^2-D}
\mathcal{L}(\nabla_e^N\U(\tfrac{x}N))^2 &= \sum_{|e'|=1} 
a_{e'} (\nabla_{e'}^N\nabla_e^N\U(\tfrac{x}N))^2, \\
\label{eq:L-u^2-D}
\mathcal{L}\U^2(\tfrac{x}N) &= \sum_{|e|=1} a_e (\nabla_e^N\U(\tfrac{x}N))^2,
\end{align}
on $Q_N(R-\tfrac1N)$.
Indeed, by \eqref{eq:Lap-a-N}, \eqref{eq:disc-der-E} and $\mathcal{L}\nabla_e^N v=0$
on $Q_N(R-\tfrac1N)$, which follows by noting that $(t,\tfrac{x+e}N)\in Q_N(R)$ for
$(t,\tfrac{x}N)\in Q_N(R-\tfrac1N)$,
\begin{align*}
\mathcal{L}(\nabla_e^N \U(\tfrac{x}N))^2
&= \De_a^N (\nabla_e^N \U(\tfrac{x}N))^2 - \partial_t (\nabla_e^N \U(\tfrac{x}N))^2 \\
& = N\sum_{|e'|=1} a_{e'}\nabla_{e'}^N (\nabla_e^N \U(\tfrac{x}N))^2 
-2 \nabla_e^N \U(\tfrac{x}N) \, \partial_t \nabla_e^N \U(\tfrac{x}N) \\
& = N\sum_{|e'|=1}a_{e'} (\nabla_e^N \U(\tfrac{x}N) + \nabla_e^N \U(\tfrac{x+e'}N) )
\nabla_{e'}^N \nabla_e^N \U(\tfrac{x}N)\\
& \qquad \quad  
-2 \nabla_e^N \U(\tfrac{x}N) N\sum_{|e'|=1} a_{e'}\nabla_{e'}^N \nabla_e^N \U(\tfrac{x}N)\\
& = N\sum_{|e'|=1} a_{e'}(\nabla_e^N \U(\tfrac{x+e'}N) - \nabla_e^N \U(\tfrac{x}N))
\nabla_{e'}^N \nabla_e^N \U(\tfrac{x}N)\\
&= \sum_{|e'|=1} a_{e'} (\nabla_{e'}^N \nabla_e^N \U(\tfrac{x}N))^2.
\end{align*}
This shows \eqref{eq:L-nablau^2-D}. 
Equation \eqref{eq:L-u^2-D} follows,
again by \eqref{eq:Lap-a-N} and \eqref{eq:disc-der-E}, from
\begin{align*}
\mathcal{L} \U^2(\tfrac{x}N)
&=\De_a^N \U^2(\tfrac{x}N) - \partial_t \U^2(\tfrac{x}N)\\
& = N\sum_{|e|=1} a_e \nabla_e^N \U^2(\tfrac{x}N) - 2 \U(\tfrac{x}N) 
N \sum_{|e|=1} a_e \nabla_e^N \U(\tfrac{x}N) \\
& = N\sum_{|e|=1} a_e (\U(\tfrac{x+e}N) - \U(\tfrac{x}N)) \nabla_e^N \U(\tfrac{x}N) 
= \sum_{|e|=1} a_e(\nabla_e^N \U(\tfrac{x}N))^2.
\end{align*}

\vskip .1cm
{\it Step 2.}
To accommodate the  discrete setting, especially to have a proper discrete boundary
in Step 5, instead of $Q(R)$, we consider a domain
$\tilde Q(R)$, defined similarly to $Q(R)=Q(X_0,R)$ but where we use the $L^\infty$ norm 
$|z|_{L^\infty}$ instead of $|z|$, to define $|X-X_0|$.  Note that $Q_N(R)\subset \tilde Q_N(R)
\subset \overline{\tilde Q_N(R)} \subset Q_N(\sqrt{n}(R+\tfrac1N))$ holds, where
$\tilde Q_N(R) = \tilde Q(R)\cap \Om_N$ and
$\overline{\tilde Q_N(R)} = \tilde Q_N(R) \cup \partial_N^+\tilde Q_N(R)$ is the domain
added the discrete outer boundary; see below.  We will take a smaller $c_1 R$ with
$c_1=\frac1{\sqrt{n}}\big( 1-\frac{\sqrt{n}+1}{c_0} \big)$ instead
of $R$ such that $Q_N(\sqrt{n}(c_1R+\tfrac1N)) \subset Q_N(R-\tfrac1N)$ holds.
In particular, $Q_N(c_1R)\subset \overline{\tilde Q_N(c_1R)} \subset Q_N(R-\tfrac1N)$
holds.

Define a cut off function $\zeta(X) =((c_1R)^2-|z-z_0|_{L^\infty}^2)^+((c_1R)^2-|t_0-t|)^+$
for $X=(t,z) \in \overline{\tilde Q_N(c_1R)}$ and $X_0=(t_0,z_0)$.  Actually for $\zeta$, we only use 
the properties: \eqref{eq:R-A} below, $\zeta\le C(n)R^4$ and $|\partial_t \zeta| \le C(n)R^2$.
Note that  $\zeta=0$  at the discrete outer boundary
$\partial_N^+\tilde Q(c_1R) = \partial\big([0,t_0]\times \frac1N\T_N^n \cap \tilde Q(c_1R)^c\big)$.

By \eqref{eq:L-nablau^2-D}, \eqref{eq:disc-der-E} and \eqref{eq:2.11},
for each $e\in \Z^n: |e|=1$, we have on $\tilde Q_N(c_1R)$
\begin{align}  \label{eq:L-zeta-nabla-u-2}
\mathcal{L}(\zeta\nabla_e^N\U)^2(\tfrac{x}N) 
&= \zeta^2(\tfrac{x}N) \mathcal{L}( \nabla_e^N\U(\tfrac{x}N))^2
+ (\nabla_e^N\U(\tfrac{x}N))^2 \mathcal{L}\zeta^2 (\tfrac{x}N)  \\
& \hskip 10mm
+ \sum_{|e'|=1} a_{e'} \nabla_{e'}^N\zeta^2(\tfrac{x}N) \cdot
\nabla_{e'}^N( \nabla_e^N\U)^2(\tfrac{x}N) \notag \\
&= \zeta^2(\tfrac{x}N)
\sum_{|e'|=1} a_{e'} (\nabla_{e'}^N\nabla_e^N\U(\tfrac{x}N))^2
+ (\nabla_e^N\U(\tfrac{x}N))^2 \mathcal{L}\zeta^2 (\tfrac{x}N) \notag \\
& \hskip 10mm
+ \sum_{|e'|=1} a_{e'} \nabla_{e'}^N\zeta^2(\tfrac{x}N) \cdot
(\nabla_e^N\U(\tfrac{x}N)+ \nabla_e^N\U(\tfrac{x+e'}N))
\nabla_{e'}^N \nabla_e^N\U(\tfrac{x}N) \notag   \\ \notag
&=: I_1+I_2+I_3,
\end{align}
where $I_1\geq 0$.
This is a discrete analog of the formula in line -9 in p.34 (proof of
Lemma 3.18) of \cite{Li96} ($H=\mathcal{L}, W=\zeta$).

\vskip .1cm
{\it Step 3.}  The continuous $\zeta$
(defined with $L^2$-norm for $|z-z_0|$) satisfies $\zeta\le CR^4$,
$|D\zeta|\le CR^3$ and $|D^2\zeta|\le CR^2$; see \cite{Li96}, p.52.
We have a similar property here for $\zeta$ defined above with the $L^\infty$-norm.  Since the
space singularity is not a problem given the discrete setting,
noting $R>\frac{c_0}N$, comparable to the mesh size $\frac1N$,
we have the same estimates for the discrete derivatives of $\zeta$ 
by the mean value theorem:
\begin{align}  \label{eq:R-A}
|\nabla_e^N\zeta|\le C(n)R^3, \quad |\nabla_{e'}^N\nabla_e^N\zeta| \le C(n)R^2.
\end{align}
Therefore, noting $|\partial_t\zeta|\le CR^2$ and \eqref{eq:3.5-a}, we have
\begin{align}  \label{eq:R-B}
|\mathcal{L}\zeta^2(\tfrac{x}N)| 
= \Big| 2\zeta(\tfrac{x}N)\De_a^N\zeta(\tfrac{x}N)
+\sum_{|e|=1} a_e\big(\nabla_e^N\zeta(\tfrac{x}N)\big)^2
- 2\zeta(\tfrac{x}N)\partial_t \zeta(\tfrac{x}N) \Big|
\le C(n,c_\pm)R^6.
\end{align}
In particular, we have
\begin{align*}
I_2 \ge& -C(n,c_\pm)R^6  (\nabla_e^N\U(\tfrac{x}N))^2.
\end{align*}

\vskip .1cm

{\it Step 4.}
For $I_3$ in the right hand side of \eqref{eq:L-zeta-nabla-u-2}, we first note
from \eqref{eq:disc-der-E} that
\begin{align*}  
\nabla_{e'}^N \zeta^2(\tfrac{x}N) 
= (\zeta(\tfrac{x}N)+ \zeta(\tfrac{x+e'}N)) \nabla_{e'}^N \zeta(\tfrac{x}N)
= (2\zeta(\tfrac{x}N) + \tfrac1N\nabla_{e'}^N \zeta(\tfrac{x}N)) \nabla_{e'}^N \zeta(\tfrac{x}N).
\end{align*}
Then, since $\tfrac1N\nabla_{e'}^N F(\tfrac{x}N)= F(\tfrac{x+e'}N)-F(\tfrac{x}N)$,
and using this for $F=\nabla_e^N u$,  we can rewrite $I_3$ as
\begin{align*}  
I_3 =& \sum_{|e'|=1} 2a_{e'} \zeta(\tfrac{x}N)\nabla_{e'}^N\zeta(\tfrac{x}N) \cdot
(\nabla_e^N\U(\tfrac{x}N)+ \nabla_e^N\U(\tfrac{x+e'}N))
\nabla_{e'}^N \nabla_e^N\U(\tfrac{x}N) \\
&+ \sum_{|e'|=1} a_{e'} (\nabla_{e'}^N\zeta(\tfrac{x}N))^2 \cdot
(\nabla_e^N\U(\tfrac{x}N)+ \nabla_e^N\U(\tfrac{x+e'}N))
(\nabla_e^N\U(\tfrac{x+e'}N)- \nabla_e^N\U(\tfrac{x}N)) \\
=:& I_{3,1}+I_{3,2}.
\end{align*}  
Here, by a simple bound $2bc\ge -(b^2+c^2)$ and then by \eqref{eq:R-A},
$I_{3,1}$ is estimated from below as
\begin{align*}
I_{3,1} \ge& - \zeta^2(\tfrac{x}N) \sum_{|e'|=1} 
a_{e'} (\nabla_{e'}^N \nabla_e^N\U(\tfrac{x}N))^2 \\
& - \sum_{|e'|=1} a_{e'}(\nabla_{e'}^N\zeta(\tfrac{x}N))^2
(\nabla_e^N\U(\tfrac{x}N)+ \nabla_e^N\U(\tfrac{x+e'}N))^2\\
\ge& -I_1 -C(n,c_\pm)R^6  \sum_{|e'|=1,0} (\nabla_e^N\U(\tfrac{x+e'}N))^2.
\end{align*}
The other term $I_{3,2}$ is also estimated from below by using \eqref{eq:R-A} as
\begin{align*}
I_{3,2} \ge& - \sum_{|e'|=1} a_{e'} (\nabla_{e'}^N\zeta(\tfrac{x}N))^2
(\nabla_e^N\U(\tfrac{x}N))^2
\ge -C(n,c_\pm)R^6  (\nabla_e^N\U(\tfrac{x}N))^2.
\end{align*}

\vskip .1cm
{\it Step 5.}
Summarizing these estimates, we obtain
\begin{align*}
\mathcal{L}(\zeta\nabla_e^N\U)^2(\tfrac{x}N) \ge -C(n,c_\pm)R^6 
\sum_{|e'|=1, 0} (\nabla_e^N\U(\tfrac{x+e'}N))^2,
\end{align*}
on $\tilde Q_N(c_1R)$.
This combined with \eqref{eq:L-u^2-D} shows
\begin{align}  \label{eq:L-maxPri}
\mathcal{L}\Big( (\zeta\nabla_e^N\U)^2(\tfrac{x}N)+ 
\tfrac{C(n,c_\pm)}{c_-} R^6 \sum_{|e'|=1, 0} \U^2(\tfrac{x+e'}N)\Big)\ge 0,
\end{align}
on $\tilde Q_N(c_1R)$, where $c_-$ is as in \eqref{eq:3.5-a}.

 However, since $\zeta=0$  at the discrete outer boundary
$\partial_N^+\tilde Q(c_1R)$,
this function, that is the function acted upon by $\mathcal{L}$ in the formula \eqref{eq:L-maxPri},
at $\partial_N^+ \tilde Q(c_1R)$ is bounded from above by 
$\tfrac{C}{c_-}(2n+1)R^6M_R^2$ by recalling that
$\overline{\tilde Q_N(c_1R)} \subset Q_N(R-\tfrac1N)$.
Thus, the maximum principle (Lemma \ref{lem:1.1})
for \eqref{eq:heat-D} shows
$$
\big( (\zeta\nabla_e^N\U)(\tfrac{x}N)\big)^2 \le \tfrac{C(n,c_\pm)}{c_-}(2n+1)R^6M_R^2,
$$
for each $e$ on $\tilde Q_N(c_1R)$.
Since $\zeta\ge C(c_1) R^4$ on $Q_N(r)$ from our initial assumption $r<\tfrac12c_1R$, we conclude the proof.
\end{proof}

The next lemma shows that the interior modulus of continuity estimate in 
$\tfrac{x}N$ implies that in $t$.

\begin{lem} \label{lem:thm2.13}
(cf.\ Theorem 2.13 and (2.27) in \cite{Li96})  Let $\U^N(t,\tfrac{x}N)$ be the
solution of the discrete heat equation
\eqref{eq:heat-D} on $Q= (t_1,t_1+ \frac{R^2}{16a_*})\times 
\{|\tfrac{x}N-z_0|<R\} \, (\subset \Om_N)$, $z_0\in \frac1N \T_N^n$, where
$a_*:= \sum_{|e|=1, e>0}a_e$.  Assume that
$R> \frac{c_0}N$ with some $c_0>2$ and
\begin{equation}  \label{eq:thm2.13}
|\U^N(t,\tfrac{x}N)-\U^N(t,z_0)|\le \om
\end{equation}
holds for $|\tfrac{x}N-z_0|<R$ and $t\in [t_1,t_1+ \frac{R^2}{16a_*}]$.
Then, we have
$$
|\U^N(t,z_0)-\U^N(t_1,z_0)|\le 2\om,
$$
for $t\in [t_1,t_1+ \frac{R^2}{16a_*}]$.
\end{lem}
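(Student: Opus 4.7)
The plan is a parabolic barrier argument based on the discrete maximum principle of Lemma \ref{lem:1.1}. After subtracting the constant $\U^N(t_1,z_0)$, one may assume $\U^N(t_1,z_0)=0$, so it suffices to prove $\U^N(t,z_0)\le 2\omega$; the lower bound follows from the same argument applied to $-\U^N$.

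The key computation is $\De_a^N|\tfrac{x}N-z_0|^2 = 2a_*$, which follows from \eqref{eq:Lap-a-N} since the discrete Laplacian is exact on quadratic polynomials. Choose $R_0<R$ just slightly smaller, say $R_0=R-\sqrt{n}/N$, which is positive and of size comparable to $R$ by the assumption $R>c_0/N$ (taking $c_0$ large enough); the shrinkage is arranged so that the discrete outer boundary of $\{|\tfrac{x}N-z_0|\le R_0\}$ still lies in the open set $\{|\tfrac{x}N-z_0|<R\}$ where \eqref{eq:thm2.13} is in force. For $\e>0$, introduce the barrier
\begin{align*}
\Phi_\e(t,\tfrac{x}N) := \omega + \omega\,\frac{|\tfrac{x}N-z_0|^2}{R_0^2} + \frac{2a_*\omega(t-t_1)}{R_0^2} + \e(t-t_1),
\end{align*}
which satisfies $(\partial_t-\De_a^N)\Phi_\e=\e>0$, i.e.\ is a strict supersolution of the heat equation.

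With $\tau:=R^2/(16a_*)$, define the first-exit time
\begin{align*}
T^\ast := \sup\bigl\{\,T\in[t_1,t_1+\tau]\,:\,\U^N(s,z_0)<\Phi_\e(s,z_0)\text{ for all }s\in[t_1,T]\,\bigr\}.
\end{align*}
Continuity gives $T^\ast>t_1$. Suppose, for contradiction, $T^\ast<t_1+\tau$; then $\U^N(T^\ast,z_0)=\Phi_\e(T^\ast,z_0)$ by continuity. For $s\in[t_1,T^\ast]$, on the initial slice one has $\U^N(t_1,\tfrac{x}N)\le\omega\le\Phi_\e(t_1,\tfrac{x}N)$ by \eqref{eq:thm2.13}, while on the discrete outer boundary of $\{|\tfrac{x}N-z_0|\le R_0\}$, where $|\tfrac{x}N-z_0|>R_0$,
\begin{align*}
\U^N(s,\tfrac{x}N) \le \U^N(s,z_0)+\omega < \Phi_\e(s,z_0)+\omega < \Phi_\e(s,\tfrac{x}N),
\end{align*}
the final step using $\Phi_\e(s,\tfrac{x}N)-\Phi_\e(s,z_0)=\omega|\tfrac{x}N-z_0|^2/R_0^2>\omega$. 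Thus $\U^N-\Phi_\e\le 0$ on the parabolic boundary of $[t_1,T^\ast]\times\{|\tfrac{x}N-z_0|\le R_0\}$, yet $(L^N_a-\partial_t)(\U^N-\Phi_\e)=\e>0$; the touching argument built into Lemma \ref{lem:1.1} (any interior zero of $\U^N-\Phi_\e$ would be a spatial local max and a left-max in $t$, forcing $L^N_a(\U^N-\Phi_\e)\le 0$ and $\partial_t(\U^N-\Phi_\e)\ge 0$, hence $(L^N_a-\partial_t)(\U^N-\Phi_\e)\le 0$, contradicting $=\e>0$) forces $\U^N(T^\ast,z_0)<\Phi_\e(T^\ast,z_0)$ strictly, contradicting the definition of $T^\ast$.

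Hence $T^\ast=t_1+\tau$. Letting $\e\downarrow 0$ yields $\U^N(t,z_0)\le\omega+\omega R^2/(8R_0^2)$ for $t\in[t_1,t_1+\tau]$; since $R^2/R_0^2$ is bounded by an absolute constant when $c_0$ is taken sufficiently large, this is at most $2\omega$. The principal subtlety of the proof is the discrete-boundary accounting: whereas the continuous analogue in \cite{Li96} uses a barrier scaled by $R$ directly on $B(z_0,R)$, here the discrete outer boundary of that ball protrudes beyond the region where hypothesis \eqref{eq:thm2.13} is valid, forcing the slight shrinkage $R\mapsto R_0$; the hypothesis $R>c_0/N$ is exactly what keeps $R_0$ comparable to $R$ so that the final numerical constant stays below $2$.
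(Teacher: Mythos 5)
Your overall strategy is the same as the paper's: exploit $\De_a^N|\tfrac{x}N-z_0|^2=2a_*$ to build a parabolic barrier and conclude via the discrete maximum principle (Lemma \ref{lem:1.1}). There are two places where your execution differs from the paper's, one cosmetic and one substantive.

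The cosmetic difference: the paper folds the unknown $s:=\sup_t|\U^N(t,z_0)-\U^N(t_1,z_0)|$ into the barrier as $\nu^\pm=\tfrac{8sa_*}{R^2}(t-t_1)+\tfrac{4s}{R^2}|\tfrac{x}N-z_0|^2+\om\pm(\U^N-\U^N(t_1,z_0))$, applies the max principle on the \emph{original} ball using its discrete inner boundary (for which $R>c_0/N$ with $c_0>2$ gives $|\tfrac{x}N-z_0|\ge R-1/N>R/2$), and closes the bootstrap $s\le s/2+\om$. You instead use a fixed barrier scaled by $\om$ and try to read off the bound directly. Either works, and your first-exit-time device is unnecessary: once $\U^N-\Phi_\e\le 0$ on the parabolic boundary and $(\De_a^N-\partial_t)(\U^N-\Phi_\e)=\e>0$ in the cylinder, Lemma \ref{lem:1.1}(1) already gives $\U^N\le\Phi_\e$ everywhere.

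The substantive problem is your choice $R_0=R-\sqrt{n}/N$. The discrete outer boundary of the ball only protrudes by a single nearest-neighbor step, which has Euclidean length $1/N$, not $\sqrt{n}/N$ — the $\sqrt{n}/N$ figure is the $\tfrac1N$-box diagonal, relevant for polylinear interpolation elsewhere in the paper but not here. With $R_0=R-\sqrt{n}/N$ and only $R>c_0/N$, $c_0>2$, the ratio $R/R_0$ is not controlled: for $n\ge2$, $R/R_0>1/(1-\sqrt{n}/c_0)$ can exceed $\sqrt{8}$ (so your final constant $1+R^2/(8R_0^2)$ exceeds $2$), and for $n\ge4$ the quantity $R_0$ can even be nonpositive. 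Your closing remark that the constant "stays below $2$ when $c_0$ is taken sufficiently large'' appeals to enlarging $c_0$, which the lemma does not permit — the hypothesis is fixed at $c_0>2$. The fix is to shrink by $1/N$ (or, as the paper does, not shrink at all and use the inner boundary, which yields the same lower bound $R-1/N>R/2$); then $R/R_0<2$, so $1+R^2/(8R_0^2)<3/2<2$, and the argument closes. As written, the proof has a genuine gap in dimensions $n\ge 2$.
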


\begin{proof}
The proof is essentially the same as that of Theorem 2.13 of \cite{Li96}
(in our case, $b=c=f=0$ and $\La_0=a_*$).  Set 
\begin{equation}  \label{eq:s-def}
s= \sup_{t\in (t_1,t_1+ \frac{R^2}{16a_*})} |\U^N(t,z_0)-\U^N(t_1,z_0)|
\end{equation}
and define
$$
\nu^\pm(t,\tfrac{x}N) = \tfrac{8sa_*}{R^2}(t-t_1) 
+ \tfrac{4s}{R^2}|\tfrac{x}N-z_0|^2
+ \om \pm (\U^N(t,\tfrac{x}N)-\U^N(t_1,z_0)),
$$
with $L^2$-norm for $|\tfrac{x}N-z_0|$.
Then, recalling $\mathcal{L}=\De_a^N-\partial_t$ and noting $\De_a^N |\tfrac{x}N-z_0|^2 = 2a_*$
and $\mathcal{L}\U^N=0$, we have
$$
\mathcal{L}\nu^\pm(t,\tfrac{x}N) = - \tfrac{8sa_*}{R^2} + \tfrac{8sa_*}{R^2} = 0
$$
in $Q$.  We remark that $\mathcal{L}\nu^\pm \le 0$ would be enough to apply the maximum principle below.

Let $\mathcal{P}_NQ$ be the discrete parabolic inner boundary of $Q$
defined by
$$
\mathcal{P}_NQ := \{t_1\}\times \{|\tfrac{x}N-z_0|<R\}
\bigcup  (t_1,t_1+ \tfrac{R^2}{16a_*})\times \partial_N^- \{|\tfrac{x}N-z_0|<R\},
$$
where $\partial_N^- E$ is the inner boundary of the set $E \subset \frac1N\T_N^n$,
that is, 
$$\partial_N^- E := \{\tfrac{x}N\in E; |\tfrac{x}N-\tfrac{y}N|=\tfrac1N {\rm  \ for \ some\ } 
\tfrac{y}N\notin E\}.$$

On $\mathcal{P}_NQ$, at $t=t_1$, we have 
$$\nu^\pm(t_1,\tfrac{x}N)\ge \om \pm (\U^N(t_1,\tfrac{x}N)
-\U^N(t_1,z_0))\ge 0$$
 by \eqref{eq:thm2.13}.  For $\tfrac{x}N \in
\partial_N^- \{|\tfrac{x}N-z_0|<R\}$, we have 
$$|\tfrac{x}N-z_0|\ge
|\tfrac{y}N-z_0| - |\tfrac{x}N-\tfrac{y}N|\ge R-\tfrac1N>\tfrac12 R,$$
since $R>\frac2N$, so that 
$$\nu^\pm(t,\tfrac{x}N) \ge s + \om \pm \big((\U^N(t,\tfrac{x}N)-\U^N(t,z_0))
+ (\U^N(t,z_0)-\U^N(t_1,z_0)\big) \ge 0$$ 
by \eqref{eq:thm2.13} and \eqref{eq:s-def}.
This shows $\nu^\pm\ge 0$ on $\mathcal{P}Q$.  

We now apply the maximum
principle and see that $\nu^\pm\ge 0$ on $Q$.
Thus, we have obtained 
$$\tfrac{8sa_*}{R^2}(t-t_1) + \om \ge|\U^N(t,z_0)-\U^N(t_1,z_0)|$$
at $\tfrac{x}N=z_0\in \frac1N\T_N^n$.  The conclusion follows
by taking the supremum in $t$.
\end{proof}

\subsection{Interior oscillation estimate for polylinear interpolations}
\label{interior_poly_sec}

We now reformulate the discrete space and time estimates,  Lemmas \ref{lem:Li-L4.4}
and \ref{lem:thm2.13} respectively, to the continuous setting via polylinear interpolation;
see Corollary \ref{cor:Li-L4.4} below.  

Let  $u^N=\{u^N(\tfrac{x}N); x \in \T_N^n\}$ be given and
define $\tilde u^N(z), z=(z_i)_{i=1}^n\in \T^n\, (=[0,1)^n$ or $[-\tfrac12,\tfrac12)^n)$ as 
a polylinear interpolation of $u^N$:
\begin{equation}  \label{eq:poli-A}
\tilde u^N(z) = \sum_{v\in \{0,1\}^n} \vartheta^N(v,z) u^N\big(\tfrac{[Nz]+v}N\big),
\qquad 
\vartheta^N(v,z) = \prod_{i=1}^n \vartheta^N(v_i,z_i),
\end{equation}
where $\vartheta^N(a,b)= \{Nb\} 1_{\{a=1\}}+ (1-\{Nb\}) 1_{\{a=0\}} \in [0,1]$ for
$a=0,1, b\in [0,1)$, $[Nz]=([Nz_i])_{i=1}^n$, and $[Nz_i]$ and
$\{Nb\}$ denote the integer and the fractional
parts of $Nz_i$ and $Nb$, respectively, and $v=(v_i)_{i=1}^n$.

In particular, we have
\begin{equation}  \label{eq:deri-u-F}
\partial_{z_i} \tilde u^N(z) = \sum_{v\in \{0,1\}^n} \vartheta_i^N(v,z)
 \nabla_{e_i}^Nu^N\big(\tfrac{[Nz]+\hat v_i}N\big),
\end{equation}
where $e_i\in \Z^n: |e_i|=1, e_i>0$ is the $i$th unit vector,
$\vartheta_i^N(v,z) = \frac12 \prod_{j\not=i}\vartheta^N(v_j,z_j)$
and $\hat v_i =(v_1,\ldots,v_{i-1},0,v_{i+1},\ldots,v_n)$; see \cite{DGI}.
Since $\sum_{v\in \{0,1\}^n} \vartheta_i^N(v,z) =1$ for each $i$, \eqref{eq:deri-u-F}
implies
\begin{equation}  \label{eq:deri-u-G}
|\partial_{z_i} \tilde u^N(z)| \le
\max_{\hat v_i: v\in \{0,1\}^n} \big| \nabla_{e_i}^Nu^N\big(\tfrac{[Nz]+\hat v_i}N\big)\big|.
\end{equation}

Lemma \ref{lem:Li-L4.4} roughly shows the spatial slope of 
the solution $\U^N$ of the discrete heat equation \eqref{eq:heat-D} is bounded
by $\frac{CM_R}R$ if $R>\frac{c_0}N$
and this combined with Lemma \ref{lem:thm2.13}
controls the oscillation in space and time.
For a function $F$ on $\Om$ and $Q\subset \Om$, we set
\begin{align}  \label{eq:3.osc}
[F]_0\equiv [F]_0^* := \underset{\Om}{\rm osc}\, F,  \quad
\underset{Q}{\rm osc}\, F :=  \sup_{X,Y\in Q} |F(X)-F(Y)|.
\end{align}
Then the following corollary holds.  This will be used later to show
Proposition \ref{lem:Li-L4.5}.

Before stating the corollary, let us examine the definability of the polylinear interpolation 
$\tilde \U$ on a subdomain.  In general for a domain $D\subset \T^n$, we define its 
continuous minimal cover by $\tfrac1N$-boxes (with all vertices in $\tfrac1N\T_N^n$) by
\begin{equation*}
D^*:= \bigcup_{x\in \T_N^n:B(\tfrac{x}N)\cap D \not=\emptyset} B(\tfrac{x}N) \; (\supset D),
\end{equation*} 
where $B(\tfrac{x}N)= \prod_{i=1}^n[\tfrac{x_i}N,\tfrac{x_i+1}N]$.
Set $D_N^*:= D^*\cap \tfrac1N \T_N^n$.  When  $\{\U(\tfrac{x}N); \tfrac{x}N\in D_N^*\}$
are given, its polylinear interpolation $\tilde \U(z)$ is definable on $D$, since
$\U(\tfrac{x}N)$ is defined at every vertex of the $\tfrac1N$-box containing $z\in D$.
Note that $D_N^*$ is slightly wider than $\overline{D_N} = D_N \cup \partial_N^+ D_N$
defined below \eqref{eq:outerb} or \eqref{eq:3.ob+c}, since $D_N^*$ contains
\lq\lq diagonal'' boundary points.

Denote by $D(r)=\{z\in \T^n; |z-z_0|<r\}$ the spatial part of $Q(r)$.
If $r>0$ satisfies $r+\tfrac{\sqrt{n}}N\le R$, then $(D(r))_N^* \subset D_N(R):=
D(R)\cap \tfrac1N\T_N^n$ holds, accordingly, if $\U^N$ is defined on $Q_N(R)$, 
$\tilde \U^N$ is definable on $Q(r)$.  Indeed, if $\tfrac{x}N\in (D(r))_N^*$, then
dist$(\tfrac{x}N,D(r))\le \tfrac{\sqrt{n}}N$ so that $|z_0-\tfrac{x}N|\le r+\tfrac{\sqrt{n}}N \le R$
and thus $\tfrac{x}N \in D_N(R)$.

In other words, the interior estimates on $Q(r)$, given the behavior of $\U^N$ on $Q_N(R)$, 
can be discussed only under the condition $r+\tfrac{\sqrt{n}}N\le R$.  If $r$ and $R$ 
are close, we need some outer condition on $\U^N$ 
(which, in application, is the solution of original PDE \eqref{eq:1.2-linear}
and not that of simpler discrete heat equation \eqref{eq:heat-D})
to have the estimate on $Q(r)$ (though, as noted below Lemma \ref{lem:Li-L4.4},
$v^N$ is given on $\overline{Q_N(R)}$).  We need a band area
to separate $Q(r)$ and $Q(R)$. This comes from the non-local property of the polylinear
interpolation and represents a difference from the continuous case.

We now state the corollary.
Differently from the continuous setting, we need to consider three different ranges
$0<r\le R_1<R$.  Especially, $R_1$ and $R$ should be distinguished with a gap of at least 
$\tfrac{\sqrt{n}}N$  due to the non-local nature of our problem.

\begin{cor}  \label{cor:Li-L4.4}
(cf.\ Lemma 4.4 of \cite{Li96})
Let $\U^N(t,\tfrac{x}N)$ be the solution of the discrete heat equation
\eqref{eq:heat-D} on $Q_N(R) (\not= \emptyset)$.
Assume $R_1+\tfrac{\sqrt{n}}N\le R$ for $R_1>0$ (in particular, $R>\frac{\sqrt{n}}N$)
 so that the polylinear interpolation
$\tilde \U^N(t,z)$ of $\U^N(t,\tfrac{x}N)$ as in \eqref{eq:poli-A} in the spatial variable
is well-defined on $Q(R_1)$.

We assume  $|\tilde\U^N|\le M_{R_1}$ on $Q(R_1)$.  Then, we have 
\begin{equation}  \label{eq:corLi-L4.4}
\underset{Q(r)}{\rm osc}\, \tilde \U^N \le \frac{C rM_{R_1}}{R_1},
\end{equation}
for every $0<r\le R_1 \le R -\tfrac{\sqrt{n}}N$.
Moreover, we have
\begin{equation}
\label{continuous gradient bound}
\sup_{ Q(r)}\max_i |\partial_{z_i} \tilde \U^N(t,z)| \leq \frac{CM_{R_1}}{R_1},
\end{equation}
where $r\in (0,\varepsilon R_1/2)$ and $0<\varepsilon<1/2$ is fixed in the proof.  Here, the constants $C=C(n, c_\pm)>0$.
\end{cor}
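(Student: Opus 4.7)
The plan is to establish the continuous gradient bound \eqref{continuous gradient bound} first and then deduce the oscillation bound \eqref{eq:corLi-L4.4} from it by integrating in space and invoking Lemma \ref{lem:thm2.13} in time. The interplay between the discrete function $\U^N$ and its polylinear interpolation $\tilde\U^N$ is governed by \eqref{eq:poli-A} and \eqref{eq:deri-u-F}, but carries a geometric buffer of $\tfrac{\sqrt{n}}N$: values of $\tilde\U^N$ on $D(r)$ involve $\U^N$ at vertices in $(D(r))_N^*\supsetneq D_N(r)$. This is the source of the hypothesis $R_1+\tfrac{\sqrt{n}}N\le R$ and of the need to keep $\e$ strictly below the constant $c_1$ from Lemma \ref{lem:Li-L4.4}.

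For the gradient bound I would split into two regimes according to the size of $R_1$ relative to $\tfrac1N$. In the large regime $R_1>C_*/N$ for a suitable $C_*=C_*(n,c_\pm)$, Lemma \ref{lem:Li-L4.4} applies on $Q_N(R_1)$: since $\U^N(\tfrac{x}N)=\tilde\U^N(\tfrac{x}N)$ at every vertex $\tfrac{x}N\in D_N(R_1)$ and $|\tilde\U^N|\le M_{R_1}$ on $D(R_1)$, the lemma yields $|\nabla_e^N\U^N|\le CM_{R_1}/R_1$ on $Q_N(c_1R_1/2)$. Using \eqref{eq:deri-u-G}, which expresses $\partial_{z_i}\tilde\U^N$ as a convex combination of such discrete gradients, this transfers to $|\partial_{z_i}\tilde\U^N|\le CM_{R_1}/R_1$ on $Q(r)$ whenever $r+\tfrac{\sqrt{n}}N\le c_1R_1/2$; choosing $\e<c_1$ and enlarging $C_*$ so that $r<\tfrac{\e}2R_1$ forces this inclusion. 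In the complementary small regime $R_1\le C_*/N$, direct polylinearity already suffices: on each $\tfrac1N$-cell the interpolation is an affine combination of its vertex values, whence $|\partial_{z_i}\tilde\U^N|\le 2NM_{R_1}\le 2C_*M_{R_1}/R_1$.

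For the oscillation bound I would split $r\in(0,R_1]$ into two sub-ranges. When $r$ is comparable to $R_1$, say $r\ge \tfrac{\e}4 R_1$, the trivial estimate $\underset{Q(r)}{\rm osc}\,\tilde\U^N\le 2M_{R_1}$ already implies \eqref{eq:corLi-L4.4} since $r/R_1$ is bounded below. For smaller $r$ the gradient bound applies: the spatial oscillation at fixed $t$ follows by integrating \eqref{continuous gradient bound} along the straight segment joining any two spatial points of $D(r)$, contributing at most $CrM_{R_1}/R_1$. The temporal part is obtained by applying Lemma \ref{lem:thm2.13} to $\U^N$ at each vertex of the enclosing $\tfrac1N$-cell, with spatial radius chosen so that $r^2\le R^2/(16a_*)$; polylinearity then transfers the vertex-level time oscillation to $\tilde\U^N$ since, for fixed $z$, the value $\tilde\U^N(t,z)$ is a convex combination of values of $\U^N(t,\cdot)$ at the corners of that cell.

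The main obstacle is the first step's bookkeeping: Lemma \ref{lem:Li-L4.4} only fires for $R>c_0/N$ while the corollary must cover every $R_1>0$, and the quantitative choice of $\e$ must simultaneously absorb the $\tfrac{\sqrt{n}}N$ buffer from the non-local polylinear interpolation and the shrinkage by $c_1/2$ inherent in Lemma \ref{lem:Li-L4.4}. The two-regime split pins down $\e$ as a function of $n$ and $c_\pm$ alone.
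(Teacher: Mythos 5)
Your overall architecture (gradient bound first, then the oscillation bound by integrating in space and patching in time) is the same as the paper's, but the two-regime split you propose does not cover all the cases, and the key quantitative claim in your small regime is false.

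The claim $|\partial_{z_i}\tilde\U^N| \le 2NM_{R_1}$ in the regime $R_1 \le C_*/N$ requires $|\U^N| \le M_{R_1}$ at all $2^n$ corners of the $\tfrac1N$-cell containing $z$, but those corners lie within distance $r+\tfrac{\sqrt{n}}N$ of $z_0$ only; when $R_1 \lesssim \sqrt{n}/N$ some of them fall outside $D(R_1)$, where the hypothesis gives no bound on $\U^N$. In fact the inequality fails outright: in dimension $n=1$ take the static heat-equation solution $\U^N(\tfrac{x}N)=Lx$, center $z_0=\tfrac1{4N}$, $R_1=\tfrac1{10N}$; then $M_{R_1}=\sup_{D(R_1)}|NLz|=\tfrac{7L}{20}$, so $2NM_{R_1}=\tfrac{7NL}{10}$, while the actual slope on the cell $[0,\tfrac1N]$ is $NL>\tfrac{7NL}{10}$. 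What does hold is $|\partial_{z_i}\tilde\U^N|\le CM_{R_1}/R_1$, but establishing it requires the paper's Case 4 geometric observation: using $r<\e R_1$, any axis-parallel line in direction $e_i$ that meets $D(r)$ has a segment of length $R_i^*\ge\{(\sqrt{1-\e^2}-\e)R_1\}\wedge\tfrac1N$ inside $D(R_1)\cap(\text{cell})$, on which $\tilde\U^N$ is affine with constant slope, giving $|\partial_{z_i}\tilde\U^N|\le 2M_{R_1}/R_i^*\le c_0\cdot 2M_{R_1}/R_1$ once one uses $\tfrac1N\ge R_1/c_0$. This is exactly the step your proposal replaces with an assertion.

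Two secondary issues in the same vein: Lemma~\ref{lem:Li-L4.4} requires the spatial center of $Q(R)$ and $Q(r)$ to lie in $\tfrac1N\T_N^n$, while the corollary allows an arbitrary $z_0$; the paper's Case 3 shifts to the nearest lattice point $\bar z_0$ (at distance $\le\sqrt{n}/(2N)$) and absorbs the resulting $O(\sqrt{n}/N)$ discrepancies — your large-regime invocation of Lemma~\ref{lem:Li-L4.4} does not address this. And Lemma~\ref{lem:thm2.13}, which you propose to use for the temporal oscillation in all cases, itself carries the hypothesis $R>c_0/N$ with $c_0>2$, so it cannot be used when $R_1$ is small; in that range you must estimate $\partial_t\U^N$ directly from $\partial_t\U^N=\De_a^N\U^N=N\sum_e a_e\nabla_e^N\U^N$ together with the spatial gradient bound, as the paper does in its Cases 1, 2, and 4.
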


Note that the spatial center $z_0$ of $Q(r)=Q(X_0,r), X_0=(t_0,z_0),$ may not be on $\frac1N\T_N^n$.
Note also that we don't require the gap $\tfrac{\sqrt{n}}N$ between $r$ and $R_1$.
The estimate \eqref{continuous gradient bound} will not be used later, but it might be useful for 
the reader.   It holds due to the polylinearity in our setting.

\begin{proof}
The condition $|\tilde\U^N|\le M_{R_1}$ on $Q(R_1)$ implies $\underset{Q(r)}{\rm osc}\, \tilde \U^N\le 2M_{R_1}$, 
so that \eqref{eq:corLi-L4.4} is trivial (with $C=\frac2\e$) if $r\ge \e R_1$
for any $\e>0$.  Therefore, we assume $r<\e R_1$ in the following with a fixed
$\e>0$, but taken arbitrarily small by making $c_0>0$ large enough in the following
arguments.  

We now estimate the oscillation of $\tilde v^N$ on $Q(r)$ as
\begin{align}  \label{eq:tildeu-osc}
\underset{Q(r)}{\rm osc}\, \tilde \U^N
& = \sup_{X=(t,z),X'=(t',z')\in Q(r)} |\tilde \U^N(X)-\tilde \U^N(X')| \\
& \le \sup_{t,z} |\tilde \U^N(t,z)-\tilde \U^N(t,z_0)|
+ \sup_{t,t'} |\tilde \U^N(t,z_0)-\tilde \U^N(t',z_0)| \notag  \\
& \hskip 10mm  
+ \sup_{t',z'} |\tilde \U^N(t',z_0)-\tilde \U^N(t',z')|  \notag \\
& = 2\sup_{(t,z)\in Q(r)} |\tilde \U^N(t,z)-\tilde \U^N(t,z_0)|
+ \sup_{t,t'\in (t_0-r^2,t_0)} |\tilde \U^N(t,z_0)-\tilde \U^N(t',z_0)| \notag \\
& =: 2I_1+I_2.  \notag
\end{align}

We divide the analysis into four cases.
\vskip .1cm
{\it Case 1.}
We prepare two rough estimates when $R_1$ satisfies $R_1>\tfrac{\sqrt{n}+1}N$:
\begin{align}  \label{eq:nabla_e-O}
& |\nabla_e^N \U^N(t,\tfrac{x}N)|\le 2NM_{R_1} \qquad \text{on} \quad Q_N(R_1-\tfrac1N), \\
\label{eq:nabla_e-P}
& |\partial_{z_i} \tilde \U^N(t,z)|\le 2NM_{R_1} \qquad \text{on} \quad Q(R_1-\tfrac{\sqrt{n}+1}N).
\end{align}
Indeed, \eqref{eq:nabla_e-O} is shown from $|\nabla_e^N \U^N(t,\tfrac{x}N)|
= \big|N \big(\U^N(t,\tfrac{x+e}N) - \U^N(t,\tfrac{x}N)\big) \big| \le 2NM_{R_1}$ by noting
$\tfrac{x+e}N\in Q_N(R_1) \subset Q(R_1)$ for $\tfrac{x}N \in Q_N(R_1-\frac1N)$,
and $v^N=\tilde v^N$ on $\tfrac1N\T_N^n$.
Equation \eqref{eq:nabla_e-P} follows from \eqref{eq:nabla_e-O} and 
\eqref{eq:deri-u-G} by noting 
$\big( D(R_1-\tfrac1N-\tfrac{\sqrt{n}}N)\big)_N^* \subset D_N(R_1-\tfrac1N)$.

First consider the case that $R_1\le\frac{c_0}N$ for some large enough
$c_0>\sqrt{n}+1$ determined later and $r+\frac{\sqrt{n}+1}N\le R_1$ is satisfied,
where $c_0$ is the
same constant as in Lemma \ref{lem:Li-L4.4} but note that one can make $c_0$
larger in Lemma \ref{lem:Li-L4.4}.
In this case, since $Q(r)\subset Q(R_1-\tfrac{\sqrt{n}+1}N)$, \eqref{eq:nabla_e-P} is
applicable on $Q(r)$.  Thus, for $I_1$, from
\eqref{eq:nabla_e-P} and noting $|z-z_0|<r$ and then $N\le \frac{c_0}{R_1}$, we have
$$
I_1 \le \sqrt{n} 2NM_{R_1} r \le 2 \sqrt{n} c_0 M_{R_1}\tfrac{r}{R_1}.
$$

For $I_2$, by the discrete heat equation \eqref{eq:heat-D} and 
\eqref{eq:nabla_e-O}, noting $r+\frac{\sqrt{n}}N\le R_1-\tfrac1N$, 
and recalling $a_*=\sum_{|e|=1,e>0}a_e\leq nc_+$, we have
\begin{align}   \label{eq:1stcase}
|\partial_t\U^N(t,\tfrac{x}N)| = \Big| N\sum_{|e|=1}a_e\nabla_e^N \U^N(t,\tfrac{x}N)\Big|
\le 2a_* \cdot 2M_{R_1}N^2 \qquad \text{on} \quad Q_N(r+\tfrac{\sqrt{n}}N).
\end{align}
Further, note $\tilde \U^N(t,z_0)$ is a convex combination
of $\{\U^N(t,\tfrac{x}N)\}$ around $z_0$ as in \eqref{eq:poli-A}.  Observe also that
$X,X'\in Q(r)$ implies $t, t'\in (t_0-r^2,t_0)$ so that
$|t-t'|<r^2$.  These facts show that 
$$
I_2 \le 4a_* M_{R_1}N^2 |t-t'| \le 4a_* M_{R_1}N^2 r^2.
$$
Since $Nr\le \e NR_1\le \e c_0$ and $R_1\le\frac{c_0}N$, we obtain
$$
I_2   \le 4a_* \e c_0^2  \tfrac{M_{R_1} r}{R_1}.
$$
Thus, we obtain 
\eqref{eq:corLi-L4.4} in case that $R_1\le \frac{c_0}N$ and $r+\frac{\sqrt{n}+1}N\le R_1$
is satisfied. 

Also, in this case, \eqref{continuous gradient bound} holds from \eqref{eq:nabla_e-P} as 
$N\le \tfrac{c_0}{R_1}$.

\vskip .1cm
{\it Case 2.}
Second, we consider the case that $R_1> \frac{c_0}N$ and $0<r \le \frac{c_2}N$
with $c_2\ll c_0$, such that $r+\frac{\sqrt{n}+1}N\le R_1$ is satisfied, by choosing $c_2, c_0$ so that
$c_2+\sqrt{n}+1 < c_0$.  In this case, one can apply  Lemma \ref{lem:Li-L4.4} with 
the pair $(r+\frac{\sqrt{n}}N,R_1)$ in place of $(r,R)$ in this lemma, noting 
$r+\frac{\sqrt{n}}N<\frac{1}2 c_1 R_1$ if $c_0c_1>2(c_2+\sqrt{n})$, to get
that $|\nabla_e^N \U^N(t,\tfrac{x}N)|\le \frac{C(n,c_\pm)M_{R_1}}{R_1}$ on $Q_N(r+\tfrac{\sqrt{n}}N)$,
recall that $c_1$ is the constant given in Lemma \ref{lem:Li-L4.4}.
By \eqref{eq:deri-u-G}, this shows \eqref{continuous gradient bound}, 
namely $|\partial_{z_i} \tilde \U^N(t,z)|\le \frac{C(n,c_\pm)M_{R_1}}{R_1}$
on $Q(r)$.  From these estimates, we obtain bounds for $I_1$ and $I_2$  
as above, and therefore \eqref{eq:corLi-L4.4} holds. 

\vskip .1cm
{\it Case 3.}
Third, we consider the case that $R_1> \frac{c_0}N$ and $\frac{c_2}N< r < \e R_1$.
Recall that  $z_0\in \T^n$ is the spatial center of $Q(r)$.
Take $\bar z_0\in \frac1N\T_N^n$ such that $|z_0-\bar z_0|\le 
\frac{\sqrt{n}}N$ (or $\frac{\sqrt{n}}{2N}$ is enough).  Then, we claim
\begin{align*}
 Q(r) = Q_{z_0}(r) \subset Q_{\bar z_0}(\bar r) \subset
Q_{\bar z_0}(\bar R_1) \subset Q_{z_0}(R_1)=Q(R_1)
\end{align*}
where $\bar r:= r+ \frac{\sqrt{n}}N$, $\bar R_1:= R_1-\frac{\sqrt{n}}N$
 and  the subscript $\bar z_0$ in $Q_{\bar z_0}(\cdot)$ and $Q_{N,\bar z_0}(\cdot)$ below
means that the spatial center is $\bar z_0$.
Indeed, note that $\bar r\le (1+\frac{\sqrt{n}}{c_2}) r$.
Note also that $\bar r+\frac{\sqrt{n}}N< \frac12 c_1 \bar R_1$ automatically holds as
$R_1>\frac{c_0}N$, under the choice $c_0>\frac{(4+c_1)\sqrt{n}}{c_1-2\e}$
(here, $\e$ should be taken as $\e\in (0,\tfrac12c_1)$), and $r<\e R_1$.
In particular, $\bar r<\bar R_1$ so that the middle inclusion indicated above holds.

We now investigate the oscillation of $\tilde \U^N$ on the wider space
$Q_{\bar z_0}(\bar r)$.  
Since $|\U^N|\le M_{R_1}$ on $Q_N(R_1)$, this holds also on $Q_{N,\bar z_0}(\bar R_1)$.
Therefore, by Lemma \ref{lem:Li-L4.4} applied for the pair 
 $(r,R)=(\bar r+\frac{\sqrt{n}}N,\bar R_1)$
(recall $\bar r+\frac{\sqrt{n}}N < \frac12 c_1 \bar R_1$), we have $|\nabla_e^N \U^N(t,\tfrac{x}N)|
\le \frac{C(n,c_\pm)M_{R_1}}{\bar R_1}$ on $Q_{N,\bar z_0}(\bar r+\tfrac{\sqrt{n}}N)$.

This shows \eqref{continuous gradient bound}, namely $|\partial_{z_i} \tilde \U^N(t,z)|\le 
\frac{C(n,c_\pm)M_{R_1}}{\bar R_1}$ on $Q_{\bar z_0}(\bar r)$ by \eqref{eq:deri-u-G}, in the present case.
Note that  $\tfrac1{\bar R_1} < \tfrac{c_0}{c_0-\sqrt{n}} \tfrac1{R_1}$ holds
from $R_1 > \tfrac{c_0}N$.  Therefore, we also obtain
$$
I_1 \le \sqrt{n} C(n,c_\pm)M_{R_1}\tfrac{r}{\bar R_1}.
$$

For $I_2$, from Lemma \ref{lem:thm2.13} applied on $Q_{\bar z_0}(\bar r)$
with $\om = \sqrt{n} C(n,c_\pm)M_{R_1}\frac{\bar r}{\bar R_1}$ 
in \eqref{eq:thm2.13}, we have $|\tilde \U^N(t,\bar z_0)-\tilde \U^N(t_1,\bar z_0)|
=|\U^N(t,\bar z_0)-\U^N(t_1,\bar z_0)|\le 2\om = 2\sqrt{n} C(n,c_\pm)M_{R_1}\frac{\bar r}{\bar R_1}$
if $|t-t_1|\le \bar r^2$ since $\bar z_0\in \frac1N \T_N^n$.
Note that $X, X' \in Q_{\bar z_0}(\bar r)$ implies $|t-t'|<
\bar r^2$. Take the smaller one of $\{t,t'\}$ as $t_1$.
Thus, recalling $\bar r\le (1+\frac{\sqrt{n}}{c_2} ) r$, we obtain
\eqref{eq:corLi-L4.4} in case that $R_1>\frac{c_0}N$ and $\tfrac{c_2}N<r<\e R_1$.

\vskip .1cm
{\it Case 4.}
Fourth and finally, we consider the remaining case that $R_1\le \frac{c_0}N$ and
$r+\frac{\sqrt{n}+1}N> R_1$.  It is then sufficient to show 
the conclusion when $R_1<\frac{\sqrt{n}+1}{(1-\e)N}$ and $r<\e R_1$.  Recall, 
as we mentioned, $\e>0$ will be taken small enough.  The polylinearity
plays a role in the following proof in a short distance regime.
 
First, we consider the oscillation in $z$ by
deriving estimates on the slope of
$\tilde \U^N$ on $Q(r)$ in terms of $M_{R_1}$ and $R_1$.  Recall 
$D(r)=\{|z-z_0|<r\}$, which is the spatial part of $Q(r)$.
By a spatial shift, we may assume
$|z_0|_{L^\infty}\le \frac1{2N}$ for the center $z_0\in \frac{1}{N}\T^n_N$ of $D(r)$ and $D(R_1)$.
We divide the box $E_{\frac1N}:= \{|z|_{L^\infty}<\frac1N\}$ as $E_{\frac1N}
=\cup_{v\in\{\pm 1\}^n}E_{v,\frac1N}$ into $2^n$ unit orthants $E_{v,\frac1N}:=
\{z\in E_{\frac1N} ; \text{sgn}\, z_i=v_i, 1\le i \le n\}$, where $v=(v_i)_{i=1}^n$.

From \eqref{eq:deri-u-F}, the slope $\partial_{z_i} \tilde \U^N(z)$ 
of $\tilde \U^N$ is constant in $z_i$ on each unit orthant $E_{v,\frac1N}$ 
and depends only on $\check z_{(i)} := (z_1,\ldots,z_{i-1},z_{i+1}.\ldots,z_n)$.
We need to consider only $v$ such that $E_{v,\frac1N}\cap D(r)\not=\emptyset$,
that is those orthants that $D(r)$ touches.
On such an $E_{v,\frac1N}$, we work on $\tilde D_{v,\frac1N}(R_1) := E_{v,\frac1N} \cap D(R_1)$.

Define $R_i^*(\check z_{(i)}) $ by
\begin{align*}
R_i^*\equiv R_i^*(\check z_{(i)}):= \max\{|z_i^1-z_i^2|\, ; \, &z^1=(\check z_{(i)},z_i^1),
z^2=(\check z_{(i)},z_i^2) \in \tilde D_{v,\frac1N}(R_1) \\
& \text{ and line connecting }
z^1, z^2 \text{ intersects } D(r)\}.
\end{align*}
Then, we see $R_i^*\ge \{(\sqrt{1-\e^2}-\e)R_1\}\wedge \frac1N$.

Indeed, \lq\lq the line intersects $D(r)$'' implies \lq\lq there exists $b$ such that
$(\check z_{(i)},b)\in D(r)$'', that is,
$|z_0 - (\check z_{(i)},b)|^2 = A+|z_{0,i}-b|^2 <r^2$,
where $z_0=(\check z_{0,(i)},z_{0,i})$ and $A:= |\check z_{0,(i)}-\check z_{(i)}|^2$.
By the definition of $R_i^*$, we see $R_i^* \ge |a-b| \wedge \frac1N$,
where $a$ is taken as $|z_0 - (\check z_{(i)},a)|^2 = R_1^2$.  
Then, since
$|z_{0,i}-b|\le \sqrt{r^2-A}$, $0\le A\le r^2$ and also $r<\e R_1$, we see
\begin{align*}
|a-b| & \ge |z_{0,i}-a| - |z_{0,i}-b| \\
& \ge \sqrt{R_1^2-A} - \sqrt{r^2-A} \\
& \ge \sqrt{R_1^2-r^2} -r \\
& \ge (\sqrt{1-\e^2}-\e)R_1.
\end{align*}
Thus, we obtain
$R_i^*\ge \{(\sqrt{1-\e^2}-\e)R_1\}\wedge \frac1N$.  

Now, choosing $\e>0$
small so that $\sqrt{1-\e^2}-\e$ is close to $1$, we see
 $R_i^* \ge \frac1{c_0}R_1$ (note $\frac1N\ge \frac{R_1}{c_0}$ and $c_0$ is taken large) 
 and, since the slope $\partial_{z_i} \tilde \U^N(t,z)$ is
constant in $z_i$, this leads to the bound on $Q(r) \cap E_{v,\tfrac1N}$,
\begin{align}  \label{eq:4thcase}
|\partial_{z_i} \tilde \U^N(t,z)| \le \tfrac{2M_{R_1}}{R_i^*}\le c_0\tfrac{2M_{R_1}}{R_1},
\end{align}
establishing \eqref{continuous gradient bound} in the present case.
Therefore, we also obtain
$$
I_1\le c_0 \sqrt{n} \tfrac{2M_{R_1}}{R_1} r.
$$

For $I_2$, we can use the same idea as in Case 1.  Indeed, note that,
from \eqref{eq:deri-u-F}, $\partial_{z_i} \tilde \U^N(t,z)$ is a convex
combination of $\nabla_{e_i}^N \U^N(t,\tfrac{x}N)$ on $Q_N(r+\frac{\sqrt{n}}N)$
so that, by \eqref{eq:4thcase}, we have $|\nabla_{e}^N \U^N(t,\tfrac{x}N)|
\le c_0\tfrac{2M_{R_1}}{R_1}$ on $Q_N(r+\frac{\sqrt{n}}N)$.
Therefore, one can apply a similar argument to \eqref{eq:1stcase},
and noting $Nr< \tfrac{(\sqrt{n}+1)\e}{1-\e}$, we obtain
$$
I_2\le 2a_* c_0\tfrac{2M_{R_1}}{R_1} N r^2 \le 4a_* c_0\tfrac{M_{R_1}}{R_1} \tfrac{(\sqrt{n}+1)\e}{1-\e} r.
$$
This completes the proof of the corollary.
\end{proof}

We now obtain the following integral estimates
for the polylinear interpolation $\tilde \U^N$ of the solution $\U^N$ of
the discrete heat equation \eqref{eq:heat-D} on $Q_N(R)$.
These estimates will be applied in the proof of Lemma \ref{lem:5.3} below.
As in Corollary \ref{cor:Li-L4.4}, we consider three different ranges $0<\rho<r<R$, especially,
by distinguishing $r$ and $R$.  
Otherwise, we would get estimate \eqref{eq:nabla^Nv}
in the proof of Lemma \ref{lem:5.3} only for $r\ge \tfrac{\sqrt{n}}N$, which is insufficient.

\begin{prop}  \label{lem:Li-L4.5}
(cf.\ Lemma 4.5 of \cite{Li96}) Let $\U^N=\U^N(t,\tfrac{x}N) (\equiv \U^{N,R}(t,\tfrac{x}N))$
be given on $Q_N(R) = Q_N(X_0,R)$ as in Corollary \ref{cor:Li-L4.4} 
for any fixed $X_0=(t_0,z_0)\in \Om$ and $R>\tfrac{\sqrt{n}}N$.
Let $0<\rho< r\le R-\tfrac{\sqrt{n}}N$.
In particular, $\tilde v^N$ is defined on $Q(r)=Q(X_0,r)$.
Then, there is a constant $C=C(n,c_\pm)$ such that
\begin{align}  \label{eq:Li-L4.5-1}
\int_{Q(\rho)} (\tilde \U^N)^2dX & 
\le C\big(\tfrac{\rho}r\big)^{n+2} \int_{Q(r)} (\tilde \U^N)^2dX, \\
\label{eq:Li-L4.5-2}
\int_{Q(\rho)} |\tilde \U^N-\{\tilde \U^N\}_{\rho}|^2dX & \le C\big(\tfrac{\rho}r\big)^{n+4} 
\int_{Q(r)} |\tilde \U^N-\{\tilde \U^N\}_r|^2dX,
\end{align}
where $Q(\rho)=Q(X_0,\rho)$
 and $\{\U\}_{\rho}= \frac1{|Q(\rho)|}
\int_{Q(\rho)}\U\, dX$.
Moreover,  assuming $R> \tfrac{\sqrt{n}+1}N$, for 
$0<\rho< r\le R-\tfrac{\sqrt{n}+1}N$,  we have
\begin{align}
\label{tilde_estimate}
\int_{Q(\rho)} (\nabla_e^N\tilde \U^N)^2dX & 
\le C\big(\tfrac{\rho}r\big)^{n+2} \int_{Q(r)} (\nabla_e^N\tilde \U^N)^2dX,\\ 
\int_{Q(\rho)} | \nabla_e^N \tilde \U^N-\{\nabla_e^N \tilde \U^N\}_{\rho}|^2dX 
& \le C\big(\tfrac{\rho}r\big)^{n+4} 
\int_{Q(r)} |\nabla_e^N \tilde \U^N-\{ \nabla_e^N\tilde \U^N\}_r|^2dX.
\label{eq:Li-L4.5-3}
\end{align}
\end{prop}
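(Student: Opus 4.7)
The plan is to adapt the standard Campanato--Morrey argument of Lemma 4.5 in \cite{Li96} to our discrete-continuous setting, relying on Corollary \ref{cor:Li-L4.4} for oscillation control together with an auxiliary interior mean value estimate. First I would reduce \eqref{tilde_estimate} and \eqref{eq:Li-L4.5-3} to \eqref{eq:Li-L4.5-1} and \eqref{eq:Li-L4.5-2} applied to $w^N := \nabla_e^N \U^N$. Since the constant-coefficient operator $\De_a^N$ commutes with $\nabla_e^N$, the function $w^N$ solves the same discrete heat equation \eqref{eq:heat-D} on $Q_N(R-\tfrac1N)$; from \eqref{eq:poli-A} a direct computation shows that $\nabla_e^N \tilde \U^N$ coincides with the polylinear interpolation of $w^N$, so that the extra $\tfrac1N$ in the hypothesis $r\le R-\tfrac{\sqrt n+1}{N}$ exactly accommodates this loss.

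The key ingredient beyond Corollary \ref{cor:Li-L4.4} is an interior $L^\infty$--$L^2$ mean value estimate: whenever $\tilde \U^N$ is defined on $Q(r)$,
\begin{equation*}
\sup_{Q(r/2)} |\tilde \U^N|^2 \;\le\; \frac{C}{r^{n+2}}\int_{Q(r)} |\tilde \U^N|^2\, dX.
\end{equation*}
This would follow from Gaussian-type upper bounds on the fundamental solution of $\partial_t-\De_a^N$, which are implicit in the Nash estimate of Proposition \ref{prop:Nash} and, in the constant-coefficient case, follow by tensorization from the explicit one-dimensional discrete heat kernel; alternatively, a discrete Moser-type iteration based on the summation-by-parts identity of Section \ref{sec:SbP} works. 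The polylinearity of $\tilde \U^N$ costs only a dimensional constant when passing between $\sup|\tilde \U^N|$ and the discrete maximum of $|\U^N|$ on the surrounding grid points.

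With the mean value estimate in hand, \eqref{eq:Li-L4.5-1} follows by the usual two-case argument: if $\rho\ge r/4$, the inequality is trivial with $C=4^{n+2}$; if $\rho<r/4$, then
\begin{equation*}
\int_{Q(\rho)}(\tilde \U^N)^2\,dX \;\le\; |Q(\rho)|\sup_{Q(r/2)}|\tilde \U^N|^2
\;\le\; C\rho^{n+2}\cdot \tfrac{C}{r^{n+2}}\int_{Q(r)}(\tilde \U^N)^2\,dX.
\end{equation*}
For \eqref{eq:Li-L4.5-2} I would apply the whole argument to $w:=\U^N-\{\tilde \U^N\}_r$, which still satisfies \eqref{eq:heat-D} (constants are solutions) and whose polylinear interpolation equals $\tilde \U^N-\{\tilde \U^N\}_r$. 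Since $\{\tilde \U^N\}_\rho$ minimizes the $L^2$ distance over constants and $|\tilde \U^N(X)-\tilde \U^N(X_0)|$ is bounded by $\underset{Q(\rho)}{\rm osc}\,\tilde \U^N$ for $X\in Q(\rho)$, combining Corollary \ref{cor:Li-L4.4} applied to $w$ with radii $\rho$ and $R_1=r/2$ (noting $r/2+\tfrac{\sqrt n}{N}\le R$) with the mean value estimate applied to $w$ yields
\begin{align*}
\int_{Q(\rho)}|\tilde \U^N-\{\tilde \U^N\}_\rho|^2\,dX
&\le |Q(\rho)|\,\bigl(\underset{Q(\rho)}{\rm osc}\,\tilde \U^N\bigr)^2 \\
&\le C\rho^{n+2}\Big(\tfrac{C\rho}{r}\Big)^{2}\sup_{Q(r/2)}|\tilde \U^N-\{\tilde \U^N\}_r|^2 \\
&\le C\Big(\tfrac{\rho}{r}\Big)^{n+4}\int_{Q(r)}|\tilde \U^N-\{\tilde \U^N\}_r|^2\,dX.
\end{align*}

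The main obstacle will be to establish the discrete interior mean value estimate rigorously within the polylinear framework, and to track carefully the various gap conditions ($\tfrac{\sqrt n}{N}$ versus $\tfrac{\sqrt n+1}{N}$) so that Corollary \ref{cor:Li-L4.4} may be legitimately invoked at each stage; as in the proof of that corollary, one may need to split into cases depending on how $\rho$, $r$ and the mesh size $\tfrac1N$ compare.
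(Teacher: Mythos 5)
Your outline captures the correct high-level structure: the reduction of \eqref{tilde_estimate} and \eqref{eq:Li-L4.5-3} to the scalar versions via $w^N := \nabla_e^N \U^N$ (using commutativity of $\nabla_e^N$ with $\De_a^N$, the identity $\widetilde{\nabla_e^N \U^N} = \nabla_e^N \tilde\U^N$, and the one-step shrinkage $R\rightsquigarrow R-\tfrac1N$) matches the paper, and the two-case Campanato argument (trivial for $\rho\geq r/4$, mean-value-estimate otherwise) is the right skeleton. However, you have isolated the actual content of the proposition — the interior $L^\infty$--$L^2$ mean value estimate $\sup_{Q(r/4)}|\tilde\U^N|^2 \le C r^{-(n+2)}\int_{Q(r)}|\tilde\U^N|^2\,dX$ — and left it unproven, with two suggested routes that are both misaligned with the setting.

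The Gaussian heat-kernel route is problematic because $\U^N$ solves \eqref{eq:heat-D} only on the bounded region $Q_N(R)$ and is not a convolution of initial data with the global kernel; the Nash estimate of Proposition~\ref{prop:Nash} controls the whole-space kernel on $\tfrac1N\Z^n$, not the Dirichlet kernel of a discrete parabolic cylinder, and passing from one to the other is not implicit. The Moser-iteration route is in principle viable but requires a discrete Sobolev inequality, iteration with cut-offs adapted to $\tfrac1N$-boxes, and careful bookkeeping of the polylinear interpolation at each step — machinery the paper deliberately avoids. What the paper actually does is derive the mean value estimate \emph{directly from Corollary~\ref{cor:Li-L4.4}} by a weighted-supremum device: set $U = \sup_{X\in Q(r/2)}d_r(X)^{1+n/2}|\tilde\U^N(X)|$ (with $d_r$ the parabolic distance to $\mathcal{P}Q(\tfrac r2)$), pick $X_1$ nearly achieving $U$, apply the oscillation bound \eqref{eq:corLi-L4.4} on $Q(X_1,\gamma d_r(X_1))$ with $R_1 = d_r(X_1)/2$ to show $|\tilde\U^N|$ is bounded below by a fixed fraction of $|\tilde\U^N(X_1)|$ there, and integrate to get $U^2 \le C\int_{Q(r)}|\tilde\U^N|^2\,dX$; the factor $d_r(X)^{1+n/2}$ is precisely what converts this into $\sup_{Q(r/4)}|\tilde\U^N|^2\le Cr^{-(n+2)}\int_{Q(r)}|\tilde\U^N|^2\,dX$. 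This is self-contained, uses nothing beyond Corollary~\ref{cor:Li-L4.4}, and is where the gap conditions $R>\tfrac{\sqrt n}{N}$ and $r\le R-\tfrac{\sqrt n}{N}$ actually get consumed (one must verify $R_1+\tfrac{\sqrt n}{N}\le R-\tfrac r2$ at the point where Corollary~\ref{cor:Li-L4.4} is invoked at the shifted center $X_1$). You should replace your ``obstacle'' with this argument. One smaller issue: in your bound for \eqref{eq:Li-L4.5-2} you invoke $\sup_{Q(r/2)}|w|^2$, but the mean value estimate as derived only yields $\sup_{Q(r/4)}$; using $R_1=r/4$ in the oscillation step avoids this mismatch.
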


The power, for example, $n+4$ in \eqref{eq:Li-L4.5-3} could be understood as follows.
If $\nabla_e^N \tilde \U^N$ would behave as a linear or Lipschitz function uniform in $N$,
its oscillation in
$Q(\rho)$ is like $C\rho$.  Therefore, recalling $|Q(\rho)|=C\rho^{n+2}$,
the integral in the left hand side of \eqref{eq:Li-L4.5-3} would behave as $C\rho^{n+4}$.
Similarly, the integral in the right hand side would behave as $Cr^{n+4}$,
intuitively explaining the bound \eqref{eq:Li-L4.5-3}.

\begin{proof}
The proof follows that of Lemma 4.5 of \cite{Li96}, properly modified in our discrete setting.
Indeed, to establish \eqref{eq:Li-L4.5-1}, we may assume $\rho<r/4$ as otherwise the statement
follows straightforwardly.  Recall that $\tilde v^N$ is well-defined on $Q(R-\tfrac{\sqrt{n}}N)=
Q(X_0,R-\tfrac{\sqrt{n}}N)$ and, in particular, on $Q(r) = Q(X_0,r)$.

Let now $U = \sup_{X\in Q(r/2)} | \tilde \U^N(X)|d_r(X)^{1+n/2}$ 
(the factor $d_r(X)^{1+n/2}$ leads to $r^{-n-2}$ in \eqref{eq:3.46}, \eqref{eq:3.28-A})
and find $X_1=(t_1,z_1)
\in Q(r/2)$ so that $d_r(X_1)^{1+n/2} |\tilde \U^N(X_1)| >U/2$, where 
$Q(r/2)=Q(X_0,r/2)$ and $d_r(X_1)
:= \inf\{|Y-X_1|; Y\in \mathcal{P}Q(\tfrac{r}2)\}$ is the (parabolic) distance 
from $X_1$ to the parabolic boundary of $Q(\tfrac{r}2)$: 
$\mathcal{P}Q(\tfrac{r}2) = \{t_0-(\tfrac{r}2)^2\}\times D_{z_0}(\tfrac{r}2)
\cup (t_0-(\tfrac{r}2)^2,t_0)\times \partial D_{z_0}(\tfrac{r}2)$.
Recall $|X|= \max\{\sqrt{|t|}, |z|\}$ for $X=(t,z)\in \R\times \T^n$
(including $t<0$).  Note that $|\cdot|$ satisfies the triangular inequality
(viewing $\T^n$ as $\R^n$ by periodic extension).

By the oscillation bound \eqref{eq:corLi-L4.4}
in Corollary \ref{cor:Li-L4.4} applied on the region $Q_N(X_1,R-\tfrac{r}2)$
(i.e., take $X_1$ for $X_0$ and $R-\tfrac{r}2$ for $R$, and note
 $Q_N(X_1,R-\tfrac{r}2) \subset Q_N(X_0,R)$), with $r_1=\gamma d_r(X_1)$ 
(take $r_1$ for $r$ in Corollary \ref{cor:Li-L4.4}) and $R_1=d_r(X_1)/2$, we obtain
\begin{align}  \label{eq:3.45}
\underset{Q(X_1, \gamma d_r(X_1))}{\rm osc}\, \tilde \U^N  
\leq 2C(n,c_\pm)\gamma \sup_{Q(X_1, d_r(X_1)/2)}|\tilde \U^N|,
\end{align}
where we take $\gamma \in (0, 1/4)$.  
Note that this bound is applicable, since the condition of Corollary \ref{cor:Li-L4.4},
$R_1 + \tfrac{\sqrt{n}}N\le R-\tfrac{r}2$, holds.
Indeed, from $d_r(X_1)\le \tfrac{r}2$, we see $R_1\le \tfrac{r}4$ so that 
the condition follows from our assumption: $r+\tfrac{\sqrt{n}}N\le R$.

Since $d_r(X)\geq d_r(X_1)/2$ for $X\in Q(X_1, d_r(X_1)/2)$
and  $Q(X_1,d_r(X_1)/2) \subset Q(X_0,\tfrac{r}2)$ holds, the right-hand side of
\eqref{eq:3.45} is bounded by
\begin{align*}
&2C(n,c_\pm)\gamma 2^{1+n/2} d_r(X_1)^{-1-n/2} \sup_{Q(X_1, d_r(X_1)/2)} |\tilde \U^N(X)| d_r(X)^{1+n/2}\\
&\ \ \ \ \leq C(n,c_\pm)2^{2+n/2}\gamma d_r(X_1)^{-1-n/2} U.
\end{align*}
Note, as well consequently, for $X\in Q(X_1, \ga d_r(X_1))$, that
\begin{align*}
|\tilde \U^N(X)| &\geq |\tilde \U^N(X_1)| - \underset{Q(X_1, \gamma d_r(X_1))}{\rm osc}\, \tilde \U^N\\
&\geq |\tilde \U^N(X_1)| - C(n,c_\pm)2^{2+n/2}\gamma d_r(X_1)^{-1-n/2} U\\
&\geq |\tilde \U^N(X_1)|\big(1-2^{3+n/2}C(n,c_\pm)\gamma \big).
\end{align*}
The last line follows by the choice of $X_1$.
We now choose $\gamma>0$ small enough so that $C_1:= 1-2^{3+n/2}C(n,c_\pm)\gamma >0$.  Then,
taking infimum, and by the choice of $X_1$ again,
\begin{align*}
\inf_{Q(X_1,\gamma d_r(X_1))} |\tilde \U^N|^2 \geq C_1^2 |\tilde \U^N(X_1)|^2
\geq C_1^2U^2 (d_r(X_1)^{-2-n}/4).
\end{align*}
Thus, we have
\begin{align}  \label{eq:3.46}
U^2 &\leq 4C_1^{-2} d_r(X_1)^{2+n} \inf_{Q(X_1, \gamma d_r(X_1))} |\tilde \U^N|^2 \\
&\leq C(n,C_1)\gamma^{-n-2}\int_{Q(X_1,\gamma d_r(X_1))} |\tilde \U^N|^2 dX \ 
\leq C(n, c_\pm)\int_{Q(r)}|\tilde \U^N|^2dX,   \notag
\end{align}
since $d_r(X_1)^{2+n}/|Q(X_1,\gamma d_r(X_1))| = C(n)\gamma^{-n-2}$
and then $Q(X_1,\ga d_r(X_1)) \subset Q(r)=Q(X_0,r)$.

Hence, if $\rho\leq r/4$, 
\begin{align}  \label{eq:3.28-A}
\int_{Q(\rho)} |\tilde \U^N|^2dX &\leq C(n)\rho^{n+2} \sup_{Q(\rho)} |\tilde \U^N|^2 \\
&\leq C(n)\rho^{n+2} \sup_{Q(r/4)} |\tilde \U^N(X)|^2 d_r(X)^{2+n} d_r(X)^{-2-n}   \notag\\
& \leq C(n) (\tfrac{\rho}r)^{n+2} U^2,  \notag
\end{align}
since $d_r(X)\geq r/4$ for $X\in Q(r/4)=Q(X_0,r/4)$
as $Q(r/4)\subset Q(r/2)$ belongs to the interior.

Using the bound \eqref{eq:3.46} on $U$, we obtain \eqref{eq:Li-L4.5-1}.
In particular, we have also shown
\begin{align}
\sup_{Q(\rho)}|\tilde \U^N|^2
\leq  \frac{C(n,c_\pm)}{r^{n+2}} \int_{Q(r)} |\tilde \U^N|^2dX.
\label{intermed bound}
\end{align}

To establish \eqref{eq:Li-L4.5-2}, for a fixed $r\le R-\frac{\sqrt{n}}{N}$, we may assume
$\rho<r/4$ as above.
Indeed, 
\begin{align*}
\int_{Q(\rho)} | \tilde \U^N - \{\tilde \U^N\}_\rho|^2dX
\le 2 \int_{Q(\rho)} | \tilde \U^N - \{\tilde \U^N\}_r|^2dX
+ 2 |Q(\rho)| \cdot | \{\tilde \U^N\}_\rho- \{\tilde \U^N\}_r|^2
\end{align*}
and the second term is rewritten and then bounded as
\begin{align*}
2 |Q(\rho)| \Big| \tfrac1{|Q(\rho)| }\int_{Q(\rho)} \big(\tilde \U^N - \{\tilde \U^N\}_r \big) dX\Big|^2
\le 2 \int_{Q(\rho)} | \tilde \U^N - \{\tilde \U^N\}_r|^2dX,
\end{align*}
by applying Schwarz's inequality.

Write now
\begin{align}
\int_{Q(\rho)} | \tilde \U^N - \{\tilde \U^N\}_\rho|^2dX
\ \leq C(n)\rho^{n+2} \sup_{Q(\rho)} |\tilde \U^N - \{\tilde \U^N\}_\rho|^2.
\label{lem3.4:eq1}
\end{align}
Note that
\begin{align*}
\tilde \U^N (X)- \{\tilde \U^N\}_\rho
&= \tfrac1{|Q(\rho)|} \int_{Q(\rho)} \{\tilde \U^N(X)-\tilde \U^N(Y)\} dY\\
&=  \tfrac1{|Q(\rho)|} \int_{Q(\rho)} \{\big(\tilde \U^N(X) - \{\tilde \U^N\}_r\big)
 -\big(\tilde \U^N(Y)- \{\tilde \U^N\}_r\big)\} dY
\end{align*}
and that $\tilde \U^N - \{\tilde \U^N\}_r$, as $\{\tilde \U^N\}_r$ is a constant, also satisfies the discrete heat equation \eqref{eq:heat-D} on $Q_N(R)$.  Then, applying the oscillation estimate Corollary \ref{cor:Li-L4.4} to $\tilde \U^N - \{\tilde \U^N\}_r$ (with there $r=\rho$, $R_1=r/4<r<R-\sqrt{n}{N}$), we obtain for $X\in Q(\rho)$ that
$$|\tilde \U^N (X)- \{\tilde \U^N\}_\rho| \leq C(n,c_\pm) \big(\frac{\rho}{r}\big)\sup_{Q(r/4)} |\tilde \U^N - \{\tilde \U^N\}_r|.$$

Moreover, applying
\eqref{intermed bound} (with $\rho=r/4$) to the discrete heat equation solution 
$\tilde \U^N - \{\tilde \U^N\}_r$, 
the right-hand side of \eqref{lem3.4:eq1}, is bounded by
\begin{align*}
\frac{C(n,c_\pm)\rho^{n+4} }{r^2}\sup_{Q(r/4)}|\tilde \U^N - \{\tilde \U^N\}_r|^2
&\leq C(n,c_\pm)\left(\frac{\rho}{r}\right)^{n+4} \int_{Q(r)} |\tilde \U^N - \{\tilde \U^N\}_r|^2dX,
\end{align*}
finishing the proof of \eqref{eq:Li-L4.5-2}.

The remaining two statements, \eqref{tilde_estimate} and \eqref{eq:Li-L4.5-3}, follow 
from  \eqref{eq:Li-L4.5-1} and  \eqref{eq:Li-L4.5-2}, respectively, by taking
$R-\tfrac1N$ instead of $R$, since $\nabla^N_e \U^N$ also solves  the discrete 
heat equation \eqref{eq:heat-D} on $Q_N(R-\tfrac1N)$ by noting $\tfrac{x+e}N\in Q_N(R)$ for
$\tfrac{x}N\in Q_N(R-\tfrac1N)$.
\end{proof}

\subsection{H\"older norms}
\label{sec:DirectAppl}

To set the stage for the `weighted' norms that we will use, we first define `unweighted' H\"older norms or seminorms analogous to that in \cite{Li96}, but tailored to our setting.  Recall 
$\Om= [0,T]\times \T^n$ and for $X=(t,z)\in \Om$ that $|X|=\max\{t^{\frac12}, |z|\}$.
Consider, for a function $F$ on $\T^n$ (and therefore on $\Om$), the continuous space 
gradient $\nabla^N F(z) = \{\nabla_e^N F(z)\}_{|e|=1,e>0}\in \R^n$
for $z\in \T^n$ where
\begin{align}  \label{eq:3.3-1}
\nabla_e^N F(z):= N\big( F(z+\tfrac{e}N)-F(z)\big).
\end{align}

For  a function $F=F(X)$ on $\Om$, set
\begin{align}  \label{eq:norm-1}
|F|_0 \equiv \|F\|_\infty := \sup_{X\in \Om} |F(X)|.
\end{align}
For $\a\in (0,1]$, define the parabolic H\"older seminorms by
\begin{align}\label{eq:norm-2}
\begin{aligned}
& [F]_{\a} := \sup_{X\not=Y\in \Omega} \frac{|F(X)-F(Y)|}{|X-Y|^\a},\\
& [F]_{1+\a} := \sup_{X\not=Y\in \Omega} \frac{|\nabla^N F(X)-\nabla^NF(Y)|}{|X-Y|^\a},
\end{aligned}
\end{align}
where $|\nabla^N F(X)-\nabla^NF(Y)|:= \max_{|e|=1, e>0}
|\nabla_e^N F(X)-\nabla_e^NF(Y)|$.  For $\a=0$, $[F]_0 := \underset{\Om}{{\rm osc}}(F)$
is the oscillation of $F$ on $\Om$, recall \eqref{eq:3.osc}.
For $\b\in (0,2]$, define
\begin{align} \label{eq:norm-3}
\lan F\ran_{\b} := \sup_{X\not=Y\in \Om, x=y} 
\frac{|F(X)-F(Y)|}{|X-Y|^{\frac{\b}2}},
\end{align}
where the spatial coordinates $x,y$ of $X$ and $Y$ are in common.
Adding all these, we define for $a\in (0,2]$, the unweighted H\"older norm
\begin{align}  \label{eq:norm-4}
|F|_a := [F]_a+\lan F\ran_a + |F|_0.
\end{align}

We now introduce several weighted norms to take care of possible diverging effects
near $t=0$.  
Define the parabolic boundary $\mathcal{P}\Omega$ of $\Om$ by
$$
\mathcal{P}\Omega= \{t=0\}\times \T^n.
$$
Since we work on the torus, for $X=(t,z)\in \Om$, the (parabolic) distance $d(X)$ to the boundary
is defined by
$$
d(X) := \inf\{|X-Y|; Y=(0,y)\in\mathcal{P}\Om\} \equiv \sqrt{t}.
$$
We will sometimes write $d$ instead of $d(X)$ to simplify notation when the context is clear.

For $a=0$ and $b\ge 0$, define
\begin{align}  \label{eq:norm-5}
|F|_{0}^{(b)} := \sup_{X\in \Om} d(X)^b |F(X)|.
\end{align}
For $0<a=k+\alpha \le 2$ where $k=0,1$ and $\alpha\in (0,1]$, and $b\geq 0$,
let
\begin{align}  \label{eq:norm-6}
[ F ]_{a}^{(b)} & := \sup_{X\not=Y\in \Om}
(d(X)\wedge d(Y))^{a+b}
\frac{|(\nabla^N)^k F(X)- (\nabla^N)^k F(Y)|}{|X-Y|^\a}.\, \\
\langle F\rangle_a^{(b)} & = \sup_{X\not= Y\in \Om, x=y} (d(X)\wedge d(Y))^{a+b}
\frac{|F(X)-F(Y)|}{|X-Y|^{\frac{a}{2}}}.  \label{eq:norm-7}
\end{align}
Here, $(\nabla^N)^0$ is the identity operator.  

The norm and seminorms without weights
defined in \eqref{eq:norm-1}, \eqref{eq:norm-2}, \eqref{eq:norm-3} can be expressed as
$|F|_0= |F|_0^{(0)}$, $[F]_a = [F]_a^{(-a)}$ 
and $\lan F\ran_a = \lan F\ran_a^{(-a)}$ in terms of those defined in
\eqref{eq:norm-5}, \eqref{eq:norm-6}, \eqref{eq:norm-7}, respectively,
taking $b=-a$ though we have restricted $b\ge 0$.

We will also have occasion to to use H\"older norms with respect to the 
continuous time/discrete space
$\Om_N=[0,T]\times \frac{1}{N}\T_N^n$.  Replacing $\Om$ by $\Om_N$ in the definitions
\eqref{eq:norm-5}, \eqref{eq:norm-6}, \eqref{eq:norm-7} of seminorms, 
we define, for a function $F$ on $\Om_N$ and $b\ge 0$,
\begin{align}  \label{eq:norm-8}
|F|_0^{(b),N} &:= \sup_{X\in \Om_N} d(X)^{b} |F(X)|,
\end{align}
and also, for $a = k +\a$ where $k=0,1$ and $\a\in (0, 1]$, and $b\geq 0$ that
\begin{align}\label{eq:norm-9}
[F]_a^{(b), N} &:= \sup_{X\not= Y\in \Om_N} (d(X)\wedge d(Y))^{a+b}
\frac{|(\nabla^N)^k F(X)-(\nabla^N)^k F(Y)|}{|X-Y|^\a}\\
\langle F\rangle_a^{(b),N} & := \sup_{X\not= Y\in \Om_N, x=y} (d(X)\wedge d(Y))^{a+b}
\frac{|F(X)-F(Y)|}{|X-Y|^{\frac{a}{2}}}.
\label{eq:norm-10}
\end{align}

In the following, the superscript $*$ for seminorms means $(0)$ so that for $a\in (0,2]$,
\begin{align}\label{eq:norm-11}
& [F]_a^* := [F]_a^{(0)}, \quad 
\lan F\ran_a^* := \lan F\ran_a^{(0)}, \quad 
[F]_a^{*,N} := [F]_a^{(0),N}, \quad 
\lan F\ran_a^{*,N} := \lan F\ran_a^{(0),N}. 
\end{align}
We will use $\|F\|_\infty$ rather than $|F|_0^*$ or $|F|_0^{*,N}$.

The seminorms defined by taking the supremum over $Q\subset \Om$ instead of $\Om$,
such as $\sup_{X\in Q}$ or $\sup_{X\not= Y\in Q}$,
are denoted by adding $Q$ in the subscript. [We actually do not use this notation in this article.]

We finally note that, via polylinear interpolation, seminorms for discrete and continuous functions
are mutually equivalent. 

\begin{lem}  \label{lem:Eq-norms}
For a function $F=F(t,\tfrac{x}N)$ on $\Om_N$, let $\tilde F$ be its polylinear
interpolation in spatial variable defined by \eqref{eq:poli-A} taking $F(t,\cdot)$ instead of $u^N$.
Then, we have the following for some $C=C(n)>0$,
\begin{align}\label{eq:2-norm-1}
& |F|_0^{(b), N} = |\tilde F|_0^{(b)},  \\
\label{eq:2-norm-2}
& [F]_a^{(b), N} \le [\tilde F]_a^{(b)} \le C [F]_a^{(b), N}, \\
& \lan F\ran_a^{(b),N} \le \lan \tilde F\ran_a^{(b)} \le C \lan F\ran_a^{(b),N}.
\label{eq:2-norm-3}
\end{align}
\end{lem}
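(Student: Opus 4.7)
My plan is to treat the three statements separately, exploiting the convex-combination structure of the polylinear interpolation \eqref{eq:poli-A} and a commutation identity between the discrete gradient and the interpolation. For \eqref{eq:2-norm-1}, the direction $|F|_0^{(b),N}\le |\tilde F|_0^{(b)}$ is immediate since $\tilde F=F$ on $\Om_N\subset \Om$ and $d(X)=\sqrt{t}$ depends only on time; the reverse follows by writing $\tilde F(t,z)=\sum_v \vartheta^N(v,z) F(t,P_v)$ with $P_v=\tfrac{[Nz]+v}N$ and using $\sum_v\vartheta^N(v,z)=1$. Statement \eqref{eq:2-norm-3} is similar: at $X=(t,z),Y=(t',z)$ with matched space, $\tilde F(X)-\tilde F(Y)=\sum_v \vartheta^N(v,z)[F(t,P_v)-F(t',P_v)]$, and each bracket is controlled by $\lan F\ran_a^{(b),N}$ applied to the lattice pair $\bigl((t,P_v),(t',P_v)\bigr)$, which has the same weight $(d\wedge d)^{a+b}$ and the same $|X-Y|=|t-t'|^{1/2}$ as the original.

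The bulk of the proof is \eqref{eq:2-norm-2}. I would first establish the key commutation identity $\nabla_e^N \tilde F=\widetilde{\nabla_e^N F}$. Since $e_i\in\{0,\pm 1\}$, one has $\{N(z_i+e_i/N)\}=\{Nz_i\}$ and $[N(z+e/N)]=[Nz]+e$, whence $\vartheta^N(v,z+e/N)=\vartheta^N(v,z)$; substituting into $\nabla_e^N \tilde F(z)=N(\tilde F(z+e/N)-\tilde F(z))$ yields
\begin{equation*}
\nabla_e^N \tilde F(z)=\sum_v \vartheta^N(v,z)\,\nabla_e^N F(P_v)=\widetilde{\nabla_e^N F}(z).
\end{equation*}
This reduces the case $a=1+\alpha$ to the case $a=\alpha\in(0,1]$ applied to $\nabla^N F$. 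For $a=\alpha$, one direction is trivial by restriction; for the converse, I would split $\tilde F(X)-\tilde F(Y)=[\tilde F(t,z)-\tilde F(t,z')]+[\tilde F(t,z')-\tilde F(t',z')]$. The temporal part at fixed space $z'$ is handled by the convex-combination argument of \eqref{eq:2-norm-3}, applying the $[F]_\alpha^{(b),N}$-bound at lattice pairs $\bigl((t,P_v),(t',P_v)\bigr)$. The spatial part at fixed time is the main work.

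For the spatial estimate at fixed $t$, I would distinguish two cases. When $z,z'$ lie in a common $\tfrac 1N$-box, I would use
\begin{equation*}
\tilde F(t,z)-\tilde F(t,z')=\sum_v [\vartheta^N(v,z)-\vartheta^N(v,z')]\,[F(t,P_v)-F(t,P_{v_0})]
\end{equation*}
for a fixed reference vertex $P_{v_0}$, valid because $\sum_v \vartheta^N(v,\cdot)\equiv 1$. Telescoping the product in \eqref{eq:poli-A} and using that each factor $\vartheta^N(v_i,\cdot)$ is $N$-Lipschitz gives $|\vartheta^N(v,z)-\vartheta^N(v,z')|\le C(n) N|z-z'|$. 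Combining with $|F(t,P_v)-F(t,P_{v_0})|\le [F]_\alpha^{(b),N}(\sqrt n/N)^\alpha/d(t)^{\alpha+b}$ and the box constraint $|z-z'|\le \sqrt n/N$, I obtain
\begin{equation*}
|\tilde F(t,z)-\tilde F(t,z')|\le C(n)\, N^{1-\alpha}|z-z'|\,\frac{[F]_\alpha^{(b),N}}{d(t)^{\alpha+b}}\le C(n)\,|z-z'|^\alpha\frac{[F]_\alpha^{(b),N}}{d(t)^{\alpha+b}}.
\end{equation*}
When $z,z'$ lie in different boxes, I would introduce nearest lattice vertices $P_z,P_{z'}\in\tfrac1N\T_N^n$ with $|z-P_z|,|z'-P_{z'}|\le\sqrt n/N$ and insert them by the triangle inequality; the two endpoint deviations are bounded by the same-box case, while $|F(t,P_z)-F(t,P_{z'})|$ is a direct application of $[F]_\alpha^{(b),N}$ at lattice points.

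The main obstacle is the delicate cancellation in the same-box case: the $N$-Lipschitz interpolation weights naively contribute a factor $N$, which is compensated exactly by the $\alpha$-H\"older scale $(1/N)^\alpha$ of $F$ over a unit box via $N^{1-\alpha}\cdot(\sqrt n/N)^{1-\alpha}=O_n(1)$, converting a Lipschitz bound in $|z-z'|$ into a H\"older bound in $|z-z'|^\alpha$. This balance, together with the commutation identity $\nabla_e^N\tilde F=\widetilde{\nabla_e^N F}$, are the two essentially nontrivial steps; everything else is a bookkeeping exercise on convex combinations and the time-only dependence of the weight $d$.
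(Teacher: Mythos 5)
Your outline is very close to the paper's: you correctly isolate the commutation identity $\nabla^N_e\tilde F=\widetilde{\nabla^N_e F}$ (which reduces $a=1+\alpha$ to $a=\alpha$ with $b$ shifted by $1$), you treat \eqref{eq:2-norm-1} and \eqref{eq:2-norm-3} by the same convex--combination observation, and your same-box spatial argument — telescoping the $N$-Lipschitz weights against the $(\sqrt n/N)^\alpha$ H\"older scale of $F$ over a box to convert a Lipschitz bound $N^{1-\alpha}|z-z'|$ into $|z-z'|^\alpha$ — is exactly the mechanism the paper exploits.

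The gap is in the \emph{different-boxes} case. Your plan is to insert nearest lattice vertices $P_z,P_{z'}$ and write $|\tilde F(t,z)-\tilde F(t,z')|\le|\tilde F(t,z)-\tilde F(t,P_z)|+|F(t,P_z)-F(t,P_{z'})|+|F(t,P_{z'})-\tilde F(t,z')|$, bounding the endpoint deviations by the same-box estimate. But $|z-P_z|$ is generically of order $1/N$ (for instance when $z$ is near the center of a face), \emph{independently} of $|z-z'|$. If $z$ and $z'$ straddle a box face with $|z-z'|\ll 1/N$ (take $z_1=1/N-\varepsilon$, $z_1'=1/N+\varepsilon$, other coordinates equal), the endpoint terms contribute $\sim(1/N)^\alpha[F]_\alpha^{(b),N}d^{-(\alpha+b)}$, which is not bounded by $C|z-z'|^\alpha$; the same criticism applies to $|P_z-P_{z'}|$. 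So the argument breaks exactly in the short-distance regime, which is the delicate one.

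The fix, which is what the paper does, is to \emph{not} detour through lattice vertices in this regime, but instead to go straight from $z$ to $z'$ through the common face(s). The segment $[z,z']$ crosses box boundaries at most $2^n$ times, giving a decomposition $z=Z_0\to Z_1\to\cdots\to Z_\ell=z'$ with each $[Z_i,Z_{i+1}]$ contained in a single box. Your own same-box \emph{Lipschitz} bound (the intermediate form $C(n)N^{1-\alpha}|Z_{i+1}-Z_i|\,[F]_\alpha^{(b),N}d^{-(\alpha+b)}$, before passing to H\"older) then sums over the pieces to $C(n)2^nN^{1-\alpha}|z-z'|\,[F]_\alpha^{(b),N}d^{-(\alpha+b)}$, since $\sum_i|Z_{i+1}-Z_i|=|z-z'|$ along a straight segment; the conversion to $|z-z'|^\alpha$ then uses $|z-z'|\lesssim 1/N$. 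Your nearest-vertex route is sound only once $|z-z'|\gtrsim 1/N$, so the proof needs the case split $|z-z'|\ge c/N$ versus $|z-z'|<c/N$, with the segment-crossing decomposition in the second regime; as written, this case split and the second branch are missing.
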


\begin{proof}
The first inequalities in \eqref{eq:2-norm-2}, \eqref{eq:2-norm-3} and 
$\lq\lq\le$'' in \eqref{eq:2-norm-1} are obvious as
$\tilde F(X)=F(X)$ and $\nabla_e^N \tilde F(X) = \nabla_e^N F(X)$
for $X\in \Om_N$, the latter of which follows from
\begin{align}  \label{eq:tilde-grad}
 \widetilde{\nabla_e^N F}(X)
= \nabla_e^N \tilde F(X), \quad X\in \Om.
\end{align}
This equality is shown from $\vartheta^N(v,z+\tfrac{e}N) = \vartheta^N(v,z)$ in \eqref{eq:poli-A}.
The converse inequality $\lq\lq\ge$'' in \eqref{eq:2-norm-1} follows from
$|\tilde F(t,z)| \le \sup_{x\in \T_N^n}|F(t,\tfrac{x}N)|$ for every $z\in \T^n$ and
$t\in [0,T]$.  

The  second inequality in \eqref{eq:2-norm-2} in the case $k=0$ is shown as follows.
For $X=(t,z), Y=(s,y)\in \Om$, first take 
$Y_0=(s,y_0) \in \Om$ such that $z\equiv y_0$ modulo $\tfrac1N$ and $[Ny]=[Ny_0]$
(i.e.\ $y$ and $y_0$ belong to the same $\tfrac1N$-box) hold both componentwise,
in particular, $|Y_0-Y|\le \tfrac{\sqrt{n}}N$.  Then,
$$
|\tilde F(X)-\tilde F(Y)|\le |\tilde F(X)-\tilde F(Y_0)| + |\tilde F(Y_0)-\tilde F(Y)|,
$$
and the first term is written, noting that the $\vartheta$-parts are in common, as
$$
\Big|\sum_{v\in \{0,1\}^n} \vartheta^N(v,z) \Big\{F\big(t,\tfrac{[Nz]+v}N\big)
- F\big(s,\tfrac{[Ny_0]+v}N\big)\Big\}\Big|.
$$
This is bounded by
$[F]_a^{(b), N} (d(X)\wedge d(Y))^{-(a+b)} |X-Y_0|^{\a}$.  On the other hand,
the second term is rewritten as $|\{\tilde F(Y_0)-F(Z_0)\}-\{\tilde F(Y)-F(Z_0)\}|$
taking $Z_0=(s,z_0)$ with $z_0=\frac1N[Ny]\in \tfrac1N\T_N^n$ so that it is further rewritten as
$$
\Big| \sum_{v\in \{0,1\}^n} \Big\{ \vartheta^N(v,y_0) -  \vartheta^N(v,y) \Big\}
\Big\{F\big(s,z_0+\tfrac{v}N\big)- F\big(s,z_0\big)\Big\}\Big|.
$$
This is bounded by
$$
C(n)N |y_0-y| [F]_a^{(b), N} d(Y)^{-(a+b)} (\tfrac{\sqrt{n}}N)^\a\le C(n)\sqrt{n}
 |Y_0-Y|^\a [F]_a^{(b), N} d(Y)^{-(a+b)}$$
by noting \eqref{eq:poli-A} and $|y_0-y|\le \tfrac{\sqrt{n}}N$.

In the case $|X-Y|\ge \tfrac1{10N}$, since both $|X-Y_0|, |Y_0-Y| \le C(n)|X-Y|$, 
we obtain the second inequality in \eqref{eq:2-norm-2} in the case $k=0$.  

On the other hand, in the case $|X-Y|<\frac1{10N}$, we connect $X_0=(s,z)$ and $Y=(s,y)$ by a line.
Then, it touches the boundary of $\frac1N$-boxes at most $2^n$ times, so that,
the line is divided as $X_0\to Z_1\to \cdots \to Z_\ell\to Y$ with $\ell\le 2^n$.
As we saw above, in the same $\tfrac1N$-box, our estimate picks up factors
$|Z_{i+1}-Z_i|^\a$, all of them are bounded by $|X_0-Y|^\a$.  We also have the
estimate on $|\tilde{F}(X)-\tilde F(X_0)|$ as above; see the first term with $Y_0=X_0$.
From these, we can derive the second inequality in \eqref{eq:2-norm-2} for $k=0$ also
in the case $|X-Y|<\frac1{10N}$.

The  second inequalities in \eqref{eq:2-norm-2} in case $k=1$ (i.e.\
$\nabla_eF$ in place of $F$) and in \eqref{eq:2-norm-3} are similar.
\end{proof}

\subsection{Two basic lemmas}   \label{sec:three}

We now consider and adapt some of the results from Chapter 4 of \cite{Li96}
with respect to our discrete setting,
especially in terms of polylinear interpolations.

The first is Lemma 4.3 of \cite{Li96}
that an integral estimate implies H\"older continuity.  Recall the definition 
of the parabolic ball $Q(R) = Q(X_0,R)$ in \eqref{Q_defn}.  
Due to the discrete nature of the problem in our setting, especially, 
the non-locality of the polylinear interpolation, the assumption
\eqref{eq:Li-lem-4.12} below can be given only with $r_N$ instead of $r$, which is
weaker than that of Lemma 4.3 of \cite{Li96}.  As a result, we obtain 
the H\"older property \eqref{eq:Li-L4.3-G} only for $|Y-Y_1|\ge \tfrac1{MN}$,
excluding the short distance regime, with an additional term
$\de U_{F,1+\a}  \big(d(Y)\wedge d(Y_1)\big)^{-(1+\a)}$, which can be  
made arbitrary small by taking $C=C_{\de,M}$ large.  This will be applied 
in the proofs of Propositions \ref{spatial-variation-prop} and \ref{prop:5.4}
below.  The short distance regime $|Y-Y_1|\le \tfrac1{MN}$ will be covered 
separately by Lemmas \ref{lem:4.13} and \ref{lem:4.13-5} later.

\begin{lem}  \label{lem:Li-L4.3-G}
(cf.\ Lemma 4.3 of \cite{Li96})
Let $F\in L^1(Q(X_0,2R))$ 
and suppose there are constants $\a \in (0,1]$ and $H>0$ along with 
a function $G$ defined on $Q(X_0,2R)\times (0,R)$ such that
\begin{equation}  \label{eq:Li-lem-4.12}
\int_{Q(Y,r)}|F(X) -G(Y,r)|dX \le H r_N^{n+2+\a},
\end{equation}
for any $Y\in Q(X_0,R)$ and any $r\in (0,R)$, where $r_N=r+\tfrac{c}N$ with
some $c>0$.  
Then, for every $\de>0$ and $M>0$, there exists $C=C_{\de,M}(n, \alpha)>0$ such that
\begin{align}  \label{eq:Li-L4.3-G}
|F(Y)-F(Y_1)| \le \Big(CH +\de U_{F,1+\a}  \big(d(Y)\wedge d(Y_1)\big)^{-(1+\a)}\Big)
|Y-Y_1|^\a,
\end{align}
holds if  $Y, Y_1\in Q(X_0,R)$ satisfy $|Y-Y_1|  \ge \tfrac1{MN}$, where
$$
U_{F,1+\a} := \sup_{X\not= Y} \big(d(X)\wedge d(Y)\big)^{1+\a}
\tfrac{|F(X)-F(Y)|}{|X-Y|^\a}.
$$
Note that, for $F=\nabla^N\tilde u^N$ (vector-valued), 
$U_{F,1+\a}= U_{1+\a}$ defined in \eqref{seminorm_defn} below.
[We will choose $\de>0$ small enough such that $\de U_{F,1+\a}$ will be 
eventually absorbed by $U_{F,1+\a}$ itself.]
\end{lem}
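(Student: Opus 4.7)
The plan is to adapt the Campanato-type argument behind Lemma~4.3 of \cite{Li96}, but to truncate the scale hierarchy at the lattice level $r^\ast:=c_1/N$ (with $c_1=c_1(\delta,M)$ chosen at the end), because the gap $r_N-r=c/N$ prevents a clean Lebesgue-differentiation limit $G(Y,r)\to F(Y)$ as $r\to 0$. Throughout I write $F_{Y,r}:=|Q(Y,r)|^{-1}\int_{Q(Y,r)}F\,dX$ for the mean of $F$ over the parabolic ball.

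The first building block is that \eqref{eq:Li-lem-4.12} controls differences of $G$ at two comparable scales, or at two nearby spatial points, whenever the scale is at least $c/N$. For $r\ge c_1/N$ we have $r_N\le 2r$, so the triangle inequality applied over $Q(Y,r/2)\subset Q(Y,r)$ gives
\begin{equation*}
|G(Y,r)-G(Y,r/2)|\le \frac{1}{|Q(Y,r/2)|}\sum_{s\in\{r,\,r/2\}}\int_{Q(Y,r/2)}|F-G(Y,s)|\,dX\le C(n)\,H\,r^{\alpha}.
\end{equation*}
Summing the geometric series in $2^{-\alpha}$ along dyadic scales yields $|G(Y,r_0)-G(Y,r^\ast)|\le CHr_0^{\alpha}$ for every $r_0\in[r^\ast,R)$. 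An analogous averaging over $Q(Y,r)\cap Q(Y_1,r)$, whose volume is comparable to $r^{n+2}$ when $|Y-Y_1|\le r/2$ and $r\ge c_1/N$, gives $|G(Y,r)-G(Y_1,r)|\le CHr^{\alpha}$.

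To assemble, given $Y,Y_1\in Q(X_0,R)$ with $|Y-Y_1|\ge 1/(MN)$, I set $r_0:=2|Y-Y_1|$ and decompose
\begin{equation*}
|F(Y)-F(Y_1)|\le \sum_{Z\in\{Y,Y_1\}}|F(Z)-G(Z,r^\ast)| + |G(Y,r^\ast)-G(Y,r_0)| + |G(Y,r_0)-G(Y_1,r_0)| + |G(Y_1,r_0)-G(Y_1,r^\ast)|.
\end{equation*}
The last three terms are each of order $CHr_0^{\alpha}=CH|Y-Y_1|^{\alpha}$ by the preceding paragraph. For the boundary terms, I split
\begin{equation*}
|F(Z)-G(Z,r^\ast)|\le \frac{1}{|Q(Z,r^\ast)|}\int_{Q(Z,r^\ast)}|F(Z)-F(X)|\,dX + \frac{CH\,(r^\ast_N)^{n+2+\alpha}}{(r^\ast)^{n+2}}.
\end{equation*}
The integral is bounded, by the very definition of $U_{F,1+\alpha}$, by $C\,U_{F,1+\alpha}(r^\ast)^{\alpha}(d(Y)\wedge d(Y_1))^{-(1+\alpha)}$, while the residual term is $\le CH(r^\ast_N)^{\alpha}\le C_{M}H|Y-Y_1|^{\alpha}$ since $r^\ast_N\le C/N\le CM|Y-Y_1|$. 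Choosing $c_1$ so small that $(r^\ast)^\alpha\le \delta|Y-Y_1|^\alpha/C$ (possible because $r^\ast/|Y-Y_1|\le c_1M$) produces exactly the $\delta\,U_{F,1+\alpha}(d(Y)\wedge d(Y_1))^{-(1+\alpha)}|Y-Y_1|^\alpha$ contribution in \eqref{eq:Li-L4.3-G}.

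The principal obstacle is the absence of Lebesgue differentiation: in the continuous case one sends $r\to 0$ to identify $G(Y,r)$ with $F(Y)$, but here the blow-up of $r_N^{n+2+\alpha}/r^{n+2}$ for $r\ll 1/N$ forbids it. The scale truncation at $r^\ast\sim 1/N$ is therefore unavoidable, and the auxiliary seminorm $U_{F,1+\alpha}$ is indispensable for absorbing the unresolved short-scale oscillation of $F$; this is also why the short-distance regime $|Y-Y_1|<1/(MN)$, excluded from the hypothesis, must be handled by the separate arguments of Lemmas~\ref{lem:4.13} and \ref{lem:4.13-5}.
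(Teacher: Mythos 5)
Your proof is correct and takes a genuinely different route from the paper's. The paper follows Lieberman's Lemma 4.3 verbatim in spirit: mollify $F$ in space and (one-sided) time at scale $\tau$, estimate $\bar F_y,\bar F_s,\bar F_\tau$ from the Campanato hypothesis, integrate the radial derivative from a small mollification scale $\e$ up to $\tau=|Y-Y_1|$, and control the tail $|F(Y)-\bar F(Y,\e)|$ via $U_{F,1+\a}$; the discrete modification appears in the estimate $\int_\e^\tau \rho_N^{n+2+\a}/\rho^{n+3}\,d\rho\le (1+\tfrac{c}{N\e})^{n+2+\a}\tau^\a/\a$, which is controlled by choosing $\e=\tilde\de\tau$ and using $\tau\ge\tfrac1{MN}$. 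You instead run the classical dyadic Campanato telescoping of the centers $G(Y,r)$ directly, truncating the hierarchy at the lattice scale $r^\ast$; the seminorm $U_{F,1+\a}$ absorbs the unresolved oscillation of $F$ below $r^\ast$. Both implementations share the essential observation that the truncation/mollification scale, while bounded below by order $1/N$, can be made small relative to $|Y-Y_1|$ precisely because $|Y-Y_1|\ge\tfrac1{MN}$, which is what produces an arbitrarily small prefactor $\de$ on $U_{F,1+\a}$. Your telescoping is arguably more elementary and closer to the discrete structure of the problem (no smooth kernels needed); the mollification approach maps more directly onto Lieberman's continuous proof and hence onto the rest of the paper's notation.

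One small slip: you assert ``for $r\ge c_1/N$ we have $r_N\le 2r$,'' which requires $c_1\ge c$, but at the end you need $c_1\le M^{-1}(\de/C)^{1/\a}$, which may be smaller than $c$. This is harmless: replacing ``$r_N\le 2r$'' by ``$r_N\le(1+c/c_1)r$'' only inflates the telescoping and residual constants by $(1+c/c_1)^{n+2+\a}$, which is a fixed function of $\de$ and $M$ and so is absorbed into $C_{\de,M}$. Separately, to bound the average $\tfrac1{|Q(Z,r^\ast)|}\int_{Q(Z,r^\ast)}|F(Z)-F(X)|\,dX$ by $CU_{F,1+\a}(r^\ast)^\a(d(Y)\wedge d(Y_1))^{-(1+\a)}$ one needs $d(X)\gtrsim d(Z)$ for $X\in Q(Z,r^\ast)$, i.e.\ $r^\ast$ small compared to $d(Y)\wedge d(Y_1)$; this is automatic in the paper's application (where $R=\tfrac14 d(X_0)$ and $r^\ast<2c_1MR\ll d(Y)$) but is worth stating, since the paper's own mollification step $|F(Y)-\bar F(Y,\e)|\le 2\e^\a U_{F,1+\a}d(Y)^{-(1+\a)}$ rests on the same implicit comparison.
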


\begin{proof}
As in the proof of Lemma 4.3 of \cite{Li96}, we take two nonnegative convolution kernels
$\fa=\fa(z) \in C^1(\R^n)$ supported on $\{|z|\le 1\}$ and $\eta=\eta(\si) \in
C^1(\R)$ supported on $[0,1]$ such that $\int_{\R^n}\fa dz = \int_\R \eta d\si=1$.
Define for $(Y,\t)\in Q(X_0,R)\times (0,R)$
$$
\bar F(Y,\t) =\int_{\R^n}\int_\R F(y+\t z, s-\t^2 \si)\fa(z)\eta(\si) dzd\si,
\quad Y=(s,y).
$$
Then, as in \cite{Li96}, setting $K=\|D\fa\|_{L^\infty}, L=\|\eta'\|_{L^\infty}$,
by the condition \eqref{eq:Li-lem-4.12},
\begin{align*}
|\bar F_y(Y,\t)|
&\le \tfrac{K}{\t^{n+3}}\int_{Q(Y,\t)} |F(X)-g(Y,\t)| \eta(\tfrac{s-t}{\t^2}) dX \\
&\le HKL \frac{\t_N^{n+2+\a}}{\t^{n+3}}.
\end{align*}
Similarly,
\begin{align*}
|\bar F_s(Y,\t)| & \le HKL \frac{\t_N^{n+2+\a}}{\t^{n+4}}, \\
|\bar F_\t(Y,\t)| & \le (n+K+2+2L)H \frac{\t_N^{n+2+\a}}{\t^{n+3}}.
\end{align*}

Now, fix $Y$ and $Y_1$ in $Q(X_0,R)$ and set $\t=|Y-Y_1|$.  If $\e\in (0,\t)$, then
similarly to \cite{Li96} p.51,
\begin{align}  \label{eq:4.42}
|\bar F(Y,\e) -\bar F(Y_1,\e)| 
\le & 2(n+K+2+2L)H \int_\e^\t  \tfrac{\rho_N^{n+2+\a}}{\rho^{n+3}} d\rho\\
& +HKL  \tfrac{\t_N^{n+2+\a}}{\t^{n+3}} |y-y_1|+ HKL  \tfrac{\t_N^{n+2+\a}}{\t^{n+4}} |s-s_1|.
\notag
\end{align}
Recall that $\t\ge \tfrac1{MN}$ by our assumption so that we have
$$
\tfrac{\t_N}\t = 1+\tfrac{c}{\t N}\le 1+cM.
$$
Thus, the sum of the second and third terms is bounded as 
\begin{align*}
 HKL \Big(\tfrac{\t_N^{n+2+\a}}{\t^{n+3}} |y-y_1|+ \tfrac{\t_N^{n+2+\a}}{\t^{n+4}} |s-s_1| \Big)
\le C_M H \t^\a,
\end{align*}
since $|y-y_1| \le \t, |s-s_1|\le \t^2$.

Let us estimate the integral $\int_\e^\t \tfrac{\rho_N^{n+2+\a}}{\rho^{n+3}} d\rho$
in the first term in the right hand side of \eqref{eq:4.42}.
 (This diverges as $\e\downarrow 0$, while the integral 
$\int_\e^\t \tfrac{\rho^\a}\rho d\rho$ in \cite{Li96} in continuous case converges.)
\begin{align*}
\int_\e^\t \tfrac{\rho_N^{n+2+\a}}{\rho^{n+3}} d\rho
 = \int_\e^\t \tfrac{(\frac{\rho_N}\rho)^{n+2+\a}}{\rho^{1-\a}} d\rho
\le  (1+\tfrac{c}{N\e})^{n+2+\a} \tfrac1\a \t^\a,
\end{align*}
since $\frac{\rho_N}\rho= 1+\tfrac{c}{N\rho} \le 1+\tfrac{c}{N\e}$.

Moreover, the convergence rate of $\bar F(Y,\e)$ to $F(Y)$  is estimated as
\begin{align*}
|F(Y)-\bar F(Y,\e)| 
& = \Big| \int_{\R^n}\int_\R \big\{ F(y,s) - F(y+\e z, s-\e^2 \si)\big\} \fa(z)\eta(\si) dzd\si \Big|\\
& \le \int_{\R^n}\int_\R \big( |\e z|^\a+ |\e^2\si|^{\frac{\a}2}\big) \fa(z)\eta(\si) dzd\si 
\times U_{F,1+\a} d(Y)^{-(1+\a)} \\
& \le  2\e^\a U_{F,1+\a} d(Y)^{-(1+\a)}.
\end{align*}

Summarizing these, if $\t\ge \tfrac1{MN}$, we have
\begin{align*}
&|F(Y) -F(Y_1)| 
 \le |F(Y) -\bar F(Y,\e)| 
+ |\bar F(Y,\e) -\bar F(Y_1,\e)| 
+ |\bar F(Y_1,\e) - F(Y_1)| \\
& \ \ \ \ \ \le 4 \e^\a U_{F,1+\a}  \big(d(Y)\wedge d(Y_1)\big)^{-(1+\a)}
+ C_M H \t^\a+ C(n, \alpha)H(1+\tfrac{c}{N\e})^{n+2+\a} \t^\a,
\end{align*}
for every $\e\in (0,\t)$.  We now choose $\e=\tilde\de\cdot \t$ with 
small enough $\tilde\de>0$
such that $4\tilde\de^\a\le \de$.  Then,
since $\tfrac{c}{N\e}=\tfrac{c}{\tilde \de N \t}\le \tfrac{Mc}{\tilde \de}$, 
we obtain \eqref{eq:Li-L4.3-G} for $\t\ge \tfrac1{MN}$.
\end{proof}

\begin{rem}  \label{rem:3.CH}
\rm
To derive the uniform H\"older continuity of $u^N$, we applied the results of \cite{GOS}  
and \cite{SZ} in Section \ref{sec:2.1}.  We will then use Lemma \ref{lem:Li-L4.3-G}
to show the H\"older continuity of $\nabla^Nu^N$ in Section \ref{sec:Schauder-1}.
However, it might be possible to apply Lemma \ref{lem:Li-L4.3-G}, which partially
shows the equivalence between Campanato space and H\"older space, to 
derive the H\"older continuity of $u^N$ as in \cite{HN} in a continuous setting.
\end{rem}

We state the iteration Lemma 4.6 of \cite{Li96}.
This will be modified later in Lemma \ref{lem:Li-L4.6-N} with $\si(r)$
changed to $\si(r_N)$ as in Lemma \ref{lem:Li-L4.3-G} to adjust to our discrete setting
and used to prove Propositions \ref{spatial-variation-prop} and \ref{prop:5.4} below.

\begin{lem}  \label{lem:Li-L4.6}
(cf.\ Lemma 4.6 of \cite{Li96})
Let $\om$ and $\si$ be increasing functions on an interval $(0,R_0]$
and suppose there are positive constants $\bar{\a}, \de$, and $\t$ with 
$\t<1$ and $\de < \bar{\a}$ such that
\begin{equation}  \label{eq:Li-sigma}
r^{-\de}\si(r)\le s^{-\de}\si(s) \qquad \text{if } \; 0<s\le r \le R_0
\end{equation}
and
\begin{equation}  \label{eq:Li-om}
\om(\t r) \le \t^{\bar{\a}}\om(r) +\si(r) \qquad \text{if }\; 0<r\le R_0.
\end{equation}
Then, there is a constant $C = C(\bar\a,\de,\t)$ such that
$$
\om(r) \le C\Big[ \big(\tfrac{r}{R_0}\big)^{\bar{\a}} \om(R_0)+\si(r)\Big] \leq C\Big[ \big(\tfrac{r}{R_0}\big)^\delta \om(R_0)+\si(r)\Big],
$$
for $0<r\le R_0$.
\end{lem}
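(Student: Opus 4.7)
The plan is a standard telescoping/geometric-series iteration argument, with the only subtle point being how the hypothesis \eqref{eq:Li-sigma} on $\sigma$ is used to trade one scale against another.

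First, I would iterate the recursion \eqref{eq:Li-om} $k$ times starting from $r=R_0$. Applying it successively to $R_0, \tau R_0, \tau^2 R_0,\ldots$ yields
\[
\omega(\tau^k R_0)\le \tau^{k\bar\alpha}\omega(R_0)+\sum_{j=0}^{k-1}\tau^{j\bar\alpha}\sigma(\tau^{k-1-j}R_0).
\]
This is a direct unfolding and requires nothing beyond \eqref{eq:Li-om}, so no obstacle here.

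Next, I would control the sum using \eqref{eq:Li-sigma}. Since $r^{-\delta}\sigma(r)$ is non-increasing in $r$ and $\tau^{k-1-j}R_0\ge \tau^{k-1}R_0$ for $j\ge 0$, we get
\[
\sigma(\tau^{k-1-j}R_0)\le \Bigl(\tfrac{\tau^{k-1-j}R_0}{\tau^{k-1}R_0}\Bigr)^{\!\delta}\sigma(\tau^{k-1}R_0)=\tau^{-j\delta}\sigma(\tau^{k-1}R_0).
\]
Substituting, the sum is bounded by $\sigma(\tau^{k-1}R_0)\sum_{j=0}^{k-1}\tau^{j(\bar\alpha-\delta)}$, which is a convergent geometric series precisely because $\bar\alpha>\delta$; the total is $\le C(\bar\alpha,\delta,\tau)\,\sigma(\tau^{k-1}R_0)$. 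This step is the only place the two structural hypotheses interlock, and it is the \emph{key} point of the lemma, though not really an obstacle.

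Finally, for an arbitrary $r\in(0,R_0]$, choose $k\ge 1$ with $\tau^{k+1}R_0< r\le \tau^k R_0$. Using monotonicity of $\omega$, bound $\omega(r)\le \omega(\tau^k R_0)$ and then insert the previous estimate. For the $\omega(R_0)$ term, $\tau^{k\bar\alpha}\le \tau^{-\bar\alpha}(r/R_0)^{\bar\alpha}$. For the $\sigma$ term, apply \eqref{eq:Li-sigma} once more in the other direction: since $r\le \tau^{k-1}R_0\le \tau^{-2}r$, we get $\sigma(\tau^{k-1}R_0)\le (\tau^{k-1}R_0/r)^{\delta}\sigma(r)\le \tau^{-2\delta}\sigma(r)$. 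Combining produces
\[
\omega(r)\le C(\bar\alpha,\delta,\tau)\Bigl[(r/R_0)^{\bar\alpha}\omega(R_0)+\sigma(r)\Bigr].
\]
The second inequality in the conclusion follows trivially from $\delta<\bar\alpha$ and $r/R_0\le 1$, which gives $(r/R_0)^{\bar\alpha}\le (r/R_0)^{\delta}$. There is no genuine obstacle; the whole argument is a clean iteration, and the only thing to watch is to apply \eqref{eq:Li-sigma} in both directions (once to compare inner-scale $\sigma$'s to the finest $\sigma(\tau^{k-1}R_0)$, and once to transfer $\sigma(\tau^{k-1}R_0)$ back up to $\sigma(r)$).
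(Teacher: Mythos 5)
Your proof is correct and follows essentially the same route as the paper: iterate \eqref{eq:Li-om} down to a dyadic scale, use \eqref{eq:Li-sigma} to collapse the resulting sum into a geometric series converging because $\bar\alpha>\delta$, and then transfer the finest-scale $\sigma$ back to $\sigma(r)$ with one more application of \eqref{eq:Li-sigma}. The only cosmetic differences are that the paper organizes the iteration as an induction rather than unfolding the telescoped sum all at once, and it uses $\sigma(\tau^k R_0)$ where you use $\sigma(\tau^{k-1}R_0)$ in the final comparison, both equivalent up to a fixed power of $\tau$. One small omission: your choice "$k\ge 1$ with $\tau^{k+1}R_0< r\le \tau^k R_0$" implicitly presumes $r\le\tau R_0$; you should note (as the paper does) that the case $\tau R_0<r\le R_0$ is trivial, since then $\omega(r)\le\omega(R_0)$ and $(r/R_0)^{\bar\alpha}\ge\tau^{\bar\alpha}$, so the claimed bound already holds with $C\ge\tau^{-\bar\alpha}$.
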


We remark, as noted in \cite{Li96}, if (4.15)' in \cite{Li96} holds, that is $\omega(\t r) \leq C\t^\beta\omega(r) + \sigma(r)$ with $\beta>\delta$ for all small $\t$, then \eqref{eq:Li-om} is satisfied with $\bar{\a}\in (\delta,\beta)$ and $\t$ chosen so that $C\t^\beta \leq \t^{\bar{\a}}$.

\begin{proof}
Since the argument is short, for the convenience of the reader, we give it here.  The lemma is clear when $r\geq \tau R_0$.  Hence, suppose $0<r<\tau R_0$.  Let $k$ be the smallest integer such that $r<\tau^k R_0$.  Consider the base case $k=1$:  Write
\begin{align*}
\omega(r)& \leq \omega(\tau R_0) \leq \tau^{\bar \alpha} \omega(R_0) + \sigma(R_0)\\
&\leq \tau^{\bar\alpha} \omega(R_0) + \tau^{-\delta} \sigma(\tau R_0).
\end{align*}
Suppose now for $j\leq k-1$ that 
$$\omega(\tau^j R_0) \leq \tau^{j\bar\alpha} \omega(R_0) + \tau^{-\delta} \sigma(\tau^j R_0)\sum_{\ell=1}^{j} \tau^{(\bar \alpha - \delta)(\ell-1)}.$$
Then, when $k>1$, we have
\begin{align*}
\omega(\tau^k R_0) &\leq \tau^{\bar \alpha} \omega(\tau^{k-1}R_0) + \sigma(\tau^{k-1}R_0)\\
&\leq \tau^{\bar \alpha}\big[ \tau^{(k-1)\bar\alpha}\omega(R_0) + \tau^{-\delta}\sigma(\tau^{k-1}R_0)\sum_{\ell=1}^{k-1}\tau^{(\bar\alpha - \delta)(\ell-1)}\big] + \tau^{-\delta}\sigma(\tau^k R_0)\\
&\leq \tau^{k\bar\alpha}\omega(R_0) + \tau^{-\delta}\sigma(\tau^k R_0)\sum_{\ell=2}^{k} \tau^{(\bar\alpha - \delta)(\ell-1)} + \tau^{-\delta}\sigma(\tau^k R_0)\\
&= \tau^{k\bar\alpha}\omega(R_0) + \tau^{-\delta}\sigma(\tau^k R_0)\sum_{\ell=1}^k \tau^{(\bar\alpha - \delta)(\ell-1)}\\
&\leq \tau^{k\bar\alpha}\omega(R_0) + \sigma(\tau^k R_0) \frac{1}{\tau^\delta - \tau^{\bar \alpha}}.
\end{align*}

Since $\tau^k R_0< r/k<R_0$, we have $\sigma(\tau^k R_0)\leq \sigma(r/\tau)\leq \tau^{-\delta}\sigma(r)$.
Then, as $\tau^k\leq r/R_0$,
we obtain
\begin{align*}
\omega(r)& \leq \omega(\tau^k R_0)\leq C\Big[\tau^{k\bar\alpha} \omega(R_0) + \sigma(r)\Big]\leq C\Big[\big(\frac{r}{R_0})^{\bar\alpha}\omega(R_0) + \sigma(r)\Big]
\end{align*}
where
$C=\max\{\tau^{\bar\alpha}, \tau^{-\delta}(\tau^\delta - \tau^{\bar \alpha})^{-1}\}$.
\end{proof}

\subsection{Summation by parts formula}  \label{sec:SbP}

Finally, we prepare a summation by parts formula, that is, Green's identity in discrete setting.
For $\La \subset \frac1N\T_N^n$, we denote the outer boundary and closure of 
$\La$ by 
\begin{equation}  \label{eq:3.ob+c}
\partial_N^+\La= \{\tfrac{x}N\in 
\tfrac1N\T_N^n\setminus\La; \text{dist}(\tfrac{x}N,\La)=\tfrac1N\}, \quad
\overline{\La} = \La\cup\partial_N^+\La,
\end{equation}
respectively.

\begin{lem}\label{lem:DG-Q-2}
Let $\La\subset \frac1N\T_N^n$ and let $e\in \Z^n: |e|=1$ be given.
For functions $F, G$ on $\overline{\La}$ satisfying $F=0$ at $\partial_N^+\La$,
we have
\begin{equation}  \label{eq:SbP-1}
\sum_{\tfrac{x}N\in\La} F(\tfrac{x}N)
\nabla_e^{N,*} G (\tfrac{x}N)
=  \sum_{\tfrac{x}N \text{ or } \tfrac{x+e}N \in \La} 
\nabla_e^N F(\tfrac{x}N) \cdot G(\tfrac{x}N),
\end{equation}
where the sum in the right hand side is taken over $x$ satisfying
$\tfrac{x}N \in \La$  or  $\tfrac{x+e}N \in \La$.
If we replace $\nabla_e^{N,*}$ by $\nabla_e^N$ in the left hand side,
$\nabla_e^N$ and $\frac{x+e}N$ in the right hand side should be
$\nabla_e^{N,*}$ and $\frac{x-e}N$, respectively (since $\nabla_e^{N,*}
=\nabla_{-e}^{N}$). 

 In particular, for $a=\{\bar a_x\}$, we have
\begin{equation}  \label{eq:SbP-2}
\sum_{\tfrac{x}N\in\La} F(\tfrac{x}N)
\nabla_e^{N,*} \big(\bar a_{x} G\big) (\tfrac{x}N)
=  \sum_{\tfrac{x}N \text{ or } \tfrac{x+e}N \in \La} \bar a_{x} 
\nabla_e^N F(\tfrac{x}N) \cdot G(\tfrac{x}N).
\end{equation}

Moreover, for $a=\{a_{x,e}\}$
\begin{align}  \label{eq:SbP-3}
\sum_{\tfrac{x}N\in\La}& F(\tfrac{x}N)
\sum_{|e|=1, e>0}
\nabla_e^{N,*} \big(a_{x,e}\nabla_e^N G\big) (\tfrac{x}N)\\
& =  \sum_{\tfrac{x}N \text{ or } \tfrac{x+e}N \in \La;
|e|=1, e>0} a_{x,e} \nabla_e^N F(\tfrac{x}N) \nabla_e^N G(\tfrac{x}N).
\notag
\end{align}
\end{lem}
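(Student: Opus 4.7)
The plan is to establish \eqref{eq:SbP-1} by a direct computation using a discrete change of variables plus the vanishing of $F$ on $\partial_N^+\La$, and then obtain \eqref{eq:SbP-2} and \eqref{eq:SbP-3} as immediate consequences.

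For \eqref{eq:SbP-1}, I would expand the definition $\nabla_e^{N,*}G(\tfrac{x}N) = N\big(G(\tfrac{x-e}N) - G(\tfrac{x}N)\big)$, so that the left hand side becomes
\begin{align*}
N\sum_{\tfrac{x}N\in\La} F(\tfrac{x}N) G(\tfrac{x-e}N) \;-\; N\sum_{\tfrac{x}N\in\La} F(\tfrac{x}N) G(\tfrac{x}N).
\end{align*}
In the first sum I would substitute $y = x-e$ to rewrite it as $N\sum_{\tfrac{y+e}N\in\La} F(\tfrac{y+e}N) G(\tfrac{y}N)$. Similarly, expanding $\nabla_e^N F(\tfrac{x}N) = N\big(F(\tfrac{x+e}N) - F(\tfrac{x}N)\big)$ in the right hand side gives two sums over the symmetric index set $\{\tfrac{x}N\in\La\} \cup \{\tfrac{x+e}N\in\La\}$.

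The key step is to reconcile the two summation ranges. The difference between $\{\tfrac{x}N\in\La\}$ and the symmetric set consists precisely of those $x$ for which $\tfrac{x}N\in\partial_N^+\La$ and $\tfrac{x+e}N\in\La$, and on such $x$ the factor $F(\tfrac{x}N)$ in the right hand side vanishes by hypothesis. Symmetrically, the difference between $\{\tfrac{x+e}N\in\La\}$ and the same symmetric set consists of $x$ with $\tfrac{x}N\in\La$ and $\tfrac{x+e}N\in\partial_N^+\La$, where $F(\tfrac{x+e}N)=0$. Hence the two expressions agree term by term, establishing \eqref{eq:SbP-1}.

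Now \eqref{eq:SbP-2} follows from \eqref{eq:SbP-1} at once by replacing $G$ with $\bar a_x G$. Finally, for \eqref{eq:SbP-3}, apply \eqref{eq:SbP-2} for each fixed $e$ with $|e|=1, e>0$, taking $\bar a_x := a_{x,e}$ (viewed as a function of $x$ only once $e$ is frozen) and $G$ replaced by $\nabla_e^N G$, and then sum over such $e$. The only subtlety anywhere in the argument is the correct bookkeeping of the symmetric summation region $\{\tfrac{x}N \text{ or } \tfrac{x+e}N \in \La\}$ in the right hand side, which is the natural domain of the edge variable $(x,x+e)$ straddling $\partial_N^+\La$; this is exactly the bookkeeping that the condition $F|_{\partial_N^+\La}=0$ is designed to absorb, so no further obstacle arises.
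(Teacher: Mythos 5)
Your proof is correct and takes essentially the same approach as the paper's: expand the discrete derivatives, shift the summation index, and use the vanishing of $F$ on $\partial_N^+\La$ to reconcile the asymmetric index set $\{\tfrac{x}{N}\in\La\}$ with the symmetric one $\{\tfrac{x}{N}\text{ or }\tfrac{x+e}{N}\in\La\}$; the derivations of \eqref{eq:SbP-2} and \eqref{eq:SbP-3} by substitution are identical. The only cosmetic difference is that the paper splits the left-hand side over the three cases $\{\tfrac{x}{N},\tfrac{x+e}{N}\in\La\}$, $\{\tfrac{x}{N}\notin\La,\tfrac{x+e}{N}\in\La\}$, $\{\tfrac{x}{N}\in\La,\tfrac{x+e}{N}\notin\La\}$ and combines, whereas you expand the right-hand side and trim the excess boundary terms — the same bookkeeping read in the opposite direction.
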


\begin{proof}
The left hand side of \eqref{eq:SbP-1} is rewritten as
\begin{align*}
& \sum_{\tfrac{x}N\in\La}  F(\tfrac{x}N)
 N \Big\{G(\tfrac{x-e}N) - G (\tfrac{x}N)\Big\} \\
& \quad = N \sum_{\tfrac{x+e}N\in\La} F(\tfrac{x+e}N)
 G (\tfrac{x}N)
- N \sum_{\tfrac{x}N\in\La} F(\tfrac{x}N) 
 G (\tfrac{x}N)  \\
&\quad = \sum_{\tfrac{x}N,\tfrac{x+e}N\in\La} 
\nabla_e^N F(\tfrac{x}N) G (\tfrac{x}N) 
+ \sum_{\tfrac{x}N\not\in\La,\tfrac{x+e}N\in\La} 
N F(\tfrac{x+e}N) G (\tfrac{x}N)   
- \sum_{\tfrac{x}N\in\La,\tfrac{x+e}N\not\in\La} N F(\tfrac{x}N) 
G (\tfrac{x}N) \\
&\quad = \sum_{\{\tfrac{x}N,\tfrac{x+e}N\in\La\}\cup\{\tfrac{x}N\in\La,\tfrac{x+e}N
\not\in\La\}\cup\{\tfrac{x}N\not\in\La,\tfrac{x+e}N\in\La\}}
\nabla_e^N F(\tfrac{x}N) \cdot G (\tfrac{x}N) .
\end{align*}
The last equality holds since $F(\tfrac{x}N)=0$ if $\tfrac{x}N\not\in\La$
for the second term
and $F(\tfrac{x+e}N)=0$ if $\tfrac{x+e}N\not\in\La$ for the third term. 
This shows \eqref{eq:SbP-1}.  The identity \eqref{eq:SbP-2} is immediate from
\eqref{eq:SbP-1} by taking $\bar G(\tfrac{x}N) := \bar a_x G(\tfrac{x}N)$ instead
of $G(\tfrac{x}N)$.  For \eqref{eq:SbP-3}, first consider $\bar G(\tfrac{x}N) :=
a_{x,e} G(\tfrac{x}N)$ instead of $G(\tfrac{x}N)$ for each fixed $e$, and then take
the sum in $e$.
\end{proof}

\section{Schauder estimate for the first discrete derivatives}  \label{sec:Schauder-1}

We are now ready to show the Schauder estimate for the first discrete derivatives.
We consider the linear discrete PDE \eqref{eq:1.2-linear} under the assumption
that the coefficients $a(t)=\{a_{x,e}(t)\}_{|e|=1}$ satisfy the $\a$-H\"older continuity
condition (A.2) stated below.  By \eqref{eq:cor2.3-2} in Corollary \ref{cor:2.3},
the nonlinear discrete PDE \eqref{eq:2.1-X} falls in this class with $\a=\si$.

We preview the main results.  In Section \ref{sec:4.1}, we study
the equation \eqref{eq:1.2-linear} and state Theorem \ref{thm:Li-T4.8}, which is
an analog of Theorem 4.8 of \cite{Li96}. Then, in Theorem \ref{thm:4.2},
we improve the regularity at $t=0$ for $C^2$-initial values.  In Section
\ref{first_schauder_context_subsec}, these results are applied for the nonlinear equation
\eqref{eq:2.1-X}; see Corollaries \ref{cor:K^3} and \ref{extended_cor}.
The outline of the proof of Theorem \ref{thm:Li-T4.8} is given in
Section \ref{sec:4.1} and the main part of the proof is postponed to later.
In Section \ref{interpolation_sec}, we show the discrete version of the
interpolation inequality.  In Section \ref{sec:6}, we apply 
Proposition \ref{lem:Li-L4.5} and two basic lemmas 
(Lemmas \ref{lem:Li-L4.3-G} and \ref{lem:Li-L4.6}
or its variant \ref{lem:Li-L4.6-N})
together with the summation by parts formula
(Lemma \ref{lem:DG-Q-2}) to show a useful energy inequality and a main H\"older estimate. Finally, Section \ref{sec:4.5} is for an estimate on
the time varying norm.

We will make use of the solution of the discrete heat equation
\eqref{eq:dHeq} or \eqref{eq:heat-D}, which will be denoted by $v^N$ in Sections
\ref{sec:Li-Schauder} and \ref{sec:Schauder-1}
to distinguish it from the solution $u^N$ of \eqref{eq:1.2-linear}.  We will compare
$u^N$ to $v^N$ as in \eqref{eq:w=u-v} or \eqref{eq:w=u-v-B} and will apply the
estimates for $v^N$ obtained in Proposition \ref{lem:Li-L4.5}.

\subsection{Schauder estimate for \eqref{eq:1.2-linear} with H\"older continuous coefficients}
\label{sec:4.1}

We consider the linear discrete PDE \eqref{eq:1.2-linear} 
with coefficients $a(t)$ and $g(t)$ continuous in $t$, that is,
\begin{align}
\label{general-divergence}
\partial_t u^N = L_{a(t)}^{N} u^N + g(t,\tfrac{x}N), \quad \tfrac{x}N \in \tfrac1N\T_N^n.
\end{align}

Our basic assumptions for $a(t)$ and $g(t)$ are:
\begin{itemize}
\item[(A.1)] (symmetry, nondegeneracy, boundedness) \quad $a(t) \in \mathcal{A}(c_-,c_+)$, $t\ge 0$,
\item[(A.2)] (H\"older continuity)\quad  $[a]^{*,N}_\alpha \le A<\infty$, $\a\in (0,1)$,
\item[(A.3)] (boundedness of $g$)\quad $\|g\|_\infty \le G_\infty<\infty$.
\end{itemize}
The assumption (A.2) means that $[a_e]_\a^{*,N} \le A$ holds for each $e$
and for the seminorm defined in \eqref{eq:norm-11} by regarding $a_{x,e}(t)$ as
a function of $(t,\tfrac{x}N)\in \Om_N$.
Note that, by \eqref{eq:cor2.3-2}, the coefficient $a(t)$ determined from 
the nonlinear Laplacian as in \eqref{eq:De-Lta} and \eqref{eq:a_xe} in the 
equation \eqref{eq:2.1-X} satisfies (A.2) with $\a=\si$ and $A=C(K+1)$;
see Corollary \ref{cor:K^3}.  The bound on $\|g\|_\infty$ in (A.3) is used
in the proof of Lemma \ref{lem:4.13}, but a weaker bound on 
$\sup_{t\in [0,T]}\|g(t)\|_{L^1(\T_N^n)}$ is sufficient at most places.

With respect to the solution $u^N(t,\tfrac{x}N)$ of \eqref{general-divergence}, 
define the polylinear spatial interpolation $\tilde u^N(t,z)$ following the prescription in 
\eqref{eq:poli-A}.

Set also, for $\alpha\in (0,1)$,
\begin{align}  
&U_{1+\a}:= [\tilde u^N]_{1+\a}^{*}
= \sup_{X\not= Y\in\Om} (d(X)\wedge d(Y))^{1+\a} 
\frac{|\nabla^N \tilde u^N(X)-\nabla^N \tilde u^N(Y)|}{|X-Y|^\a}, \nonumber\\
& U_1 := 
|\nabla^N \tilde u^N|_0^{(1)} \equiv
\max_{|e|=1, e>0} |\nabla_e^N \tilde u^N|_0^{(1)},      \label{seminorm_defn}   \\
& \mathcal{U}:=U_{1+\a} + \lan u^N\ran_{1+\a}^{*,N},  \notag
\end{align}
and the seminorm
\begin{align}  \label{eq:4.3}
|\tilde u^N|_{1+\a}^* := U_1+\mathcal{U} 
\equiv [\tilde u^N]_{1+\a}^{*}+ \lan u^N\ran_{1+\a}^{*,N} + |\nabla^N \tilde u^N|_0^{(1)}.
\end{align}
Here, we may recall Lemma \ref{lem:Eq-norms} for the consistency of two seminorms
$\lan u^N\ran_{1+\a}^{*,N}$ and $\lan \tilde u^N\ran_{1+\a}^{*}$.

We now come to the main estimate of this section.

\begin{thm}  \label{thm:Li-T4.8}
(First Schauder estimate, cf.\ Theorem 4.8 of \cite{Li96})
For the equation \eqref{eq:1.2-linear}, assume (A.1), (A.2), (A.3) and 
\begin{align}  \label{eq:4.bounded}
\sup_N \|u^N\|_\infty \equiv 
\sup_N \sup_{\Om_N}|u^N(t,\tfrac{x}N)|<\infty.
\end{align}
Then, we have
\begin{align}  \label{eq:4.4}
|\tilde u^N|_{1+\alpha}^* &\leq C\big[(A+1)^{1+\frac{1}{\alpha}} \|u^N\|_\infty + G_\infty\big].
\end{align}
 In particular, we have
\begin{align}  \label{eq:4.5}
|\nabla_e^Nu^N(t,\tfrac{x}{N})|&\leq C t^{-\tfrac{1}{2}}\big[(A+1)^{\frac{1}{\alpha}}\|u^N\|_\infty + (A+1)^{-1}G_\infty\big]
\end{align}
Here, the constants $C=C(n, c_\pm, T, \alpha)$.
\end{thm}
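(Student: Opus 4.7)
My approach would be the classical frozen-coefficients strategy, adapted to the discrete setting. For a reference point $X_0 = (t_0, z_0) \in \Omega_N$ and radius $R$, I would compare $u^N$ on $Q_N(X_0, R)$ with the solution $\U^N$ of the discrete heat equation $\partial_t \U^N = \De_{a(X_0)}^N \U^N$ with the \emph{constant} coefficient frozen at $X_0$, matching $u^N$ on the discrete parabolic boundary. The difference $w^N = u^N - \U^N$ then satisfies an equation whose source involves $(a(t,x/N) - a(X_0))\nabla^N u^N$, which is small of order $A R^\a$ by (A.2), together with $g$. Multiplying by a cut-off of $w^N$ and summing by parts (Lemma \ref{lem:DG-Q-2}), using the coercivity in (A.1), I expect a local discrete energy inequality controlling $\int_{Q_N(\rho)} |\nabla^N w^N|^2 \, dX$ by $C [A^2 R^{2\a} \|u^N\|_\infty^2 + G_\infty^2 R^2] R^n$ for $\rho < R$; this is what the forthcoming Lemmas \ref{lem:D-energy-a-Holder} and \ref{lem:estimate-D-energy} should provide.

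Combining the energy estimate with the oscillation bound \eqref{eq:Li-L4.5-3} for $\nabla^N \tilde \U^N$ and the decomposition $\nabla^N \tilde u^N = \nabla^N \tilde \U^N + \nabla^N \tilde w^N$, I would derive a decay inequality
\[
\int_{Q(\rho)} |\nabla^N \tilde u^N - \{\nabla^N \tilde u^N\}_\rho|^2 \, dX \le C\Big(\frac{\rho}{r}\Big)^{n+4}\int_{Q(r)}|\nabla^N \tilde u^N - \{\nabla^N \tilde u^N\}_r|^2 \, dX + \mathrm{err}(r),
\]
valid for $\rho \le r \le R - \sqrt{n}/N$. The iteration Lemma \ref{lem:Li-L4.6} (or its discrete variant with $r_N = r + c/N$) then upgrades this to the integral estimate
\[
\int_{Q(Y,r)} |\nabla^N \tilde u^N - G(Y,r)|^2 \, dX \le H r_N^{n+2+2\alpha},
\]
with $H$ explicit in $A$, $\|u^N\|_\infty$ and $G_\infty$. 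At this stage Lemma \ref{lem:Li-L4.3-G}, applied with $F = \nabla^N \tilde u^N$ and exponent $\alpha$, converts the integral bound into the H\"older seminorm $[\tilde u^N]_{1+\alpha}^*$ for pairs with $|Y-Y_1| \ge 1/(MN)$, modulo the self-referential term $\de U_{1+\a}$ that I would absorb by taking $\de$ small.

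Two ancillary pieces remain: the short-distance regime $|Y-Y_1| < 1/(MN)$ must be handled by Lemma \ref{lem:4.13} using polylinearity; the mixed space-time seminorm $\lan u^N\ran_{1+\a}^{*,N}$ is controlled from the equation itself together with the spatial estimate (Section \ref{sec:4.5}); and $|\nabla^N \tilde u^N|_0^{(1)}$ is absorbed via the discrete interpolation inequality of Section \ref{interpolation_sec}, which trades $U_1$ for a small multiple of $U_{1+\a}$ plus a multiple of $\|u^N\|_\infty$. The pointwise estimate \eqref{eq:4.5} then follows from \eqref{eq:4.4}, since $|\nabla_e^N u^N(t, x/N)| = |\nabla_e^N \tilde u^N(t, x/N)| \le t^{-1/2} |\nabla^N \tilde u^N|_0^{(1)} \le t^{-1/2} |\tilde u^N|_{1+\a}^*$, combined with a sharper accounting of the $G_\infty$ contribution (which enters linearly rather than through the $(A+1)$-power).

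The main obstacle will be the book-keeping of the $A$-dependence. Freezing the coefficient produces an error of order $A R^\alpha$ per rescaling, so one must restrict to $R \lesssim A^{-1/\alpha}$ before the perturbative estimates take effect; iterating from this reference scale down to $\rho$ is what is responsible for the factor $(A+1)^{1+1/\alpha}$ in \eqref{eq:4.4}. A secondary difficulty is managing the non-local gap $\sqrt{n}/N$ between nested balls forced by polylinear interpolation in Proposition \ref{lem:Li-L4.5}, which is precisely what necessitates using $r_N$ rather than $r$ in the hypothesis of Lemma \ref{lem:Li-L4.3-G} and the separate short-range argument.
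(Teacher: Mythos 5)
Your plan matches the paper's proof essentially step for step: frozen-coefficient comparison with the discrete heat equation, the discrete energy inequality via summation by parts (Lemmas \ref{lem:D-energy-a-Holder} and \ref{lem:estimate-D-energy}), oscillation decay from Proposition \ref{lem:Li-L4.5} with the iteration variant Lemma \ref{lem:Li-L4.6-N}, the Campanato-to-H\"older conversion of Lemma \ref{lem:Li-L4.3-G} supplemented by the short-distance Lemma \ref{lem:4.13}, the temporal estimate of Proposition \ref{temporal-prop}, and the interpolation inequality of Proposition \ref{prop:Li-P4.2} to close, with \eqref{eq:4.5} then read off from the $U_1$ bound. One correction to your closing intuition about the exponent: the factor $(A+1)^{1+1/\alpha}$ does \emph{not} come from restricting to scales $R\lesssim A^{-1/\alpha}$ --- the iteration in Proposition \ref{spatial-variation-prop} is run on the full range $r\in(0,R_0]$ with $R_0=\tfrac13 d(Y)$ and yields only the linear factor $(A+1)U_1$ in \eqref{eq:4.6}; the power $1+\tfrac1\alpha$ appears only in the last step \eqref{eq:U_1}, where the interpolation bound $U_1\le 3[\tilde u^N]_0^{\alpha/(1+\alpha)}([\tilde u^N]_0+[\tilde u^N]_{1+\alpha}^*)^{1/(1+\alpha)}$ is combined with Young's inequality, which forces $C(A+1)+1$ to be raised to the conjugate exponent $\tfrac{1+\alpha}{\alpha}$ so that the resulting $U_{1+\alpha}$ contribution can be absorbed into the left-hand side.
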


\begin{proof}
We outline the proof of Theorem \ref{thm:Li-T4.8}, referring to results proved later.

We first prove \eqref{eq:4.4}.
Recalling $\mathcal{U}=U_{1+\a} + \lan u^N\ran_{1+\a}^{*,N}$, we show
in Proposition \ref{spatial-variation-prop} that
$$
U_{1+\a} = [\tilde u^N]_{1+\a}^*
\le \bar{C} [(A+1) U_1+ G_\infty ]+ \de \mathcal{U},
$$
for every $\de>0$, where $\bar{C}=\bar{C}(n,c_\pm, \alpha, \delta)$.
Moreover, in Proposition \ref{temporal-prop}, we estimate that
$$
\lan u^N\ran_{1+\a}^{*,N}\le \widehat{C}[AU_1+ U_{1+\a}+G_\infty ]
$$
where $\widehat{C}=\widehat{C}(n,c_+, T, \alpha)$.
Thus, by these two estimates, we have
\begin{align*}
\mathcal{U}&= U_{1+\a}+ \lan u^N\ran_{1+\a}^{*,N}
\le \widehat{C}AU_1+ (\widehat{C}+1) U_{1+\a}+\widehat{C}G_\infty  \\
&\le  [\widehat{C}+\bar{C}(\widehat{C}+1)] (A+1)U_1+ \de (\widehat{C}+1) \mathcal{U}+[\widehat{C}+ \bar{C}(\widehat{C}+1)] G_\infty .
\end{align*}
Choosing $\de>0$ small, this implies
\begin{align}  \label{eq:4.6}
\mathcal{U}\le C[(A+1)U_1+G_\infty ]
\end{align}
and as $|\tilde u^N|_{1+\a}^* = U_1+\mathcal{U}$ that
\begin{align}
\label{eq:4.8}
|\tilde u^N|_{1+\a}^* &\le \big(C(A+1)+1\big)U_1+ CG_\infty
\end{align}
where $C=C(n,c_\pm, T,\alpha)$.

However, by the interpolation
inequality proved in Proposition \ref{prop:Li-P4.2} in Section \ref{interpolation_sec},
\begin{align}  \label{eq:4.7}
U_1\le 3 [\tilde u^N]_0^{\frac{\a}{1+\a}}
\big( [\tilde u^N]_0+ [\tilde u^N]_{1+\a}^*\big) ^{\frac1{1+\a}},
\end{align}
where we recall $[u]_0= \underset{\Om}{\text{osc}}\, u$ in \eqref{eq:3.osc}.  Observe also that
$[\tilde u^N]_0 \le 2 |\tilde u^N|_0= 2 \|u^N\|_\infty.$
Thus, from \eqref{eq:4.7}, we have that
\begin{align}  \label{eq:U_1}
\big(&C(A+1)+1\big)U_1 \\
& \le \Big(2^{\frac1\a} \big(3\big[C(A+1)+1\big]\big)^{\frac{1+\a}\a}
2\|u^N\|_\infty \Big)^{\tfrac\a{1+\a}} \Big( \tfrac12
\big(2\|u^N\|_\infty+ U_{1+\a}\big)\Big)^{\tfrac1{1+\a}}   \notag   \\
& \le 2^{\frac1\a} \big(3\big[C(A+1)+1\big]\big)^{\frac{1+\a}\a}
2\|u^N\|_\infty + \tfrac12
\big(2\|u^N\|_\infty + |\tilde u^N|_{1+\alpha}^* \big),  \notag
\end{align}
where $C=C(n, c_\pm, T, \alpha)$ and we have used a trivial bound: $ab \, (\le \tfrac{a^p}p+\tfrac{b^q}q)
\le a^p+b^q$
for $a,b>0$ and $\tfrac1p+\tfrac1q=1$.  
Inserting this into \eqref{eq:4.8},
we obtain \eqref{eq:4.4}.

The estimate \eqref{eq:4.5} now follows from \eqref{eq:U_1} and \eqref{eq:4.4}.
\end{proof}

When the initial value $u^N(0,\cdot)$ is $C^2$, or more precisely, if
it satisfies the bound
\begin{align}  \label{eq:Th4.2-0}
\sup_N \| u^N(0)\|_{C_N^2} \le C_0<\infty,
\end{align}
we expect a better regularity estimate at $t=0$.  Recall \eqref{eq:norm-uN}
for the norm $\| \cdot \|_{C_N^2}$.
Indeed to discuss the equation \eqref{eq:1.2-linear}, we need the
following additional assumptions for the coefficient $a_{x,e}(t)$:
\begin{align}  \label{eq:Th4.2-B}
& [a]_\a^{(-\a),N} \le B<\infty: \quad
|a_{x_1,e}(t_1) - a_{x_2,e}(t_2)| \le B \Big\{ \big| t_2-t_1 \big|^{\frac{\a}2}
+ \big| \tfrac{x_1}N - \tfrac{x_2}N \big|^\a \Big\},\\
\label{eq:Th4.2-1}
& \sup_{N,x,e} |\nabla_e^{N,*}a_{x,e}(0)| \le C_1<\infty.
\end{align}
The condition \eqref{eq:Th4.2-B} is understood in relation to (A.2) for the seminorm
$[a_e]_\a^{(-\a),N}$ for each $e$ defined by \eqref{eq:norm-9} taking 
$a=\a, k=0, b=-\a$ so that $a+b=0$.  Or $[a_e]_\a^{(-\a),N}$ is the seminorm
$[a_e]_\a$ in \eqref{eq:norm-2} with the supremum taken for $X\not= Y\in \Om_N$.

\begin{thm}  \label{thm:4.2}
We assume (A.1), (A.3), \eqref{eq:4.bounded} in Theorem \ref{thm:Li-T4.8}
and \eqref{eq:Th4.2-0}, \eqref{eq:Th4.2-B}, \eqref{eq:Th4.2-1} stated above.
Then, for the solution $u^N$ of \eqref{eq:1.2-linear}, we have
\begin{align}  \label{eq:4-13}
|\tilde u^N|_{1+\a} 
\le C[(B +1)^{1+\frac1\a}\|u^N\|_\infty+G_\infty + C_2],
\end{align}
where $|\tilde u^N|_{1+\a}$ is the unweighted norm defined in \eqref{eq:norm-4}.
In particular, we have
\begin{align}  \label{eq:4-14}
|\nabla_e^N u^N(t,\tfrac{x}N)|&\leq C\big[(B+1)^{\frac1\a}\|u^N\|_\infty+(B+1)^{-1}\big(G_\infty + C_2]\big)\big]
\end{align}
and the $\a$-H\"older seminorm of $\nabla _e^N u^N$ is uniformly bounded in $t\geq 0$:
\begin{align}  \label{eq:4-15}
[\nabla_e^N \tilde u^N]_\a \le C[(B+1)^{1+\frac1\a} \|u^N\|_\infty +G_\infty +C_2].
\end{align}
Here, $C=C(n, c_\pm, T, \alpha)$
and $C_2 = n(C_1C_0 + c_+C_0)$.
\end{thm}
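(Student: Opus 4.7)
The plan follows the backward–extension device of Theorem \ref{extended-Holder-thm}, which moves the parabolic boundary away from our region of interest so that the weighted bounds of Theorem \ref{thm:Li-T4.8} translate into unweighted ones. Concretely, for $t\in[-1,0]$ I would set
\[
\hat u(t,\tfrac{x}{N}):=u^N(0,\tfrac{x}{N}),\qquad \hat a_{x,e}(t):=a_{x,e}(0),\qquad \hat g(t,\tfrac{x}{N}):=-L_{a(0)}^N u^N(0,\tfrac{x}{N}),
\]
and take $\hat u=u^N$, $\hat a=a$, $\hat g=g$ on $[0,T]$. On $[-1,0]$ one has $\partial_t\hat u\equiv 0\equiv L^N_{\hat a(t)}\hat u+\hat g$ by construction, so $\hat u$ solves the extended linear PDE on all of $[-1,T]$. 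Clearly $\hat a\in\mathcal A(c_-,c_+)$, and thanks to the uniform–in–$t$ Hölder bound \eqref{eq:Th4.2-B} the extension $\hat a$ remains $\alpha$-Hölder across the gluing time: for $t_1\le 0\le t_2$,
\[
|\hat a(t_1,\tfrac{x_1}{N})-\hat a(t_2,\tfrac{x_2}{N})|=|a(0,\tfrac{x_1}{N})-a(t_2,\tfrac{x_2}{N})|\le B\bigl(t_2^{\alpha/2}+|\tfrac{x_1-x_2}{N}|^\alpha\bigr)\le B\bigl(|t_2-t_1|^{\alpha/2}+|\tfrac{x_1-x_2}{N}|^\alpha\bigr).
\]

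Next I would bound $\|\hat g\|_\infty$ using the initial regularity. Writing \eqref{eq:2.6} and applying the discrete Leibniz rule
$\nabla_e^{N,*}(a_{\cdot,e}\nabla_e^N u)(\tfrac{x}{N})=\nabla_e^N u(\tfrac{x-e}{N})\cdot\nabla_e^{N,*}a_{x,e}+a_{x,e}\,\nabla_e^{N,*}\nabla_e^N u(\tfrac{x}{N})$ at $t=0$, the hypotheses \eqref{eq:Th4.2-0} and \eqref{eq:Th4.2-1} yield
\[
|L_{a(0)}^N u^N(0,\tfrac{x}{N})|\le\!\!\sum_{|e|=1,e>0}\!\!\bigl(c_+\|\nabla_e^{N,*}\nabla_e^N u^N(0)\|_\infty+C_1\|\nabla_e^N u^N(0)\|_\infty\bigr)\le n(c_+C_0+C_1C_0)=C_2,
\]
so $\|\hat g\|_\infty\le G_\infty+C_2$ on $[-1,T]$.

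After a time‐shift by $+1$ the extended system lives on $[0,T+1]\times\T^n$ and satisfies the hypotheses of Theorem \ref{thm:Li-T4.8}: the $\ast,N$–seminorm of $\hat a$ is $\le C(T)B$ (absorbing a factor $(T+1)^{\alpha/2}$), and the forcing obeys $\|\hat g\|_\infty\le G_\infty+C_2$. Theorem \ref{thm:Li-T4.8} then provides the weighted bounds \eqref{eq:4.4} and \eqref{eq:4.5} for $\hat u_{\mathrm{sh}}$ on this enlarged domain. For any point $X$ corresponding to an original time $t\in[0,T]$, the shifted parabolic distance to $\{t_{\mathrm{sh}}=0\}$ equals $\sqrt{t+1}\ge 1$; consequently every weight $d(X)^\beta$ with $\beta\in\{1,1+\alpha\}$ appearing in $U_1$, $U_{1+\alpha}$, $\langle\cdot\rangle_{1+\alpha}^{\ast,N}$ is at least one on this slice, and every factor $t_{\mathrm{sh}}^{-1/2}$ in \eqref{eq:4.5} is at most one. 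Reading \eqref{eq:4.4} off on the slice $t_{\mathrm{sh}}\ge 1$ gives precisely the unweighted \eqref{eq:4-13}; specializing \eqref{eq:4.5} to $t_{\mathrm{sh}}\ge 1$ gives \eqref{eq:4-14}; and \eqref{eq:4-15} is the $[\nabla^N\tilde u^N]_\alpha$–component of \eqref{eq:4-13}.

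The main technical step is the middle one: verifying that the condition \eqref{eq:Th4.2-1} on the discrete gradient of $a(0,\cdot)$ is exactly what is needed, alongside the discrete $C^2$–bound \eqref{eq:Th4.2-0} on the initial value, to control $L_{a(0)}^Nu^N(0)$ uniformly in $N$ by the constant $C_2$ announced in the statement; without the hypothesis \eqref{eq:Th4.2-1} the $N$‐dependence in $\nabla_e^{N,*}a_{x,e}(0)$ would otherwise cause $\|\hat g\|_\infty$ to blow up and the argument would fail.
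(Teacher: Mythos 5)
Your proposal is correct, and the overall scheme is the same one the paper uses: extend the equation backward in time by one unit so that after a shift the parabolic boundary $\{t=0\}$ sits at distance $1$ from the slice of interest, then apply the weighted estimate of Theorem~\ref{thm:Li-T4.8} and read it off on $\{t_{\mathrm{sh}}\ge 1\}$ where all weights $d(X)^\beta\ge 1$. The difference is in the construction of the extension. The paper first solves the forward discrete heat equation $\partial_s v = L^{N,0}v$ on $(0,1]$ with $v(0)=u^N(0)$, sets $\hat v(t):=v(1-t)$, and observes that $\hat v$ solves $\partial_t\hat v = L^{N,0}\hat v + 2\hat h$ with $\hat h=-L^{N,0}\hat v$; they then invoke the maximum principle (Lemma~\ref{lem:1.1}) applied to the equation satisfied by $h(t)=-L^{N,0}v(t)$ to get $\|\hat h\|_\infty\le \max|L^{N,0}u^N(0)|\le C_2$. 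You instead use the constant-in-time extension $\hat u(t)=u^N(0)$ on $[-1,0]$, which makes the required artificial forcing equal to $-L^{N}_{a(0)}u^N(0)$ directly, so no maximum-principle step is needed and the bound $\|\hat g\|_\infty\le G_\infty + C_2$ (rather than $G_\infty+2C_2$) falls out immediately. Both extensions glue continuously to $u^N$ at the interface, both keep $\hat a\in\mathcal A(c_-,c_+)$, and both use \eqref{eq:Th4.2-B} to get the $\alpha$-H\"older bound $[\hat a]_\alpha^{*,N}\le C(T)B$ across the gluing time. Your reduction is simpler and is a perfectly valid replacement; the two constructions are otherwise doing the same work, and the identification of what the hypotheses \eqref{eq:Th4.2-0}, \eqref{eq:Th4.2-1} buy you (a uniform-in-$N$ bound on $L^N_{a(0)}u^N(0)$ by $C_2=n(C_1C_0+c_+C_0)$) matches the paper exactly.
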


\begin{proof}
The proof is similar to that of Theorem \ref{extended-Holder-thm}, but to guarantee
the condition (A.2) for the extended system \eqref{extended_equation-B}
below, especially to well-connect at $t=1$, we replace $\De^N$ by $L^{N,0}$ defined by
$$
L^{N,0} =-\sum_{|e|=1, e>0}\nabla_e^{N,*} (a_{x,e}(0)\nabla^N_e \cdot).
$$
Then, consider the discrete linear PDE,
\begin{align}  \label{eq:2.19-Q}
\partial_s v = L^{N,0} v,\quad s \in (0,1],
\end{align}
with initial condition $v(0,\tfrac{x}{N}) = u^N(0,\tfrac{x}{N})$ for $x\in \T^n_N$.
Define $\hat v(t) := v(1-t)$ for $0\leq t< 1$ and $\hat h(t,\tfrac{x}{N}) := 
-L^{N,0} \hat v(t,\tfrac{x}{N})$.  Note, for $0\leq t< 1$, that $\hat v$ satisfies
\begin{align*}
\partial_t \hat v = -L^{N,0} \hat v = L^{N,0}\hat v + 2\hat h.
\end{align*}

However, by \eqref{eq:2.19-Q}, $h(t):= \hat h(1-t) = -L^{N,0} v(t)$ satisfies the discrete PDE
$\partial_t h(t) = L^{N,0} h(t)$ with initial value $h(0)=-L^{N,0}u^N(0)$ and thus,
by the maximum principle (Lemma \ref{lem:1.1}) for this equation, we have
\begin{align*}
|h(t,\tfrac{x}{N})| \le \max_y |L^{N,0} u^N(0,\tfrac{y}N)|.
\end{align*}
But, by the condition \eqref{eq:Th4.2-0} for $u^N(0)$ and by
\eqref{eq:Th4.2-1} for $a_{x,e}(0)$, 
we have, applying \eqref{eq:disc-der-D}, 
\begin{align*}
\|\hat h\|_\infty &= \Big\| \sum_{|e|=1, e>0} \Big\{ \nabla^{N,*}_e a_{\cdot,e}(0) 
\cdot \nabla^N_e u^N(0, \cdot) + a_{\cdot-e,e}(0) \nabla^{N,*}_e \nabla^N_e u^N(0,\cdot)
\Big\}\Big\|_\infty\\
&\leq n(C_1C_0 + c_+C_0) =: C_2.
\end{align*}

Define now 
\begin{align}
\label{hat a defn-B}
\hat a_{x,e} (t)= \left\{ \begin{array}{rl}
a_{x,e}(t-1) & \ {\rm for \ }t\ge 1\\
a_{x,e}(0)& \ {\rm for \ } 0\leq t<1,
\end{array}\right.
\end{align}
and $\hat g(t,\tfrac{x}{N})$ by \eqref{hat g defn} in the present setting.
Consider the extended system, for $t\geq 0$,
\begin{align}
\label{extended_equation-B}
\partial_t \hat u^N = L_{\hat a(t)}^N\hat u^N + \hat g(t,\tfrac{x}N).
\end{align}

Note that $\hat a$ satisfies (A.1) and $[\hat a]_{\a}^{*,N}\le T^{\frac{\alpha}{2}}B<\infty$
from the condition \eqref{eq:Th4.2-B} (which holds also at $t_1, t_2=0$), 
and $\|\hat g\|_{\infty} \leq \|g\|_{\infty} + 2C_2$.
Also, by the maximum principle, $v$ is uniformly bounded by $\|v(s)\|_\infty
\le \|u^N(0)\|_\infty$, $s\in [0,1]$.  Therefore, $\hat u^N$ is bounded by $\|u^N\|_\infty$.
Then, Theorem \ref{thm:Li-T4.8} yields
$$
|\widetilde{\hat u^N}|_{1+\a}^* \le C\big[(B+1)^{1+\frac1\a}\|u^N\|_\infty +G_\infty + 2C_2\big].
$$
where $C=C(n,c_\pm, T, \alpha)$.
We observe, by specializing to times $1\leq t \leq T+1$ and noting
$\hat u(t, \cdot) = u^N(t-1, \cdot)$, that \eqref{eq:4-13} holds for the solution 
$u=u^N$ of \eqref{eq:1.2-linear}. Similarly, \eqref{eq:4-14} follows from
\eqref{eq:4.5}. \eqref{eq:4-15} is immediate from \eqref{eq:4-13}.
\end{proof}

\subsection{Schauder estimate in the context of \eqref{eq:2.1-X}}
\label{first_schauder_context_subsec}

We now consider the solution $u^N(t,\tfrac{x}N)$ of the discrete nonlinear PDE 
\eqref{eq:2.1-X}.   Recall  $u_\pm$ in \eqref{eq:1.u_pm} and $c_\pm$ in \eqref{eq:c_pm-N} 
in this context.

The following is a corollary of Theorem \ref{thm:Li-T4.8}.
\begin{cor} \label{cor:K^3}
For the solution $u^N(t,\tfrac{x}N)$ of \eqref{eq:2.1-X} satisfying \eqref{eq:1.u_pm}, we have 
$$
|\tilde u^N|_{1+\si}^* \le C(K^{1+\frac1\si}+1),
$$
where $\si\in (0,1)$ is as in Theorem \ref{thm:Holder-u^N}, and
$C=C(n, c_\pm, T, \sigma, \|f\|_\infty, \|\fa''\|_\infty, u_\pm)$.
In particular, from the last term in the seminorm $|\tilde u^N|_{1+\si}^*$ in
\eqref{eq:4.3}, noting $d(X)=t^{\frac12}$, we obtain
$$
|\nabla_e^N u^N(t,\tfrac{x}N)| \le C t^{-\frac12}(K^{\frac1\si}+1), \quad t\in (0,T].
$$
From the first term in \eqref{eq:4.3}, that is $U_{1+\a}$ in \eqref{seminorm_defn},
we see that the $\si$-H\"older seminorm of $\nabla_e^N u^N$ has 
the singularity $(t^{-\frac12})^{1+\si}= t^{-\frac12(1+\si)}$ near $t=0$.
\end{cor}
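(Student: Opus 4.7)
The plan is to identify the quasilinear equation \eqref{eq:2.1-X} with the linear divergence-form equation \eqref{eq:1.2-linear} via the substitution \eqref{eq:De-Lta}--\eqref{eq:a_xe}, and then invoke Theorem \ref{thm:Li-T4.8} with H\"older exponent $\alpha=\sigma$ supplied by the Nash estimate. The real work is tracking the $K$-dependence of the constants in the hypotheses (A.1)--(A.3) and \eqref{eq:4.bounded}.

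First, I would set $a_{x,e}(t):=a_{x,e}(u^N(t))$ as in \eqref{eq:a_xe} and $g(t,\tfrac{x}N):=Kf(u^N(t,\tfrac{x}N))$. By \eqref{eq:1.u_pm} and \eqref{eq:c_pm-N}, $a(t)\in\mathcal{A}(c_-,c_+)$ with $c_\pm$ depending only on $\fa'$ and $u_\pm$, which gives (A.1); the same $L^\infty$-bound gives \eqref{eq:4.bounded} with $\|u^N\|_\infty\le \max(|u_-|,|u_+|)$. Since $\|f\|_{L^\infty([u_-,u_+])}<\infty$, hypothesis (A.3) holds with $G_\infty=K\|f\|_\infty$.

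The substantive step is to verify (A.2) with $A=C(K+1)$ and $\alpha=\sigma$. I would start from the H\"older bound \eqref{eq:cor2.3-2} of Corollary \ref{cor:2.3}, which carries a factor $(t_1\wedge t_2)^{-\sigma/2}$ in the denominator, and then observe that the relevant seminorm $[a_e]_\sigma^{*,N}=[a_e]_\sigma^{(0),N}$ is the \emph{weighted} seminorm of \eqref{eq:norm-9} with $a=\sigma$, $b=0$, $k=0$, so it carries precisely the weight $(d(X)\wedge d(Y))^\sigma=(t_1\wedge t_2)^{\sigma/2}$. Bounding $|t_2-t_1|^{\sigma/2}+|\tfrac{x_1}N-\tfrac{x_2}N|^\sigma\le 2|X-Y|^\sigma$ in the parabolic metric, the weight exactly cancels the singular factor and yields
\[
[a_e]_\sigma^{*,N}\ \le\ C(K\|f\|_\infty+\|u^N(0)\|_\infty)\ \le\ C(K+1),
\]
uniformly in $N$. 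This is the only place where the ``non-uniform'' Corollary \ref{cor:2.3} is used, and the weighted H\"older seminorm is what makes the reduction work despite the $t=0$ singularity.

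With (A.1)--(A.3) and \eqref{eq:4.bounded} in hand with $A\le C(K+1)$, $G_\infty\le CK$, $\|u^N\|_\infty\le C$, the main inequality \eqref{eq:4.4} of Theorem \ref{thm:Li-T4.8} gives
\[
|\tilde u^N|_{1+\sigma}^*\ \le\ C\bigl[(K+2)^{1+1/\sigma}\cdot C+CK\bigr]\ \le\ C(K^{1+1/\sigma}+1),
\]
which is the asserted bound. The pointwise gradient bound is not obtained by discarding the other seminorms, but directly from \eqref{eq:4.5}, which gives $(A+1)^{1/\sigma}\|u^N\|_\infty+(A+1)^{-1}G_\infty\le C(K^{1/\sigma}+1)$ and hence $|\nabla_e^N u^N(t,\tfrac{x}N)|\le Ct^{-1/2}(K^{1/\sigma}+1)$; note the gain of a factor $K^{-1}$ over naively reading off the last term $|\nabla^N\tilde u^N|_0^{(1)}$ of \eqref{eq:4.3}. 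Finally, the remark about the space-time $\sigma$-H\"older seminorm of $\nabla_e^N u^N$ carrying the singularity $t^{-(1+\sigma)/2}$ near $t=0$ is a direct readout of the first term $U_{1+\sigma}=[\tilde u^N]_{1+\sigma}^*$ in \eqref{seminorm_defn}, where the weight $(d(X)\wedge d(Y))^{1+\sigma}=(t_1\wedge t_2)^{(1+\sigma)/2}$ is what remains bounded in the final estimate.
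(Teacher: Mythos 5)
Your proposal is correct and follows essentially the same route as the paper: reduce \eqref{eq:2.1-X} to the linear equation \eqref{eq:1.2-linear} via \eqref{eq:De-Lta}--\eqref{eq:a_xe}, verify (A.1)--(A.3) and \eqref{eq:4.bounded} with $A=C(K+1)$, $G_\infty=K\|f\|_\infty$ using \eqref{eq:1.u_pm}, \eqref{eq:c_pm-N} and Corollary~\ref{cor:2.3}, then invoke Theorem~\ref{thm:Li-T4.8} with $\alpha=\sigma$. Your explicit check that the weighted seminorm $[a_e]_\sigma^{*,N}$ absorbs the $(t_1\wedge t_2)^{-\sigma/2}$ singularity in \eqref{eq:cor2.3-2}, and your remark that the pointwise gradient bound $Ct^{-1/2}(K^{1/\sigma}+1)$ comes from the sharper \eqref{eq:4.5} rather than from naively reading off the last term of \eqref{eq:4.3} (which would cost an extra factor of $K$), are both accurate and in fact spell out details the paper's one-line proof leaves implicit.
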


\begin{proof}
We first observe that $g = Kf(u^N)$ is such that $\|g\|_\infty \leq \|f\|_\infty K:= G_\infty$ by \eqref{eq:1.u_pm}.
Next, we have that $a(t):=a(u^N(t))$,
that is $a_{x,e}(u)$ in \eqref{eq:a_xe} taking $u=u^N(t)$, satisfies (A.1) and by Corollary \ref{cor:2.3}, $[a]_{\si}^{*,N} \leq C(n, c_\pm, T, \|u^N(0)\|_\infty, \|f\|_\infty, \|\fa''\|_\infty)(K+1):= A$.
The condition \eqref{eq:4.bounded} follows from \eqref{eq:1.u_pm}.
Therefore, the corollary is a consequence of Theorem \ref{thm:Li-T4.8}.
\end{proof}

We have the following better regularity estimate at $t=0$ assuming \eqref{eq:Th4.2-0}.  
Recall the constant $C_0$ in \eqref{eq:Th4.2-0}.

\begin{cor}
\label{extended_cor}
For the solution $u^N$ of \eqref{eq:2.1-X} satisfying \eqref{eq:1.u_pm}, 
when the initial data satisfies \eqref{eq:Th4.2-0}, we have
\begin{align}  \label{eq:cor4.4-1}
|\tilde u^N|_{1+\si} \le C (K+C_0 +1)^{1+\frac1\si},
\end{align}
where $\si\in (0,1)$ is as in Theorem \ref{thm:Holder-u^N} and 
$C=C(n, c_\pm, T, \sigma, \|f\|_\infty, \|\fa''\|_\infty, u_\pm)$.  
Recall $\|f\|_\infty$ and $\|\fa''\|_\infty$  defined in Corollary \ref{cor:2.3}.
In particular, 
\begin{align}  \label{eq:cor4.4-2}
|\nabla_e^N u^N(t,\tfrac{x}N)|&\leq C (K+C_0+1)^{\frac1\si}
\end{align}
and the $\si$-H\"older seminorm of $\nabla _e^N u^N$ is uniformly bounded in $t\geq 0$,
\begin{align}  \label{eq:cor4.4-3}
[\nabla^N_e \tilde u^N]_\si \leq C (K+C_0+1)^{1+\frac1\si}.
\end{align}
\end{cor}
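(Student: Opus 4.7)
The plan is to mimic the proof of Corollary \ref{cor:K^3} but invoke Theorem \ref{thm:4.2} in place of Theorem \ref{thm:Li-T4.8}, so that the singularity near $t=0$ is removed under the regularity assumption \eqref{eq:Th4.2-0}. The main task is therefore to verify, for the linearization $a(t)=a(u^N(t))$ defined through \eqref{eq:a_xe} and $g(t,\tfrac{x}N)=Kf(u^N(t,\tfrac{x}N))$, all three extra hypotheses required by Theorem \ref{thm:4.2}: the unweighted bounds \eqref{eq:Th4.2-B} on $[a]_\a^{(-\a),N}$ and \eqref{eq:Th4.2-1} on $\nabla_e^{N,*}a_{x,e}(0)$, in addition to \eqref{eq:Th4.2-0} which is given.

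First I would record the background bounds that are common with Corollary \ref{cor:K^3}. By \eqref{eq:1.u_pm}, $\|u^N\|_\infty\le\max(|u_-|,|u_+|)$ and $\|g\|_\infty\le K\|f\|_\infty$, so (A.1), (A.3) and \eqref{eq:4.bounded} hold with $c_\pm$ as in \eqref{eq:c_pm-N} and $G_\infty=K\|f\|_\infty$. The key new ingredient is to replace Corollary \ref{cor:2.3} with Corollary \ref{cor:2.6}: under \eqref{eq:Th4.2-0}, the estimate \eqref{eq:cor2.6-2} upgrades the H\"older control of $a(t)$ to the unweighted form
\[
|a_{x_1,e}(t_1)-a_{x_2,e}(t_2)|\le C\bigl(K\|f\|_\infty+\|u^N(0)\|_\infty+C_0\bigr)\Bigl\{|t_2-t_1|^{\sigma/2}+\bigl|\tfrac{x_1}N-\tfrac{x_2}N\bigr|^\sigma\Bigr\},
\]
so \eqref{eq:Th4.2-B} holds with $\alpha=\sigma$ and $B\le C(K+C_0+1)$, where $C=C(n,c_\pm,T,\|f\|_\infty,\|\fa''\|_\infty,u_\pm)$.

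Next I would verify \eqref{eq:Th4.2-1}. Since $a_{x,e}(0)$ is the divided difference of $\fa$ along $u^N(0)$, writing $a_{x-e,e}(0)$ in the form \eqref{eq:a_xe} at the shifted base point and applying Lemma \ref{lem:mvt} with $(a,b)=(u^N(0,\tfrac{x+e}N),u^N(0,\tfrac{x}N))$ and $(c,d)=(u^N(0,\tfrac{x}N),u^N(0,\tfrac{x-e}N))$ yields
\[
|a_{x,e}(0)-a_{x-e,e}(0)|\le \tfrac12\|\fa''\|_\infty\bigl(|u^N(0,\tfrac{x+e}N)-u^N(0,\tfrac{x}N)|+|u^N(0,\tfrac{x}N)-u^N(0,\tfrac{x-e}N)|\bigr).
\]
Multiplying by $N$ and using \eqref{eq:Th4.2-0} (which controls the first discrete derivatives of $u^N(0)$ by $C_0$) gives $|\nabla_e^{N,*}a_{x,e}(0)|\le \|\fa''\|_\infty C_0=:C_1$. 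Consequently the quantity $C_2=n(C_1C_0+c_+C_0)$ appearing in Theorem \ref{thm:4.2} is bounded by $C(C_0^2+C_0)\le C(K+C_0+1)^2$.

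Finally I would plug these values of $B$, $G_\infty$ and $C_2$ into \eqref{eq:4-13}, \eqref{eq:4-14} and \eqref{eq:4-15}. Since $(B+1)^{1+1/\sigma}\|u^N\|_\infty\le C(K+C_0+1)^{1+1/\sigma}$, $G_\infty\le C(K+C_0+1)$, and $C_2\le C(K+C_0+1)^2\le C(K+C_0+1)^{1+1/\sigma}$ (as $\sigma<1$ forces $1+1/\sigma>2$), the dominant term in each bound is $C(K+C_0+1)^{1+1/\sigma}$, yielding \eqref{eq:cor4.4-1}. The pointwise bound \eqref{eq:cor4.4-2} follows analogously from \eqref{eq:4-14} since $(B+1)^{1/\sigma}\|u^N\|_\infty$ dominates $(B+1)^{-1}(G_\infty+C_2)$, and \eqref{eq:cor4.4-3} is a direct translation of \eqref{eq:4-15}. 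The only mild technical point to be careful about is ensuring that the absorption of $C_2$ into $(K+C_0+1)^{1+1/\sigma}$ (which uses $\sigma\in(0,1)$) is uniform, and that the constants introduced in Lemma \ref{lem:mvt} and Corollary \ref{cor:2.6} only depend on the quantities listed in the statement; this is the main housekeeping step, but otherwise the corollary is a direct specialization of Theorem \ref{thm:4.2} to \eqref{eq:2.1-X}.
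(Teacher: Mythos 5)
Your proof follows essentially the same route as the paper: verify hypotheses (A.1), (A.3), \eqref{eq:4.bounded}, obtain \eqref{eq:Th4.2-B} from Corollary \ref{cor:2.6}, obtain \eqref{eq:Th4.2-1} with $C_1 \le \|\fa''\|_\infty C_0$ from Lemma \ref{lem:mvt}, then apply Theorem \ref{thm:4.2} and absorb the $C_2 \le C(C_0^2+1)$ term into $(K+C_0+1)^{1+1/\sigma}$ using $\sigma<1$. The argument is correct and matches the paper's.
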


\begin{proof}
Note that the condition \eqref{eq:Th4.2-B} follows from 
\eqref{eq:cor2.6-2} in  Corollary \ref{cor:2.6} with 
$$B= C(K \|f\|_\infty + \|u^N(0)\|_\infty + C_0)$$ 
where $C=C(n, c_\pm, T, \|\fa''\|_\infty)$.   Moreover,
since $a_{x,e}(0) = a_{x,e}(u^N(0))$ is determined by \eqref{eq:a_xe},
it satisfies (A.1) and the condition \eqref{eq:Th4.2-1}, 
$|\nabla^{N,*}_e a_{x,e}(0)|\leq C_1$ where $C_1= C(\|\fa''\|_\infty) C_0$,
due to the `mean-value' Lemma \ref{lem:mvt} and \eqref{eq:Th4.2-0}.
Thus, with $\|g\|_\infty = K \|f\|_\infty =G_\infty$ and $C_2 =n (C_1C_0 + c_+C_0)
\leq C(n, c_+, \|\fa''\|_\infty)[C_0^2+1]$,
the corollary follows from Theorem \ref{thm:4.2}. 
Note that we obtain the bound
$$
|\tilde u^N|_{1+\si} \le C\big((K+C_0 +1)^{1+\frac1\si} + C_0^2 +1\big)
$$
and the others.  However, since $\si\in (0,1)$, both $C_0^2$ and $1$ are bounded by
$(K+C_0 +1)^{1+\frac1\si}$ so that the desired three estimates 
\eqref{eq:cor4.4-1}--\eqref{eq:cor4.4-3} are shown.
\end{proof}

\subsection{Interpolation inequality}
\label{interpolation_sec}

We adapt the (4.2c) in Proposition 4.1 of \cite{Li96} to our discrete context.
Recall \eqref{eq:3.3-1} for the continuous space gradient $\nabla^N u(t,z)$
defined for a function $u$ on $\Om=[0,T]\times \T^n$. 

\begin{prop}  \label{prop:Li-P4.2}
(cf.\ (4.2c) in Proposition 4.1 of \cite{Li96})
Let $\a\in (0,1)$.
Then, for every function $u$ on $\Om$,
\begin{align}  \label{eq:4.U1}
U_1:= |\nabla^N u|_0^{(1)} \, \Big(\!\! = \max_{|e|=1, e>0} |\nabla_e^N u|_0^{(1)} \Big)
\le 3 [u]_0^{\frac{\a}{1+\a}}
\big( [u]_0+ [u]_{1+\a}^*\big) ^{\frac1{1+\a}}.
\end{align}
Moreover,
\begin{align}\label{eq:4.U2}
U_2 := |\nabla^N u|_0^{(2)}\  \le \ 5( |u|_0^{(1)})^{\frac{\a}{1+\a}}
\big( |u|_0^{(1)}+ [u]_{1+\a}^{(1)}\big) ^{\frac1{1+\a}}.
\end{align}
\end{prop}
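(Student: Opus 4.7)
The plan is to mimic the classical interpolation argument for $C^{1+\a}$ seminorms (compare \cite{Li96}, Proposition 4.1), with the central modification that the finite-difference operator $\nabla_e^N$, already at the discrete scale $\tfrac1N$, be compared to its average over a coarser macroscopic scale $\rho$. Fix $X=(t,z)\in\Om$ with $d(X)=\sqrt{t}>0$, a unit vector $e$, and an integer $m\ge 2$ with $\tfrac{m}{N}\le \tfrac{d(X)}{2}$. The telescoping identity gives
\begin{equation*}
\bar G_m := \tfrac1m\sum_{k=0}^{m-1} \nabla_e^N u(X+\tfrac{ke}{N})
= \tfrac{N}{m}\bigl(u(X+\tfrac{me}{N})-u(X)\bigr),
\end{equation*}
so that $|\bar G_m|\le \tfrac{N}{m}[u]_0$. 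The discrepancy $\nabla_e^N u(X)-\bar G_m = \tfrac1m\sum_{k=0}^{m-1}\bigl(\nabla_e^N u(X)-\nabla_e^N u(X+\tfrac{ke}{N})\bigr)$ is controlled by the $(1{+}\a)$-seminorm: since $d(X+\tfrac{ke}{N})=d(X)$ (the translation is spatial), each summand is bounded by $[u]_{1+\a}^{*}\,d(X)^{-(1+\a)}(k/N)^{\a}$, and hence $|\nabla_e^N u(X)-\bar G_m|\le [u]_{1+\a}^{*}\,d(X)^{-(1+\a)}(m/N)^{\a}$.

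Combining and multiplying by $d(X)$, with $\rho:=\tfrac{m}{N}$,
\begin{equation*}
d(X)\,|\nabla_e^N u(X)|\ \le\ \tfrac{d(X)\,[u]_0}{\rho}\ +\ [u]_{1+\a}^{*}\bigl(\tfrac{\rho}{d(X)}\bigr)^{\a}.
\end{equation*}
I would then optimize in $\rho$ over the admissible range $\rho\in [\tfrac{2}{N},\tfrac{d(X)}{2}]$ (modulo integer rounding). The unconstrained minimum sits at $\rho_*/d(X)\sim ([u]_0/[u]_{1+\a}^{*})^{1/(1+\a)}$; whenever it lies in the admissible range one obtains $d(X)\,|\nabla_e^N u(X)|\le C\,[u]_0^{\a/(1+\a)}([u]_{1+\a}^{*})^{1/(1+\a)}$, from which \eqref{eq:4.U1} follows after bookkeeping of constants and the elementary inequality $[u]_{1+\a}^{*}\le [u]_0+[u]_{1+\a}^{*}$. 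Two boundary regimes must be handled separately: if $\rho_*>\tfrac{d(X)}{2}$ (so $[u]_{1+\a}^{*}\le C\,[u]_0$) the choice $\rho=\tfrac{d(X)}{2}$ gives $d(X)|\nabla_e^N u(X)|\le C\,[u]_0$, which sits inside the desired right-hand side via the identity $[u]_0=[u]_0^{\a/(1+\a)}[u]_0^{1/(1+\a)}\le [u]_0^{\a/(1+\a)}([u]_0+[u]_{1+\a}^{*})^{1/(1+\a)}$; if $\rho_*<\tfrac{2}{N}$, so that $[u]_0\le C\,[u]_{1+\a}^{*}(Nd(X))^{-(1+\a)}$, one uses the crude bound $|\nabla_e^N u(X)|\le N|u(X+\tfrac{e}{N})-u(X)|\le N[u]_0$ to obtain $d(X)|\nabla_e^N u(X)|\le Nd(X)[u]_0\le C\,[u]_{1+\a}^{*}(Nd(X))^{-\a}$, which is compared to the RHS using the same defining inequality.

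The companion estimate \eqref{eq:4.U2} follows from an identical template, multiplying through by $d(X)^2$ in place of $d(X)$ and systematically replacing $[u]_0$ by $2|u|_0^{(1)}$ and $[u]_{1+\a}^{*}$ by $[u]_{1+\a}^{(1)}$. Concretely, $|u(X+\tfrac{me}{N})-u(X)|\le 2 d(X)^{-1}|u|_0^{(1)}$ (again using that $d$ is unchanged by spatial shifts) yields $d(X)^2|\bar G_m|\le 2(d(X)/\rho)\,|u|_0^{(1)}$, while the H\"older step gives $d(X)^2|\nabla_e^N u(X)-\bar G_m|\le [u]_{1+\a}^{(1)}(\rho/d(X))^{\a}$; the same three-regime optimization in $\rho$ then yields \eqref{eq:4.U2}. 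The main obstacle will be the bookkeeping in the two boundary regimes: because the forced discrete floor $\rho\ge \tfrac1N$ precludes the continuous optimum, the substitute estimates must be meshed with the precise form of the RHS, including the sharp numerical constants $3$ and $5$. The remainder is essentially the standard interpolation computation carried over term by term.
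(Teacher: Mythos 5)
Your strategy is the same as the paper's at its core: compare $\nabla^N_e u(X)$ to an average of $\nabla^N_e u$ over a spatial scale $\rho$ in the $e$-direction, bound the average by $[u]_0/\rho$, bound the discrepancy by $[u]^*_{1+\a}(\rho/d(X))^\a$, and optimize $\rho$. Both routes arrive at the identical master inequality $d(X)\,|\nabla^N_e u(X)|\le [u]_0/\epsilon+\epsilon^\a[u]^*_{1+\a}$ with $\epsilon=\rho/d(X)$. The difference is only in how the average is formed. You telescope the discrete arithmetic mean $\bar G_m=\tfrac1m\sum_{k<m}\nabla^N_e u(X+\tfrac{ke}{N})$, which locks $\rho=m/N$ onto the lattice $\{1/N,2/N,\dots\}$. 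The paper instead averages $\nabla^N_e u(t,\cdot)$ in the continuous variable over a segment of arbitrary length $|z-y|$, using the shift identity $\int_{z_1}^{y_1}\nabla^N_e u\,dw_1 = N\int_{z_1}^{z_1+1/N}[u(\cdot+y-z)-u(\cdot)]\,dw_1$, so $\epsilon$ ranges freely over $(0,\tfrac12]$.

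This is not an innocuous cosmetic choice: because your $\epsilon$ is confined to multiples of $1/(Nd(X))$, the two-case optimization in the paper becomes a three-regime case analysis for you (the usual two, plus $\rho^*<2/N$, equivalently $d(X)$ comparable to the mesh), and the intermediate regime forces you to round $\rho$ to the nearest lattice point, which costs an extra absolute factor (roughly $2$). You flag this explicitly but do not carry it out, so the stated constants $3$ and $5$ are not actually established by the argument as written, and the degenerate regime $d(X)<4/N$ (where no admissible $m\ge 2$ exists) is unaddressed. In substance this gap is harmless --- the downstream estimate \eqref{eq:U_1} folds the constant into a generic $C$ --- but the continuous-average trick in the paper sidesteps all of the discreteness at once, requires no rounding or extra case, and is the simpler route.
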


\begin{proof}
We argue now the first statement and later discuss the second inequality.  

Consider $ \nabla^N_e u(X)$ for $X=(t,z)\in \Omega$ for fixed $e$.
For simplicity, let us assume that $e$ is the vector $\langle 1,0,\ldots,0\rangle$.  We will write 
$z=\langle z_1,z_2,\ldots,z_n\rangle \in \T^n$.

Let $Y=(t,y)$ be such that $y=(y_1,z_2,z_3,\ldots,z_n\rangle$ and $z_1-y_1$ has the same sign as $\nabla^N_e u(X) \in \R$.
Take $Y$ also such that $|z-y| =|z_1-y_1|= \epsilon d(X)$ for an $\epsilon\in (0,\tfrac12]$.
We note that in this argument that $t>0$ is fixed, and so $d(X)=d(Y) = \sqrt{t}$.

Then, since $z_1-y_1$ has the same sign as $\nabla^N_e u(X)$ and
$|z-y| =|z_1-y_1|$,  we have
\begin{align}  \label{eq:4.21-u}
|\nabla^N_e u(X)| &= \nabla^N_e u(X) \cdot \frac{z_1-y_1}{|z-y|} \\
&= \Big(\frac{1}{|z-y|} \int_{z_1}^{y_1} \nabla^N_e u(t,w)dw_1\Big)\cdot \frac{z_1-y_1}{|z-y|}\nonumber\\
&\ \ \ \ \ \ \  + \Big(\nabla^N_e u(X) - \frac{1}{|z-y|} \int_{z_1}^{y_1} \nabla^N_e u(t,w)dw_1\Big)\cdot \frac{z_1-y_1}{|z-y|}\nonumber,
\end{align}
where $w = \langle w_1,z_2,\ldots,z_n\rangle$.  
However, the above integral term can be rewritten as
\begin{align*} 
\int_{z_1}^{y_1}  \nabla^N_e u(t,w)dw_1 &= N\int_{z_1}^{y_1} [u(t,w+\tfrac{e}N) - u(t,w)] dw_{1}\\
&= N \Big\{\int_{z_1+\tfrac1N}^{y_1+\tfrac1N} u(t,w)dw_1 - \int_{z_1}^{y_1} u(t,w)dw_1\Big\}
  \notag \\
&= N \int_{z_1}^{z_1+\tfrac1N} \big[u(t,w+y-z) -u(t,w) \big]dw_1,
\notag
\end{align*}
Thus, by \eqref{eq:4.21-u}
and recalling $[u]_0 = \underset{\Om}{{\rm osc}}(u)$ and $[u]_{1+\a}^*$ in
\eqref{eq:norm-6}, \eqref{eq:norm-11}, we have
\begin{align}\label{Osc step}
|\nabla^N_e u(X)| &\leq \frac{N}{|z-y|} \int_{z_1}^{z_1+\tfrac1N}\big|u(t,w+y-z)-u(t,w) \big|dw_1\\
&\ \ \ \ \ \ \ \ \ \ \ \ \ \ \ \ + \Big(\frac{1}{|z-y|} \int_{z_1}^{y_1} 
[\nabla^N_e u(X) - \nabla^N_e u(t,w)] dw_1\Big)\cdot \frac{z_1-y_1}{|z-y|}  \notag  \\
&\leq \frac{[u]_0}{|z-y|} + \frac{[u]_{1+\a}^*}{|z-y|} \Big| \int_{z_1}^{y_1} 
(d(X)\wedge d(W))^{-(1+\alpha)} |z-w|^\alpha dw_1\Big|, \nonumber
\end{align}
where $W=(t,w)$.
As remarked earlier, as $t>0$ is fixed, $d(X)=d(W)=\sqrt{t}$ for the points considered above.
Also,
$\big|\int_{z_1}^{y_1} |z-w|^\alpha dw_1\big| \leq \frac{1}{1+\alpha} |z-y|^{1+\alpha} \leq |z-y|^{1+\alpha}$.

From these computations, we have
\begin{align*}
|\nabla^N_e u(X)|
&\leq \frac{[u]_0}{|z-y|} + 
d(X)^{-(1+\alpha)}[u]_{1+\a}^* |z-y|^\alpha\\
&= \frac{[u]_0}{\epsilon d(X)} + \frac{\epsilon^\alpha [u]_{1+\a}^*}{d(X)}.
\end{align*}
Multiply by $d(X)$ and take supremum in $X$ to get
$$U_1 \leq \frac{[u]_0}{\epsilon } + 
\epsilon^\alpha [u]_{1+\a}^*.$$

We would like to choose $\epsilon \in (0,\tfrac12]$ as follows:
If $[u]_0<[u]_{1+\a}^*$,
then take $\epsilon = \tfrac12 ([u]_0/[u]_{1+\a}^*)^{1/(1+\alpha)} (< \tfrac12)$.  
We get in this case
\begin{align*}
U_1 &\leq 2\big([u]_{1+\a}^*\big)^{1/(1+\alpha)}[u]_0^{\alpha/(1+\alpha)} 
+  2^{-\a}[u]_0^{\alpha/(1+\alpha)} \big([u]_{1+\a}^*\big)^{1/(1+\alpha)} \\
& \le 3 \big([u]_{1+\a}^*\big)^{1/(1+\alpha)}[u]_0^{\alpha/(1+\alpha)},
\end{align*}
which is bounded by the right hand side of \eqref{eq:4.U1}.
Otherwise, when $[u]_0\ge [u]_{1+\a}^*$,
choose $\epsilon = 1/2$.  Then, 
\begin{align*}
U_1 & \leq 2[u]_0 + 
2^{-\alpha} [u]_{1+\a}^* 
\leq 3
[u]_0,\end{align*}
which is also bounded by the right hand side of \eqref{eq:4.U1}.
Thus, the first inequality \eqref{eq:4.U1} follows.  

The second inequality \eqref{eq:4.U2} follows the same scheme.  Indeed, in \eqref{Osc step}, 
one bounds $|u(t,w+y-z)- u(t,w)| \leq |u(t,w+y-z)| + |u(t,w)| \le 2 d(X)^{-1} |u|_0^{(1)}$ 
and $|\nabla^N_e u(X) - \nabla^N_eu(t,w)| \leq [u]_{1+\a}^{(1)} d(X)^{-(2+\alpha)}|z-w|^\alpha$,
recall \eqref{eq:norm-5} for $|u|_0^{(1)}$.  Multiplying through by $d^2(X)=(\sqrt{t})^2$ 
at this point, we may follow the derivation of the first inequality to obtain the second statement.
\end{proof}

\subsection{Energy inequalities and estimate on $U_{1+\alpha}=[\tilde u^N]^*_{1+\alpha}$}
\label{sec:6}

For an open domain $D\subset \T^n$, we define its discrete interior by
\begin{equation} \label{eq:outerb}
D_N:= D\cap \tfrac1N \T_N^n.
\end{equation} 
The outer boundary $\partial_N^+ D_N$ and the closure $\overline{D_N}$ of $D_N$ are
defined as in \eqref{eq:3.ob+c} taking $\La=D_N$, respectively.
Recall $\Om = [0,T]\times \T^n$ and $\Om_N =[0,T]\times \tfrac1N\T_N^n$.

Take $Y=(t_1,y)\in \Om$ and $r: 0< r<\frac12 d(Y) = \frac12 t_1^{\frac12}$.
Recall $Q(r)\equiv Q(Y,r) = (t_1-r^2,t_1)\times D(y,r)$,
where $D(y,r) = \{z\in \T^n;|z-y|<r\}$ with the distance $|z-y|$ as in
\eqref{eq:distanceTn}, and set  $Q_N(r) \equiv Q_N(Y,r) = (t_1-r^2,t_1)\times D_N(y,r)
\, (=Q(r) \cap \Om_N)$, where $D_N(y,r)$ is the discrete interior of
$D(y,r)$.  Define the parabolic outer boundary of $Q_N(Y,r)$ by
\begin{equation} \label{eq:4.24-A}
\mathcal{P}_N^+Q_N(Y,r) :=  \{t_1-r^2\}\times \overline{D_N(y,r)}
\cup  (t_1-r^2,t_1]\times \partial_N^+D_N(y,r),
\end{equation} 
where $\partial_N^+D_N(y,r)$ and $\overline{D_N(y,r)}$ are the 
outer boundary and closure of $D_N(y,r)$ defined as above, respectively.
We also denote 
\begin{equation} \label{eq:4.25-A}
\overline{Q_N(Y,r)} :=[t_1-r^2,t_1]\times \overline{D_N(y,r)}.
\end{equation} 
In the following, $Y$ is fixed until it moves in
the proof of Proposition \ref{spatial-variation-prop}.  All constants
will be uniform in $Y$.

Let $u=u^N$ be the solution of the linear discrete PDE 
\eqref{general-divergence} or equivalently \eqref{eq:1.2-linear}
on $\Om_N$:
$$
\mathcal{L}_{a(\cdot)}u \equiv (L_{a(t)}^{N} -\partial_t) u =-g(t)
$$ 
with $a(t)$ and $g(t)$ satisfying the assumptions (A.1), (A.2) and (A.3).

Take the closest point $\tilde y \in \tfrac1N\T_N^n$ to $y$, in particular,
$|y-\tilde y| \le \tfrac{\sqrt{n}}{2N}$ holds, and set $\tilde Y :=(t_1,\tilde y)\in \Om_N$.
Note that $d(\tilde Y) =d(Y)= t_1^{\tfrac12}$.
Let $v=v^N =v^{N,Q(r)}$ be the solution of the discrete heat equation
\eqref{eq:heat-D} or equivalently \eqref{eq:dHeq} on $Q_N(r)=Q_N(Y,r)$:
\begin{align}  \label{eq:4.heat}
\mathcal{L}_av\equiv (\De_a^N -\partial_t)v =0
\end{align}
with constant coefficients
$a_e:= a_e(\tilde Y) \equiv a_{N\tilde y,e}(t_1)\ge c_->0$
under the boundary condition $v=u$ at $\mathcal{P}_N^+Q_N(Y,r)$.
We will consider the case that $r> \tfrac{\sqrt{n}}N$ (in the proof of Lemma
\ref{lem:5.3}), in particular, $Q_N(r)\not=\emptyset$.

Set 
\begin{align}  \label{eq:w=u-v}
w \equiv w^N:= u-v\equiv u^N-v^N \quad \text{ on } \; \overline{Q_N(Y,r)}.
\end{align}
Then, the following discrete energy inequality holds.

\begin{lem} \label{lem:D-energy-a-Holder}
Assume  $0<r<\frac12 d(Y)=\frac12 t_1^{\frac12}$ and $Q_N(r)\not=\emptyset$.
Then, we have
\begin{align}  \label{eq:Nnablawdt}
\int_{t_1-r^2}^{t_1} & N^{-n}\sum_{\tfrac{x}N \text{ or }\frac{x+e}N\in 
D_N(y,r); |e|=1, e>0} |\nabla_e^N w|^2(X) dt \\
& \le CA^2 (\tfrac{r+\tfrac{1+\sqrt{n}}N}{d(Y)} )^{2\a} (r+\tfrac1N)^n r^2 
\sup_{X \text{ or } X+\frac{e}N \in Q_N(r); |e|=1, e>0} |\nabla_e^N u(X)|^2 
\notag \\
& \quad +  C G_\infty  (r+\tfrac1N)^n (\tfrac{r}{d(Y)})^{1+\a}  \sup_{X\in Q_N(r)} |w(X)|,  \notag
\end{align}
where $X=(t,\tfrac{x}N)$ and $X+\frac{e}N = (t, \frac{x+e}N)$, and recall
$Q_N(r) \equiv Q_N(Y,r)$.  Here, the constants $C=C(n,c_-, T)$.
\end{lem}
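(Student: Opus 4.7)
The plan is to run a standard $L^2$ energy estimate for $w = u^N-v^N$ on the discrete parabolic cylinder $Q_N(Y,r)$, using the summation-by-parts identities of Lemma~\ref{lem:DG-Q-2} and the Dirichlet boundary condition $w=0$ on $\mathcal{P}_N^+Q_N(Y,r)$ (which in particular forces $w(t_1-r^2,\cdot)=0$). Subtracting $\mathcal{L}_a v=0$ from $\mathcal{L}_{a(\cdot)}u=-g$, the function $w$ satisfies
\begin{equation*}
\partial_t w \;=\; \De_a^N w \;+\; \bigl(L_{a(t)}^N-\De_a^N\bigr)u \;+\; g(t,\tfrac{x}{N}) \quad\text{on } Q_N(Y,r),
\end{equation*}
and it is the discrepancy between the frozen coefficients $a_e=a_{N\tilde y,e}(t_1)$ of $\De_a^N$ and the variable $a_{x,e}(t)$ of $L_{a(t)}^N$ that will be controlled through hypothesis~(A.2).

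The first step will be to multiply the above identity by $w$, sum over $\tfrac{x}{N}\in D_N(y,r)$ with the normalization $N^{-n}$, and integrate in $t\in[t_1-r^2,t_1]$. The time-derivative side contributes $\tfrac12 N^{-n}\sum w^2(t_1,\cdot)\ge 0$, while \eqref{eq:SbP-3} combined with $w|_{\partial_N^+}=0$ converts the $\De_a^N w$ term into
\begin{equation*}
c_-\int_{t_1-r^2}^{t_1} N^{-n}\!\!\sum_{\tfrac{x}{N}\text{ or }\tfrac{x+e}{N}\in D_N;\ |e|=1,e>0}\!\!(\nabla_e^N w)^2\,dt
\end{equation*}
on the left-hand side, which is exactly the quantity to be estimated. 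For the coefficient-discrepancy term I will rewrite $(L_{a(t)}^N-\De_a^N)u=-\sum_{e>0}\nabla_e^{N,*}\bigl((a_{x,e}(t)-a_e)\nabla_e^N u\bigr)$ and apply \eqref{eq:SbP-2} with $F=w$, producing $-\sum(a_{x,e}(t)-a_e)\nabla_e^N w\,\nabla_e^N u$ summed over the same enlarged index set. Hypothesis~(A.2) applied to the pair $(X,\tilde Y)$ with $X=(t,\tfrac{x}{N})\in\overline{Q_N(Y,r)}$ and $\tilde Y=(t_1,\tilde y)$, together with $r<\tfrac12 d(Y)$ (which gives $d(X)\wedge d(\tilde Y)\ge c\, d(Y)$) and the triangle bound $|X-\tilde Y|\le r+\tfrac{1+\sqrt{n}}{N}$ (from $|y-\tilde y|\le \tfrac{\sqrt{n}}{2N}$ and the possible $\tfrac{e}{N}$ offset), yields the uniform estimate $|a_{x,e}(t)-a_e|\le C\,A\,d(Y)^{-\a}\bigl(r+\tfrac{1+\sqrt{n}}{N}\bigr)^{\a}$. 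A Young inequality then absorbs half of the $(\nabla_e^N w)^2$ sum into the left-hand side, and using $N^{-n}\#D_N(y,r)\le C(r+\tfrac1N)^n$ together with the time factor $r^2$ delivers the first right-hand-side term in \eqref{eq:Nnablawdt}.

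The forcing contribution is the easier of the two: a direct bound gives $|N^{-n}\int\!\sum wg\,dt|\le C\,G_\infty\sup_{Q_N(r)}|w|\cdot r^2(r+\tfrac1N)^n$, and the factor $r^2$ can be rewritten as $C_T(r/d(Y))^{1+\a}$ since $r<\tfrac12 d(Y)$ and $d(Y)\le\sqrt{T}$ imply $r^{1-\a}d(Y)^{1+\a}\le C_T$; this recovers the second right-hand-side term. The main technical care will lie in the discrete summation by parts itself: keeping track of the enlarged index set $\{\tfrac{x}{N}\text{ or }\tfrac{x+e}{N}\in D_N\}$ that appears whenever SBP is performed against a function vanishing only on the outer boundary $\partial_N^+D_N(y,r)$. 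Lemma~\ref{lem:DG-Q-2} is tailored precisely for this bookkeeping, so no spurious boundary contributions arise and the estimate closes cleanly.
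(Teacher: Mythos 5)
Your proposal is correct and follows essentially the same route as the paper: the same energy identity obtained by testing $\partial_t w = \De_a^N w + (L_{a(t)}^N-\De_a^N)u + g$ against $w$, the same use of Lemma~\ref{lem:DG-Q-2} and the vanishing boundary data to produce the enlarged index set, the same (A.2)-based bound $|a_{x,e}(t)-a_e|\le CA\,d(Y)^{-\a}\bigl(r+\tfrac{1+\sqrt n}{N}\bigr)^\a$ with Young's inequality absorbing the $\nabla_e^N w$ square, and the same conversion $r^2\le C_T(r/d(Y))^{1+\a}$ for the forcing term.
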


\begin{proof}
The proof is divided into steps.

\vskip .1cm
{\it Step 1.} First from $\partial_t w= L_{a(t)}^{N}u+g(t)-\De_a^Nv$ on $Q_N(r)$,
and then applying \eqref{eq:SbP-3} in Lemma \ref{lem:DG-Q-2} since
$w(t,\cdot)=0$ at the boundary $\partial_N^+D_N(y,r)$, we have
\begin{align*}
\tfrac12\partial_t & \bigg( N^{-n}\sum_{\tfrac{x}N\in D_N(y,r)} w^2(X) \bigg)\\
& = N^{-n}\sum_{\tfrac{x}N \in D_N(y,r); |e|=1, e>0} 
w(X) \big( -\nabla_e^{N,*} (a_e(\tilde{Y})\nabla_e^N w)\big)(X) \\
& \qquad + N^{-n}\sum_{\tfrac{x}N \in D_N(y,r); |e|=1, e>0} 
w(X) \big( -\nabla_e^{N,*}((a_e(X)-a_e(\tilde{Y}))\nabla_e^N u)\big) (X)  \\
&\qquad + N^{-n}\sum_{\tfrac{x}N\in D_N(y,r)} w(X)g(X) \\
&= - N^{-n}\sum_* a_e(\tilde{Y})|\nabla_e^N w|^2(X)\\
& \qquad - N^{-n}\sum_* (a_e(X)-a_e(\tilde{Y})) \nabla_e^N w(X)
 \cdot \nabla_e^N u(X)  \\
&\qquad + N^{-n}\sum_{\tfrac{x}N\in D_N(y,r)} w(X)g(X) \\
& =: -I_1+I_2+I_3,
\end{align*}
where $X=(t,\tfrac{x}N)$, $a_e(X):= a_{x,e}(t)$ and $\sum\limits_*$ means the sum
over $x$ and $e$ such that $\tfrac{x}N$ or $\frac{x+e}N\in D_N(y,r)$
and $|e|=1, e>0$.  Note that we put a minus sign in $I_1$ so that $I_1\ge 0$.
\vskip .1cm

{\it Step 2.}
Integrate both sides in $t\in [t_1-r^2,t_1]$.  Since $w(t_1-r^2, \cdot)=0$,
the left hand side is
$$
\tfrac12 N^{-n}\sum_{\tfrac{x}N\in D_N(y,r)} w^2(t_1, \tfrac{x}N)\ge 0.
$$

On the other hand, for every $\e>0$,
\begin{align*}
\int_{t_1-r^2}^{t_1} |I_2|dt 
& \le \e \int_{t_1-r^2}^{t_1}  N^{-n}\sum_* |\nabla_e^N w|^2(X) dt \\
& \qquad + \frac1\e \int_{t_1-r^2}^{t_1}  N^{-n}
\sum_* |a_e(X)-a_e(\tilde{Y})|^2 |\nabla_e^N u|^2(X) dt.
\end{align*}
Here, we estimate $ |\nabla_e^N u|^2(X) \le \sup_{X \text{ or } 
X+\frac{e}N\in Q_N(r)} |\nabla_e^N u|^2$, and by $[a]_\a^{*,N}\le A$,
\begin{align*}
|a_e(X)-a_e(\tilde{Y})|\le A(d(X)\wedge d(\tilde{Y}))^{-\a} |X-\tilde{Y}|^\a
\le A 2^\a d(Y)^{-\a}(r+\tfrac{1+\sqrt{n}}N)^\a,
\end{align*}
since $|X-\tilde{Y}|\le |X-Y|+|Y-\tilde{Y}| \le (r+\tfrac1N)+\tfrac{\sqrt{n}}{2N}
\le r+\tfrac{1+\sqrt{n}}N$ 
for $X\in \overline{Q_N(r)}$ (note that, when $X+\tfrac{e}N\in Q_N(r)$,
it happens that $X\in \overline{Q_N(r)}$)
and $r<\frac12 d(Y)= \tfrac12 d(\tilde Y)$ implies $d(X)\ge \frac12 d(Y)$ 
(note the time direction is not enlarged by $\frac1N$) as
\begin{align}  \label{eq:d-Y-X}
d(Y)=t_1^{\frac12}\le t^{\frac12}+|t-t_1|^{\frac12}
\le d(X)+r \le d(X) + \tfrac12 d(Y),
\end{align}
so that $d(X)\wedge d(\tilde{Y}) = d(X) \ge \frac12 d(Y)$.
Moreover,
$$
\int_{t_1-r^2}^{t_1} N^{-n}
\sum_{\tfrac{x}N \text{ or }\frac{x+e}N\in 
D_N(y,r); |e|=1, e>0} 1 dt \le C(n) (r+\tfrac1N)^n r^2.
$$
Note that, even for very small $r>0$, $D_N(y,r)$ may contain
a single point so that we pick up the factor $+\tfrac1N$.
Therefore, recalling $a_e(\tilde{Y})\ge c_->0$, we obtain
\begin{align*}
\int_{t_1-r^2}^{t_1} |I_2|dt 
& \le \frac{\e}{c_-} \int_{t_1-r^2}^{t_1}  I_1dt \\
& \quad + \frac1\e C(n)2^{2\a} A^2(\tfrac{r+\tfrac{1+\sqrt{n}}N}{d(Y)})^{2\a} (r+\tfrac1N)^n r^2 
\sup_{X \text{ or } X+\frac{e}N \in Q_N(r); |e|=1, e>0} |\nabla_e^N u|^2.
\end{align*}

\vskip .1cm
{\it Step 3.}
For $I_3$, since $|Q_N(r)| \le C(n) N^n (r+\tfrac1N)^n r^2$ (we pick up $+\frac1N$
similarly as above), by $\|g\|_\infty \le G_\infty$,
\begin{align*}
\int_{t_1-r^2}^{t_1} |I_3|dt
& \le G_\infty  C(n) (r+\tfrac1N)^n r^2 \sup_{Q_N(r)} |w|\\
&=  c G_\infty  (r+\tfrac1N)^n (\tfrac{r}{d(Y)})^{1+\a}(\tfrac{r}{d(Y)})^{1-\a} 
d(Y)^2  \sup_{Q_N(r)} |w|\\
& \le (\tfrac12)^{1-\a} T C(n) G_\infty  (r+\tfrac1N)^n (\tfrac{r}{d(Y)})^{1+\a}  \sup_{Q_N(r)} |w|,
\end{align*}
since $\frac{r}{d(Y)}\le \frac12$ and $d(Y)^2 \le T$.  Summarizing these estimates,
we have
\begin{align*}
(1-\tfrac{\e}{c_-}) \int_{t_1-r^2}^{t_1}  I_1 dt 
\le &\frac1\e C(n)2^{2\a}A^2 (\tfrac{r+\tfrac{1+\sqrt{n}}N}{d(Y)} )^{2\a}  (r+\tfrac1N)^n r^{2} 
\sup_{X \text{ or } X+\frac{e}N \in Q_N(r); |e|=1, e>0}|\nabla_e^N u|^2\\
&+  (\tfrac12)^{1-\a} TC(n) G_\infty   (r+\tfrac1N)^n (\tfrac{r}{d(Y)})^{1+\a}  \sup_{Q_N(r)} |w|.
\end{align*}
Choosing $\e>0$ small and noting $a_e(\tilde{Y})\ge c_->0$, we have shown \eqref{eq:Nnablawdt}.
\end{proof}

We now give estimates on three terms in \eqref{eq:Nnablawdt}
in terms of polylinear interpolations. First recall that $\tilde u = \tilde u^N(t,z), 
(t,z)\in \Om$ was defined as the polylinear interpolation of $u=u^N(t,\tfrac{x}N)$
in Section \ref{sec:4.1}.  

Next, taking $r_N^1 = r + \tfrac{\sqrt{n}+1}N$, consider the solution $v \equiv v^{N, r}(X)
:= v^{N, Q(Y,r_N^1)}(X)$, $X\in \overline{Q_N(Y,r_N^1)}$
of the discrete heat equation \eqref{eq:4.heat} on $Q_N(Y,r_N^1)$
with the boundary condition $v=u$ at $\mathcal{P}_N^+ Q_N(Y,r_N^1)$.
Then, set as in \eqref{eq:w=u-v}
\begin{align}  \label{eq:w=u-v-B}
w \equiv w^{N, r}:= u- v\equiv u^N- v^{N, r}
\quad \text{ on } \; \overline{Q_N(Y,r_N^1)}.
\end{align}
We consider $v=v^{N,r}$ on a domain $Q_N(Y,r_N^1)$ slightly enlarging
$Q_N(Y,r)$.  By this choice, the polylinear interpolations 
$\tilde v= \tilde{v}^{N,r}(X)$ and $\tilde w= \tilde{w}^{N,r}(X)$
of $v$ and $w$, respectively, are well-defined for $X\in Q(r)\equiv Q(Y,r)$.
Moreover, $\nabla_e^N\tilde v(X)$ and $\nabla_e^N\tilde w(X)$ are also defined
for $X\in Q(r)$ and $\widetilde{\nabla^N_ew} = \nabla_e^N \tilde w$ holds on $Q(r)$
from \eqref{eq:tilde-grad}; recall the discussions above Corollary \ref{cor:Li-L4.4}
and  Proposition \ref{lem:Li-L4.5}.  Furthermore, the estimate 
\eqref{eq:Li-L4.5-3} in Proposition \ref{lem:Li-L4.5} is applicable for $v=v^{N,r}(X)$ by taking
$R=r_N^1$ so that $0<\rho< r\le r_N^1-\tfrac{\sqrt{n}+1}N =r$.

Recall \eqref{seminorm_defn}  for $U_1$ and $\mathcal{U}$
defined from $\tilde u^N$ and $u^N$.

\begin{lem} \label{lem:estimate-D-energy}
Assume  $0<r <\frac12 d(Y)= \tfrac12 t_1^{\tfrac12}$.
Then, we have
\begin{align}  \label{eq:1-Nnablawdt}
& \int_{Q(r)} |\widetilde{\nabla^Nw}|^2(X)dX \le 
\int_{t_1-r^2}^{t_1} N^{-n}\sum_{\tfrac{x}N \in 
D_N(y,r+\tfrac{\sqrt{n}}N); |e|=1, e>0} |\nabla_e^N w|^2(t,\tfrac{x}N) dt, \\
\label{eq:2-Nnablawdt}
& \sup_{X \text{ or } X+\frac{e}N \in Q_N(r); |e|=1, e>0} |\nabla_e^N u(X)|^2
\le \frac{4U_1^2}{d(Y)^2}, \\
\label{eq:3-Nnablawdt}
& \sup_{X\in Q_N(r)} |w(X)| \le C(n) \mathcal{U} (\tfrac{r}{d(Y)})^{1+\a}.
\end{align}

Note that, in the left hand side of \eqref{eq:1-Nnablawdt}, differently from
$X=(t,\tfrac{x}N)$ in its right hand side, that $X=(t,z)\in \Om$ is a continuous
variable.  Also recall $|\widetilde{\nabla^Nw}(X)| := \max_{|e|=1, e>0}
|\widetilde{\nabla_e^Nw}(X)|$.
\end{lem}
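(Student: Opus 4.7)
The three assertions are of quite different character, and I address them in order, leaving the hardest for last.

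Assertion \eqref{eq:1-Nnablawdt} will follow from Jensen's inequality applied to the polylinear interpolation. Applying \eqref{eq:poli-A} to $\nabla^N_e w$ and invoking \eqref{eq:tilde-grad}, for $X=(t,z)$ with $z$ in the $\tfrac1N$-box $B(\tfrac{x}{N})$ the value $\widetilde{\nabla^N_e w}(X)$ is the convex combination $\sum_{v\in\{0,1\}^n}\vartheta^N(v,z)\nabla^N_e w(t,\tfrac{x+v}{N})$. Jensen gives $|\widetilde{\nabla^N_e w}|^2 \le \sum_v \vartheta^N(v,z)|\nabla^N_e w(t,\tfrac{x+v}{N})|^2$; integrating each weight over $B(\tfrac{x}{N})$ contributes $(2N)^{-n}$; and each lattice point appears as a corner of at most $2^n$ boxes meeting $D(y,r)$, so the factors of $2^n$ cancel. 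The spatial integral thereby converts to the discrete sum over $D_N(y,r+\tfrac{\sqrt n}{N})$. Finally, $\max_e \le \sum_e$ on the left and integration in $t \in [t_1-r^2,t_1]$ finish the proof.

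Assertion \eqref{eq:2-Nnablawdt} is immediate: at lattice points $X$, $\nabla^N_e u(X) = \nabla^N_e \tilde u^N(X)$, so the definition of $U_1 = |\nabla^N\tilde u^N|_0^{(1)}$ gives $|\nabla^N_e u(X)| \le U_1/d(X)$, while the bound $d(X) \ge \tfrac12 d(Y)$ on $\overline{Q_N(r)}$ follows from $r < \tfrac12 d(Y)$ exactly as in \eqref{eq:d-Y-X}. Squaring produces the $4U_1^2/d(Y)^2$ bound.

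For the key estimate \eqref{eq:3-Nnablawdt}, the plan is to compare $v$ to the affine function
\[
\bar u(t,z) := u(\tilde Y) + \sum_{|e|=1,\,e>0}\nabla^N_e u(\tilde Y)\bigl(z_{i(e)}-\tilde y_{i(e)}\bigr),
\]
where $i(e)$ is the coordinate index of $e$. Since $\bar u$ is linear in $z$ and constant in $t$, it satisfies $(\Delta^N_a-\partial_t)\bar u = 0$, hence solves the same discrete heat equation as $v$. Applying the maximum principle (Lemma \ref{lem:1.1}) to $\pm(v-\bar u)$ on $\overline{Q_N(Y,r_N^1)}$ yields $\sup_{\overline{Q_N(r_N^1)}}|v-\bar u| \le \sup_{\mathcal{P}^+_N Q_N(r_N^1)}|u-\bar u|$, so via the triangle inequality $|w| = |u-v|$ is bounded by twice $\sup_{\overline{Q_N(r_N^1)}}|u-\bar u|$. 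I then decompose
\[
u(X) - \bar u(X) = \bigl[u(t,\tfrac{x}{N})-u(t_1,\tfrac{x}{N})\bigr] + \Bigl[u(t_1,\tfrac{x}{N}) - u(\tilde Y) - \sum_e b_e\bigl(\tfrac{x_{i(e)}}{N}-\tilde y_{i(e)}\bigr)\Bigr]
\]
with $b_e := \nabla^N_e u(\tilde Y)$. The first bracket is controlled by $\langle u^N\rangle^{*,N}_{1+\a}$ using $|t-t_1|\le r^2$ and $d \ge \tfrac12 d(Y)$, giving at most $C\langle u^N\rangle^{*,N}_{1+\a}(r_N^1/d(Y))^{1+\a}$. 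The second bracket is bounded by telescoping along a discrete lattice path from $\tilde y$ to $\tfrac{x}{N}$ of length $O(nNr_N^1)$; each step of size $1/N$ contributes $N^{-1}\cdot U_{1+\a}\cdot (d(Y)/2)^{-(1+\a)}\cdot (r_N^1)^\a$, summing to $C(n)U_{1+\a}(r_N^1/d(Y))^{1+\a}$. Under the assumption $r > \sqrt{n}/N$ in force for Lemma \ref{lem:5.3}, $r_N^1/r \le C(n)$, and \eqref{eq:3-Nnablawdt} follows.

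The main obstacle will lie in part (3): organizing the telescoping so that each increment picks up exactly $N^{-1}$ against the finite-difference quotient controlled by $U_{1+\a}$, confirming that the continuous-space definition \eqref{eq:3.3-1} evaluated at lattice points coincides with $\nabla^N_e u^N$ (so the two H\"older contributions to $\mathcal{U}$ can be combined), and tracking the constants absorbed through the spatial enlargement from $r$ to $r_N^1=r+\tfrac{\sqrt n+1}{N}$ required by the non-local nature of the polylinear interpolation.
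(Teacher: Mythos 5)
Your proof follows the same route as the paper for all three assertions: convexity of the polylinear weights for \eqref{eq:1-Nnablawdt}, direct evaluation of $U_1$ at lattice points combined with \eqref{eq:d-Y-X} for \eqref{eq:2-Nnablawdt}, and for \eqref{eq:3-Nnablawdt} comparison of $v$ with a discrete-affine function that $\De_a^N-\partial_t$ annihilates, via the maximum principle plus H\"older control of the boundary data. In \eqref{eq:1-Nnablawdt}, your Jensen step is in fact the tight argument: the identity $\int_{B(x/N)}\vartheta^N(v,\cdot)\,dz=(2N)^{-n}$ combined with the fact that each vertex is shared by $2^n$ boxes yields exactly the constant-free inequality, whereas the paper's cruder bound \eqref{eq:4.nablaw}, taken at face value, would inflate the right-hand side by $2^n$. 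In \eqref{eq:3-Nnablawdt}, anchoring the affine comparison function at $\tilde Y$ rather than at an arbitrary $Z\in Q_N(r)$, as the paper does, is a cosmetic variation. The one substantive caveat is the restriction $r>\sqrt{n}/N$ in your part (3): the lemma is stated for all $0<r<\tfrac12 d(Y)$, and the paper handles the tiny-$r$ case by observing that $D_N(y,r)$ then holds at most one point so the telescoping sum is vacuous. You are right, though, that the maximum-principle step controls $V$ by boundary data at distance up to $r_N^1$ from $y$, not $r$, so your qualification is reasonable; and since Lemma \ref{lem:5.3} only invokes \eqref{eq:3-Nnablawdt} with $r$ replaced by $r+\sqrt{n}/N$, the restricted version you prove is exactly what is used downstream.
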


\begin{proof}
To show \eqref{eq:1-Nnablawdt}, noting from \eqref{eq:poli-A} (with $\nabla_e^Nw$
instead of $u^N$) that $\widetilde{\nabla_e^Nw}$ is a convex combination
of $\nabla_e^Nw$ at neighboring sites, we observe 
\begin{equation}  \label{eq:4.nablaw}
|\widetilde{\nabla_e^Nw}(t,z)| \le
\max_{v\in \{0,1\}^n} \big| \nabla_{e}^Nw\big(t,\tfrac{[Nz]+v}N\big)\big|.
\end{equation}
However, since $\big| z- \tfrac{[Nz]+v}N \big| \le \tfrac{\sqrt{n}}N$, we have
$\tfrac{[Nz]+v}N\in D_N(y,r+\tfrac{\sqrt{n}}N)$
for $z\in D(y,r)$.  Thus, \eqref{eq:4.nablaw} proves \eqref{eq:1-Nnablawdt}.

To show \eqref{eq:2-Nnablawdt},
since $\nabla_e^Nu(X) = \nabla_e^N \tilde u(X)$ for $X=(t,\tfrac{x}{N})$ 
with $x\in \T^n_N$, recalling \eqref{seminorm_defn}  for $U_1$, bound
for each $e$ that
\begin{align*} 
\sup_{X \text{ or } X+\frac{e}N \in Q_N(r)} |\nabla_e^N u(X)|
& \le \sup_{X\in \Om: d(X)\ge (d(Y)^2-r^2)^{1/2}} |\nabla^N_e \tilde u(X)|  \\
& \le U_1 \sup_{d(X)\ge (d(Y)^2-r^2)^{1/2}} d(X)^{-1}.
\end{align*}
However, as we saw in \eqref{eq:d-Y-X} in the proof of Lemma \ref{lem:D-energy-a-Holder},
$d(X)\ge \frac12 d(Y)$ and this shows \eqref{eq:2-Nnablawdt}.

To demonstrate \eqref{eq:3-Nnablawdt},
we divide $|w(X)|$ into two terms when $D_N(r)\equiv D_N(y,r)$ is non-empty, 
as otherwise the estimate holds trivially.  For $X =(t,\tfrac{x}N), 
Z=(t',\tfrac{z}N) \in Q_N(r)$,
$$
|w(X)| \le |u(X)-u(Z)- \nabla^N u(Z)\cdot \tfrac{x-z}N|
+ |v(X)-u(Z)- \nabla^N u (Z)\cdot \tfrac{x-z}N|,
$$ 
where $\cdot$ means the inner product in $\R^n$.
The first term, writing $u=u^N$, is further bounded from above by
\begin{align*}
& |{u}^N(X)-{u}^N(X')| 
  +|{u}^N(X')- {u}^N(Z)- \nabla^N{u}^N(Z)\cdot\tfrac{x-z}{N}| \\
& \le  \lan u^N\ran_{1+\a}^{*,N} (2r)^{1+\a}\big(d(X)\wedge d(X')\big) ^{-(1+\a)} 
+ \tfrac{1}{N} \sum_j |\nabla^N{u}^N(Z_j)- \nabla^N{u}^N(Z)|   \\
& \le 2^{2+2\a} \lan u^N\ran_{1+\a}^{*,N} \big( \tfrac{r}{d(Y)}\big)^{1+\a} 
+ 4^{1+\a} \sqrt{n} \, [\tilde{u}^N]_{1+\a}^* (r+\tfrac{\sqrt{n}}N)^{1+\a}\cdot d(Y)^{-(1+\a)}
\end{align*}
where $X'$ is taken as $X'=(t',\tfrac{x}N)$ for $X=(t,\tfrac{x}N)$ and $Z=(t',\tfrac{z}N)$, and
$\{Z_j =(t',\frac{z_j}{N})\}$ is a sequence of points in $Q_N(r)$ connecting $X'$ and $Z$ 
by moving along in nearest-neighbor steps. The sum over $j$ consists of 
at most $|x-z|_{L^\infty}$ so that $\sqrt{n}|x-z|$ terms.
Note that $d(X)\wedge d(X') \ge \frac12 d(Y)$, $d(Z_j)\wedge d(Z) \ge \frac12 d(Y)$
from \eqref{eq:d-Y-X} and $|Z_j-Z|^\a \le \big(2(r+\tfrac{\sqrt{n}}N)\big)^\a$ for 
$Z_j, Z \in Q_N(r)$ (or even $\in \overline{Q_N(r)}$).

When there are two distinct space points $\tfrac{x}{N}, \tfrac{z}{N} \in D_N(r)$, necessarily $\frac{1}{N}<r$.  When there is only one space point in $D_N(r)$, we have $x=z$ in the above sequence.  Hence, covering both cases and recalling $\mathcal{U}= [\tilde{u}^N]_{1+\a}^* + \lan u^N\ran_{1+\a}^{*,N}$,
we bound the first term, noting $\alpha<1$, as
\begin{align*}
|u(X)-u(Z)- \nabla^N u(Z) \cdot \tfrac{x-z}N|
\le C(n) \mathcal{U} \big( \tfrac{r}{d(Y)}\big)^{1+\a}.
\end{align*}

The second term
$V(X) := v^N(X)-u^N(Z)- \nabla^N u^N(Z)\cdot \tfrac{x-z}N$ on $Q_N(r)$, 
noting the boundary value is the
same as the first term, by applying the maximum principle for
the discrete heat equation \eqref{eq:heat-D}, it has the same bound as the first term.
Therefore, we have
$$
|w| \le 2C(n) \mathcal{U} (\tfrac{r}{d(Y)})^{1+\a},
$$
on $Q_N(r)$.  This shows \eqref{eq:3-Nnablawdt}.
\end{proof}

We now combine the bounds in Lemmas \ref{lem:D-energy-a-Holder},
\ref{lem:estimate-D-energy} and also Proposition \ref{lem:Li-L4.5} to obtain the following estimate.
Recall $Q(\cdot)= Q(Y,\cdot)$.

\begin{lem}  \label{lem:5.3}
For $\rho\in (0,r)$, when $r>0$ satisfies $r_N 
= r+ \frac{1+2\sqrt{n}}{N} < \frac{1}{2}d(Y)$, we have
\begin{align} \label{eq:om(r)}
\om(\rho) \le & \bar{C}\big(\tfrac{\rho}r\big)^{n+4} \om(r)+ \si(r_N),
\end{align}
where
\begin{align}  \label{eq:om+si}
\begin{aligned}
& \om(r) = \int_{Q(r)} |\widetilde{\nabla^N u}-\{\widetilde{\nabla^N u}\}_r|^2dX,\\
& \si(r) = \widehat{C}[ A^2 U_1^2+ G_\infty  \mathcal{U}]  r^{n+2+2\a}d(Y)^{-(2+2\a)}
\end{aligned}
\end{align}
and $\bar{C}=\bar{C}(n, c_\pm)$, $\widehat{C}=\widehat{C}(n, c_\pm, T)$.
\end{lem}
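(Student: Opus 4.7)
The plan is to carry out the standard Campanato-type freezing argument, adapted to the discrete setting. On $\overline{Q_N(Y, r_N^1)}$ with $r_N^1 = r + \tfrac{\sqrt{n}+1}{N}$, decompose $u^N = v^{N,r} + w^{N,r}$ where $v^{N,r}$ solves the discrete heat equation \eqref{eq:4.heat} with constant coefficients $a_e = a_{N\tilde y,e}(t_1)$ and matches $u^N$ on $\mathcal{P}_N^+ Q_N(Y, r_N^1)$. The choice of $r_N^1$ (one more $\tfrac1N$ than the inner buffer in Lemma \ref{lem:estimate-D-energy}) is exactly what is needed so that the polylinear interpolations $\tilde v^{N,r}$ and $\nabla_e^N \tilde v^{N,r}$ are definable on $Q(r)$ and so that \eqref{eq:Li-L4.5-3} of Proposition \ref{lem:Li-L4.5} applies with $R=r_N^1$.

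Using the $L^2$-minimality of averages twice, first write
\begin{align*}
\omega(\rho) \le \int_{Q(\rho)}|\widetilde{\nabla^N u} - \{\widetilde{\nabla^N v}\}_\rho|^2 dX
\le 2\int_{Q(\rho)}|\widetilde{\nabla^N v} - \{\widetilde{\nabla^N v}\}_\rho|^2 dX + 2\int_{Q(\rho)}|\widetilde{\nabla^N w}|^2 dX.
\end{align*}
Apply \eqref{eq:Li-L4.5-3} to the first term to move from $Q(\rho)$ to $Q(r)$ at the cost of $(\rho/r)^{n+4}$. Expanding $\widetilde{\nabla^N v} - \{\widetilde{\nabla^N v}\}_r = (\widetilde{\nabla^N u}-\{\widetilde{\nabla^N u}\}_r) - (\widetilde{\nabla^N w}-\{\widetilde{\nabla^N w}\}_r)$ and using minimality a second time yields
\begin{align*}
\int_{Q(\rho)}|\widetilde{\nabla^N v}-\{\widetilde{\nabla^N v}\}_\rho|^2 dX
\le 2C\bigl(\tfrac{\rho}{r}\bigr)^{n+4}\Bigl[\omega(r) + \int_{Q(r)}|\widetilde{\nabla^N w}|^2 dX\Bigr].
\end{align*}

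The residual step is to bound $\int_{Q(r)}|\widetilde{\nabla^N w}|^2 dX$ by $\sigma(r_N)$. By \eqref{eq:1-Nnablawdt} this integral is controlled by the discrete energy on $D_N(y, r+\tfrac{\sqrt{n}}{N})$, which sits inside $D_N(y, r_N^1)$; hence Lemma \ref{lem:D-energy-a-Holder} applied with $r_N^1$ in place of $r$ (its hypothesis $r_N^1 < \tfrac12 d(Y)$ follows from $r_N^1 < r_N < \tfrac12 d(Y)$) is available. The factor $\sup|\nabla_e^N u|^2$ on the right of \eqref{eq:Nnablawdt} is then replaced by $4 U_1^2/d(Y)^2$ via \eqref{eq:2-Nnablawdt}, and $\sup|w|$ by $C\mathcal{U}(r_N^1/d(Y))^{1+\alpha}$ via \eqref{eq:3-Nnablawdt}. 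Matching the resulting powers of $r_N^1/d(Y)$ and observing that $r_N^1 + \tfrac{1+\sqrt{n}}{N}$, $r_N^1 + \tfrac{1}{N}$, and $r_N$ are all comparable up to constants depending only on $n$, the two contributions combine into
$$\int_{Q(r)}|\widetilde{\nabla^N w}|^2 dX \le \widehat{C}[A^2 U_1^2 + G_\infty \mathcal{U}] r_N^{n+2+2\alpha} d(Y)^{-(2+2\alpha)} = \sigma(r_N).$$

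Plugging this into the two pieces above, and noting $Q(\rho)\subset Q(r)$ so that $\int_{Q(\rho)}|\widetilde{\nabla^N w}|^2\,dX$ is also absorbed into $\sigma(r_N)$, gives \eqref{eq:om(r)}. The only genuinely non-trivial part is the bookkeeping that reconciles the two different discrete buffers, $\tfrac{\sqrt{n}}{N}$ from \eqref{eq:1-Nnablawdt} and $\tfrac{\sqrt{n}+1}{N}$ built into $r_N^1$, into the single gap $r_N = r + \tfrac{1+2\sqrt{n}}{N}$ appearing in $\sigma$; this is precisely where the non-local nature of polylinear interpolation forces a mild enlargement compared with the continuous proof of Lemma 4.5 in \cite{Li96}.
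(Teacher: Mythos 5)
Your proof is correct and follows essentially the same route as the paper: the same freezing decomposition $u^N=v^{N,r}+w^{N,r}$ on $Q_N(Y,r_N^1)$, the same use of Proposition \ref{lem:Li-L4.5} (via \eqref{eq:Li-L4.5-3}) for $v$, and the same chain \eqref{eq:1-Nnablawdt}, Lemma \ref{lem:D-energy-a-Holder}, \eqref{eq:2-Nnablawdt}, \eqref{eq:3-Nnablawdt} to absorb $\int_{Q(r)}|\widetilde{\nabla^N w}|^2$ into $\sigma(r_N)$. The only cosmetic difference is that you invoke $L^2$-minimality of averages twice where the paper instead splits $\widetilde{\nabla^N u}-\{\widetilde{\nabla^N u}\}_\rho$ into three pieces with the inequality $(a+b+c)^2\le 3(a^2+b^2+c^2)$ and tracks the extra term $\{\widetilde{\nabla^N w}\}_\rho^2\,|Q(\rho)|$; both yield the same bound up to the values of $\bar C$ and $\widehat C$.
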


\begin{proof}
Recall that, setting $r_N^1 = r + \tfrac{\sqrt{n}+1}N$ for each $r>0$, 
we consider the solution $v=v^N\equiv v^{N,r}$ of the discrete heat 
equation \eqref{eq:4.heat} on $Q_N(r_N^1)$ with boundary condition $v^N=u^N$
at $\mathcal{P}_N^+Q_N(r_N^1)$ and set $w\equiv w^{N,r}= u^N-v^{N,r}$
on $\overline{Q_N(r_N^1)}$ as in \eqref{eq:w=u-v-B}.

Then, from \eqref{eq:1-Nnablawdt} in Lemma \ref{lem:estimate-D-energy}, then applying
Lemma \ref{lem:D-energy-a-Holder} (with $r$ replaced by
$r+ \tfrac{\sqrt{n}}N$) and \eqref{eq:2-Nnablawdt}, \eqref{eq:3-Nnablawdt} 
in Lemma \ref{lem:estimate-D-energy} (with $r$ replaced by $r+ \tfrac{\sqrt{n}}N$),
we obtain
\begin{align}  \label{eq:nabla^Nw}
\int_{Q(r)}  |\widetilde{\nabla^N w}|^2(X) dX
 \le \bar{C}[ A^2 U_1^2+ G_\infty  \mathcal{U}]  r_N^n (\tfrac{r_N}{d(Y)})^{2+2\a},
\end{align}
for all $r>0$ such that $r_N= r+ \tfrac{1+2\sqrt{n}}N<\frac12 d(Y)$ where $\bar{C}=\bar{C}(n, c_-, T)$.

Moreover, $v$ solves the discrete heat equation \eqref{eq:heat-D} on $Q_N(r_N^1)$.
Thus, from \eqref{eq:Li-L4.5-3} in Proposition \ref{lem:Li-L4.5} (with $R=r_N^1$), we have
\begin{align}\label{eq:nabla^Nv}
\int_{Q(\rho)} |\widetilde{\nabla_e^N v}-\{\widetilde{\nabla_e^N v}\}_{\rho}|^2dX 
 \le \widehat{C}\big(\tfrac{\rho}r\big)^{n+4} 
\int_{Q(r)} |\widetilde{\nabla_e^N v}-\{\widetilde{\nabla_e^N v}\}_r|^2dX,
\end{align}
for $\rho\in (0,r)$ where $\widehat{C}=\widehat{C}(n, c_\pm)$.

Now, let us show the estimate \eqref{eq:om(r)} for $\om(\rho)$ and $\rho\in (0,r)$.
We first rewrite $u$ in the integrand $|\widetilde{\nabla^N u}-
\{\widetilde{\nabla^N u}\}_\rho|^2$ in $\om(\rho)$ as $u=w+v\equiv w^{N,r}+v^{N,r}$
(note that we take $w$ and $v$ those determined from $r$ and not by $\rho$) and estimate it by
$3\big( |\widetilde{\nabla^N w}|^2 + \{\widetilde{\nabla^N w}\}_\rho^2
+ |\widetilde{\nabla^N v}-\{\widetilde{\nabla^N v}\}_\rho|^2 \big)$.  Then, we have
\begin{align*}
\om(\rho) \le&  
3\int_{Q(\rho)}  |\widetilde{\nabla^N w}|^2 dX
+ 3 \{\widetilde{\nabla^N w}\}_{\rho}^2 \cdot |Q(\rho)|
+ 3 \int_{Q(\rho)} |\widetilde{\nabla^N v}-\{\widetilde{\nabla^N v}\}_{\rho}|^2dX 
  \\
\le& 6 \int_{Q(r)}  |\widetilde{\nabla^N w}|^2 dX
+ 3\widehat{C}\big(\tfrac{\rho}r\big)^{n+4} 
\int_{Q(r)} |\widetilde{\nabla^N v}-\{\widetilde{\nabla^N v}\}_r|^2dX \\
\le & 6 \bar{C}[ A^2 U_1^2+ G_\infty \mathcal{U}]  r_N^n (\tfrac{r_N}{d(Y)})^{2+2\a} \\
& \qquad
+ 3\widehat{C}\big(\tfrac{\rho}r\big)^{n+4} \Big[
3 \int_{Q(r)} |\widetilde{\nabla^N w}|^2dX 
+ 3 \{\widetilde{\nabla^N w}\}_r^2 \cdot |Q(r)|
+ 3 \om(r)   \Big] \\
\le & 6 \bar{C}[ A^2 U_1^2+ G_\infty \mathcal{U}]  r_N^n (\tfrac{r_N}{d(Y)})^{2+2\a}  \\
& \qquad 
+ 3\widehat{C}\big(\tfrac{\rho}r\big)^{n+4} \cdot 6 \bar{C}[ A^2 U_1^2+ G_\infty \mathcal{U}]  r_N^n 
(\tfrac{r_N}{d(Y)})^{2+2\a} +  9\widehat{C}\big(\tfrac{\rho}r\big)^{n+4} \om(r).
\end{align*}
Here, for the second inequality, we have estimated $\{\widetilde{\nabla^N w}\}_{\rho}^2
\le \tfrac1{|Q(\rho)|} \int_{Q(\rho)}|\widetilde{\nabla^N w}|^2 dX$ and then simply enlarged the
domain of integral to $Q(r)$ for the first and second terms.  
We also used \eqref{eq:nabla^Nv} for the last term.
For the third inequality, we have used \eqref{eq:nabla^Nw} for the first term and 
a similar estimate to the first inequality for the second term.  For the fourth inequality,
we use \eqref{eq:nabla^Nw} again.
Finally, estimating $\big(\tfrac{\rho}r\big)^{n+4} \le 1$ in the second term, we obtain 
\eqref{eq:om(r)}. 
\end{proof}

We now apply Lemma \ref{lem:Li-L4.6} in our setting.
This iteration lemma improves $\si((R_0)_N)$, in \eqref{eq:om(r)} with $r=R_0$,
to $\si(r_N)$.

\begin{lem}  \label{lem:Li-L4.6-N}
Assume \eqref{eq:om(r)} in Lemma \ref{lem:5.3}.
Then, we have
$$
\om(r) \le C\Big[ \big(\tfrac{r}{R_0}\big)^{n+2+2\a} \om(R_0)+\si(r_N)\Big],
$$
for $0< r \le R_0$, where $R_0 < \tfrac12 d(Y)-\tfrac{1+2\sqrt{n}}N$, $r_N=r+\tfrac{1+2\sqrt{n}}N$ and $C=C(n, c_\pm)$.
\end{lem}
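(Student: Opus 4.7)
The plan is to adapt the proof of Lemma \ref{lem:Li-L4.6} to our discrete setting, where the difficulty is that we cannot directly invoke that lemma with $r \mapsto \sigma(r_N)$ in place of $\sigma$: the function $r^{-\delta}\sigma(r_N) = C\, r^{-\delta}(r + \tfrac{1+2\sqrt{n}}{N})^{n+2+2\alpha}$ is not monotone increasing near $r=0$ (since $r_N$ stays bounded below by $\tfrac{1+2\sqrt{n}}{N}$), so hypothesis \eqref{eq:Li-sigma} fails. Instead, I will carry out the iteration by hand, exploiting the explicit power-law form of $\sigma$ from \eqref{eq:om+si}, namely $\sigma(r) = C_\sigma r^{n+2+2\alpha}$ with $C_\sigma = \widehat{C}[A^2 U_1^2 + G_\infty \mathcal{U}]d(Y)^{-(2+2\alpha)}$.

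First, I will fix $\tau \in (0,1)$ small enough that $\bar{C}\tau^{n+4} \le \tau^{\bar\alpha}$ for some $\bar\alpha$ with $n+2+2\alpha < \bar\alpha < n+4$, which is possible since $\alpha < 1$. Setting $r_k = \tau^k R_0$, the assumption $R_0 < \tfrac12 d(Y) - \tfrac{1+2\sqrt{n}}{N}$ guarantees $(r_k)_N < \tfrac12 d(Y)$ for all $k \ge 0$, so we may apply \eqref{eq:om(r)} at each scale with $r = r_k$, $\rho = r_{k+1}$ to obtain
\[
\omega(r_{k+1}) \le \tau^{\bar\alpha}\omega(r_k) + \sigma((r_k)_N).
\]
Iterating this recursion from $k=0$ yields the standard form
\[
\omega(r_k) \le \tau^{k\bar\alpha}\omega(R_0) + \sum_{j=0}^{k-1} \tau^{(k-1-j)\bar\alpha}\sigma((r_j)_N).
\]

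The main step is to show the remainder sum is bounded by $C\sigma((r_k)_N)$. Writing $c = 1+2\sqrt{n}$ and using the elementary inequality $(r_j + \tfrac{c}{N})^{n+2+2\alpha} \le 2^{n+2+2\alpha}\bigl(r_j^{n+2+2\alpha} + (\tfrac{c}{N})^{n+2+2\alpha}\bigr)$, I split the sum into two pieces. For the $r_j^{n+2+2\alpha} = \tau^{j(n+2+2\alpha)}R_0^{n+2+2\alpha}$ piece, the sum becomes a geometric sum with common ratio $\tau^{(n+2+2\alpha)-\bar\alpha} > 1$, whose last term $\tau^{(k-1)(n+2+2\alpha)+\bar\alpha}R_0^{n+2+2\alpha}$ dominates, giving a bound $\le C_\tau r_k^{n+2+2\alpha}$. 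The $(\tfrac{c}{N})^{n+2+2\alpha}$ piece is constant in $j$, and the prefactor sum $\sum_{j=0}^{k-1}\tau^{(k-1-j)\bar\alpha}$ converges to a constant, yielding $\le C_\tau(\tfrac{c}{N})^{n+2+2\alpha}$. Adding these two bounds and using monotonicity of $\sigma$ together with $r_k^{n+2+2\alpha} + (\tfrac{c}{N})^{n+2+2\alpha} \le C\,\sigma((r_k)_N)/C_\sigma$ produces
\[
\omega(r_k) \le \tau^{k\bar\alpha}\omega(R_0) + C\sigma((r_k)_N).
\]

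Finally, for an arbitrary $r \in (0, R_0]$, choose $k \ge 0$ so that $r_{k+1} < r \le r_k$. Then $\omega(r) \le \omega(r_k)$ and $(r_k)_N = r_k + \tfrac{c}{N} \le \tau^{-1}(r + \tfrac{c}{N}) = \tau^{-1}r_N$, hence $\sigma((r_k)_N) \le \tau^{-(n+2+2\alpha)}\sigma(r_N)$; also $\tau^k \le r/(\tau R_0)$, so $\tau^{k\bar\alpha} \le \tau^{-\bar\alpha}(r/R_0)^{\bar\alpha} \le \tau^{-\bar\alpha}(r/R_0)^{n+2+2\alpha}$ since $r/R_0 \le 1$ and $\bar\alpha > n+2+2\alpha$. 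Combining these, the constants being uniform in $Y$ and absorbing $\tau$-powers into a single $C = C(n,c_\pm)$, yields the desired conclusion
\[
\omega(r) \le C\Bigl[\bigl(\tfrac{r}{R_0}\bigr)^{n+2+2\alpha}\omega(R_0) + \sigma(r_N)\Bigr].
\]
The main obstacle, as noted at the outset, is precisely the failure of the monotonicity hypothesis \eqref{eq:Li-sigma} for $\sigma(r_N)$; the workaround is the explicit additive split of $(r_j + \tfrac{c}{N})^{n+2+2\alpha}$, which converts the offending constant-in-$r$ boundary contribution $(\tfrac{c}{N})^{n+2+2\alpha}$ into an absorbable geometric series.
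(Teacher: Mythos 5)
Your argument reaches the right conclusion, but the premise motivating the detour is a misreading of hypothesis \eqref{eq:Li-sigma}. That condition reads $r^{-\delta}\sigma(r)\le s^{-\delta}\sigma(s)$ for $0<s\le r\le R_0$, which asks that $r\mapsto r^{-\delta}\sigma(r)$ be \emph{non-increasing}, not increasing. With $\delta=n+2+2\alpha$, $c=1+2\sqrt{n}$, and $\sigma(r)$ proportional to $r^{\delta}$ as in \eqref{eq:om+si}, the function $r\mapsto r^{-\delta}\sigma(r_N)$ equals a constant times
\[
\Big(1+\tfrac{c}{Nr}\Big)^{\delta},
\]
which is plainly \emph{decreasing} in $r$; so hypothesis \eqref{eq:Li-sigma} is satisfied with $\sigma(r_N)$ in place of $\sigma(r)$—this is precisely the check displayed as \eqref{eq:Li-sigma-N}. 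The blowup near $r=0$ that you point to is a failure of monotone \emph{increase}, which the hypothesis never asks for. Consequently the paper's proof consists simply of verifying: $\sigma(r_N)$ is increasing, $r^{-\delta}\sigma(r_N)$ is decreasing (giving \eqref{eq:Li-sigma-N}), $\omega$ is increasing, and \eqref{eq:om(r)} supplies \eqref{eq:Li-om} after choosing $\bar\alpha\in(\delta,n+4)$ and $\tau$ with $\bar{C}\tau^{n+4}\le\tau^{\bar\alpha}$; then Lemma \ref{lem:Li-L4.6} applies verbatim with $\sigma(r_N)$ in the role of $\sigma(r)$.

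That said, your explicit iteration is a correct alternative derivation of the same bound, differing from the paper only in how the shift $r\mapsto r_N$ is absorbed. Rather than verify \eqref{eq:Li-sigma-N} and appeal to Lemma \ref{lem:Li-L4.6} as a black box, you exploit the power-law form of $\sigma$: you split $(r_j+\tfrac{c}{N})^{\delta}$ additively, sum the $r_j^{\delta}$ contribution as a geometric series with ratio $\tau^{\delta-\bar\alpha}>1$ dominated by its last term, sum the constant $(\tfrac{c}{N})^{\delta}$ contribution as a convergent series in $\tau^{\bar\alpha}$, and recombine via $r_k^{\delta}+(\tfrac{c}{N})^{\delta}\le 2(r_k+\tfrac{c}{N})^{\delta}$. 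The final interpolation to arbitrary $r\in(0,R_0]$ is standard. This route is less modular—it relies on the explicit form of $\sigma$ rather than the abstract monotonicity hypothesis—but it is valid and makes transparent how the discrete $(\tfrac{c}{N})^{\delta}$ floor is absorbed. The only substantive correction is that the detour was unnecessary: the hypothesis you believed fails in fact holds.
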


\begin{proof}
We may check that $\si(r_N)$ satisfies the condition
\eqref{eq:Li-sigma} in place of $\si(r)$, that is,
\begin{equation}  \label{eq:Li-sigma-N}
r^{-\de}\si(r_N)\le s^{-\de}\si(s_N) \qquad \text{if } \; 0<s\le r \le R_0,
\end{equation}
where $\de=n+2+2\a$, $\bar \a \in (\de, \b)$, $\b= n+4$ (note $\a\le 1$), 
$\si(r) = \widehat{C}(n, c_\pm,T)r^\de$ from \eqref{eq:om+si} and $r_N=r+\tfrac{c}N$.
Indeed,
$$
r^{-\de}\si(r_N) = r^{-\de}(r+\tfrac{c}N)^\de= (1+\tfrac{c}{Nr})^\de
$$
is decreasing in $r$.  

Thus, the conclusion follows by applying Lemma \ref{lem:Li-L4.6} and the comment after it:  
In our situation,
(4.15)$'$ of \cite{Li96} holds with $\b=n+4$, that is $\omega(\t r) \leq C\t^\beta\omega(r) + \sigma(r_N)$ for all small $\t$ and $C=C(n, c_\pm)$.  Then, \eqref{eq:Li-om} in Lemma \ref{lem:Li-L4.6}
holds for any $\delta<\bar\a<\b$,
choosing $\t\in (0,1)$ such that $C\t^\b<\t^{\bar\a}$.

Note that the increasing property of $\si(r_N)$ is clear, while that of $\om(r)$ follows
by showing $\om'(r)\ge 0$.  Indeed, $\om$ is increasing in its integral domain
$Q(r)$ and another term coming from the derivative of $\{\nabla_e^N \tilde u\}_r$ in
$r$ vanishes: $\int_{Q(r)} \big(\nabla_e^N \tilde u-\{\nabla_e^N \tilde u\}_r
\big)dX \cdot \partial_r \{\nabla_e^N \tilde u\}_r =0$.
\end{proof}

We are at the position to give  the estimate on $U_{1+\alpha}$.
It is important that the coefficient $\de$ of $\mathcal{U}$ in the estimate
\eqref{spatial-variation-equation} can be made arbitrary small.  
This is because our estimate is shown in terms of $\sqrt{\mathcal{U}}$.

\begin{prop}
\label{spatial-variation-prop}
We have
\begin{align}
\label{spatial-variation-equation}
U_{1+\a} = [\tilde u^N]_{1+\a}^*
\le C [(A+1) U_1+ G_\infty ]+ \de \mathcal{U}
\end{align}
for all small $\de>0$ with some $C=C(n, c_\pm, T, \de)$.
\end{prop}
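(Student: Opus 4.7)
The plan is to verify the weighted H\"older seminorm bound for $F=\nabla^N\tilde u^N$ by splitting pairs of evaluation points $X\ne Y\in \Om$ (WLOG $d(X)\le d(Y)$, set $d=d(X)$) into three regimes according to $|X-Y|$. When $|X-Y|\ge d/8$, the elementary bound $|\nabla^N\tilde u^N(Z)|\le U_1/d(Z)$ together with the triangle inequality gives $d^{1+\a}|\nabla^N\tilde u^N(X)-\nabla^N\tilde u^N(Y)|/|X-Y|^\a \le CU_1$, which contributes only to the $CU_1$ part of \eqref{spatial-variation-equation}. When $|X-Y|\le 1/(MN)$ for the constant $M$ to be fixed below, the required bound will be supplied by the separately established Lemma \ref{lem:4.13}; this includes the edge case $d\lesssim 1/N$, since then $|X-Y|< d/8$ forces $|X-Y|\lesssim 1/N$ and one may simply enlarge $M$. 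The core of the argument is the middle regime $1/(MN)\le |X-Y|<d/8$, treated below via Lemma \ref{lem:Li-L4.3-G}.

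In the middle regime I take $X_0=X$ and $R=d/8$, and verify the integral hypothesis \eqref{eq:Li-lem-4.12} of Lemma \ref{lem:Li-L4.3-G} for $F=\nabla^N\tilde u^N$ and $G(Y',r)=\{\nabla^N\tilde u^N\}_r$ (the mean over $Q(Y',r)$), uniformly over $Y'\in Q(X,d/8)$ and $r\in(0,R)$. For such $Y'$ one has $d(Y')\ge 7d/8$, so on $Q(Y',R_0)$ with $R_0$ a fixed small fraction of $d$ the bound $|\nabla^N\tilde u^N|\le CU_1/d$ holds, yielding the base estimate $\om(R_0)\le CR_0^{n+2}U_1^2/d^2$. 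Feeding this into the iteration Lemma \ref{lem:Li-L4.6-N} via Lemma \ref{lem:5.3} gives
\begin{equation*}
\om(r)\le C\bigl[A^2U_1^2+G_\infty\mathcal{U}+U_1^2\bigr]\, r_N^{n+2+2\a}\, d^{-(2+2\a)},\qquad 0<r\le R_0.
\end{equation*}
Cauchy--Schwarz together with $r\le r_N$ then yields the integral estimate
\begin{equation*}
\int_{Q(Y',r)}\bigl|\nabla^N\tilde u^N-\{\nabla^N\tilde u^N\}_r\bigr|\,dX'\le H\, r_N^{n+2+\a},\qquad H:=\frac{C\bigl((A+1)U_1+\sqrt{G_\infty\mathcal{U}}\bigr)}{d^{1+\a}}.
\end{equation*}

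Applying Lemma \ref{lem:Li-L4.3-G} with this $H$ and a free parameter $\de'>0$ to the specific pair $(X,Y)\in Q(X,d/8)$ (note $|X-Y|\ge 1/(MN)$) and multiplying through by $d^{1+\a}/|X-Y|^\a$ gives, since $d^{1+\a}H=C((A+1)U_1+\sqrt{G_\infty\mathcal{U}})$ and $d^{1+\a}/(d(X)\wedge d(Y))^{1+\a}=1$,
\begin{equation*}
d^{1+\a}\,\frac{|\nabla^N\tilde u^N(X)-\nabla^N\tilde u^N(Y)|}{|X-Y|^\a}\le C(A+1)U_1+C\sqrt{G_\infty\mathcal{U}}+\de' U_{1+\a}.
\end{equation*}
The elementary Young inequality $C\sqrt{G_\infty\mathcal{U}}\le C_\eta G_\infty+\eta\mathcal{U}$ absorbs the cross term; choosing $\de'+\eta\le\de$ and using $U_{1+\a}\le\mathcal{U}$ completes the bound in the middle regime, and taking the supremum over $X,Y$ then yields \eqref{spatial-variation-equation}.

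The principal obstacle is the \emph{non-local nature} forcing Lemma \ref{lem:Li-L4.3-G} to operate with $r_N=r+c/N$ rather than $r$, which compels a separate treatment of $|X-Y|\le 1/(MN)$ by Lemma \ref{lem:4.13} and makes the $\de' U_{F,1+\a}$ term in Lemma \ref{lem:Li-L4.3-G} (rather than a naked constant multiple of $U_{1+\a}$) indispensable for closing the estimate; the careful bookkeeping one must watch is the cancellation of the factor $d^{1+\a}$ against $d(Y')^{-(2+2\a)}$ produced by the iteration, which is what makes the bound $H\propto d^{-(1+\a)}$ compatible with the weighted seminorm on the left-hand side.
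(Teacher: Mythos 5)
Your proposal is correct and follows essentially the same strategy as the paper's proof: derive the integral oscillation estimate via the energy inequality (Lemma \ref{lem:5.3}) and iteration lemma (Lemma \ref{lem:Li-L4.6-N}), then pass through Lemma \ref{lem:Li-L4.3-G}, with the short-distance regime $|X-Y|\le \tfrac1{MN}$ covered separately by Lemma \ref{lem:4.13}. The bookkeeping differs slightly: you split pairs by $|X-Y|$ into three regimes and in particular avoid having to establish the integral estimate \eqref{eq:Li-lem-4.12} in the small-$d(Y)$ range $d(Y)\lesssim 1/N$ by noting that such pairs automatically fall into the short-distance regime after enlarging $M$, whereas the paper proves \eqref{eq:4.45-F} uniformly in $d(Y)$ (using \eqref{eq:4.41-D}--\eqref{eq:4.41-E} for small $d(Y)$) before invoking Lemma \ref{lem:Li-L4.3-G}; also, you make explicit the trivial far-regime bound $|X-Y|\ge d/8 \Rightarrow$ contribution $\le CU_1$, which the paper absorbs implicitly in the ``moving $X_0$'' step. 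One minor technicality: taking $X_0=X$ literally places $X$ on the parabolic boundary of $Q(X_0,R)$ (where $t<t_0$ is strict), so you should take $X_0$ infinitesimally forward in time; this does not affect the estimates. Both proofs produce the same constants up to equivalent reparametrizations of $\de$ and $\eta$.
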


\begin{proof}
We apply Lemma \ref{lem:Li-L4.6-N} to get
\begin{align}
\label{spatial-step}
\om(r) \le & C\big(\tfrac{r}{R_0}\big)^{n+2+2\a} \om(R_0)
+C[A^2U_1^2+ G_\infty \mathcal{U}] r_N^{n+2+2\a} d(Y)^{-(2+2\a)},
\end{align}
for $0<r\le R_0$, taking $R_0= \tfrac13 d(Y)$, if $\tfrac13 d(Y) <
\tfrac12 d(Y)-\tfrac{1+2\sqrt{n}}N$, that is, if $d(Y) > \tfrac{6(1+2\sqrt{n})}N$ holds.
Recall $r_N=r+\tfrac{c}N$ with $c=1+2\sqrt{n}$ (later in the proof we may take $c$ larger).  Here, $C=C(n, c_\pm, T)$.

However, $\om(R_0)$ is bounded as
\begin{align*}
\om(R_0) & \le 2 \int_{Q(R_0)} |\nabla^N \tilde u|^2dX + 2 |Q(R_0)| \,
 \{\nabla^N \tilde u\}_{R_0}^2  \\
& \le 4 \int_{Q(R_0)} |\nabla^N \tilde u|^2dX \\
&\le C(n) \, U_1^2 R_0^{n+2}\, d(Y)^{-2}.
\end{align*}
Here, the first line is from the definition of $\om(R_0)$ in \eqref{eq:om+si}, 
the second is by Schwarz's inequality applied for the second term, and
the third is from \eqref{eq:2-Nnablawdt} taking $r=R_0$.

Thus, first by Schwarz's inequality and then recalling $R_0=\tfrac13 d(Y)$
so that the first term in the right hand side of \eqref{spatial-step} is bounded
by the second term with $A^2$ replaced by $A^2+1$, we have
\begin{align}  \label{eq:4.41-C}
\int_{Q(r)} |\nabla_e^N \tilde u-\{\nabla_e^N \tilde u\}_r|dX
& \le \sqrt{|Q(r)|} \sqrt{\om(r)}\\
&\le C \sqrt{(A^2+1) U_1^2+ G_\infty \mathcal{U}} \, d(Y)^{-(1+\a)} r_N^{n+2+\a} 
  \notag \\
&\le \big\{C'[(A+1) U_1+ G_\infty] + \de \mathcal{U} \big\} \, d(Y)^{-(1+\a)} r_N^{n+2+\a},
  \notag   
\end{align}
for every $\de>0$ with some $C'=C'(n, c_\pm, T,\de)>0$,
for $0<r\le \tfrac13 d(Y)$, if $d(Y) > \tfrac{6(1+2\sqrt{n})}N$.
We have estimated $C\sqrt{G_\infty \mathcal{U}} \le \de \mathcal{U}+ \tfrac{C^2}\de G_\infty$
as usual.

In the case that $d(Y) \le \tfrac{6(1+2\sqrt{n})}N$, we can apply \eqref{eq:4.42-B}
in Lemma \ref{lem:4.13} stated below by making $c>0$ in $r_N$ 
larger if necessary.  Indeed, 
by \eqref{eq:4.42-B}, we can estimate for $0<r\le \tfrac13 d(Y)$
\begin{align}\label{eq:4.41-D}
\int_{Q(r)} |\nabla_e^N \tilde u-\{\nabla_e^N \tilde u\}_r|dX
& \le \int_{Q(r)} \frac{dX}{|Q(r)|} \int_{Q(r)}|\nabla_e^N \tilde u(X)
-\nabla_e^N \tilde u(Z)|dZ \\
&  \le C(n, c_+) r^{n+2}\cdot r^\a \big(U_{1+\a} +U_1A+G_\infty \big) 
d(Y)^{-(1+\a)}.   \notag
\end{align}
Here, note that $|X-Z|\le 2r \le \tfrac{c_1}N$ with $c_1=4(1+2\sqrt{n})$ and also
$d(X), d(Z) \ge \tfrac{\sqrt{8}}3 d(Y)$ for $X, Z\in Q(r)$.
However, from $r\le \tfrac13 d(Y) \le \tfrac{c_2}N$ with $c_2=2(1+2\sqrt{n})$, 
one can obtain
\begin{align}\label{eq:4.41-E}
r^{n+2+\a} \le (\tfrac{c_2}{c_2+c})^{2+n+\a} r_N^{2+n+\a} \le \tfrac{\de}C \, r_N^{2+n+\a},
\end{align}
for every $\de>0$, since $\tfrac{c_2}{c_2+c}$ can be small by making $c$ large enough.

From \eqref{eq:4.41-C}--\eqref{eq:4.41-E}, recalling $U_{1+\a}\le \mathcal{U}$, we have shown
(without any restriction on $d(Y)$)
\begin{align}  \label{eq:4.45-F}
\int_{Q(Y,r)} |\nabla_e^N \tilde u-\{\nabla_e^N \tilde u\}_r|dX
&\le \big\{C[(A+1) U_1+ G_\infty] + \de \mathcal{U} \big\} 
\, d(Y)^{-(1+\a)} r_N^{n+2+\a},
\end{align}
for $0<r\le \tfrac13 d(Y)$ and any $Y=(t_1,y)\in \Om$, where $C=C(n, c_\pm, T, \de)$.

Now we apply Lemma \ref{lem:Li-L4.3-G}.  We fix any $X_0=(t_0,z_0)\in \Om$ and
take $R=\tfrac14d(X_0)=\tfrac14 \sqrt{t_0}$.  Then, \eqref{eq:4.45-F} shows that
the condition \eqref{eq:Li-lem-4.12} in Lemma \ref{lem:Li-L4.3-G} holds for
any $Y\in Q(X_0,R)$, $r\in (0,R)$ with
\begin{align*}
&F=\nabla_e^N \tilde u, \ U_F=U_{1+\alpha},\ 
G(Y,r)= \{\nabla_e^N \tilde u\}_r\\
&\ \ \   {\rm and \ \ } H= \{C[(A+1) U_1+ G_\infty] + \de \mathcal{U} \} 
\, d(Y)^{-(1+\a)}.
\end{align*}
  Indeed, for $Y=(t_1,y)\in Q(X_0,R)$, we have $0<t_0-t_1<R^2=\tfrac1{16}t_0$
so that $\sqrt{t_1}>\tfrac{\sqrt{15}}4 \sqrt{t_0}$ and, combined with $r<R=\tfrac14 \sqrt{t_0}$,
this shows $r<\tfrac14 \tfrac4{\sqrt{15}} \sqrt{t_1} = \tfrac1{\sqrt{15}} \sqrt{t_1}
< \tfrac13 d(Y)$ so that \eqref{eq:4.45-F}  is applicable.  

Therefore, by Lemma \ref{lem:Li-L4.3-G},
we see that, for every $\de>0$ and $M>0$, there exists 
$C=C_{\de,M}(n, c_\pm, T)>0$ such that
\begin{align}  \label{eq:4.41-F}
|\nabla_e^N\tilde u(Y)-\nabla_e^N\tilde u(Y_1)| \le \Big( &
 C[(A+1) U_1+ G_\infty] + \de \mathcal{U}+\de U_{1+\a} \Big) \\
& \qquad \times \big(d(Y)\wedge d(Y_1)\big)^{-(1+\a)} |Y-Y_1|^\a,
\notag
\end{align}
holds if  $Y, Y_1\in Q(X_0,R)$ satisfy $|Y-Y_1| \ge \tfrac1{MN}$.

The case $|Y-Y_1| \le \tfrac1{MN}$ is covered by \eqref{eq:4.43-B}
in Lemma \ref{lem:4.13} stated below, and we obtain \eqref{eq:4.41-F} 
for any $Y, Y_1 \in Q(X_0,R)$.

Thus, moving $X_0$, we have shown \eqref{eq:4.41-F} for any $Y, Y_1\in \Om$.
This implies the concluding estimate \eqref{spatial-variation-equation} on $U_{1+\a}$
by recalling the definition of $U_{1+\a}$ in \eqref{seminorm_defn}, $U_{1+\a}\le
\mathcal{U}$ and by changing $\de>0$ to $\tfrac12\de$.
\end{proof}

\begin{rem}
At this point, we comment that estimation in terms of the solution $v^N$ to the discrete heat equation \eqref{eq:heat-D} was an important device 
to derive the power $n+4$ in the bounds in Lemma \ref{lem:5.3} and the bound 
\eqref{spatial-variation-equation}, leading to the desired $\a$-H\"older continuity
of $\nabla_e^N\tilde u$.
\end{rem}

The following H\"older estimate \eqref{eq:4.43-B} for $\nabla_e^N \tilde u$ in the short distance
regime $|Y-Y_1|\le \tfrac1{MN}$ was used in the proof of Proposition
\ref{spatial-variation-prop}.  This complements the H\"older estimate for
$|Y-Y_1|\ge \tfrac1{MN}$ obtained
by applying Lemma \ref{lem:Li-L4.3-G}.  It is essential that the front factor
(especially that of $U_{1+\a}$) can be taken arbitrary small by choosing $M\ge 1$ large enough.
By the polylinearity, at least in spatial directions, the function is
Lipschitz continuous and therefore, in view of the H\"older estimate, the front factor
can be made small in the short distance regime.

\begin{lem}  \label{lem:4.13}
{\rm (1)} If  $|Y-Y_1| \le \tfrac{c_1}{N}$, we have
\begin{align}  \label{eq:4.42-B}
|\nabla_e^N \tilde u(Y)-\nabla_e^N \tilde u(Y_1)| \!
\le C \big(U_{1+\a} +U_1A+G_\infty \big) \!
\big(d(Y)\wedge d(Y_1)\big)^{-(1+\a)}  |Y-Y_1|^\a,
\end{align}
for some $C=C(n, c_+,c_1)>0$.\\
{\rm (2)}  Furthermore, for every $\de>0$, there exists $M=M(n,c_+, \alpha)\ge 1$ such that
\begin{align}  \label{eq:4.43-B}
|\nabla_e^N \tilde u(Y)-\nabla_e^N \tilde u(Y_1)| 
\le \de \big(U_{1+\a} +U_1A+G_\infty \big) 
\big(d(Y)\wedge d(Y_1)\big)^{-(1+\a)}  |Y-Y_1|^\a,
\end{align}
holds if  $|Y-Y_1| \le \tfrac1{MN}$.
\end{lem}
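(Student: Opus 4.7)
Write $Y=(t,z)$ and $Y_1=(t_1,z_1)$ and split the increment into a spatial and a temporal piece,
\begin{align*}
\nabla_e^N\tilde u^N(Y)-\nabla_e^N\tilde u^N(Y_1)
= \bigl[\nabla_e^N\tilde u^N(t,z)-\nabla_e^N\tilde u^N(t,z_1)\bigr]
+ \bigl[\nabla_e^N\tilde u^N(t,z_1)-\nabla_e^N\tilde u^N(t_1,z_1)\bigr],
\end{align*}
and bound each separately. The spatial piece will be handled by polylinearity (which provides a Lipschitz bound strictly stronger than Hölder inside a $\tfrac1N$-cell), while the temporal piece will be handled by the PDE (which provides pointwise control of $\partial_t$).

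For the spatial piece, the analogue of \eqref{eq:deri-u-F} applied to $\nabla_e^N u^N$ shows that $\partial_{z_i}\nabla_e^N\tilde u^N$ is, on each $\tfrac1N$-cell, a convex combination of $\nabla_{e_i}^N\nabla_e^Nu^N$ at cell vertices. The definition of $U_{1+\a}$ forces $|\nabla_{e_i}^N\nabla_e^Nu^N(s,\tfrac yN)| \le U_{1+\a}\, d^{-(1+\a)}\, N^{1-\a}$, where $d=d(Y)\wedge d(Y_1)$. Connecting $z$ to $z_1$ by a straight segment that crosses at most $2^n$ cell boundaries (the device used in the proof of Lemma \ref{lem:Eq-norms}) and summing the cell-wise Lipschitz bounds yields
\begin{align*}
|\nabla_e^N\tilde u^N(t,z)-\nabla_e^N\tilde u^N(t,z_1)|
\le C(n)\, U_{1+\a}\, d^{-(1+\a)}\, N^{1-\a}\,|z-z_1|.
\end{align*}
Rewriting $N^{1-\a}|z-z_1|=(N|z-z_1|)^{1-\a}|z-z_1|^\a$ and using $N|z-z_1|\le c_1$ in part (1) or $N|z-z_1|\le 1/M$ in part (2) produces the target Hölder bound, with the desired small prefactor $M^{-(1-\a)}$ in part (2).

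For the temporal piece, commuting $\nabla_e^N$ and the polylinear interpolation with $\partial_s$ gives $\partial_s\nabla_e^N\tilde u^N(s,z_1)=\sum_v\vartheta^N(v,z_1)\,\nabla_e^N\partial_s u^N(s,\cdot)$ at cell vertices, and the PDE reduces matters to bounding $\nabla_e^N L_{a(s)}^Nu^N+\nabla_e^N g$. Using the representation $L_a^N u^N(\tfrac yN)=-\sum_{|e'|=1,e'>0}\nabla_{e'}^{N,*}(a_{\cdot,e'}\nabla_{e'}^N u^N)(\tfrac yN)$ and the discrete product rule, I decompose
\begin{align*}
L_a^N u^N(\tfrac yN)
= -\sum_{|e'|=1,e'>0}\Bigl\{N a_{y,e'}\bigl[\nabla_{e'}^Nu^N(\tfrac{y-e'}N)-\nabla_{e'}^Nu^N(\tfrac yN)\bigr] + N\bigl[a_{y-e',e'}-a_{y,e'}\bigr]\nabla_{e'}^Nu^N(\tfrac{y-e'}N)\Bigr\}.
\end{align*}
The first bracket is bounded via $U_{1+\a}\,d^{-(1+\a)}(1/N)^\a$ (Hölder of $\nabla_{e'}^Nu^N$), the second via $A\,d^{-\a}(1/N)^\a\cdot U_1\,d^{-1}$ (Hölder of $a$ plus first-derivative bound), giving $|L_a^Nu^N|\le C(n,c_+)N^{1-\a}d^{-(1+\a)}(AU_1+U_{1+\a})$. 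Applying $\nabla_e^N$ picks up another factor of $N$, and adding $|\nabla_e^N g|\le 2NG_\infty$ yields
\begin{align*}
|\partial_s\nabla_e^N\tilde u^N(s,z_1)|
\le C(n,c_+)\,N^{2-\a}\,d^{-(1+\a)}(AU_1+U_{1+\a}) + 2NG_\infty.
\end{align*}
Integrating over $|t-t_1|\le\kappa^2/N^2$ (with $\kappa=c_1$ or $\kappa=1/M$) and writing $|t-t_1|=(N|t-t_1|^{1/2})^{2-\a}N^{-(2-\a)}|t-t_1|^{\a/2}$ produces the exact Hölder scaling in $|Y-Y_1|^\a=|t-t_1|^{\a/2}$, with overall prefactor $\kappa^{2-\a}$ on the $U$-terms. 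The $G_\infty$-term is handled similarly, picking up an extra factor $N^{-(1-\a)}\le 1$, and the resulting constant can be absorbed into $Cd^{-(1+\a)}$ upon using $d\le\sqrt T$.

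Combining the two pieces, the prefactor in front of $(U_{1+\a}+U_1A+G_\infty)d^{-(1+\a)}|Y-Y_1|^\a$ is $C(n,c_+)(\kappa^{1-\a}+\kappa^{2-\a})$; this bounded by $C(n,c_+,c_1)$ in part (1) and by $C(n,c_+)M^{-(1-\a)}$ in part (2), so choosing $M$ with $C(n,c_+)M^{-(1-\a)}\le\de$ gives part (2). \textbf{The main obstacle} is the bookkeeping of powers of $N$: one must arrange that each of the two pieces contributes exactly the factor $(N|Y-Y_1|)^{1-\a}$ or $(N|Y-Y_1|)^{2-\a}$, so that the $N$'s cancel with $|Y-Y_1|\le 1/(MN)$ and what survives is a pure $M^{-(1-\a)}$ that can be made arbitrarily small; in particular the non-Hölder term $\nabla_e^N g$ requires using the short time window to recover a small factor despite the absence of any Hölder control on $g$.
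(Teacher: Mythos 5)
Your proof is correct and follows essentially the same route as the paper's: split into a spatial and a temporal increment, use polylinearity to get a Lipschitz bound (hence $(N|Y-Y_1|)^{1-\alpha}$) on the spatial piece, and use the PDE together with the discrete product rule to control $\partial_t\nabla_e^N u^N$ (giving $(N|Y-Y_1|)^{2-\alpha}$) on the temporal piece. The only cosmetic difference is that you bound $L_{a}^N u^N$ first and then apply the crude $|\nabla_e^N F|\le 2N\|F\|_\infty$, whereas the paper applies $\nabla_e^N$ inside and Hölder-estimates the resulting expression directly; both give the same power of $N$.
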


\begin{proof}
We now show \eqref{eq:4.43-B}.  For \eqref{eq:4.42-B}, we may take $\tfrac1M=c_1$ in the proof.

{\it Step 1}. (spatial direction)
Let $Y=(t,z^{(1)})$ and $Y_1=(t,z^{(2)})$ have the same $t$ coordinate.  
We will assume first that $z^{(1)} = (z_i^{(1)})_{i=1}^n$ and
$z^{(2)} = (z_i^{(2)})_{i=1}^n$ belong to the same $\frac1N$-box, so that
$\tfrac{[Nz^{(1)}]}N = \tfrac{[Nz^{(2)}]}N =: \bar z =(\bar z_i)_{i=1}^n\in \tfrac1N\T_N^n$ are common,
and also that $z_j^{(1)} = z_j^{(2)} =: z_j$ except $j=i$.  Recall 
$\nabla_e^N \tilde u(Y)= \widetilde{\nabla_e^N u}(Y)$ as in \eqref{eq:tilde-grad} so that 
$$
\nabla_e^N \tilde u(z) = \sum_{v\in \{0,1\}^n} \vartheta^N(v,z) 
\nabla_e^N u\big(\bar z + \tfrac{v}N\big),
$$
from \eqref{eq:poli-A}, where we have now only specified the spatial variable.  Then,
\begin{align*}
& \nabla_e^N \tilde u(z^{(1)}) - \nabla_e^N \tilde u(z^{(2)}) \\
& \qquad  = \sum_{v\in \{0,1\}^n} \big(\vartheta^N(v_i,z_i^{(1)}) - \vartheta^N(v_i,z_i^{(2)})\big)
\prod_{j\not=i} \vartheta^N(v_j,z_j)
\nabla_e^N  u\big(\bar z + \tfrac{v}N\big).
\end{align*}
However,
\begin{align*}
& \sum_{v_i=0,1} \vartheta^N(v_i,z_i^{(1)}) \nabla_e^N u\big(\bar z + \tfrac{v}N\big) \\
& \qquad = N (z_i^{(1)}-\bar z_i) \nabla_e^N u\big(\bar z + \tfrac{e_i+\hat v_i}N\big)
+ (1-N (z_i^{(1)}-\bar z_i)) \nabla_e^N u\big(\bar z+ \tfrac{\hat v_i}N \big),
\end{align*}
where $\hat v_i$ is defined below \eqref{eq:deri-u-F} from $v$.
Therefore, noting $|\vartheta^N(v_j,z_j)|\le 1$, we have
\begin{align*}
& \big|\nabla_e^N \tilde u(z^{(1)}) - \nabla_e^N \tilde u(z^{(2)})\big| \\
& \qquad  \le \sum_{v\in \{0,1\}^{n-1}} \Big| N (z_i^{(1)}-z_i^{(2)}) 
\nabla_e^N u\big(\bar z + \tfrac{e_i+\hat v_i}N\big)
- N (z_i^{(1)}-z_i^{(2)}) \nabla_e^N u\big(\bar z+\tfrac{\hat v_i}N\big) \Big| \\
& \qquad  =
 \sum_{v\in \{0,1\}^{n-1}}  N |z^{(1)}-z^{(2)}| \Big| \nabla_e^N u\big(\bar z + \tfrac{e_i+\hat v_i}N\big)
- \nabla_e^N u\big(\bar z+\tfrac{\hat v_i}N\big) \Big|  \\
& \qquad \le 2^{n-1} N |z^{(1)}-z^{(2)}| \cdot U_{1+\a} (\tfrac1N)^\a d(Y)^{-(1+\a)}.
\end{align*}
Here, recalling $|Y-Y_1| \le \tfrac1{MN}$, we estimate
$$
|z^{(1)}-z^{(2)}|= |z^{(1)}-z^{(2)}|^\a \cdot |z^{(1)}-z^{(2)}|^{1-\a}
\le |z^{(1)}-z^{(2)}|^\a (\tfrac1{MN})^{1-\a},
$$
and obtain
\begin{align}  \label{eq:L.4.11-1}
\big| \nabla_e^N \tilde u(z^{(1)}) - \nabla_e^N \tilde u(z^{(2)}) \big|
\le |z^{(1)}-z^{(2)}|^\a (\tfrac1{M})^{1-\a} \cdot 2^{n-1} 
U_{1+\a} d(Y)^{-(1+\a)}.
\end{align}

When $z^{(1)}$ and $z^{(2)}$ belong to different $\tfrac1N$-boxes, we consider
the segment connecting these two points, divided into pieces belonging to
the same $\tfrac1N$-boxes and apply the above result.
Thus, the desired  H\"older estimate \eqref{eq:4.43-B} in the spatial direction is shown
by taking $M$ large, as $1-\alpha>0$.

\vskip 3mm
{\it Step 2}. (temporal direction) 
Let $Y=(t,z)$ and $Y_1=(s,z)$; we may assume $z=\tfrac{x}N \in \tfrac1N \T_N^n$.
By the equation \eqref{eq:1.2-linear} or equivalently
\eqref{general-divergence}, for $X=(t,z)$, we have
\begin{align*}
\partial_t \nabla_e^N u(X)
=  - \tfrac12\sum_{e':|e'|=1} \nabla_e^N \nabla^{N,*}_{e'} \big(a_{x,e'}(t)\nabla^N_{e'} u \big)(X)
+\nabla_e^Ng(X).
\end{align*}
We apply the following rough estimates to the right hand side:
By \eqref{eq:disc-der-D} applied for $\nabla_{e'}^{N,*}$ and (A.1)--(A.3),
\begin{align*}
|\nabla_e^N \nabla^{N,*}_{e'} \big(a_{x,e'}(t)\nabla^N_{e'} u \big)(X)|
& \le 2N^2 \Big\{ c_+ \sup_{X'}|\nabla^N_{e'} u(X'-\tfrac{e'}N) - \nabla^N_{e'} u(X')|  \\
& \hskip 16mm + U_1 d(X)^{-1} \sup_{X'=(t,x')}|a_{x'-e',e'}(t)-a_{x',e'}(t)| \Big\} \\
& \le 2N^2 \Big\{ c_+ U_{1+\a} + U_1 A\Big\} (\tfrac1N)^{\a} d(X)^{-(1+\a)},
\end{align*}
and $|\nabla_e^Ng(X)| \le 2 N G_\infty$.  Therefore, we have
\begin{align*}
|\nabla_e^N u(t,z)- \nabla_e^N u(s,z)|
\le & 2n N^{2-\a} |t-s| \Big\{ c_+ U_{1+\a} + U_1 A\Big\} \big(d(Y)\wedge d(Y_1)\big)^{-(1+\a)}  \\
&+ 2N|t-s| G_\infty.
\end{align*}
Here, we estimate
$$
|t-s|=|t-s|^{\frac{\a}2} |t-s|^{1-\frac{a}2}
\le |t-s|^{\frac{\a}2} (\tfrac1{MN})^{2-\a},
$$
by noting $|t-s|\le (\tfrac1{MN})^2$ from $|Y-Y_1|\le \tfrac1{MN}$. 
Thus, we obtain
\begin{align*}
& |\nabla_e^N u(t,z)- \nabla_e^N u(s,z)| \\
& \qquad \le |t-s|^{\frac{\a}2} (\tfrac1{M})^{2-\a} \Big[ 2n \Big\{ c_+ U_{1+\a} + U_1 A\Big\} 
\big(d(Y)\wedge d(Y_1)\big)^{-(1+\a)} + 2G_\infty\Big].
\end{align*}
The desired  H\"older estimate \eqref{eq:4.43-B} in the temporal direction is shown by taking $M$ large, as $2-\alpha>0$.

The proof of the lemma is completed by combining the results obtained
in Steps 1 and 2.
\end{proof}

\subsection{Estimate on $\lan u^N\ran_{1+\alpha}^{*,N}$}
\label{sec:4.5}

The purpose of this subsection is to provide an estimate on $\lan u^N\ran_{1+\a}^{*,N}$ used in the proof of Theorem \ref{thm:Li-T4.8}.  The argument, although different in our discrete setting, is inspired by that of Theorem 4.8 (p. 58) of \cite{Li96}.  Recall \eqref{seminorm_defn}  for $U_1, U_{1+\a}$
and the assumptions (A.1)--(A.3) for $a(t)$ and $g(t)$, especially for the constants $A$ and $G_\infty$.

\begin{prop}
\label{temporal-prop}
We have
$$
\langle u^N\rangle^{*,N}_{1+\alpha} \leq C[AU_1 + U_{1+\alpha} + G_\infty ]
$$
where $C=C(n, c_+, T, \alpha)$.
\end{prop}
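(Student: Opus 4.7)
The plan is to estimate $|u^N(t, y/N) - u^N(s, y/N)|$ at a fixed lattice point by exploiting the equation $\partial_t u^N = L_{a(t)}^N u^N + g$ directly. Since any pointwise bound on $L_{a(\tau)}^N u^N$ obtainable from H\"older data alone grows like $N^{1-\alpha}$, a single argument cannot work uniformly over all time gaps; the natural threshold is $|t-s|\sim N^{-2}$. I therefore split into the regimes $|t-s|\le N^{-2}$ and $|t-s|>N^{-2}$.

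In the short-time regime $|t-s|\le N^{-2}$, I rewrite the discrete divergence
\begin{align*}
L_{a(\tau)}^N u^N(\tau, x/N) = \sum_{|e|=1,e>0} N\bigl\{&a_{x,e}\bigl[\nabla_e^N u^N(x/N) - \nabla_e^N u^N((x-e)/N)\bigr] \\
&+ [a_{x,e} - a_{x-e,e}]\nabla_e^N u^N((x-e)/N)\bigr\},
\end{align*}
and apply assumption (A.2) for $a$ together with the definitions of $U_1,U_{1+\alpha}$ for $\nabla^N u^N$ to obtain the pointwise bound $|L_{a(\tau)}^N u^N(\tau, y/N)| \le C N^{1-\alpha}(AU_1 + U_{1+\alpha})/d(\tau)^{1+\alpha}$. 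Since $d(\tau) \ge d \wedge d'$ on $[s,t]$, integrating over $\tau$ and invoking the elementary inequality $|t-s| N^{1-\alpha} \le |t-s|^{(1+\alpha)/2}$ (valid precisely when $|t-s|\le N^{-2}$) plus the trivial bound $|\int_s^t g\, d\tau|\le G_\infty |t-s|$ delivers the desired estimate.

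In the moderate-time regime $|t-s|>N^{-2}$, I set $r=\sqrt{|t-s|}>1/N$ and choose a discrete test function $\varphi_r$ on $\tfrac{1}{N}\T^n_N$ that is symmetric about $y/N$, supported in $\{|z-y/N|\le r\}$, normalized so that $\langle\varphi_r,1\rangle_N = 1$, and satisfies $\max_e\|\nabla_e^N\varphi_r\|_{L^1}\le C/r$. Decomposing
\begin{align*}
u^N(t,y/N)-u^N(s,y/N)
&= [u^N(t,y/N) - \langle\varphi_r, u^N(t)\rangle_N] \\
&\quad + \langle\varphi_r, u^N(t)-u^N(s)\rangle_N \\
&\quad + [\langle\varphi_r, u^N(s)\rangle_N - u^N(s,y/N)],
\end{align*}
I bound the outer brackets by a discrete Taylor expansion: expressing $u^N(\tau,z) - u^N(\tau,y/N) - \nabla^N u^N(\tau,y/N)\cdot(z-y/N)$ as a telescoping sum of $\nabla^N u^N$-increments along a lattice path from $y/N$ to $z$, each increment controlled by $U_{1+\alpha}$, and using the symmetry $\langle\varphi_r, z-y/N\rangle_N = 0$ to annihilate the linear term. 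This yields a bound of order $CU_{1+\alpha} r^{1+\alpha}/(d\wedge d')^{1+\alpha}$. For the middle bracket, the equation and Lemma \ref{lem:DG-Q-2} give
\begin{align*}
\langle\varphi_r, L_{a(\tau)}^N u^N(\tau)\rangle_N = -\sum_{|e|=1,e>0}\langle\nabla_e^N\varphi_r,\, a_{\cdot,e}(\tau)\nabla_e^N u^N(\tau)-a_{y,e}(\tau)\nabla_e^N u^N(\tau,y/N)\rangle_N,
\end{align*}
where subtracting the constant is legitimate because $\langle\nabla_e^N\varphi_r, 1\rangle_N = 0$; the combined H\"older estimate $|a_{z,e}\nabla_e^N u^N(\tau,z) - a_{y,e}\nabla_e^N u^N(\tau,y/N)|\le C(AU_1 + U_{1+\alpha}) r^\alpha/d(\tau)^{1+\alpha}$ together with the $L^1$ bound on $\nabla^N\varphi_r$ yields the pointwise in-time bound $C(AU_1 + U_{1+\alpha}) r^{\alpha-1}/(d\wedge d')^{1+\alpha}$. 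Integrating and using $|\langle\varphi_r, g\rangle_N|\le G_\infty$ bounds the middle bracket by $C(AU_1 + U_{1+\alpha})|t-s|r^{\alpha-1}/(d\wedge d')^{1+\alpha} + G_\infty|t-s|$. With $r=\sqrt{|t-s|}$, both $r^{1+\alpha}$ and $|t-s|r^{\alpha-1}$ equal $|t-s|^{(1+\alpha)/2}$, so the pieces combine to the desired estimate; the residual $G_\infty|t-s|$ is absorbed using $(d\wedge d')^{1+\alpha}|t-s|^{(1-\alpha)/2}\le T$.

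The hard part will be constructing the discrete mollifier $\varphi_r$ to satisfy simultaneously the three properties (symmetry about an arbitrary lattice center, unit discrete mass, and the $L^1$ gradient bound $\|\nabla_e^N\varphi_r\|_{L^1}\le C/r$) uniformly across all admissible scales $r\in[1/N,1]$, together with the careful verification that the discrete Taylor expansion along a lattice path actually produces error of order $U_{1+\alpha}|z-y/N|^{1+\alpha}/d^{1+\alpha}$ rather than the merely Lipschitz bound $U_1|z-y/N|/d$; this crucially depends on using the H\"older regularity of $\nabla^N u^N$ captured by $U_{1+\alpha}$ to estimate the path-dependent correction terms.
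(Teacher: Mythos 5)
Your proposal is essentially the paper's proof: the paper defines $U(t)$ as the average of $u^N(t,\cdot)$ over a discrete cube $S_N(r,y)$ of width $r=\sqrt{|t-s|}$ centered at $y/N$, which is precisely your mollifier $\varphi_r$ specialized to the normalized indicator $\tfrac{N^n}{|S_N(r,y)|}\mathbf{1}_{S_N(r,y)}$ (for which symmetry, unit mass, and $\|\nabla_e^N\varphi_r\|_{L^1}\sim N\cdot\tfrac{|S_N(r,y)\triangle S_N(r,y-e)|}{|S_N(r,y)|}\le C/r$ are all immediate, so the construction you flag as the hard part requires no work), and the paper handles both of your time regimes in one computation by observing that when $r\le 1/N$ the box degenerates to a singleton --- so the outer brackets vanish --- while the middle-bracket prefactor $\tfrac{N(r+\tfrac1N)^\alpha r^2}{[Nr]+1}\le C N^{1-\alpha}r^2\le Cr^{1+\alpha}$ recovers your short-time estimate. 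The Taylor expansion along a nearest-neighbor lattice path, with $\sum_{x\in S_N(r,y)}(\tfrac{y}{N}-\tfrac{x}{N})=0$ killing the linear term and $U_{1+\alpha}$ controlling the increments of $\nabla^N u^N$, appears in Step 4 exactly as you describe, so that concern is also unfounded.
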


\begin{proof}
{\it Step 1.}  
Let $X=(s,y/N)$ and $Y=(t,y/N)$ be given with  $y\in \T^n_N$ and $0\le t< s \le T$ 
such that $s-t = r^2$, $0<r<\frac{1}{2}d(Y)$.  Let
 $$
 S_N(r,y) = \big\{x\in \T^n_N: |\tfrac{x}{N} - \tfrac{y}{N}|_{L^\infty} \le  r \big\}
 $$ 
 be a square with center $\tfrac{y}{N}$ with width $r$. Note that $|S_N(r,y)| = (2[ Nr] +1)^n$,
 recall that $[Nr]$ denotes the integer part of $Nr$. 
 Define, for $y\in \T^n_N$ and $t\in[0,T]$,
$$
U(t) = \frac{1}{|S_N(r,y)|}\sum_{x\in S_N(r, y)} u^N(t, \tfrac{x}{N}),
$$
and divide
\begin{align}  \label{eq:4.39}
| u^N(Y) - u^N(X)| \leq |u^N(t,\tfrac{y}{N}) - U(t)| + |U(t)-U(s)| + |U(s) -  u^N(s,\tfrac{y}{N})|.
\end{align}
We will develop bounds for each of these terms.

\vskip .1cm
{\it Step 2.}  First, we  rewrite the second term of \eqref{eq:4.39} as
\begin{align}
\label{step2.0}
& U(s) - U(t) = \frac{1}{|S_N(r,y)|} \sum_{x\in S_N(r,y)} \int_t^s  \partial_q u^N(q,\tfrac{x}{N})  dq \\
& = \frac{1}{|S_N(r,y)|}\sum_{x\in S_N(r,y)} \int_t^s \Big\{-\sum_{|e|=1, e>0} 
\nabla^{N,*}_e\big(a_{x,e}(q)\nabla^N_eu^N\big)\big(q,\tfrac{x}{N}\big) + g\big(q,\tfrac{x}{N}\big)\Big\} dq 
\notag \\
&= \frac{N}{|S_N(r,y)|}\sum_{|e|=1,e>0} \int_t^s \Big\{\sum_{x\in S_N(r,y)} 
a_{x,e}(q)\nabla^N_eu^N\big(q,\tfrac{x}{N}\big)
\notag \\
&\hskip 50mm  
- \sum_{x\in S_N(r,y-e)} 
a_{x,e}(q)\nabla^N_eu^N\big(q,\tfrac{x}{N}\big)\Big\}dq
\notag \\
&\ \ + \frac{1}{|S_N(r,y)|} \sum_{x\in S_N(r,y)} \int_t^s g\big(q,\tfrac{x}{N}\big)  dq
\notag \\
&=: \sum_{|e|=1,e>0} M^1_e + M^2,  \notag
\end{align}
where we use the equation \eqref{general-divergence} for the second line
and then recall $\nabla_e^{N,*}=
\nabla_{-e}^N$ for the third line.

One can write $a_{x,e}(q) = a_{y,e}(q) + \big(a_{x,e}(q) - a_{y,e}(q)\big)$ and insert so that 
\begin{align*}
M^1_e
&=\frac{N}{|S_N(r,y)|}\int_t^s\Big\{ \sum_{x\in S_N(r,y)}  \big(a_{x,e}(q) - a_{y,e}(q)\big)
\nabla^N_eu^N\big(q,\tfrac{x}{N}\big) \\
&\hskip 35mm
- \sum_{x\in S_N(r,y-e)} \big(a_{x,e}(q) - a_{y,e}(q)\big)\nabla^N_eu^N\big(q,\tfrac{x}{N}\big)\Big\} dq\\
&\hskip 7mm
+ \frac{N}{|S_N(r,y)|} \int_t^s a_{y,e}(q)
\Big\{\sum_{x\in S_N(r,y)}  \nabla^N_eu^N\big(q,\tfrac{x}{N}\big) 
 - \sum_{x\in S_N(r,y-e)}  \nabla^N_eu^N\big(q,\tfrac{x}{N}\big)\Big\}dq\\
&=: J_{1,e}+J_{2,e}.
\end{align*}
Further, as the difference of sums of $\nabla^N_e u^N\big( q,\tfrac{y}{N}\big)$, which does not depend on $x$, over $x\in S_N(r,y-e)$ and $x\in S_N(r,y)$ vanishes, we may rewrite $J_{2,e}$ as
\begin{align*}
J_{2,e} &= \frac{N}{|S_N(r,y)|} \int_t^s a_{y,e}(q)\Big\{\sum_{x\in S_N(r,y)} 
\big[\nabla^N_eu^N\big(q,\tfrac{x}{N}\big) - \nabla^N_e u^N\big( q,\tfrac{y}{N}\big)\big]\\
& \hskip 38mm - \sum_{x\in S_N(r,y-e)}  \big[\nabla^N_eu^N\big(q,\tfrac{x}{N}\big) - \nabla^N_e u^N\big( q,\tfrac{y}{N}\big)\big]\Big\}dq.
\end{align*}

\vskip .1cm
{\it Step 3.}
Note that the size of the symmetric difference satisfies
$$|S_N(r,y)\Delta S_N(r,y-e)| = 2\big(2[ Nr] +1)^{n-1}.$$
We now see that, recalling $s-t=r^2$,
\begin{align*}
|J_{1,e}| &\leq \tfrac{2}{2 [ Nr] +1} AU_1 N r^2 \big(\sqrt{n}(r+\tfrac{1}{N})\big)^\alpha d(Y)^{-(1+\alpha)},
\end{align*}
since, for $x\in S_N(r,y-e)\cup S_N(r,y)$, we have 
$|x-y|_{L^\infty} \leq r + \frac{1}{N}$ so that $|a_{x,e}(q) - a_{y,e}(q)| 
\leq A \big(\sqrt{n}(r+\frac{1}{N})\big)^\alpha d(Y)^{-\a}$ noting that $d((q,\tfrac{x}N))\ge d(Y)$ for
$q\in [t,s]$, and also
$$
|\nabla^N_eu^N\big(q,\tfrac{x}{N}\big)| \le |\nabla^N_eu^N|_0^{(1),N} d(Y)^{-1}
=U_1 d(Y)^{-1},
$$
by \eqref{eq:2-norm-1} and \eqref{eq:tilde-grad} in Lemma \ref{lem:Eq-norms}.

On the other hand, for $J_{2,e}$, since $0<a_{y,e}(q)\le c_+$ and also 
$$
\big|\nabla^N_eu^N\big(q,\tfrac{x}{N}\big) - \nabla^N_e u^N\big( q,\tfrac{y}{N}\big)\big|
\le U_{1+\a} d(Y)^{-(1+\a)} |\tfrac{x}{N}-\tfrac{y}{N}|^\a,
$$
by \eqref{eq:2-norm-2} in Lemma \ref{lem:Eq-norms},
we observe that
\begin{align*}
|J_{2,e}| &\leq  \tfrac{2}{2 [ Nr] +1} c_+
U_{1+\alpha} N r^2 \big(\sqrt{n}(r+\tfrac{1}{N})\big)^\a d(Y)^{-(1+\a)}.
\end{align*}

Hence, if $r>\frac{1}{N}$, 
\begin{align*}
|M_e^1| = |J_{1,e}+J_{2,e}|
&\leq 2 \big(AU_1+ c_+U_{1+\alpha}\big) \frac{Nr^2 d(Y)^{-(1+\alpha)}}{[ Nr] +1} 
(2\sqrt{n} r)^\a.
\end{align*}
Since $[ Nr] +1\geq Nr$, further
\begin{align}
\label{M_2^1 estimate}
|M_e^1| \leq  2 (2\sqrt{n})^\a\big(AU_1+ c_+ U_{1+\alpha}\big) r^{1+\alpha} d(Y)^{-(1+\alpha)}.
\end{align}

However, if $r\leq \frac{1}{N}$,
as $[ Nr] +1\geq 1$ and $0<\alpha<1$, then
$$\frac{N \big(\sqrt{n}(r+\frac{1}{N})\big)^\alpha r^2}{[ Nr] +1} 
\leq  (2\sqrt{n})^\a N^{1-\alpha}r^2 \leq (2\sqrt{n})^\a r^{1+\alpha}.$$
Hence, in this case also we recover the same estimate on $M_e^1$ in \eqref{M_2^1 estimate}.

Moreover, as $\a<1$, $d(Y)\leq \sqrt{T}$ and $r^2\le T$
so that $r^{1-\a}\le \sqrt{T}^{1-\a}$, we bound the term $M^2$ by
\begin{align}\label{second_temporal_help}
|M^2| & \le \frac{1}{|S_N(r,y)|} \sum_{x\in S_N(r,y)} \Big| \int_t^s g\big(q,\tfrac{x}{N}\big)  dq \Big| \\
&\leq  G_\infty r^2  \  \leq \ G_\infty r^{1+\a} d(Y)^{-(1+\a)} T^{1+\a}.
\nonumber
\end{align}

Hence, from \eqref{step2.0}, \eqref{M_2^1 estimate} and \eqref{second_temporal_help},
we obtain the bound
\begin{align}  \label{eq:4.43}
|U(t)-U(s)| \leq C(AU_1 + c_+U_{1+\alpha} + G_\infty) \big(\tfrac{r}{d(Y)}\big)^{1+\a}
\end{align}
where $C=C(n, T, \alpha)$.

\vskip 1mm
{\it Step 4.} 
For the first and third terms of \eqref{eq:4.39},
noting that $\sum_{x\in S_N(r,y)} (\tfrac{y}{N}-\tfrac{x}{N}) =0$, write
\begin{align*}
&|u^N(t,\tfrac{y}{N}) - U(t)| \\
&\leq 
\big| \frac{1}{|S_N(r,y)|} 
\sum_{x\in S_N(r,y)} \big| u^N(t,\tfrac{y}{N})- u^N(t,
\tfrac{x}{N}) - \nabla^N u^N(t,\tfrac{y}{N})\cdot (\tfrac{y}{N}-\tfrac{x}{N}) \big|.
\end{align*}
When $r<\tfrac1N$, as there is a single point in $S_N(r,y)$, the above display vanishes.  

When $r\ge\tfrac1N$ and there are multiple points in $S_N(r,y)$, as in the proof 
of \eqref{eq:3-Nnablawdt} in Lemma \ref{lem:estimate-D-energy}, consider a path 
$\{(t, \frac{z_j}{N})\}\subset \{t\}\times \frac{1}{N}S_N(r,y)$ which moves from 
$(t, \tfrac{y}{N})$ to $(t,\tfrac{x}{N})$ in nearest-neighbor steps in $\{t\}\times S_N(r,y)$
with at most $\sqrt{n}|x-y|$ terms.
Then, 
$$|u^N(t,\tfrac{y}{N})- u^N(t,\tfrac{x}{N}) - \nabla^N u^N(t, \tfrac{y}{N})
\cdot  (\tfrac{y}{N}-\tfrac{x}{N})| \leq \tfrac{1}{N}\sum_j 
|\nabla^N u^N(t, \tfrac{z_j}{N}) - \nabla^N u^N(t, \tfrac{y}{N})|$$
and, as $\tfrac1N |x-y| \leq r$, we have
by \eqref{eq:2-norm-2} in Lemma \ref{lem:Eq-norms} that
\begin{align}  \label{eq:4.44}
|u^N(t,\tfrac{y}{N}) - U(t)| \leq C(n)[\tilde u^N]^*_{1+\a} r^\a \cdot r d(Y)^{-(1+\a)} 
= C(n)U_{1+\a}
 (\tfrac{r}{d(Y)})^{1+\a}.
\end{align}

\vskip .1cm

{\it Step 5.}  Finally, combining \eqref{eq:4.43} and \eqref{eq:4.44}, inputting back into \eqref{eq:4.39}, 
we have the concluding estimate on $\langle u^N \rangle_{1+\alpha}^{*,N}$ noting that
$d(X)\wedge d(Y)=d(Y)$.
\end{proof}

\section{Schauder estimate for the second discrete derivatives}
\label{sec:Schauder-second}

The goal of this section is to derive uniform $L^\infty$ bounds and H\"older estimates
for the second discrete derivatives $\nabla_{e_1}^N\nabla_{e_2}^N u^N(t,\tfrac{x}N)$
of the solution of the equation \eqref{eq:2.1-X}.
To derive such estimates, as in the proof of Theorem 4.9 of \cite{Li96},
it is natural to consider the equation
for $\nabla_e^N u^N(t,\tfrac{x}N)$ and repeat the same argument for proving
Theorem \ref{thm:Li-T4.8} for this equation.  However, it turns out
to be more convenient in our situation to consider $\V^N$, instead of $u^N$,
defined by the nonlinear transformation
\begin{equation}  \label{eq:vN-0}
\V^N(t,\tfrac{x}N) :=\fa(u^N(t,\tfrac{x}N))
\end{equation}
and the equation satisfied by its discrete derivatives; see \eqref{eq:we}
below.

In Section \ref{sec:5.1}, we study the system of linear discrete PDEs \eqref{eq:we},
which is obtained as above and has a different form from \eqref{eq:1.2-linear}, 
but as we will see, a similar application of the energy inequality works well for this equation.
In Theorem \ref{second_Schauder_thm}, we give Schauder bounds for 
\eqref{eq:we} which suit our applications; see Remark \ref{alternative_rmk} for
an alternative.  In Section \ref{sec:5.2}, we return to the original setting and
formulate corresponding Schauder estimates there; see Corollary \ref{second_Schauder_cor}.
When the initial value is smooth enough, we also give in Corollary 
\ref{second_Schauder_time_cor} an improved regularity estimate.

\subsection{Schauder bounds for the associated linear discrete PDE \eqref{eq:we}}
\label{sec:5.1}

From \eqref{eq:2.1-X}, $\V^N$ defined by \eqref{eq:vN-0} satisfies
\begin{equation}  \label{eq:vN}
\partial_t \V^N = \bar a \partial_t u^N
= \bar a \{ \De^N \V^N + Kf(u^N)\},
\end{equation}
where the last term can also be written as $Kf(\fa^{-1}(\V^N))$ 
in terms of $\V^N$ and
\begin{equation}  \label{eq:5.3-a}
\bar a \equiv \bar a(t, \tfrac{x}N) := \fa'(u^N (t, \tfrac{x}N)).
\end{equation}

Note that the coefficient $\bar a$ is a site function, while $a_{x,e}(u)$ in
\eqref{eq:a_xe}, which appears in \eqref{eq:2.1-X} by \eqref{eq:1-L_a}
and \eqref{eq:De-Lta}, depends also on the edges or directions $e$.
In the continuous setting, these two functions are the same.

For $e\in \Z^n: |e|=1, e>0$, consider the discrete derivative of
$\V^N$ in the direction $e$:
$$
\W_e\equiv \W_e^N(t, \tfrac{x}N) := \nabla_e^N \V^N(t, \tfrac{x}N).
$$
By acting $\nabla_e^N$ on the equation \eqref{eq:vN} and
recalling \eqref{eq:Lap-N} for $\De^N$, $\{\W_e\}_e$ satisfies the system of equations
\begin{align}  \label{eq:we}
\partial_t \W_e
=  -\nabla_e^N \Big\{ \bar a(X)\sum_{|e'|=1, e'>0}
\nabla_{e'}^{N,*}  \W_{e'} \Big\}
+ \nabla_e^N g,
\end{align}
where $X=(t,\tfrac{x}N)$ and
\begin{align}  \label{eq:5.5-g}
g(X)\equiv g ^N(t, \tfrac{x}N):=K \bar a  (t, \tfrac{x}N) f(u^N (t, \tfrac{x}N)).
\end{align}
One may view the system \eqref{eq:we} as a perturbation of a closed `diagonal' 
or  `decoupled' system over $\{e\}$:  Noting \eqref{eq:disc-der-D}, 
and \eqref{eq:Lap-N} and \eqref{eq:2.4} to write the term with $\Delta^N$, we have
\begin{align}
\label{modified eq:we}
\partial_t \W_e &= \bar{a}(X)\Delta^N \W^N_e -\nabla^N_e\bar{a}(X) \sum_{|e'|=1, e'>0} \nabla^{N,*}_{e'} \W_{e'}(X+\tfrac{e}N) + \nabla^N_e g,
\end{align}
where $X+\frac{e}N= (t, \frac{x+e}N)$ as in Lemma \ref{lem:D-energy-a-Holder}.

In the following, apart from \eqref{eq:vN} and similarly to Section \ref{sec:4.1},
we will study the equation \eqref{eq:we}, where $\W_e$ is in the form of a gradient of 
abstract function $\psi$, that is $\W_e= \nabla^N_e \psi$, with functions $\bar a$ and $g$ satisfying
the following abstract assumptions:

\begin{itemize}
\item[(B.1)] (nondegeneracy, boundedness) \quad $c_- \le \bar a \le c_+$,
\item[(B.2)] (H\"older continuity)\quad  $[\bar a]^{*,N}_\alpha \le \bar A<\infty$, $\a\in (0,1)$,
\item[(B.3)] (boundedness of $\nabla^N g$)\quad  
$|\nabla^N g|_0^{(1),N} \equiv \max\limits_{e:|e|=1}|\nabla_e^N g|_0^{(1),N}\le G_0^1<\infty$.
\end{itemize}

We always assume the continuity of $g(t)$ in $t$.
Note that $\bar a$ defined by \eqref{eq:5.3-a} from the solution $u^N$ of
\eqref{eq:2.1-X} with \eqref{eq:1.u_pm} satisfies the conditions (B.1) and
(B.2) with $\a=\si$ noting \eqref{eq:cor2.3-1} in Corollary \ref{cor:2.3} and 
$\fa\in C^2([u_-,u_+])$ is strictly increasing.  Also, we will observe that $g$ satisfying \eqref{eq:5.5-g} satisfies (B.3) as a consequence of the Schauder estimate proved for $u^N$ in Theorem \ref{thm:Li-T4.8}.  See Section \ref{sec:5.2} for these computations.

One might think that an $L^\infty$ bound on $\nabla^Ng$ could be substituted instead of (B.3).  However, without further regularity, if one only assumes an $L^\infty$ bound on the initial data $u^N(0,\cdot)$, then (B.3) is natural in view of the bound of $\nabla^N u^N$ in Theorem \ref{thm:Li-T4.8}.

To begin, we need to rewrite Lemma  \ref{lem:D-energy-a-Holder}
to adjust in the present setting; see Lemma \ref{lem:discreteEnergy-We}
below.  Take $Y=(t_1,y) \in \Om=[0,T]\times \T^n$ and $r: 0<r<\frac12 
d(Y)=\frac12 t_1^{\frac12}$.  Recall that 
$Q_N(Y,r) = (t_1-r^2,t_1)\times D_N(y,r)$ with the discrete interior
$D_N(y,r)$ of the minimal cover of $D(y,r)$ by $\frac1N$-boxes, and
$\mathcal{P}_N^+Q_N(Y,r)$ is the parabolic outer boundary of $Q_N(Y,r)$.

Take the closest point $\tilde y \in \tfrac1N\T_N^n$ to $y$, in particular,
$|y-\tilde y|\le \tfrac{\sqrt{n}}{2N}$ and set $\tilde Y:=(t_1,\tilde y) \in \Om_N$.
For each $e:|e|=1, e>0$, let $\zeta_e=\zeta_e^N = \zeta_e^{N,Q(r)}(t, \tfrac{x}N)$ be
the solution of the discrete heat equation (with direction-independent coefficient)
on $Q_N(r) = Q_N(Y,r)$:
\begin{align}  \label{eq:zetae}
\partial_t \zeta_e(t, \tfrac{x}N)= \bar a(\tilde Y) \De^N \zeta_e(t, \tfrac{x}N),
\end{align}
satisfying $\zeta_e = \W_e$ at $\mathcal{P}_N^+Q_N(r)$, and set 
$$
W_e \equiv W_e^N:= \W_e-\zeta_e \quad \text{ on } \; \overline{Q_N(r)}.
$$
Recall \eqref{eq:4.24-A} and \eqref{eq:4.25-A} for $\mathcal{P}_N^+Q_N(r)$
and $\overline{Q_N(r)}$, respectively.  Note, as before, by \eqref{eq:Lap-N} and \eqref{eq:2.4}, that
\begin{align}
\label{w_sub_eq}
&-\nabla^N_e \Big\{ \bar a(\tilde Y) \sum_{|e'|=1, e'>0} \nabla^{N,*}_{e'}\W^N_{e'} \Big\}
= - \nabla^N_e \Big\{\bar a(\tilde Y) \sum_{|e'|=1, e'>0} \nabla^{N,*}_{e'}\nabla^{N}_{e'} \V^N \Big\}\\
&\ \ \ \ \ \ \ \ \ \ \ \ \ \ \ \ =  \bar a(\tilde Y) \Delta^N \nabla^N_e \V^N = \bar a(\tilde Y)\Delta^N \W^N_e.
\notag
\end{align}

We have the following discrete energy inequality for $W_e$.

\begin{lem} \label{lem:discreteEnergy-We}
Assume  $0<r<\frac12 d(Y)=\frac12 t_1^{\frac12}$ and $Q_N(r)\not=\emptyset$.
Then, for each $e$, we have
\begin{align}  \label{eq:Nnablawdt-B}
\int_{t_1-r^2}^{t_1} & N^{-n}\sum_{\tfrac{x}N \text{ or }\frac{x+e'}N\in 
D_N(y,r); |e'|=1, e'>0} |\nabla_{e'}^N W_e|^2(X) dt \\
& \le C \bar A^2 (\tfrac{r+\tfrac{1+\sqrt{n}}N}{d(Y)} )^{2\a} (r+\tfrac1N)^n r^2
\sup_{X \text{ or } X+\frac{e}N\in Q_N(r); |e'|=1, e'>0} 
|\nabla_{e'}^N \W_{e'}^N|^2(X) 
\notag \\
& \quad +  C G_0^1(r+\tfrac{1}{N})^n r^2 d(Y)^{-1}
\sup_{X\in Q_N(r)}|W_e(X)|,
\notag
\end{align}
where $X=(t,\tfrac{x}N)$ and $X+\frac{e}N = (t, \frac{x+e}N)$.  
Here, the constants $C=C(n, c_-)$.
\end{lem}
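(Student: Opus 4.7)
The plan is to mimic the energy argument of Lemma \ref{lem:D-energy-a-Holder}, applied now to the difference $W_e = \mathcal{W}_e - \zeta_e$. The decisive structural feature is that $W_e$ vanishes on $\partial_N^+ D_N(y,r)$, which allows repeated summation by parts (Lemma \ref{lem:DG-Q-2}) with no boundary contribution, and also that $W_e$ vanishes at time $t = t_1 - r^2$.

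First I would derive the equation for $W_e$. Adding and subtracting the constant coefficient $\bar{a}(\tilde Y)$ in \eqref{eq:we} and applying identity \eqref{w_sub_eq} with $\bar a$ replaced by the constant $\bar{a}(\tilde Y)$, I obtain
\begin{align*}
\partial_t W_e = \bar{a}(\tilde Y)\, \Delta^N W_e - \nabla_e^N\Big\{(\bar a(X) - \bar{a}(\tilde Y))\sum_{|e'|=1,\,e'>0}\nabla_{e'}^{N,*}\mathcal{W}_{e'}\Big\} + \nabla_e^N g,
\end{align*}
which plays the role of the decomposition used in Step 1 of Lemma \ref{lem:D-energy-a-Holder}. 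Multiplying by $W_e$ and summing $N^{-n}\sum_{x/N\in D_N(y,r)}$ gives
\begin{align*}
\tfrac12\partial_t\Big(N^{-n}\sum_{x/N\in D_N(y,r)} W_e^2\Big) = -I_1 + I_2 + I_3,
\end{align*}
with $I_1 \ge 0$ coming from the principal part and $I_2, I_3$ the error and source contributions.

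For $I_1$, I would invoke \eqref{eq:SbP-3} with constant coefficient $\bar{a}(\tilde Y)$ to get $I_1 = N^{-n}\sum_* \bar{a}(\tilde Y)|\nabla_{e'}^N W_e|^2 \ge c_- N^{-n}\sum_*|\nabla_{e'}^N W_e|^2$, where $\sum_*$ is as in \eqref{eq:Nnablawdt-B}. For $I_2$, I would apply \eqref{eq:SbP-1} to transfer $\nabla_e^N$ onto $W_e$, producing a term of the form $N^{-n}\sum(\bar a(X)-\bar{a}(\tilde Y))\,\nabla_{-e}^N W_e\cdot\sum_{e'>0}\nabla_{e'}^{N,*}\mathcal{W}_{e'}$; on $\overline{Q_N(r)}$ one has $|X-\tilde Y|\le r + (1+\sqrt n)/N$ and $d(X)\ge d(Y)/2$ by the computation \eqref{eq:d-Y-X}, so (B.2) yields $|\bar a(X)-\bar{a}(\tilde Y)|\le \bar A\, 2^\alpha(r+(1+\sqrt n)/N)^\alpha d(Y)^{-\alpha}$. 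Cauchy--Schwarz with an $\epsilon$-factor (bounding $|\nabla_{e'}^{N,*}\mathcal{W}_{e'}|=|\nabla_{-e'}^N\mathcal{W}_{e'}|$ by the sup on the right of \eqref{eq:Nnablawdt-B}) then reproduces the first term on the right of \eqref{eq:Nnablawdt-B}, with the $\epsilon\sum_*|\nabla_{-e}^N W_e|^2$ absorbed into $I_1$ via $\bar{a}(\tilde Y)\ge c_-$. For $I_3$, I would bound directly: by (B.3), $|\nabla_e^N g(X)|\le G_0^1 d(X)^{-1}\le 2G_0^1 d(Y)^{-1}$ on $Q_N(r)$, and $|D_N(y,r)|/N^n\le C(n)(r+1/N)^n$, yielding after time-integration the second term on the right of \eqref{eq:Nnablawdt-B}.

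Finally, integrating $\tfrac12\partial_t\|W_e\|^2 = -I_1 + I_2 + I_3$ over $t\in[t_1-r^2,t_1]$ kills the left side at $t_1-r^2$ and leaves a nonnegative contribution at $t_1$ which I discard, so the time integral of $I_1$ is controlled by that of $I_2+I_3$, giving \eqref{eq:Nnablawdt-B} after dividing by $c_-$. The main obstacle I anticipate is the bookkeeping in $I_2$: unlike the divergence-form setting of Lemma \ref{lem:D-energy-a-Holder}, where outer and inner derivatives are paired in the same direction $e'$, here the outer $\nabla_e^N$ and the inner $\nabla_{e'}^{N,*}$ act in different directions, and after summation by parts the derivative on $W_e$ is in direction $-e$ while $\nabla_{e'}^{N,*}\mathcal{W}_{e'}$ involves a shift by $-e'/N$; this is precisely why the supremum on the right of \eqref{eq:Nnablawdt-B} is taken over the enlarged set ``$X$ or $X+e'/N\in Q_N(r)$'' so that all such shifts are absorbed.
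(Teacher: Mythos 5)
Your proposal is correct and matches the paper's proof essentially step by step: the same decomposition of $\partial_t W_e$ around the frozen coefficient $\bar a(\tilde Y)$, the same $-I_1 + I_2 + I_3$ energy identity via summation by parts exploiting $W_e=0$ on $\partial_N^+ D_N(y,r)$, the same Cauchy--Schwarz absorption of the cross term into $c_- I_1$, the same H\"older bound on $\bar a(X)-\bar a(\tilde Y)$ using $d(X)\ge d(Y)/2$, the same $(B.3)$ bound on $\nabla_e^N g$, and the same time integration discarding the nonnegative terminal term. Your closing remark about the directional mismatch between the outer $\nabla_e^N$ and the inner $\nabla_{e'}^{N,*}$ after summation by parts, and the consequent enlarged index set in the supremum, is precisely the index-relabeling the paper performs when rewriting $\sum_{**}|\nabla_e^{N,*}W_e|^2$ as a sum indexed by ``$x/N$ or $(x+e)/N \in D_N(y,r)$''.
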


\begin{proof}
The proof is similar to that of Lemma \ref{lem:D-energy-a-Holder}.
Rewriting as $\bar a(X)=\bar a(\tilde{Y})+ (\bar a(X)-\bar a(\tilde{Y}))$ in \eqref{eq:we}, noting
\eqref{w_sub_eq},
and from \eqref{eq:zetae},
we have
\begin{align*}
\partial_t W_e(t, \tfrac{x}N)
= & \bar a(\tilde{Y})\De^N W_e(t, \tfrac{x}N)  \\
& -\nabla_e^N \Big\{ (\bar a(X)- \bar a(\tilde{Y})) \sum_{|e'|=1, e'>0}
\nabla_{e'}^{N,*}  \W_{e'} \Big\} (t, \tfrac{x}N)
+ \nabla_e^N g (t, \tfrac{x}N).
\end{align*}
Noting that $W_e(t,\cdot)=0$ at 
the boundary $\partial_N^+D_N(y,r)$, by \eqref{eq:SbP-2}
in Lemma \ref{lem:DG-Q-2}, we have
\begin{align*}
\tfrac12\partial_t & \bigg(N^{-n}\sum_{\tfrac{x}N\in D_N(y,r)} W_e^2(X)\bigg) \\
& = - N^{-n} \bar a (\tilde{Y})  \sum_{\tfrac{x}N \in D_N(y,r)} 
W_e(X) \sum_{|e'|=1, e'>0} \big(\nabla_{e'}^{N,*} \nabla_{e'}^N W_e\big)(X) \\
& \qquad - N^{-n}\sum_{\tfrac{x}N \in D_N(y,r)} 
W_e(X) \nabla_{e}^{N}\Big\{ (\bar a(X)-\bar a(\tilde{Y}))
\sum_{|e'|=1, e'>0} \nabla_{e'}^{N,*} \W_{e'}^N\Big\} (X)  \\
&\qquad + N^{-n}\sum_{\tfrac{x}N\in D_N(y,r)} W_e(X) \nabla_e^N g(X) \\
&= - N^{-n} \bar a(\tilde{Y}) \sum_* |\nabla_{e'}^N W_e|^2(X)\\
& \qquad - N^{-n}\sum_{**} (\bar a(X)- \bar a(\tilde{Y})) \nabla_{e}^{N,*} W_e(X)
 \sum_{|e'|=1, e'>0}\nabla_{e'}^{N, *} \W_{e'}^N(X)  \\
 &\qquad + N^{-n}\sum_{\tfrac{x}N\in D_N(y,r)} W_e(X) \nabla_e^N g(X)\\
& =: - I_1+I_2+I_3,
\end{align*}
where $X=(t,\tfrac{x}N)$,  $\sum\limits_*$ means the sum over $(x,e')$
such that $\tfrac{x}N$ or $\frac{x+e'}N\in D_N(y,r)$
and $|e'|=1, e'>0$, while $\sum\limits_{**}$ means the sum over $x$
such that $\tfrac{x}N$ or $\frac{x-e}N\in D_N(y,r)$.
Note that $I_1\ge 0$.

Integrate both sides in $t\in [t_1-r^2,t_1]$.  Since $W_e(t_1-r^2, \cdot)=0$,
the left hand side is
$$
\tfrac12 N^{-n}\sum_{\tfrac{x}N\in D_N(y,r)} W_e^2(t_1, \tfrac{x}N)\ge 0.
$$
On the other hand, for every $\e>0$,
\begin{align*}
\int_{t_1-r^2}^{t_1} |I_2|dt 
& \le \e \int_{t_1-r^2}^{t_1}  N^{-n}\sum_{**} |\nabla_{e}^{N,*} W_e|^2(X) dt \\
& \qquad + \frac{n}\e \int_{t_1-r^2}^{t_1}  N^{-n}
\sum_{**} |\bar a(X)- \bar a(\tilde{Y})|^2 \sum_{|e'|=1, e'>0}|\nabla_{e'}^{N,*} \W_{e'}^N|^2(X) dt.
\end{align*}
Here, for the first term, since $\nabla_e^{N,*}= \nabla_{-e}^N$ implies
$\nabla_e^{N,*} W_e(X)=- \nabla_e^N W_e(X-\frac{e}N)$, we have
\begin{align*}
\sum_{**} |\nabla_{e}^{N,*} W_e|^2(X) 
& = \sum_{**} |\nabla_{e}^{N} W_e|^2(X-\tfrac{e}N) \\
& = \sum_{\frac{x}N \text{ or } \frac{x+e}N \in D_N(y,r)} 
|\nabla_{e}^{N} W_e|^2(X),
\end{align*}
by replacing $X-\frac{e}N$ by $X$.
For the second term, by a similar replacement, we estimate 
\begin{align*}
|\nabla_{e'}^{N,*} \W_{e'}^N|^2(X)  \le J_{N,r}:=
\sup_{X \text{ or } X+\frac{e}N\in Q_N(r); |e'|=1, e'>0} 
|\nabla_{e'}^N \W_{e'}^N|^2(X),
\end{align*}
and by $[\bar a]_\a^{*,N} \le \bar A$,
\begin{align*}
|\bar a(X)-\bar a(\tilde{Y})|\le \bar A(d(X)\wedge d(\tilde{Y}))^{-\a} |X-\tilde{Y}|^\a
\le \bar A 2^\a d(Y)^{-\a}(r+\tfrac{1+\sqrt{n}}N)^\a,
\end{align*}
since $|X-\tilde{Y}|\le r+\tfrac{1+\sqrt{n}}N$ for $X\in \overline{Q_N(r)}$ and $r<\frac12 d(Y)
= \tfrac12 d(\tilde Y)$ implies $d(X)\ge \frac12 d(Y)$ as in \eqref{eq:d-Y-X}
so that $d(X)\wedge d(\tilde{Y})\ge \frac12 d(Y)$.
Moreover,
$$
\int_{t_1-r^2}^{t_1} N^{-n}
\sum_{\tfrac{x}N \text{ or }\frac{x-e}N\in 
D_N(y,r)} 1 dt \le C(n) (r+\tfrac1N)^n r^2.
$$
Therefore, recalling $\bar a(\tilde{Y}) \ge c_->0$, we obtain
\begin{align*}
\int_{t_1-r^2}^{t_1} |I_2|dt 
& \le \frac{\e}{c_-} \int_{t_1-r^2}^{t_1}  I_1 dt \\
& \quad + \frac{n^2}\e C(n)2^{2\a} \bar A^2
(\tfrac{r+\tfrac{1+\sqrt{n}}N}{d(Y)})^{2\a} (r+\tfrac{1}N)^n r^2 J_{N,r}.
\end{align*}

For $I_3$, since $|Q_N(r)| \le C(n) N^n (r+\tfrac1N)^n r^2$, we obtain by 
$|\nabla_e^N g|_0^{(1),N}\le G_0^1$,
\begin{align*}
\int_{t_1-r^2}^{t_1} |I_3|dt
&\le G_0^1 C(n)(r+ \tfrac{1}{N})^n r^2 d(Y)^{-1} \sup_{Q_N(r)}|W_e^N|.
\end{align*}
Summarizing these estimates, we have
\begin{align*}
(1-\tfrac{\e}{c_-}) \int_{t_1-r^2}^{t_1}  I_1 dt 
\le & \frac{n^2}\e c2^{2\a}\bar A^2
(\tfrac{r+\tfrac{1+\sqrt{n}}N}{d(Y)} )^{2\a}  (r+\tfrac1N)^n r^{2} J_{N,r} \\
&+  C(n) G_0^1(r+ \tfrac{1}{N})^n r^2 d(Y)^{-1} \sup_{Q_N(r)}|W_e^N|.
\end{align*}
Choosing $\e>0$ small and noting $\bar a(\tilde{Y}) \ge c_->0$ and $\alpha<1$,
we have shown \eqref{eq:Nnablawdt-B} in terms of a $C=C(n, c_-)$.
\end{proof}

We now rewrite the discrete estimate obtained in Lemma \ref{lem:discreteEnergy-We}
into a continuous one via polylinear interpolation. 
We will take $\zeta_e^N = \zeta_e^{N,Q(r_N^1)}$ as the solution of the discrete heat equation
\eqref{eq:zetae} on the wider domain $Q_N(Y, r_N^1)$ with boundary condition
$\zeta_e^N = \W_e^N$ on $\mathcal{P}^+_NQ_N(Y,r_N^1)$, where 
$r_N^1=r+ \frac{\sqrt{n}+1}{N}$.
As before, $W^N_e = \zeta^N_e - \W^N_e$ on $\overline{Q_N(Y, r_N^1)}$. 
Set
\begin{align}  \label{eq:5.U}
\begin{aligned}
& U_{2+\a,e}:=[\tilde \W_e^N]_{1+\a}^{(1)}, \\
& U_2  := \max_{e_1,e_2}|\nabla_{e_1}^N \tilde \W_{e_2}^N|_0^{(2)} 
= \max_{e_1,e_2} \sup_{X\in\Om} d^2(X) |\nabla_{e_1}^N \tilde \W_{e_2}^N(X)|, \\
& \mathcal{U}^1_e := [\tilde \W^N_e]_{1+\a}^{(1)} + \langle \W^N_e\rangle_{1+\a}^{(1),N},
\end{aligned}
\end{align}
and also
\begin{align}  \label{eq:5.U-B}
U_{2+\a}:= \max_{e} U_{2+\a,e},
\qquad
\mathcal{U}^1 := \max_{e} \mathcal{U}^1_e.
\end{align}
Recall \eqref{eq:norm-6} and \eqref{eq:norm-10} for $[\tilde \W^N_e]_{1+\a}^{(1)}$
and $\langle \W^N_e\rangle_{1+\a}^{(1),N}$, respectively.
We have the following estimate extending \eqref{eq:nabla^Nw} in the present setting.

\begin{lem}  \label{lem:5.2}
We have 
\begin{align}
\label{full_r_second}
\int_{Q(r)} |\widetilde{ \nabla^N W_{e}}|^2 (X)dX \leq 
C[\bar A^2 U_2^2 + G_0^1 \mathcal{U}_e^1 ]  r_N^n (\tfrac{r_N}{d(Y)})^{2+2\a} d(Y)^{-2},
\end{align}
for every  $r>0$ such that $r_N=r+\tfrac{1+2\sqrt{n}}N < \tfrac12 d(Y)$, where $C=C(n, c_-, T, \alpha)$.
Recall here $Q(r)=Q(Y, r)$. 
\end{lem}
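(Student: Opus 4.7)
The plan is to mimic the strategy behind the bound \eqref{eq:nabla^Nw} from Lemma \ref{lem:5.3}, but now with the system \eqref{eq:we} in place of the linear scalar equation \eqref{eq:1.2-linear} and with $\W_e^N$ playing the role of $u^N$. More precisely, I will combine (i) the analog of the discrete-to-continuous comparison \eqref{eq:1-Nnablawdt}, (ii) the new energy estimate in Lemma \ref{lem:discreteEnergy-We}, and (iii) a second-order version of the pointwise bounds \eqref{eq:2-Nnablawdt}, \eqref{eq:3-Nnablawdt}, reflecting that $\W^N$ is one derivative higher than $u^N$ and therefore carries an extra $d(Y)^{-1}$ weight.

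First, since the polylinear interpolation $\widetilde{\nabla^N_{e'} W_e}(t,z)$ is a convex combination of $\nabla^N_{e'} W_e$ at vertices of a $\tfrac{1}{N}$-box, the same argument as in \eqref{eq:1-Nnablawdt} gives
\begin{equation*}
\int_{Q(r)} |\widetilde{\nabla^N W_e}|^2 dX
\le \int_{t_1-r^2}^{t_1} N^{-n}\!\!\sum_{\frac{x}N \text{ or } \frac{x+e'}N \in D_N(y,r+\tfrac{\sqrt n}N);|e'|=1,e'>0}\!\! |\nabla^N_{e'} W_e|^2(X)\,dt.
\end{equation*}
Apply Lemma \ref{lem:discreteEnergy-We} with $r$ replaced by $r + \tfrac{\sqrt n}{N}$ (this is why the gap $r_N^1 = r+\tfrac{\sqrt n+1}{N}$ was built into the definition of $\zeta_e^{N,Q(r_N^1)}$, so that $W_e$ is defined on the enlarged domain). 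This reduces the proof to estimating the two right-hand side terms in \eqref{eq:Nnablawdt-B}.

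Second, for the first term, $|\nabla_{e'}^N \W^N_{e'}(X)| = |\widetilde{\nabla^N_{e'} \W^N_{e'}}(X)| \le U_2\, d(X)^{-2}$ by the definition of $U_2$ in \eqref{eq:5.U}, and on $Q_N(r)$ we have $d(X) \ge \tfrac12 d(Y)$ by the same calculation as in \eqref{eq:d-Y-X}, yielding $\sup|\nabla_{e'}^N \W_{e'}^N|^2 \le 16\,U_2^2/d(Y)^4$. For the second term, I need to show
\begin{equation*}
\sup_{Q_N(r)} |W_e^N(X)| \le C\,\mathcal U^1_e\,\Bigl(\tfrac{r}{d(Y)}\Bigr)^{1+\alpha} d(Y)^{-1},
\end{equation*}
which is the analog of \eqref{eq:3-Nnablawdt}. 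The argument mirrors the proof of \eqref{eq:3-Nnablawdt}: split $|W_e(X)|$ into $|\W_e(X) - \W_e(Z) - \nabla^N \W_e(Z)\!\cdot\!\tfrac{x-z}{N}|$ plus a residual $|V(X)|$ with $V = \zeta_e - \W_e(Z) - \nabla^N \W_e(Z)\!\cdot\!\tfrac{x-z}{N}$. The first piece is controlled by inserting an intermediate point $X' = (t',\tfrac{x}{N})$, applying $\langle \W^N_e\rangle_{1+\alpha}^{(1),N}$ to the time increment and $[\tilde \W^N_e]_{1+\alpha}^{(1)}$ along a nearest-neighbor spatial path from $X'$ to $Z$; each of these seminorms is weighted by $(d\wedge d)^{2+\alpha}$, which produces the extra $d(Y)^{-1}$ factor compared with \eqref{eq:3-Nnablawdt}. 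The second piece $V$ satisfies the discrete heat equation \eqref{eq:zetae} on $Q_N(r_N^1)$ with the same parabolic-boundary data as the first piece, so the maximum principle (Lemma \ref{lem:1.1}) transfers the boundary bound to the interior.

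Finally, combining these inputs in \eqref{eq:Nnablawdt-B} gives
\begin{equation*}
\int_{Q(r)} |\widetilde{\nabla^N W_e}|^2 dX
\le C\bar A^2 U_2^2 \bigl(\tfrac{r_N}{d(Y)}\bigr)^{2\alpha} r_N^n r^2 d(Y)^{-4}
+ C G_0^1 \mathcal U^1_e\,r_N^n r^{3+\alpha} d(Y)^{-(3+\alpha)}.
\end{equation*}
Using $r \le r_N$ in the first term immediately produces $r_N^n (r_N/d(Y))^{2+2\alpha} d(Y)^{-2}$. For the second term, the identity $r_N^n r^{3+\alpha} d(Y)^{-(3+\alpha)} = r_N^n (r_N/d(Y))^{2+2\alpha} d(Y)^{-2} \cdot (r/r_N)^{3+\alpha} r_N^{1-\alpha} d(Y)^{1+\alpha}$, combined with $r\le r_N$ and $r_N, d(Y) \le \sqrt T$, absorbs the mismatch into a constant depending on $T$ and $\alpha$. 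This yields \eqref{full_r_second}. The main obstacle is the pointwise bound on $\sup_{Q_N(r)}|W_e|$: the weighted H\"older norms in $\mathcal U^1_e$ are calibrated so that $\W^N_e$ already has a $d^{-1}$ singularity built in, so one must carefully track the $d(Y)^{-(2+\alpha)}$ factor coming from $[\,\cdot\,]_{1+\alpha}^{(1)}$ and $\langle\,\cdot\,\rangle_{1+\alpha}^{(1),N}$ rather than the $d(Y)^{-(1+\alpha)}$ that sufficed in the first-derivative case of \eqref{eq:3-Nnablawdt}.
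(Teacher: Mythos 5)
Your proposal is correct and follows essentially the same route as the paper: bound $\int_{Q(r)}|\widetilde{\nabla^N W_e}|^2$ via the polylinear comparison \eqref{eq:1-Nnablawdt}, feed in the energy estimate of Lemma \ref{lem:discreteEnergy-We} with $r$ shifted by $\tfrac{\sqrt n}{N}$, and control the two resulting factors by $U_2^2/d(Y)^4$ and by the sup bound \eqref{W:10.5} on $W_e$. Your explanation of the extra $d(Y)^{-1}$ in \eqref{W:10.5} — tracing it to the weight exponent $a+b=2+\alpha$ in $[\cdot]_{1+\alpha}^{(1)}$ and $\langle\cdot\rangle_{1+\alpha}^{(1),N}$ — is a more explicit rendering of the paper's terse ``multiply and divide by one more factor of $d(Y)$,'' and your closing algebra (absorbing $(r/r_N)^{3+\alpha}r_N^{1-\alpha}d(Y)^{1+\alpha}\le T$) is a legitimate reshuffling of the paper's step, which instead writes the $G_0^1$ term with a factor $(r_N/d(Y))^{1-\alpha}$ and bounds it by $1\le Td(Y)^{-2}$.
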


\begin{proof} 
By the same argument given for \eqref{eq:3-Nnablawdt},
except that we multiply and divide by one more factor of  $d(Y)\leq \sqrt{T}$, we have 
  \begin{align}
 \label{W:10.5}
 \sup_{X\in Q_N(r)} |W_e(X)| \leq C(n, T) \mathcal{U}_e^1 (\tfrac{r}{d(Y)})^{1+\a} d(Y)^{-1},
\end{align}
for $0<r<\tfrac12 d(Y)$.
 
The main link now is given by Lemma \ref{lem:estimate-D-energy}.  In fact, 
by \eqref{eq:1-Nnablawdt} for $\nabla^NW_e$ in place of $\nabla^N w$, then by
Lemma \ref{lem:discreteEnergy-We} (with $r$ replaced by
$r+ \tfrac{\sqrt{n}}N$) and \eqref{eq:2-Nnablawdt} 
in Lemma \ref{lem:estimate-D-energy} and also \eqref{W:10.5} (with $r$ replaced by $r+ \tfrac{\sqrt{n}}N$),
we obtain similarly to \eqref{eq:nabla^Nw} that
\begin{align}  \label{eq:nabla^Nw-2}
\int_{Q(r)}  |\widetilde{\nabla_{e'}^N W_e}|^2(X) dX
 &\le C \big[ \bar A^2 U_2^2 d(Y)^{-2}
 + G_0^1 \mathcal{U}_e^1(\tfrac{r_N}{d(Y)})^{1-\a}\big]  r_N^n (\tfrac{r_N}{d(Y)})^{2+2\a} \\
 &\le C[ \bar A^2 U_2^2 + G_0^1\mathcal{U}_e^1]  r_N^n (\tfrac{r_N}{d(Y)})^{2+2\a} d(Y)^{-2}
 \nonumber
\end{align}
for every $e, e'$: $|e|=|e'|=1, e>0, e'>0$ and 
all $r>0$ if it satisfies $r_N<\frac12 d(Y)$.  For the second line, note that $1-\a>0$
and $2r_N/d(Y) \le 1\le Td(Y)^{-2}$.  Here, $C=C(n, c_-, T)$ changed line to line.
\end{proof}

Since $\nabla_{e'}^N \zeta_e$ solves the discrete
heat equation, analogous to the derivation of \eqref{eq:nabla^Nv}, we obtain
\begin{align}\label{eq:nabla^Nv-2}
\int_{Q(\rho)} |\widetilde{\nabla_{e'}^N \zeta_e}-\{\widetilde{\nabla_{e'}^N \zeta_e}\}_{\rho}|^2dX 
& \le C\big(\tfrac{\rho}r\big)^{n+4} 
\int_{Q(r)} |\widetilde{\nabla_{e'}^N \zeta_e}-\{\widetilde{\nabla_{e'}^N \zeta_e}\}_r|^2dX,
\end{align}
for $\rho\in (0,r)$ and $C=C(n, c_\pm)$.  Indeed, in the present setting, the coefficient $\bar a(\tilde Y)$
is direction-independent and one can directly apply
\eqref{eq:Li-L4.5-3} in Proposition \ref{lem:Li-L4.5}.

Therefore, combined with \eqref{full_r_second} in Lemma \ref{lem:5.2}, 
we have as in Lemma \ref{lem:5.3},
\begin{align} \label{eq:om(r)-2}
\om(\rho) \le & \bar{C}\big(\tfrac{\rho}r\big)^{n+4} \om(r)+ \si(r_N),
\end{align}
for $\rho\in (0,r)$ if $r_N= r+ \tfrac{1+\sqrt{n}}N < \frac12 d(Y)$ is satisfied,
where
\begin{align}  \label{eq:om+si-2}
\begin{aligned}
& \om(r) \equiv \om_e(r) = \int_{Q(r)} |\widetilde{\nabla^N \W_e}-\{\widetilde{\nabla^N \W_e}\}_r|^2dX,\\
& \si(r) \equiv \si_e(r) = \widehat{C}[ \bar A^2 U_2^2+ G_0^1\mathcal{U}_e^1]  r^{n+2+2\a}d(Y)^{-(4+2\a)}.
\end{aligned}
\end{align}
Here, $\bar{C}=\bar{C}(n, c_\pm)$ and $\widehat{C}=\widehat{C}(n, c_\pm, T)$.

To derive the bound on $U_{2+\a}$ analogous to Proposition \ref{spatial-variation-prop},
we need the following H\"older estimates for $\nabla_{e'}^N \tilde \xi_e$ in a short distance
regime as in Lemma \ref{lem:4.13}.

\begin{lem}  \label{lem:4.13-5}
{\rm (1)} If  $|Y-Y_1| \le \tfrac{c_1}{N}$, we have
\begin{align}  \label{eq:4.42-Q}
|\nabla_{e'}^N \tilde \xi_e(Y)-\nabla_{e'}^N \tilde \xi_e(Y_1)| \!
\le C \big(U_{2+\a} +U_2 \bar A+G_0^1 \big) \!
\big(d(Y)\wedge d(Y_1)\big)^{-(2+\a)}  |Y-Y_1|^\a,
\end{align}
for some $C=C(n, c_+,c_1)>0$.\\
{\rm (2)}  Furthermore, for every $\de>0$, there exists $M=M(n,c_+)\ge 1$ such that
\begin{align}  \label{eq:4.43-Q}
|\nabla_{e'}^N \tilde \xi_e(Y)-\nabla_{e'}^N \tilde \xi_e(Y_1)| 
\le \de \big(U_{2+\a} +U_2 \bar A+G_0^1 \big) 
\big(d(Y)\wedge d(Y_1)\big)^{-(2+\a)}  |Y-Y_1|^\a,
\end{align}
holds if  $|Y-Y_1| \le \tfrac1{MN}$.
\end{lem}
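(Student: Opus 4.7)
The statement is the second-derivative analog of Lemma \ref{lem:4.13}, and I will follow that proof's two-step strategy. Part (1) follows from Part (2) on setting $\tfrac1M=c_1$ (keeping the constant explicit rather than shrinking the prefactor), so it suffices to establish (2), treating the spatial and temporal directions separately and combining by the triangle inequality through the intermediate point $(t,y)$ when $Y=(t,z)$ and $Y_1=(s,y)$.

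For the spatial case $Y=(t,z^{(1)})$, $Y_1=(t,z^{(2)})$ with $|z^{(1)}-z^{(2)}|\le\tfrac1{MN}$, I repeat Step 1 of the proof of Lemma \ref{lem:4.13} with $\tilde\xi_e$ in place of $\tilde u^N$. When $z^{(1)}$ and $z^{(2)}$ lie in the same $\tfrac1N$-box and differ only in coordinate $i$, polylinearity and \eqref{eq:poli-A} give an exact expansion of $\nabla^N_{e'}\tilde\xi_e(Y)-\nabla^N_{e'}\tilde\xi_e(Y_1)$ as $N(z^{(1)}_i-z^{(2)}_i)$ times a first difference of $\nabla^N_{e'}\xi_e$ over a spatial step $1/N$. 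The H\"older bound $|\nabla^N_{e'}\xi_e(W)-\nabla^N_{e'}\xi_e(W')|\le U_{2+\alpha}(d(W)\wedge d(W'))^{-(2+\alpha)}|W-W'|^\alpha$ controls this difference by $U_{2+\alpha}(1/N)^\alpha d(Y)^{-(2+\alpha)}$, and writing $|z^{(1)}-z^{(2)}|\le|z^{(1)}-z^{(2)}|^\alpha(1/(MN))^{1-\alpha}$ produces the desired $M^{-(1-\alpha)}$ prefactor. Different-box configurations are reduced to the in-box case by decomposing the segment $[z^{(1)},z^{(2)}]$ into at most $2^n$ pieces.

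For the temporal case $Y=(t,z)$, $Y_1=(s,z)$ with $|t-s|\le (1/(MN))^2$, I apply the trivial estimate $|\nabla^N_{e'}F|\le 2N\|F\|_\infty$ to $F(z')=\xi_e(t,z')-\xi_e(s,z')=\int_s^t\partial_q\xi_e(q,z')\,dq$, reducing the problem to bounding $|\partial_q\xi_e|$ via the form \eqref{modified eq:we}. The three terms are estimated by (i) $|\bar a\Delta^N\xi_e|\le c_+ n N^{1-\alpha}U_{2+\alpha}d^{-(2+\alpha)}$, by viewing $\nabla^{N,*}_{e''}\nabla^N_{e''}\xi_e$ as $N$ times a first difference of $\nabla^N_{e''}\xi_e$ and invoking the seminorm $U_{2+\alpha}$; (ii) $|\nabla^N_e\bar a\cdot\sum_{e''>0}\nabla^{N,*}_{e''}\xi_{e''}(X+\tfrac{e}{N})|\le n N^{1-\alpha}\bar A U_2 d^{-(2+\alpha)}$, using $|\nabla^N_e\bar a|\le N^{1-\alpha}\bar A d^{-\alpha}$ from (B.2) together with the identity $\nabla^{N,*}_{e''}\xi_{e''}(X)=-\nabla^N_{e''}\xi_{e''}(X-\tfrac{e''}N)$ and the definition of $U_2$; and (iii) $|\nabla^N_e g|\le G_0^1 d^{-1}$ from (B.3), absorbed into $d^{-(2+\alpha)}$ via $d^{-1}\le T^{(1+\alpha)/2}d^{-(2+\alpha)}$. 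Combined with $|t-s|\le|t-s|^{\alpha/2}(1/(MN))^{2-\alpha}$, the factor $2N\cdot N^{1-\alpha}=2N^{2-\alpha}$ cancels precisely against $(1/N)^{2-\alpha}$, yielding the prefactor $M^{-(2-\alpha)}$; the estimate transfers to $\nabla^N_{e'}\tilde\xi_e$ since $\widetilde{\nabla^N_{e'}\xi_e}=\nabla^N_{e'}\tilde\xi_e$ is a convex combination of lattice values. The main obstacle is securing the correct $N$-power in $|\partial_q\xi_e|$: the crude bound $|\Delta^N\xi_e|\le 2nN\cdot U_2 d^{-2}$ would leave an uncancellable $N^\alpha$ in the temporal step, so the sharper H\"older-based bound $N^{1-\alpha}$ is essential, and the rewritten form \eqref{modified eq:we} is preferred over \eqref{eq:we} because it isolates the principal term $\bar a\Delta^N\xi_e$ while confining the directional coupling to a remainder that admits the $U_2$-bound.
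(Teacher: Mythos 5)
Your proof is correct and follows essentially the same strategy as the paper's. The spatial step is identical to the paper's (reduce to a single $\tfrac1N$-box, read off $N|z^{(1)}_i-z^{(2)}_i|$ times a unit difference of $\nabla_{e'}^N\xi_e$, invoke $U_{2+\a}$ to gain $(1/N)^\a$, decompose across boxes otherwise), and the power counting in the temporal step agrees with the paper's.

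The only genuine (and minor) difference is in the temporal direction: the paper differentiates the equation \eqref{eq:we} by $\nabla_{e'}^N$ first, bounds $|\partial_t\nabla_{e'}^N\xi_e|\le 2N^{2-\a}\{c_+U_{2+\a}+U_2\bar A\}d^{-(2+\a)}+2NG_0^1d^{-1}$, and then integrates in time; you integrate first, obtaining $F=\xi_e(t,\cdot)-\xi_e(s,\cdot)$, bound $\|F\|_\infty$ using \eqref{modified eq:we}, and only then apply the crude gradient bound $|\nabla_{e'}^NF|\le 2N\|F\|_\infty$. Since $\nabla_{e'}^N$ commutes with the time integral and the same product-rule/H\"older gain $N^{-\a}$ is extracted in both cases, the two are essentially the same computation in a different order, and both give the exponent $2-\a$ cleanly (your explicit reliance on \eqref{modified eq:we} makes the source of the $N^{1-\a}$ term a bit more transparent than the paper's compressed form, but the content is the same). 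Your remark that the naive $U_2$-bound on $\Delta^N\xi_e$ would leave an uncancelled $N^\a$ is exactly the point of using $U_{2+\a}$, which the paper also does tacitly. The deduction of part (1) by taking $\tfrac1M=c_1$ (at the cost of $C$ depending on $c_1$) matches the paper's handling of the analogous Lemma \ref{lem:4.13}.
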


\begin{proof}
We only outline the proof following that of Lemma \ref{lem:4.13}.

For the spatial direction, we give the estimate in terms of $U_{2+\a}$
instead of $U_{1+\a}$.  This yields an extra factor of $d(Y)^{-1}$ and,
instead of \eqref{eq:L.4.11-1}, we obtain
\begin{align*}
\big|\nabla_{e'}^N \tilde \xi_e(z^{(1)}) - \nabla_{e'}^N \tilde \xi_e(z^{(2)}) \big|
\le |z^{(1)}-z^{(2)}|^\a (\tfrac1{M})^{1-\a} \cdot 2^{n-1} 
U_{2+\a,e} d(Y)^{-(2+\a)}.
\end{align*}

For the temporal direction,  by the equation \eqref{eq:we}, we have
\begin{align*}
\partial_t \nabla_{e'}^N \xi_e(X)
=  - \sum_{|e_1|=1, e_1>0} \nabla_{e'}^N \nabla^{N}_{e} \big(\bar a(X)\nabla^{N,*}_{e_1} \xi_{e_1} \big)(X)
+\nabla_{e'}^N \nabla_e^N g(X).
\end{align*}
We apply the following rough estimates to the right hand side:
By \eqref{eq:disc-der-D} and (B.1)--(B.3),
\begin{align*}
|\nabla_{e'}^N \nabla^{N}_{e} \big(\bar a(X)\nabla^{N,*}_{e_1} \xi_{e_1} \big)(X)|
& \le 2N^2 \Big\{ c_+ \sup_{X'}|\nabla^{N,*}_{e_1} \xi_{e_1} (X'+\tfrac{e}N) 
  - \nabla^{N,*}_{e_1} \xi_{e_1} (X')|  \\
& \hskip 16mm + U_2 d(X)^{-2} \sup_{X'}| \bar a(X'+\tfrac{e}N)-\bar a(X')| \Big\} \\
& \le 2N^2 \Big\{ c_+ U_{2+\a} + U_2 \bar A\Big\} (\tfrac1N)^{\a} d(X)^{-(2+\a)},
\end{align*}
and $|\nabla_{e'}^N\nabla_e^Ng(X)| \le 2 N G_0^1 d(X)^{-1}$.  Therefore, similarly to 
Step 2 in the proof of Lemma \ref{lem:4.13}, we obtain
\begin{align*}
& |\nabla_e^N u(t,z)- \nabla_e^N u(s,z)| \\
& \qquad \le C |t-s|^{\frac{\a}2} (\tfrac1{M})^{2-\a} \Big[ \Big\{ U_{2+\a} + U_2 \bar A\Big\} 
\big(d(Y)\wedge d(Y_1)\big)^{-(1+\a)} + G_0^1\Big] \big(d(Y)\wedge d(Y_1)\big)^{-1},
\end{align*}
where $Y=(t,z)$, $Y_1=(s,z)$ and $C=C(n, c_+)$.  This concludes the proof of the lemma.
\end{proof}

We are at the position to give the bound on $U_{2+\a}$, analogous to 
Proposition \ref{spatial-variation-prop}.

\begin{prop} \label{prop:5.4}
We have
\begin{align}  \label{eq:5.16}
U_{2+\a}= \max_{|e|=1, e>0} [\tilde \W_e^N]_{1+\a}^{(1)}
 \le C\big[(\bar A+1) U_2 + G_0^1\big]+ \de \mathcal{U}^1,
\end{align}
for every $\de>0$ with some $C=C(n, c_\pm, T, \de)$.
\end{prop}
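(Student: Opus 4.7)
The proof plan is to mirror the structure of Proposition \ref{spatial-variation-prop}, adapted to the second-derivative setting and to the coupling for the system \eqref{eq:we}. The iteration inequality \eqref{eq:om(r)-2}, established via Lemma \ref{lem:5.2} and the oscillation estimate \eqref{eq:nabla^Nv-2}, plays exactly the role Lemma \ref{lem:5.3} played in the first-derivative argument.

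First, I would apply the iteration Lemma \ref{lem:Li-L4.6-N} to \eqref{eq:om(r)-2}. When $d(Y) > 6(1+2\sqrt{n})/N$, choose $R_0=\tfrac{1}{3}d(Y)$ so that $r_N=R_0+(1+2\sqrt{n})/N<\tfrac{1}{2}d(Y)$, and use the crude bound $\om_e(R_0)\le 4\int_{Q(R_0)}|\widetilde{\nabla^N\W_e}|^2\,dX\le CU_2^2 R_0^{n+2}d(Y)^{-4}$, which follows from $|\widetilde{\nabla^N\W_e}(X)|\le U_2 d(X)^{-2}\le 4U_2 d(Y)^{-2}$ on $Q(R_0)$ (analogous to \eqref{eq:2-Nnablawdt}, with weight $d^{-2}$ in place of $d^{-1}$). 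Combining with \eqref{eq:om+si-2}, Schwarz's inequality, and Young's inequality on the cross term $\sqrt{G_0^1\mathcal{U}_e^1}$, one arrives at
\begin{align*}
\int_{Q(Y,r)}|\nabla_{e'}^N\tilde\W_e-\{\nabla_{e'}^N\tilde\W_e\}_r|\,dX
\le\bigl\{C[(\bar A+1)U_2+G_0^1]+\de\mathcal{U}^1\bigr\}\,d(Y)^{-(2+\a)}r_N^{n+2+\a}
\end{align*}
for $0<r\le\tfrac{1}{3}d(Y)$. The complementary case $d(Y)\le 6(1+2\sqrt{n})/N$ is handled exactly as in \eqref{eq:4.41-D}--\eqref{eq:4.41-E}, integrating the pointwise bound \eqref{eq:4.42-Q} of Lemma \ref{lem:4.13-5}(1) and enlarging $c$ in $r_N=r+c/N$ so that $r^{n+2+\a}\le(\de/C)r_N^{n+2+\a}$.

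Next I would apply a weight-$(2+\a)$ variant of Lemma \ref{lem:Li-L4.3-G}, whose proof is identical to the original with the single change that the modulus estimate $|F(Y)-\bar F(Y,\e)|\le 2\e^\a U_{F,1+\a}d(Y)^{-(1+\a)}$ is replaced by $|F(Y)-\bar F(Y,\e)|\le 2\e^\a U_{F,2+\a}d(Y)^{-(2+\a)}$. Taking $F=\nabla_{e'}^N\tilde\W_e$, $X_0\in\Om$ arbitrary, and $R=\tfrac{1}{4}d(X_0)$, so that on $Q(X_0,R)$ one has $d(Y)\sim d(X_0)$ and $r\le R$ implies $r<\tfrac{1}{3}d(Y)$, the integral estimate above yields
\begin{align*}
|F(Y)-F(Y_1)|\le\bigl\{C[(\bar A+1)U_2+G_0^1]+\de\mathcal{U}^1+\de' U_{2+\a,e}\bigr\}(d(Y)\wedge d(Y_1))^{-(2+\a)}|Y-Y_1|^\a
\end{align*}
for $Y,Y_1\in Q(X_0,R)$ with $|Y-Y_1|\ge 1/(MN)$. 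The short-distance regime $|Y-Y_1|\le 1/(MN)$ is covered by \eqref{eq:4.43-Q} of Lemma \ref{lem:4.13-5}(2) with the same form of bound. Moving $X_0$ over $\Om$, multiplying by $(d(Y)\wedge d(Y_1))^{2+\a}$, taking the supremum, and using $U_{2+\a,e}\le U_{2+\a}\le\mathcal{U}^1$ to absorb the $\de'$-term for $\de'$ small, gives \eqref{eq:5.16} after maximizing over $e$.

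The main technical obstacle is the mismatch of weights. The stated form of Lemma \ref{lem:Li-L4.3-G} controls $U_{F,1+\a}$, but the natural seminorm for a spatial second derivative is $U_{F,2+\a}$, and $U_{F,1+\a}$ cannot be bounded by $\mathcal{U}^1$ alone (the ratio $U_{F,1+\a}/U_{F,2+\a}$ blows up as $d\to 0$). This forces the use of the weight-$(2+\a)$ variant of Lemma \ref{lem:Li-L4.3-G} described above; verifying that the proof of that lemma really does adapt verbatim, and tracking all the $d(Y)^{-1}$ powers that flow from the extra factor of $\bar a$ in \eqref{eq:we} and from the gradient $\nabla_e^N g$ through assumptions (B.1)--(B.3), is the bookkeeping-intensive step where care is required.
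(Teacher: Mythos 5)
Your proposal is correct and follows the paper's own proof essentially step for step: apply Lemma \ref{lem:Li-L4.6-N} to the iteration inequality \eqref{eq:om(r)-2}, bound $\om(R_0)$ by $CU_2^2R_0^{n+2}d(Y)^{-4}$, pass to the $L^1$ oscillation estimate via Schwarz and Young, split cases on $d(Y)$ using Lemma \ref{lem:4.13-5}(1), then invoke Lemma \ref{lem:Li-L4.3-G} with the weight raised from $(1+\a)$ to $(2+\a)$ and cover the short-distance regime with Lemma \ref{lem:4.13-5}(2). The one place the paper is terser than you — it dismisses the weight change as ``(with a proper modification in $U_{F,1+\a}$)'' — you correctly identify as the point requiring the modulus-of-approximation estimate to carry the exponent $2+\a$ rather than $1+\a$; your explicit account of that modification is the right reading of the paper's parenthetical.
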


\begin{proof}
We apply Lemma \ref{lem:Li-L4.6-N}, combining with \eqref{eq:om(r)-2}, to get
\begin{align*}
\om(r) \le & C\big(\tfrac{r}{R_0}\big)^{n+2+2\a} \om(R_0)
+C[\bar A^2U_2^2+ G_0^1 \mathcal{U}_e^1] r_N^{n+2+2\a} d(Y)^{-(4+2\a)},
\end{align*}
for $0<r\le R_0 = \tfrac13 d(Y)$, if $d(Y) > \tfrac{6(1+2\sqrt{n})}N$.  
Here, $C=C(n, c_\pm, T)$ and recall $r_N=r+\tfrac{c}N$ with $c=1+2\sqrt{n}$.

However, $\om(R_0)$ is bounded as
\begin{align*}
\om(R_0) & \le 4 \int_{Q(R_0)} |\nabla^N \tilde \xi_e|^2dX 
\le C(n) \, U_2^2 R_0^{n+2}\, d(Y)^{-4}.
\end{align*}

Therefore, we have
\begin{align}  \label{eq:4.41-C-5}
\int_{Q(r)} |\nabla_{e'}^N \tilde \xi_e-\{\nabla_{e'}^N \tilde \xi_e\}_r|dX
&\le C \sqrt{(\bar A^2+1) U_2^2+ G_0^1 \mathcal{U}_e^1} \, d(Y)^{-(2+\a)} r_N^{n+2+\a} 
  \\
&\le \{C'[(\bar A+1) U_2+ G_0^1] + \de \mathcal{U}_e^1 \} \, d(Y)^{-(2+\a)} r_N^{n+2+\a},
  \notag   
\end{align}
for every $\de>0$ with some $C'=C'(n, c_\pm, T, \de)>0$, 
for $0<r\le \tfrac13 d(Y)$, if $d(Y) > \tfrac{6(1+2\sqrt{n})}N$.

In the case that $d(Y) \le \tfrac{6(1+2\sqrt{n})}N$, we can apply \eqref{eq:4.42-Q}
in Lemma \ref{lem:4.13-5}  by making $c>0$ in $r_N$ larger if necessary.  Indeed, 
we obtain for $0<r\le \tfrac13 d(Y)$
\begin{align}\label{eq:4.41-D-5}
\int_{Q(r)} |\nabla_{e'}^N \tilde \xi_e-\{\nabla_{e'}^N \tilde \xi_e\}_r|dX
\le C r^{n+2+\a} \big(U_{2+\a} +U_2 \bar A+G_0^1 \big) 
d(Y)^{-(2+\a)}
\end{align}
where $C=C(n, c_+)$.

From \eqref{eq:4.41-C-5}, \eqref{eq:4.41-D-5}, making  $c>0$ in $r_N$ large
as in the proof of Proposition \ref{spatial-variation-prop} and recalling
$U_{2+\a} \le \mathcal{U}^1$, we have shown
\begin{align}  \label{eq:4.45-F-5}
\int_{Q(Y,r)} |\nabla_{e'}^N \tilde \xi_e-\{\nabla_{e'}^N \tilde \xi_e\}_r|dX
&\le \{C[(\bar A+1) U_2+ G_0^1] + \de \mathcal{U}^1 \} 
\, d(Y)^{-(2+\a)} r_N^{n+2+\a},
\end{align}
for $0<r\le \tfrac13 d(Y)$ and any $Y=(t_1,y)\in \Om$ where $C=C(n, c_\pm, T, \de)$.

We now fix any $X_0=(t_0,z_0)\in \Om$ and
take $R=\tfrac14d(X_0)=\tfrac14 \sqrt{t_0}$. Then, by Lemma \ref{lem:Li-L4.3-G}
(with a proper modification in $U_{F,1+\a}$), 
we see that, for every $\de>0$ and $M>0$, there exists 
$C=C_{\de,M}(n, c_\pm, T)>0$ such that
\begin{align}  \label{eq:4.41-F-5}
|\nabla_{e'}^N \tilde \xi_e(Y)-\nabla_{e'}^N \tilde \xi_e(Y_1)| \le \Big( &
 C[(\bar A+1) U_2+ G_0^1] + \de \mathcal{U}^1+\de U_{2+\a} \Big) \\
& \qquad \times \big(d(Y)\wedge d(Y_1)\big)^{-(2+\a)} |Y-Y_1|^\a,
\notag
\end{align}
holds if  $Y, Y_1\in Q(X_0,R)$ satisfy $|Y-Y_1| \ge \tfrac1{MN}$.

The case $|Y-Y_1| \le \tfrac1{MN}$ is covered by \eqref{eq:4.43-Q}
in Lemma \ref{lem:4.13-5}, and we obtain \eqref{eq:4.41-F-5} 
for any $Y, Y_1 \in Q(X_0,R)$.
Thus, moving $X_0$, we have shown \eqref{eq:4.41-F-5} for any $Y, Y_1\in \Om$.
This implies the concluding estimate \eqref{eq:5.16} on $U_{2+\a}$.
\end{proof}

Now, by the same argument given for Proposition \ref{temporal-prop}, 
using in place of $U_{1+\alpha}$, $U_1$ and $G_\infty$ the quantities 
$U_{2+\alpha,e}$, $U_2$ and $G_0^1$ (which introduce an extra factor of $d(Y)^{-1}$), 
we have
\begin{align}  \label{eq:weN} 
\langle \W^N_e\rangle_{1+\a}^{(1),N} \leq  C'\big[\bar AU_2 + U_{2+\alpha,e} + G_0^1\big]
\end{align}
and so
\begin{align}  \label{eq:5.17} 
 \mathcal{U}_e^1 =U_{2+\a,e}+ \langle \W^N_e\rangle_{1+\a}^{(1),N}
 \leq U_{2+\a,e} +  C'\big[\bar AU_2 + U_{2+\alpha,e} + G_0^1\big]
\end{align}
where $C'=C'(n, c_+, T, \alpha)$.
 
Hence, inserting \eqref{eq:5.17}, maximized over $e$, into \eqref{eq:5.16} in Proposition \ref{prop:5.4}, we obtain
\begin{align*}
U_{2+\a} &\leq C\Big[ (\bar A+1) U_2 + G_0^1 + 
\de \big\{U_{2+\a}+C'[\bar AU_2 + U_{2+\a}+G_0^1 \big]\big\}\Big]
\end{align*} 
and by choosing $\de>0$ small that
\begin{align}  \label{eq:5.18-B} 
U_{2+\a} \leq C''\big[(\bar A+1) U_2 + G_0^1\big]
\end{align}
where $C''=C''(n, c_\pm, T, \alpha)$.

However, by applying the interpolation inequality \eqref{eq:4.U2}
in Lemma \ref{prop:Li-P4.2} to $U_2$,
\begin{align}  \label{eq:5.19-B} 
U_2\le 5 \max_{e'} (|\tilde \W_{e'}^N|_0^{(1)})^{\frac{\a}{1+\a}}
\big( |\tilde \W_{e'}^N|_0^{(1)}+ [\tilde \W_{e'}^N]_{1+\a}^{(1)}\big) ^{\frac1{1+\a}}.
\end{align}
Denote
$$
|\tilde \W_\cdot^N|_0^{(1)} := \max_{e'} |\tilde \W_{e'}^N|_0^{(1)}.
$$
Then, from \eqref{eq:5.19-B}, we have in view of \eqref{eq:5.18-B} that
\begin{align}  \label{eq:5.AU2}
C''(\bar A+1)U_2 & \le \Big(2^{\frac1\a} \big(5C''(\bar A+1)\big)^{\frac{1+\a}\a}
|\tilde \W_\cdot^N|_0^{(1)} \Big)^{\tfrac\a{1+\a}} \Big( \tfrac12
\big(|\tilde \W_\cdot^N|_0^{(1)} + U_{2+\a}\big)\Big)^{\tfrac1{1+\a}} \\
& \le 2^{\frac1\a} \big(5C''(\bar A+1)\big)^{\frac{1+\a}\a}
|\tilde \W_\cdot^N|_0^{(1)} + \tfrac12
\big(|\tilde \W_\cdot^N|_0^{(1)} + U_{2+\a}\big),  \notag
\end{align}
where we have used a trivial bound: $ab \, (\le \tfrac{a^p}p+\tfrac{b^q}q)
\le a^p+b^q$
for $a,b>0$ and $\tfrac1p+\tfrac1q=1$.  Inserting this into \eqref{eq:5.18-B},
we obtain
\begin{align} \label{eq:5.U2+a}
U_{2+\a} \leq \bar{C} \Big[(\bar A+1)^{1+\tfrac1\a}|\tilde \W_\cdot^N|_0^{(1)}+ G_0^1\Big]
\end{align}
where $\bar{C}=\bar{C}(n, c_\pm, T, \alpha)$.

Reporting these estimates, as an extension of Theorem \ref{thm:Li-T4.8},
we arrive at the following bound for $U_2$ and
$$
|\tilde \W_\cdot^N|_{1+\a}^{(1)} := U_2 + U_{2+\alpha} + \max_e\langle \W^N_e\rangle_{1+\a}^{(1),N}.
$$

\begin{thm} \label{second_Schauder_thm}
Under the assumptions (B.1), (B.2) and (B.3), we have
\begin{align}  \label{eq:5.22-B}
& |\tilde \W_\cdot^N|_{1+\a}^{(1)} 
\leq C \Big[(\bar A+1)^{1+\tfrac1\a}|\tilde \W_\cdot^N|_0^{(1)}+ G_0^1\Big],
\intertext{and}
\label{eq:5.22-C}
& U_2 \le C\Big[(\bar A+1)^{\tfrac1\a}|\tilde \W_\cdot^N|_0^{(1)}+ (\bar A+1)^{-1}G_0^1\Big]
\end{align}
where $C=C(n, c_\pm, T, \alpha)$.
\end{thm}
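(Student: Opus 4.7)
The bulk of the work has already been carried out in the arguments leading up to the theorem statement, so my plan is essentially to assemble three ingredients: the spatial H\"older bound \eqref{eq:5.18-B} coming from Proposition \ref{prop:5.4}, the temporal estimate \eqref{eq:weN} (a direct analogue of Proposition \ref{temporal-prop}, applied to $\psi^N$ in place of $u^N$), and the interpolation inequality \eqref{eq:4.U2} of Proposition \ref{prop:Li-P4.2} which produced \eqref{eq:5.AU2} and ultimately \eqref{eq:5.U2+a}.

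First I would derive \eqref{eq:5.22-C}. Starting from \eqref{eq:5.AU2}, I divide through by $C''(\bar{A}+1)$ to isolate $U_2$: the first term on the right contributes $2^{1/\a}(5C'')^{(1+\a)/\a}(\bar{A}+1)^{1/\a}\,|\tilde\xi^N_\cdot|_0^{(1)}$, while the second contributes $\tfrac{1}{2C''(\bar{A}+1)}\bigl(|\tilde\xi^N_\cdot|_0^{(1)}+U_{2+\a}\bigr)$. For the $U_{2+\a}$ piece I plug in \eqref{eq:5.U2+a}; the $(\bar{A}+1)^{1+1/\a}$ factor there cancels against the $(\bar{A}+1)^{-1}$ prefactor, leaving a term of order $(\bar{A}+1)^{1/\a}|\tilde\xi^N_\cdot|_0^{(1)}$ and a term of order $(\bar{A}+1)^{-1}G_0^1$. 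Combining, I obtain \eqref{eq:5.22-C}.

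Next for \eqref{eq:5.22-B}: by definition
\[
|\tilde\xi^N_\cdot|_{1+\a}^{(1)} = U_2 + U_{2+\a} + \max_e \langle \xi^N_e\rangle^{(1),N}_{1+\a}.
\]
The $U_{2+\a}$ term is already controlled by \eqref{eq:5.U2+a}, and the $U_2$ term by \eqref{eq:5.22-C} (which is weaker and therefore dominated by the target). For the third term I invoke \eqref{eq:weN}, which bounds it by $C'[\bar{A}U_2 + U_{2+\a} + G_0^1]$; substituting the bounds just obtained for $U_2$ and $U_{2+\a}$ shows $\bar A U_2$ contributes $\bar A(\bar A+1)^{1/\a}|\tilde\xi^N_\cdot|_0^{(1)} \leq (\bar A+1)^{1+1/\a}|\tilde\xi^N_\cdot|_0^{(1)}$ and $\bar A(\bar A+1)^{-1}G_0^1 \leq G_0^1$, all of the desired form. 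Summing the three contributions yields \eqref{eq:5.22-B}.

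The main obstacle is purely bookkeeping---keeping track of the powers of $(\bar{A}+1)$ so that the $G_0^1$ coefficient in \eqref{eq:5.22-C} acquires the correct $(\bar{A}+1)^{-1}$ weight (needed downstream in Section \ref{sec:5.2} to identify the sharp power of $K$). No new analytic ideas are required beyond those already in place: the energy inequality Lemma \ref{lem:discreteEnergy-We}, the oscillation estimate \eqref{eq:Li-L4.5-3} in Proposition \ref{lem:Li-L4.5} applied to the constant-coefficient heat solutions $\zeta_e$, the iteration Lemma \ref{lem:Li-L4.6-N}, the short-distance estimate Lemma \ref{lem:4.13-5}, the temporal bound \eqref{eq:weN}, and the discrete interpolation inequality \eqref{eq:4.U2}.
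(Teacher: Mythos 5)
Your proposal is correct and follows essentially the same route as the paper: both assemble the seminorm $|\tilde\W^N_\cdot|_{1+\a}^{(1)}$ term by term from \eqref{eq:5.U2+a}, \eqref{eq:5.AU2}, and \eqref{eq:weN}. The only cosmetic difference is ordering---you derive \eqref{eq:5.22-C} first and then feed it into \eqref{eq:5.22-B}, while the paper obtains \eqref{eq:5.22-C} as a byproduct of bounding the $U_2$ piece of \eqref{eq:5.22-B}---and this does not affect correctness.
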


\begin{proof}
For \eqref{eq:5.22-B},
it is enough to bound each term in $|\tilde \W_\cdot^N|_{1+\a}^{(1)}$ by the right hand side.
For $U_{2+\a}$, the bound follows from \eqref{eq:5.U2+a}.
For $U_2$, we can use \eqref{eq:5.AU2}, where multiplying  
the right hand side of \eqref{eq:5.U2+a} (or \eqref{eq:5.22-B})
by $(\bar A+1)^{-1}$ gives \eqref{eq:5.22-C}.
Finally for $\langle \W^N_e\rangle_{1+\a}^{(1),N}$, we may use
\eqref{eq:weN} together with \eqref{eq:5.AU2} and \eqref{eq:5.U2+a}.
\end{proof}

Theorem \ref{second_Schauder_thm} shows a form of the first Schauder estimate
for the linear discrete PDE \eqref{eq:we}. It is an estimate in terms of norms which 
weight more near the boundary $t=0$.  In fact, the estimate \eqref{eq:5.22-C} on $U_2$
yields the singularity $\frac1t$ for $\nabla_{e_1}^N\xi_{e_2}^N$ near $t=0$. 
However, we note here an analog of Theorem \ref{thm:Li-T4.8} and \eqref{eq:4.8} for 
the equation \eqref{eq:we}, which exhibits a weaker singularity $\frac1{\sqrt{t}}$;
see the second estimate in \eqref{rmk 5.1 bounds}.

\begin{cor}
\label{alternative_rmk}
Under the assumptions (B.1), (B.2) and 
$\|\nabla^Ng\|_\infty<\infty$ (replacing (B.3)), and also $\|\W^N_\cdot\|_\infty <\infty$ (compare with \eqref{eq:1.u_pm}), we have
\begin{align}
\label{rmk 5.1 bounds}
\begin{aligned}
|\tilde \W^N_\cdot|_{1+\alpha}^*& \leq C\big[ (\bar A + 1)^{1+ \frac{1}{\alpha}} \|\W^N_\cdot\|_\infty + \|\nabla^Ng\|_\infty\big],\\
|\nabla^N \W^N_\cdot|_0^{(1)} &\leq 
C\big[(\bar A + 1)^{\frac{1}{\alpha}} \|\W^N_\cdot\|_\infty +(\bar A+1)^{-1} \|\nabla^Ng\|_\infty\big], 
\end{aligned}
\end{align}
where $C=C(n, c_\pm, T, \alpha)$.

\end{cor}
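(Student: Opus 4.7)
The plan is to parallel the proof of Theorem \ref{thm:Li-T4.8} of Section \ref{sec:Schauder-1}, applied to the system \eqref{eq:we} for $\{\W_e^N\}_e$, using unweighted $*$-seminorms throughout instead of the $(1)$-weighted seminorms employed to prove Theorem \ref{second_Schauder_thm} in Section \ref{sec:Schauder-second}. Introducing
\[
U_1^{\W}:=|\nabla^N\tilde\W_\cdot^N|_0^{(1)},\quad U_{1+\alpha}^{\W}:=\max_e[\tilde\W_e^N]_{1+\alpha}^*,\quad\mathcal U^*:=U_{1+\alpha}^{\W}+\max_e\langle\W_e^N\rangle_{1+\alpha}^{*,N},
\]
we have $|\tilde\W_\cdot^N|_{1+\alpha}^*=U_1^{\W}+\mathcal U^*$ in exact analogy with \eqref{eq:4.3}. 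The six ingredients---the discrete energy inequality for $W_e^N=\W_e^N-\zeta_e^N$ with $\zeta_e^N$ solving the discrete heat equation \eqref{eq:zetae}, the oscillation bound \eqref{eq:Li-L4.5-3} applied to $\nabla^N\tilde\zeta_e$, the iteration Lemma \ref{lem:Li-L4.6-N}, the Campanato-type Lemma \ref{lem:Li-L4.3-G}, a short-distance estimate analogous to Lemma \ref{lem:4.13}, and the time-varying norm estimate of Proposition \ref{temporal-prop}---all carry through after mild adjustment.

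First, redo Lemma \ref{lem:discreteEnergy-We} replacing $|\nabla^Ng|_0^{(1),N}\leq G_0^1$ by $|\nabla^Ng|\leq\|\nabla^Ng\|_\infty$ in the $I_3$ estimate (eliminating a factor $d(Y)^{-1}$), and rederive the supremum bound $\sup_{Q_N(r)}|W_e^N|\leq C\mathcal U^*(r/d(Y))^{1+\alpha}$ now using $\mathcal U^*$ in place of $\mathcal U_e^1$ (eliminating another factor $d(Y)^{-1}$). This yields the analog of \eqref{eq:nabla^Nw} with $d(Y)^{-(2+2\alpha)}$ scaling instead of $d(Y)^{-(4+2\alpha)}$. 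Iterating via Lemma \ref{lem:5.3} and following Proposition \ref{spatial-variation-prop} then delivers
\[
U_{1+\alpha}^{\W}\leq C\big[(\bar A+1)\,U_1^{\W}+\|\nabla^Ng\|_\infty\big]+\delta\,\mathcal U^*
\]
for every small $\delta>0$. The short-distance regime $|Y-Y_1|\leq 1/(MN)$ is handled as in Lemma \ref{lem:4.13}: the spatial direction exploits polylinearity of $\tilde\W_e^N$, while the temporal direction differentiates \eqref{eq:we} and uses only the rough bound $|\nabla^N_{e'}\nabla^N_e g|\leq 2N\|\nabla^Ng\|_\infty$ (cf.\ the analogous step in Lemma \ref{lem:4.13-5}), so no additional regularity of $g$ is required. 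Next, running the averaging argument of Proposition \ref{temporal-prop} verbatim for $\W_e^N$ via the divergence form of \eqref{eq:we} yields
\[
\max_e\langle\W_e^N\rangle_{1+\alpha}^{*,N}\leq C\big[\bar A\,U_1^{\W}+U_{1+\alpha}^{\W}+\|\nabla^Ng\|_\infty\big].
\]
Combining the two bounds with $\delta$ chosen small gives $\mathcal U^*\leq C[(\bar A+1)U_1^{\W}+\|\nabla^Ng\|_\infty]$, and applying the interpolation inequality \eqref{eq:4.U1} of Proposition \ref{prop:Li-P4.2} to $\tilde\W_e^N$ with $[\tilde\W_e]_0\leq 2\|\W_\cdot^N\|_\infty$, together with Young's inequality precisely as in \eqref{eq:U_1}, completes the derivation of the first bound in \eqref{rmk 5.1 bounds}. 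The second bound follows by substituting the first back into the interpolation inequality, mirroring the derivation of \eqref{eq:4.5} from \eqref{eq:4.4}.

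The main obstacle is the bookkeeping of $d(Y)^{-1}$ factors: each occurrence---arising from the H\"older regularity of $\bar a$, the supremum of $W_e^N$, or the source $\nabla^Ng$---must be carefully tracked through the iteration so that the final weight matches the $*$-structure and not the $(1)$-structure used in Theorem \ref{second_Schauder_thm}. This is technically routine given the framework of Sections \ref{sec:Schauder-1} and \ref{sec:Schauder-second}, but requires care to ensure consistency, and to verify that the short-distance argument indeed closes using only the first-derivative bound $\|\nabla^Ng\|_\infty$ rather than any second-derivative control on $g$.
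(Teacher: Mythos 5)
Your proposal is correct and follows the same route as the paper: rerun the machinery of Theorem \ref{second_Schauder_thm} (itself parallel to Theorem \ref{thm:Li-T4.8}) with each of $|\tilde\W^N_\cdot|_{1+\alpha}^{(1)}$, $|\tilde\W^N_\cdot|_0^{(1)}$, $|\nabla^N g|_0^{(1),N}$ replaced by its counterpart with one fewer power of $d(X)$, namely $|\tilde\W^N_\cdot|_{1+\alpha}^*$, $\|\W^N_\cdot\|_\infty$, $\|\nabla^N g\|_\infty$. You have simply spelled out the intermediate bookkeeping (the two eliminated $d(Y)^{-1}$ factors in the energy inequality and the sup bound for $W_e^N$, the short-distance check using $|\nabla^N_{e'}\nabla^N_e g|\le 2N\|\nabla^N g\|_\infty$) that the paper compresses into the one-line observation that all the weights shift consistently.
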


\begin{proof} The argument follows the same proof as for Theorem \ref{second_Schauder_thm}.
Observe that the seminorms $|\tilde \W_\cdot^N|_{1+\a}^{(1)}$, $|\tilde \W_\cdot^N|_0^{(1)}$,
$|\nabla^Ng|_{0}^{(1),N}$ all involve just one more weight $d(X)$ compared to
$|\tilde \W^N_\cdot|_{1+\alpha}^*$, $\|\W^N_\cdot\|_\infty$, $\|\nabla^Ng\|_\infty$,
respectively; see also the proof of Corollary \ref{cor:5.7}.
\end{proof}

Now, as in Theorem \ref{thm:4.2}, when the initial value of $\psi$ and $\W^N_e =\nabla^N_e \psi$ are smooth enough, we will obtain a more regular estimate at $t=0$ without any singularity.
We will suppose that $\psi$ is $C^4$ and so $\W^N_e$ is $C^3$ in the sense that
\begin{align}
\label{psi_smooth_initial}
\sup_N \|\psi(0)\|_{C_N^4} \le \mathcal{C}_0<\infty \ \ \ {\rm and \ so \ \  \ }
\sup_N \|\W_e^N(0)\|_{C_N^3} \le \mathcal{C}_0<\infty.
\end{align}
We will need to following additional assumptions for the coefficient $\bar a$,
similar to \eqref{eq:Th4.2-B} and \eqref{eq:Th4.2-1}:
\begin{align}
\label{a-holder}
&[\bar a]_\alpha^{(-\alpha),N} \leq \bar B<\infty: |\bar a(t_1, \tfrac{x_1}{N})-a(t_2, \tfrac{x_2}{N})|\leq \bar B\big\{|t_2-t_1|^{\frac{\alpha}{2}} + \big|\tfrac{x_2}{N}-\tfrac{x_1}{N}\big|^\alpha\big\},\\
\label{a-grad_bounds}
&\sup_{N}\| \bar a(0,\cdot)\|_{C^4_N} \leq \mathcal{C}_1<\infty.
\end{align}

Also, we assume that $\nabla^N g$ satisfies
\begin{align}
\label{g-bound}
\sup_{N,e} |\nabla^N_e g|_0 \leq G^1_\infty<\infty.
\end{align}

\begin{thm}
\label{second_Schauder_time_thm}
Consider the discrete PDE \eqref{eq:we}, satisfying (B.1), \eqref{a-holder}, \eqref{a-grad_bounds}, \eqref{g-bound}, such that
\begin{align}
\label{W-bound}
\sup_{N,e} |\W^N_e|_0 \leq W_\infty<\infty,
\end{align}
and with initial value given in \eqref{psi_smooth_initial}.  Then, with respect to the unweighted norms,  
\begin{align}  \label{1+alpha bound}
& |\tilde \W_\cdot^N|_{1+\alpha}\leq C\big[ (\bar B  +1)^{1+\frac{1}{\alpha}}\big(W_\infty + \mathcal{C}_6\big) +G^1_\infty + \mathcal{C}_7\big]\\
\label{nabla bound}
&|\nabla^{N} \tilde \W^N_\cdot|_0 \leq C\big[ (\bar B  +1)^{\frac{1}{\alpha}}\big(W_\infty + \mathcal{C}_6\big)+ (\bar B  + 1)^{-1}(G^1_\infty + \mathcal{C}_7)\big].\\
\label{holder-bound}
&[\nabla^N \tilde \W^N_\cdot]_\alpha \leq C\big[(\bar B  +1)^{1+\frac{1}{\alpha}}\big(W_\infty + \mathcal{C}_6\big) + G^1_\infty + \mathcal{C}_7\big],
\end{align}
where $C=C(n, c_\pm, T, \alpha)$ and $\mathcal{C}_6, \mathcal{C}_7$, specified in the proof, depend on $\bar B$, $\mathcal{C}_0$, $\mathcal{C}_1$ and $\alpha$.

In particular, we have polynomial bounds in $\bar B$, $\mathcal{C}_0$ and $\mathcal{C}_1$, in terms of a universal constant $C$,
\begin{align*}
\mathcal{C}_6  \leq C(\bar B+1)^{\frac{1}{\alpha}}\mathcal{C}_0(1+\mathcal{C}_1^5) \ \ {\rm and \ \ }
\mathcal{C}_7  \leq C(\bar B+1)^{\frac{2}{\alpha}}\mathcal{C}_0(1+\mathcal{C}_1^{11}).
\end{align*}
\end{thm}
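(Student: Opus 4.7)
The plan is to adapt the backward-prolongation device used in the proof of Theorem~\ref{thm:4.2} to the non-divergence equation $\partial_t\psi=\bar a\Delta^N\psi+g$ that underlies the system \eqref{eq:we}. I would construct an extended function $\hat\psi$ on $[0,T+1]$ whose coefficient and source satisfy the hypotheses of Theorem~\ref{second_Schauder_thm} (equivalently Corollary~\ref{alternative_rmk}) globally, apply the weighted first Schauder estimate to the extended system, and then restrict the conclusion to times $t\in[1,T+1]$---corresponding to the original interval $[0,T]$---where the weight $d(X)=\sqrt t\geq 1$ is harmless, so the weighted seminorms majorise their unweighted counterparts.

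Concretely, let $v(s,\tfrac{x}{N})$ solve $\partial_s v=\bar a(0,\cdot)\Delta^N v$ on $s\in(0,1]$ with $v(0,\cdot)=\psi(0,\cdot)$, and set $\hat v(t):=v(1-t)$ and $\hat h(t,\tfrac{x}{N}):=-\bar a(0,\cdot)\Delta^N\hat v(t,\tfrac{x}{N})$, so that $\partial_t\hat v=\bar a(0,\cdot)\Delta^N\hat v+2\hat h$. Following \eqref{hat a defn-B}, define
\[
\hat a(t):=\begin{cases}\bar a(t-1)&t\geq 1,\\ \bar a(0,\cdot)&0\leq t<1,\end{cases}\qquad \tilde g(t):=\begin{cases}g(t-1)&t\geq 1,\\ 2\hat h(t)&0\leq t<1,\end{cases}
\]
together with $\hat\psi(t):=\hat v(t)$ for $t\in[0,1)$ and $\hat\psi(t):=\psi(t-1)$ for $t\geq 1$. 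By construction $\hat\psi$ is continuous at $t=1$, satisfies $\partial_t\hat\psi=\hat a\Delta^N\hat\psi+\tilde g$ globally, and hence $\hat\W_e^N:=\nabla_e^N\hat\psi$ solves the analogue of \eqref{eq:we} with coefficient $\hat a$ and source $\tilde g$ on $[0,T+1]$.

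The technical core is the verification of the hypotheses of Corollary~\ref{alternative_rmk} for the extended system together with quantitative bounds on the two driving quantities $\|\hat\W_\cdot^N\|_\infty$ and $\|\nabla^N\tilde g\|_\infty$. Nondegeneracy of $\hat a$ and its Hölder continuity with constant comparable to $\bar B$ follow immediately from (B.1) and \eqref{a-holder}; likewise $\|\nabla^N\tilde g\|_\infty\leq G_\infty^1$ holds trivially on $t\geq 1$, so the only real difficulty is confined to $t<1$, where one must control $\hat h$ and $\nabla^N\hat h=-\nabla^N(\bar a(0,\cdot)\Delta^N\hat v)$---quantities involving up to three discrete derivatives of $v$ and two of $\bar a(0,\cdot)$. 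The uniform-in-$s$ bounds on $\|v(s)\|_{C_N^k}$, $k\leq 4$, needed for this are produced by iterated maximum-principle (Lemma~\ref{lem:1.1}) arguments: writing $p:=\partial_s v=\bar a(0,\cdot)\Delta^N v$ and observing that $p$ solves the same equation as $v$ (since $\bar a(0,\cdot)$ is time-independent) one obtains $\|p(s)\|_\infty\leq\|p(0)\|_\infty\lesssim\mathcal C_1\mathcal C_0$, whereas successive discrete spatial differentiations of the equation yield inhomogeneous linear equations for $\nabla^N v,\nabla^N\nabla^N v,\ldots$ whose source terms collect up to $k-1$ derivatives of $\bar a(0,\cdot)$ acting multiplicatively on lower-order quantities; an induction closed by the maximum principle then produces polynomial-in-$(\mathcal C_0,\mathcal C_1)$ bounds that, combined with the first Schauder estimate of Section~\ref{sec:Schauder-1} applied to $v$ itself (which produces the $(\bar B+1)^{1/\alpha}$-type factors), yield the explicit forms for $\mathcal C_6$ and $\mathcal C_7$ claimed in the theorem. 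This bookkeeping of product-rule terms---in particular the tracking of the powers $\mathcal C_1^5$ and $\mathcal C_1^{11}$---is the most delicate portion of the argument and the main obstacle.

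Granted these bounds, applying Corollary~\ref{alternative_rmk} to $\hat\W_\cdot^N$ on $[0,T+1]$ yields
\[
\bigl|\widetilde{\hat\W}_\cdot^N\bigr|_{1+\alpha}^{*}\leq C\bigl[(\bar B+1)^{1+1/\alpha}(W_\infty+\mathcal C_6)+G_\infty^1+\mathcal C_7\bigr]
\]
and the analogous weighted estimate for $|\nabla^N\widetilde{\hat\W}_\cdot^N|_0^{(1)}$. Restricting to $t\in[1,T+1]$---equivalently the original interval $[0,T]$---the weight $d(X)\geq 1$ converts the $(\cdot)^{*}$ and $(\cdot)^{(1)}$ seminorms into their unweighted counterparts $|\tilde\W_\cdot^N|_{1+\alpha}$, $|\nabla^N\tilde\W_\cdot^N|_0$ and $[\nabla^N\tilde\W_\cdot^N]_\alpha$ on $[0,T]$; this delivers \eqref{1+alpha bound}, \eqref{nabla bound} and \eqref{holder-bound} with the stated dependence on $W_\infty$, $G_\infty^1$, $\mathcal C_0$ and $\mathcal C_1$.
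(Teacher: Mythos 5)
Your overall plan --- extend backwards in time by solving a constant-in-time equation, apply the weighted Schauder estimate to the extended system on $[0,T+1]$, then restrict to $t\ge 1$ where the weight $d(X)\ge 1$ converts weighted seminorms into unweighted ones --- is exactly the paper's scheme. But the way you propose to produce the needed bounds on $\hat h$ and $\nabla^N\hat h$ contains two real gaps, and a third claim that is misleading.

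The first gap is the plan to apply ``the first Schauder estimate of Section~\ref{sec:Schauder-1} to $v$ itself.'' Your $v$ solves the \emph{non-divergence} equation $\partial_s v=\bar a(0,\cdot)\Delta^N v$, while Theorems~\ref{thm:Li-T4.8} and~\ref{thm:4.2} are stated and proved for the divergence-form equation \eqref{eq:1.2-linear}, i.e.\ $\partial_t u=L^N_{a}u+g$ with $L^N_a=-\tfrac12\sum_e\nabla_e^{N,*}(a_{x,e}\nabla_e^N\,\cdot\,)$. There is no non-divergence Schauder estimate in the paper to invoke, so the step that is supposed to produce the $(\bar B+1)^{1/\alpha}$-type factors is not available. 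The paper resolves exactly this by introducing $Z:=\psi(0)/b$ with $b:=\bar a(0,\cdot)$ and solving the \emph{divergence-form} equation $\partial_t Z=-\sum_{e'>0}\nabla^{N,*}_{e'}(b\nabla^N_{e'}Z)$, so that Theorem~\ref{thm:4.2} applies; only afterwards is $\hat\Psi=b\hat Z$ formed (introducing the correction $\hat h_2$ that your sketch does not anticipate).

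The second gap is the claimed mechanism of the iterated maximum principle. You say the sources for $\nabla^N v,\nabla^N\nabla^N v,\ldots$ ``collect $\ldots$ lower-order quantities,'' but differentiating $\partial_s v=b\Delta^N v$ once already produces a source involving $(\nabla^N_e b)\,\Delta^N v$, i.e.\ a quantity of the \emph{same} order as the unknown, and the next differentiation brings in $\nabla^N\Delta^N v$, which is one order \emph{higher}. The induction does not close on its own. What makes it close is precisely the cascade of auxiliary functions $p:=b\Delta^N v$, $q:=b\Delta^N p$, $\ldots$, each of which again solves the homogeneous equation; you introduce only $p$, but controlling $\nabla^N\hat h=\nabla^N p$ already requires $q$ (to bound $\Delta^N p=q/b$). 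Finally, note that if you rely purely on this max-principle cascade the resulting bounds for the analogues of $\mathcal C_6,\mathcal C_7$ carry no $(\bar B+1)^{1/\alpha}$ factors at all (those arise solely from Theorem~\ref{thm:4.2}), so the ``explicit forms'' stated in the theorem would not be reproduced; one obtains sharper constants, which still proves the estimate but means your claim to recover the stated forms of $\mathcal C_6,\mathcal C_7$ is inaccurate and a sign that the two tools (max principle vs.\ Schauder) were being conflated.
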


\begin{proof}
We will apply the same sort of scheme as given for Theorem \ref{thm:4.2}, where now Theorem \ref{second_Schauder_thm} will be invoked with respect to a time extended system.
Recall equation \eqref{eq:we} for $t>0$,
\begin{align}
\label{new_eq}
\partial_t \W^N_e = \nabla^N_e\big\{-\bar a(X) \sum_{|e'|=1, e'>0} \nabla^{N,*}_{e'}  \W^N_{e'}\big\}  + \nabla^N_e g,
\end{align}
where $\{\W^N_e =\nabla^N_e \psi\}_{|e|=1}$ is a gradient of the function $\psi$. 
As before, the idea now is to extend the evolution below time $t=0$.

Consider $\bar a$ evaluated at time $t=0$, and define $b(X):=\bar a\big(0,\tfrac{x}{N}\big)$.
Note that $b$ does not depend on time $t$ or direction $e$, and satisfies $c_-\leq b\leq c_+$ from (B.1).
We will need to consider the reciprocal $1/b$ in the following.  Note by \eqref{a-grad_bounds} that $\|(1/b)\|_{C^4_N} \leq C(n,c_\pm)\mathcal{C}_1^4$.

For $t\geq 0$, define $Z$ by
\begin{align}
\label{eq:Z}
\partial_t Z = -\sum_{|e'|=1, e'>0} \nabla^{N, *}_{e'} \big(b \nabla^N_{e'} Z\big)
\end{align}
with initial condition $Z(0)= \tfrac{1}{b}\psi(0)$.  Note as $\sup_N\|(1/b)\|_{C^4_N}\leq  C(n, c_\pm)\mathcal{C}_1^4$ that we have $\sup_N\|Z(0)\|_{C^4_N}
\leq C(n, c_\pm)\mathcal{C}_0(1+\mathcal{C}^4_1)$.
Note also that equation \eqref{eq:Z} is at the level of \eqref{eq:vN} for $\psi=\psi^N$,
but we divide it by $b$ to make the equation in divergence form so that we can apply
the gradient bounds obtained in Theorem \ref{thm:4.2}.

By the maximum principle (Lemma \ref{lem:1.1}), $\|Z\|_\infty \leq \|Z(0)\|_\infty \leq C(c_-)\mathcal{C}_0$.  Moreover, we may apply Theorem \ref{thm:4.2} to bound gradients of $Z$:  We have $b$ satisfies (A.1) and also \eqref{eq:Th4.2-B} and \eqref{eq:Th4.2-1} with $B=[b]_\alpha^{(-\alpha),N}\leq \bar B$ and $C_1=\sup_{N,e} |\nabla^N_e b_0| \leq \mathcal{C}_1$.  Here, $G_\infty=0$, $C_0=\sup_N\|Z(0)\|_{C^2_N}\leq C(n, c_\pm)\mathcal{C}_0(1+\mathcal{C}^4_1)$, and \eqref{eq:4.bounded} holds with bound $\|Z\|_\infty\leq C(c_-)\mathcal{C}_0$. Then,
by \eqref{eq:4-14},
\begin{align}
\label{claim-grad}
\max_{e}|\nabla^N_{e}Z|_0&\leq C(n,c_\pm, T, \alpha)\big[(B +1)^{\frac{1}{\alpha}}|Z|_0 
+ (B+1)^{-1} \big\{G_\infty+C_2\big\}\big] \\
&\leq C(n, c_\pm, T, \alpha)\mathcal{C}_3
\nonumber
\end{align}
where 
\begin{align}
\label{eq:C_0}
{C}_2 &= n(C_1C_0+c_+C_0) \leq C(n, c_\pm)\mathcal{C}_0(1+\mathcal{C}_1^5),\\
\mathcal{C}_3 &=
 (\bar B+1)^{\frac{1}{\alpha}}\mathcal{C}_0 
 + (\bar B+1)^{-1}\mathcal{C}_0(1+\mathcal{C}_1^5) \nonumber\\
 &\leq C(\bar B+1)^{\frac{1}{\alpha}}\mathcal{C}_0(1+\mathcal{C}_1^5),\nonumber
\end{align}
in terms of a universal constant $C$.

Define now, and noting \eqref{eq:disc-der-D},
\begin{align}
\label{eq: h_1}
h_1 &:= \sum_{|e'|=1, e'>0} \nabla^{N, *}_{e'} \big(b \nabla^N_{e'} Z\big)  \\
&= -b\Delta^N Z + \sum_{|e'|=1, e'>0} \nabla^{N, *}_{e'}b \cdot \nabla^N_{e'}Z(X+ \tfrac{e}{N}).\nonumber
\end{align}
 Observe that $h_1$ satisfies \eqref{eq:Z}.  Then, by the maximum principle, and previous bounds $b\leq c_+$, $\max_e |\nabla^N_e b|_0\leq \mathcal{C}_1$ in \eqref{a-grad_bounds}, and $\|Z(0)\|_{C^4_N}\leq C(n, c_\pm)\mathcal{C}_0(1+\mathcal{C}_1^4)$, we have
\begin{align}
\label{h_1 bound}
\|h_1\|_\infty \leq \|h_1(0)\|_\infty \leq C(n,c_\pm)\mathcal{C}_0(1+ \mathcal{C}^5_1). 
\end{align}

   Hence, noting \eqref{eq: h_1}, by \eqref{claim-grad} and triangle inequality, we also have 
\begin{align}  \label{eq:5.Z-2}
\|\Delta^N Z\|_\infty \leq C(n, c_\pm, T, \alpha)\mathcal{C}_1\mathcal{C}_3 +
C(n,c_\pm)\mathcal{C}_0(1+ \mathcal{C}^5_1).
\end{align}

Further, by Theorem \ref{thm:4.2}, as in the set-up of \eqref{claim-grad}, 
we obtain an estimate for the gradient of $h_1$:
Indeed, as $Z = \tfrac{1}{b}\psi$ and $\|Z(0)\|_{C^4_N} \leq C(n, c_\pm)\mathcal{C}_0(1+\mathcal{C}_1^4)$,
noting \eqref{a-grad_bounds}, we have
$C_0=\sup_N \|h_1(0)\|_{C_N^2} \leq C(n, c_\pm)\mathcal{C}_0(1+\mathcal{C}^5_1)$.  Also, as before, $C_1=\sup_{N, e}|\nabla^N_e b|_0\leq \mathcal{C}_1$, $B=[b]_\alpha^{(-\alpha),N}\leq \bar B$ and $G_\infty=0$.  Hence, in this context (cf. \eqref{eq:C_0}), $C_2 \leq C(n, c_\pm)\mathcal{C}_0(1+\mathcal{C}_1^6)$.
By \eqref{h_1 bound}, $|h_1|_0\leq C\mathcal{C}_0(1+\mathcal{C}_1^5)$.  Then,
\begin{align}
\label{eq h_1 grad}
\max_e|\nabla^N_e h_1|_0 &\leq C(n,c_\pm, T, \alpha)\big[(B +1)^{\frac{1}{\alpha}}|h_1|_0 
+ (B+1)^{-1} \big\{G_\infty+C_2\big\}\big]\\ 
&\leq
C(n, c_\pm, \alpha, T)\mathcal{C}_4\nonumber
\end{align}
where, in terms of a universal constant $C$,
\begin{align*}
\mathcal{C}_4 &= (\bar B +1)^{\frac{1}{\alpha}}\mathcal{C}_0(1+\mathcal{C}_1^5) + (\bar B + 1)^{-1}\mathcal{C}_0(1+\mathcal{C}_1^6)\\
&\leq C(\bar B+1)^{\frac{1}{\alpha}}\mathcal{C}_0(1+\mathcal{C}_1^6).
\end{align*}

By considering the equation for $Z_e := \nabla^N_e Z$,
\begin{align}
\label{eq:grad Z}
\partial_t Z_e = -\sum_{|e'|=1, e'>0} \nabla^{N,*}_{e'} \big(b \nabla^N_{e'} Z_e\big)
 - \sum_{|e'|=1, e'>0} \nabla^{N, *}_{e'} \big((\nabla^N_e b)Z_{e'}(X+ \tfrac{e}{N})\big),
\end{align}
we may bound second gradients $\nabla_{e_1}^N \nabla^N_{e_2} Z$.  
Indeed, $\eta_e:=\nabla_e^N(bZ)$ is more natural object that we consider below, which is parallel
to $\xi_e$, but here we consider $Z_e$
to deduce the second gradient bounds.

Observe here that $C_0= \sup_N\|Z_e(0)\|_{C^2_N} \leq C(n, c_\pm)\mathcal{C}_0(1+\mathcal{C}^4_1)$.  
As before, in this context, $C_1=\sup_{N, e}|\nabla^N_e b|_0\leq \mathcal{C}_1$, $B=[b]_\alpha^{(-\alpha),N}\leq \bar B$ and $C_2 \leq C(n, c_\pm)\mathcal{C}_0(1+\mathcal{C}_1^5)$ (cf. \eqref{eq:C_0}).  However, here $|\nabla^N_{e_2}Z|_0 \leq C(n, c_\pm, T, \alpha)\mathcal{C}_3$ and, by \eqref{claim-grad} and \eqref{eq:5.Z-2},
\begin{align*}
G_\infty &= \big\|\sum_{|e'|=1, e'>0} \nabla^{N, *}_{e'} \big((\nabla^N_e b)Z_{e'}(X+ \tfrac{e}{N})\big)\big\|_\infty \leq C(n)\mathcal{C}_1\max_e \|Z_e\|_\infty + \mathcal{C}_1\|\Delta^N Z\|_\infty\\
&= C(n, c_\pm, T, \alpha)\big\{ \mathcal{C}_1\mathcal{C}_3 + \mathcal{C}_1\big[\mathcal{C}_1\mathcal{C}_3 + \mathcal{C}_0(1+\mathcal{C}^5_1)\big]\big\}.
\end{align*}

Then, by Theorem \ref{thm:4.2} applied to \eqref{eq:grad Z}, we have
\begin{align}
\label{second grad Z}
\max_{e_1, e_2} |\nabla^N_{e_1}\nabla^N_{e_2} Z|_0 &\leq 
C(n,c_\pm, T, \alpha)\big[(B +1)^{\frac{1}{\alpha}}|\nabla^N_{e_1}Z|_0 
+ (B+1)^{-1} \big\{G_\infty+C_2\big\}\big]\\ 
&\leq C(n, c_\pm, T, \alpha) \mathcal{C}_5, \nonumber
\end{align}
where
\begin{align*}
\mathcal{C}_5 &= (\bar B +1)^{\tfrac{1}{\alpha}}\mathcal{C}_3 + (\bar B+1)^{-1}\big\{\mathcal{C}_1\mathcal{C}_3 + \mathcal{C}_1\big[\mathcal{C}_1\mathcal{C}_3 + \mathcal{C}_0(1+\mathcal{C}^5_1)\big]+ \mathcal{C}_0(1+\mathcal{C}_1^5)\big\}\\
&\leq C(\bar B+1)^{\tfrac{2}{\alpha}}\mathcal{C}_0(1+\mathcal{C}_1^7),
\end{align*}
incorporating the expression for $C_2$ and $\mathcal{C}_3$ in \eqref{eq:C_0} in terms of 
a universal constant $C$.

Let now $\hat Z(t) = Z(1-t)$ and $\hat h_1(t) = h_1(1-t)$ for $0\leq t<1$.  Then,
$$\partial_t \hat Z = -\sum_{|e'|=1, e'>0} \nabla^{N, *}_{e'} \big(b \nabla^N_{e'} \hat Z\big) + 2\hat h_1.$$
Define $\hat \Psi = b\hat Z$ to be back to the level of $\psi=\psi^N$.  Then,
by using \eqref{eq:disc-der-D},
\begin{align*}
\partial_t \hat \Psi = b\Delta^N\hat \Psi - b\hat h_2
+ 2b\hat h_1,
\end{align*}
where
$$\hat h_2 = \sum_{|e'|=1, e'>0} \nabla^{N, *}_{e'}\big[ b(\nabla^N_{e'} \tfrac{1}{b})\hat\Psi(X+\tfrac{e'}{N})\big].$$
We have by the previous bounds \eqref{psi_smooth_initial}, \eqref{a-grad_bounds}, \eqref{claim-grad} and $\|(1/b)\|_{C^4_N}\leq C(n, c_\pm)\mathcal{C}_1^4$, noting $\hat\Psi = b\hat Z$, that
\begin{align}
\label{h_2 bound}
\|\hat h_2\|_\infty & \leq C(n, c_\pm) \max_{e'}\Big\{
|\nabla^{N, *}_{e'}\big[b(\nabla^N_{e'}\tfrac{1}{b})(X)b(X+\tfrac{e'}{N})\big]|_0|Z|_0\\
&\ \ \ \ \ \ \ \  + |b(\nabla^N_{e'}\tfrac{1}{b})(X)b(X+\tfrac{e'}{N})|_0|\nabla^{N, *}_{e'} Z|_0\Big\} \nonumber\\
& \leq C(n, c_\pm, T, \alpha)\big\{ \mathcal{C}_0\mathcal{C}^4_1(1+\mathcal{C}_1)+
\mathcal{C}^4_1\mathcal{C}_3\big\}.\nonumber
\end{align}
Analogously, using also \eqref{second grad Z}, we have
\begin{align}
\label{h_2 grad bound}
\|\nabla^N_e \hat h_2\|_\infty & \leq C(n, c_\pm)\max_{e'} \Big\{|\nabla^N_e\nabla^{N, *}_{e'}\big[b(\nabla^N_{e'}\tfrac{1}{b})(X)b(X+\tfrac{e'}{N})\big]|_0|Z|_0\\
&\hskip 30mm   + |b(\nabla^N_{e'}\tfrac{1}{b})(X)b(X+\tfrac{e'}{N})|_0|\nabla^N_e\nabla^{N, *}_{e'} Z|_0
\nonumber\\
&\hskip 30mm + |\nabla^{N, *}_{e'}\big[b(\nabla^N_{e'}\tfrac{1}{b})(X)b(X+\tfrac{e'}{N})\big]|_0|\nabla^N_eZ|_0 
\nonumber\\
& \hskip 30mm
+ |\nabla^N_e\big[b(\nabla^N_{e'}\tfrac{1}{b})(X)b(X+\tfrac{e'}{N})\big]|_0|\nabla^{N, *}_{e'}Z|_0\Big\} \nonumber\\
&\leq C(n, c_\pm, T, \alpha)\big\{ \mathcal{C}_0\mathcal{C}^4_1(1+\mathcal{C}_1^2) + \mathcal{C}^4_1\mathcal{C}_5 + \mathcal{C}_3\mathcal{C}^4_1(1+\mathcal{C}_1)\big\}.\nonumber
\end{align} 

Now define $\hat \eta_e = \nabla^N_e \hat\Psi$ for $|e|=1$ to be at the level of
$\xi_e^N$.  Then,
\begin{align}
\label{eq: eta_e}
\partial_t \hat \eta_e &= \nabla^N_e\Big\{ -b \sum_{|e'|=1,e'>0} \nabla^{N, *}_{e'}\hat \eta_{e'}\Big\} -\nabla^N_e (b \hat h_2) + 2\nabla^N_e (b \hat h_1).
\end{align}
Note that $\hat \eta_e(1) = \nabla^N_e (b Z(0)) = \nabla^N_e \psi(0) = \W^N_e(0)$, matching the initial data to \eqref{new_eq}.
Also, by \eqref{psi_smooth_initial}, \eqref{a-grad_bounds} and \eqref{claim-grad}, noting $Z= \tfrac{1}{b}\psi$ and via \eqref{eq:disc-der-D} $\nabla^N_e (b\hat Z) = (\nabla^N_e b)\hat Z + b \nabla^N_e \hat Z(X+\tfrac{e}{N})$,
\begin{align}
\label{eta_e bound}
\|\hat\eta_e\|_\infty& = \|\nabla^N_e (b\hat Z)\|_\infty \leq C(n, c_\pm, T, \alpha)\big[\mathcal{C}_0\mathcal{C}_1 + \mathcal{C}_3\big]\\
&=:C(n, c_\pm, T, \alpha)\mathcal{C}_6 \nonumber
\end{align}
where, incorporating the expression for $\mathcal{C}_3$ in \eqref{eq:C_0}, in terms of a universal constant $C$,
$$\mathcal{C}_6=\mathcal{C}_0\mathcal{C}_1 + \mathcal{C}_3 
\leq C(\bar B+1)^{\frac{1}{\alpha}}\mathcal{C}_0(1+\mathcal{C}_1^5).$$

Now, as in \eqref{hat a defn-B}, define
$$\hat a = \left\{\begin{array}{rl}
\bar a(t-1)& {\rm for \ }t\geq 1\\
b& {\rm for \ } 0\leq t<1,
\end{array}\right. \ \ {\rm and \ \ }
\hat g = \left\{\begin{array}{rl}
\nabla^N_e g& {\rm for \ } t\geq 1\\
2\nabla^N_e(b\hat h_1) - \nabla^N_e(b\hat h_2) & {\rm for \ }0\leq t<1.
\end{array}\right.
$$
Note that $\hat a$ satisfies condition (B.1), and also (B.2) with 
$$[\hat a]^{*,N}_\alpha \leq (T+1)^{\frac{\alpha}{2}}[\bar a]_\alpha^{(-\alpha),N}
\leq C(T)\bar B  = \bar A$$
since $b=\bar a(0)$ does not depend on time $t$ and $d(X)\leq \sqrt{T+1}$.

Also, (B.3) holds, 
noting the bounds \eqref{a-grad_bounds}, \eqref{g-bound}, \eqref{h_1 bound}, \eqref{eq h_1 grad}, \eqref{h_2 bound} \eqref{h_2 grad bound}, via \eqref{eq:disc-der-D},
 with 
\begin{align*}
|\hat g|_0^{(1)}&\leq (T+1)^{\frac{\alpha}{2}} \big\{ G^1_\infty + 2|\nabla^N_e(b\hat h_1)|_0+|\nabla^N_e(b\hat h_2)|_0 \big\}\\
&= C(n, c_\pm, T,\alpha)\big\{G^1_\infty + \mathcal{C}_7\big\} = G_0^1
\end{align*}
where 
\begin{align*}
\mathcal{C}_7 &= |\nabla^N_e b|_0|\hat h_1|_0 + |\nabla^N_e \hat h_1|_0 + |\nabla^N_e b|_0|\hat h_2|_0 + |\nabla^N_e\hat h_2|_0\\
&\leq \mathcal{C}_1\big[\mathcal{C}_0(1+\mathcal{C}^5_1)\big] + \mathcal{C}_4
+ \mathcal{C}_1\big[ \mathcal{C}_0\mathcal{C}^4_1(1+\mathcal{C}_1) + \mathcal{C}^4_1\mathcal{C}_3\big]\\ 
&\ \ + \mathcal{C}_0\mathcal{C}_1^4(1+\mathcal{C}_1^2) +\mathcal{C}_1^4\mathcal{C}_5  + \mathcal{C}_3\mathcal{C}_1^4(1+\mathcal{C}_1)\\
&\leq C\Big\{\mathcal{C}_0\mathcal{C}_1(1+\mathcal{C}_1^5) + \mathcal{C}_3\mathcal{C}_1^4(1+\mathcal{C}_1) + \mathcal{C}_4 + \mathcal{C}_5\mathcal{C}_1^4\Big\}\\
&\leq C (\bar B+1)^{\frac{2}{\alpha}}\mathcal{C}_0(1+\mathcal{C}_1^{11}).
\end{align*}
The last line follows by incorporating the expressions of $\mathcal{C}_3$ in \eqref{eq:C_0}, 
$\mathcal{C}_4$ below \eqref{eq h_1 grad}, and $\mathcal{C}_5$ below \eqref{second grad Z}.
Here, the constants $C$ are universal.

We now formulate the extended system for $t\geq 0$, which corresponds to \eqref{eq: eta_e} when $0\leq t<1$ and \eqref{new_eq} when $t\geq 1$, as
\begin{align*}
\partial_t V_e =  \nabla^N_e\Big\{-\hat a(X)\sum_{|e'|=1, e'>0}\nabla^{N,*}_{e'}V_{e'}(X) \Big\}+ \hat g(X).
\end{align*}
Observe, by \eqref{W-bound} and \eqref{eta_e bound}, that 
$$
|V_e|_0^{(1)} \leq (T+1)^{\frac{\alpha}{2}} \big\{ \max_e |\W^N_e|_0 + \max_e |\eta_e|_0\big\} \leq C(n, c_\pm, T, \alpha)\big(W_\infty + \mathcal{C}_6\big).
$$
We may now apply Theorem \ref{second_Schauder_thm} to the system $\{V_e\}_{|e|=1}$. 
As $\W_e(t) = V_e(t+1)$ for $t\geq 0$, the desired statements \eqref{1+alpha bound}, \eqref{nabla bound}, and 
\eqref{holder-bound} follow.
\end{proof}

\subsection{Second Schauder estimate for \eqref{eq:2.1-X}}  \label{sec:5.2}

We now specialize to the setting introduced at the beginning of this section
and apply Theorem \ref{second_Schauder_thm}, Corollary \ref{alternative_rmk}, and Theorem \ref{second_Schauder_time_thm} to the equation \eqref{eq:2.1-X}
satisfying \eqref{eq:1.u_pm}.  
See, correspondingly, Corollaries \ref{second_Schauder_cor}, \ref{cor:5.7} 
and \ref{second_Schauder_time_cor} below, in which we obtain estimates of
the second discrete derivatives of $u^N(t,\frac{x}N)$ exhibiting singularities
$\frac1t, \frac1{\sqrt{t}}, 1$, respectively, near $t=0$, without assuming or  assuming
some regularity conditions for the initial value $u^N(0)$.
Recall the correspondences at the beginning of Section \ref{sec:5.1}.

\begin{cor}  \label{second_Schauder_cor}
Consider the nonlinear discrete PDE \eqref{eq:2.1-X}
satisfying \eqref{eq:1.u_pm}. Then, we have
\begin{align}  \label{eq:5.25}
& |\tilde \W_e^N|_{1+\si}^{(1)} \leq C\big( K^{1+\frac{2}{\si}}+1\big),
\intertext{for every $e$, and}
 \label{eq:5.28}
& |\W_e^N(X)| \leq \frac{C(K^{\frac{1}{\si}}+1)}{\sqrt{t}},\\
& |\nabla_{e_1}^N \W_{e_2}^N(X)| \leq \frac{C (K^{\frac{2}{\si}}+1)}t,
\label{eq:5.27}  \\
& |\nabla^N_{e_1}\nabla^N_{e_2} u^N(X)| \leq \frac{C(K^{\frac{2}{\si}}+1)}t,
\label{eq:5.29}
\end{align}
for every $e, e_1, e_2$ and $X=(t,\tfrac{x}N)\in \Om_N$.
Here, $C=C(n, c_\pm, T, \sigma, \|f\|_\infty, \|\fa^{(i)}\|_\infty, u_\pm: i=1,2)$.

Also, \eqref{eq:5.25} implies a $\sigma$-H\"older estimate for 
$\nabla^N_{e_1}\nabla^N_{e_2} u^N(X)$,
that is
\begin{align}
\label{eq:holder-uu}
[\nabla^N_{e_1}\nabla^N_{e_2} u^N]_{\sigma}^{(3-\sigma)} \leq C\big(K^{\frac{3}{\si}} +1\big)
\end{align}
for every $e_1, e_2$, where $C=C(n, c_\pm, T, \sigma, \|f\|_\infty, \|\fa^{(i)}\|_\infty, 
u_\pm: 1\le i \le 3)$ and $\|h\|_\infty:= \|h\|_{L^\infty([u_-,u_+])}$ for a function $h$.
\end{cor}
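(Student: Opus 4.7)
The plan is to reduce Corollary \ref{second_Schauder_cor} to Theorem \ref{second_Schauder_thm} applied to the system \eqref{eq:we} with $\bar a$ and $g$ arising from the solution $u^N$ of \eqref{eq:2.1-X}, and then translate the resulting bounds on $\W_e^N=\nabla_e^N\fa(u^N)$ back to bounds on $u^N$ via the discrete chain rule $\W_e^N=a_{x,e}(u^N)\nabla_e^N u^N$ with $a_{x,e}(u^N)$ given in \eqref{eq:a_xe}.

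First I would verify the hypotheses (B.1)--(B.3) for $\bar a=\fa'(u^N)$ and $g=K\bar a\,f(u^N)$, taking $\a=\si$. Assumption (B.1) is immediate from \eqref{eq:c_pm-N} and \eqref{eq:1.u_pm}. For (B.2), with $\fa\in C^2$, the H\"older estimate \eqref{eq:cor2.3-1} for $u^N$ combined with the mean-value Lemma \ref{lem:mvt} gives $\bar A\le C(K+1)$. For (B.3), the discrete product rule \eqref{eq:disc-der-D} applied to $g=K\bar a\,f(u^N)$ reduces $|\nabla^N g|$ to a multiple of $K|\nabla^N u^N|$; combined with the pointwise bound $|\nabla_e^N u^N(X)|\le Ct^{-1/2}(K^{1/\si}+1)$ from Corollary \ref{cor:K^3}, this yields $G_0^1\le C(K^{1+1/\si}+1)$. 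Finally, $|\tilde\W_\cdot^N|_0^{(1)}\le C(K^{1/\si}+1)$ follows directly from $|\W_e^N|\le\|\fa'\|_\infty|\nabla^N u^N|$ and the same pointwise bound.

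Inserting $\bar A\sim K+1$, $|\tilde\W_\cdot^N|_0^{(1)}\sim K^{1/\si}+1$ and $G_0^1\sim K^{1+1/\si}+1$ into \eqref{eq:5.22-B} and \eqref{eq:5.22-C} of Theorem \ref{second_Schauder_thm} gives \eqref{eq:5.25} and \eqref{eq:5.27} respectively; the estimate \eqref{eq:5.28} is immediate from $|\W_e^N|\le\|\fa'\|_\infty|\nabla^N u^N|$ and Corollary \ref{cor:K^3}. For \eqref{eq:5.29} I would invert to write $\nabla_e^N u^N=\W_e^N/a_{x,e}(u^N)$ and apply $\nabla_{e_1}^N$ via the discrete product rule \eqref{eq:disc-der-D}: the principal term $(\nabla_{e_1}^N\W_{e_2}^N)/a_{x,e_2}(u^N)(X+\tfrac{e_1}{N})$ is controlled directly by \eqref{eq:5.27} and the lower bound $a\ge c_-$, while the remaining term $\W_{e_2}^N\cdot\nabla_{e_1}^N(1/a_{x,e_2}(u^N))$, using $\fa\in C^3$ together with Lemma \ref{lem:mvt} to control the quotient derivative in terms of $|\nabla^N u^N|$, contributes a product of two first-order gradients of order $(K^{1/\si}+1)^2/t\le C(K^{2/\si}+1)/t$.

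The H\"older estimate \eqref{eq:holder-uu} is the most intricate piece and is where I expect the main difficulty. Using the same inversion, I would estimate the weighted $\si$-H\"older seminorm of each term in $\nabla_{e_1}^N(\W_{e_2}^N/a_{x,e_2}(u^N))$. The principal contribution is $[\nabla^N\tilde\W_e^N]_\si^{(3-\si)}$, which by the weight comparison $d^3\le T^{(1-\si)/2}d^{2+\si}$ is dominated by $[\tilde\W_e^N]_{1+\si}^{(1)}\le C(K^{1+2/\si}+1)$ from \eqref{eq:5.25}. The remaining contributions involve H\"older bounds on the coefficients $a_{x,e}(u^N)$ and on their discrete derivatives, which by $\fa\in C^3$ and Lemma \ref{lem:mvt} reduce to H\"older and pointwise bounds on $u^N$ and $\nabla^N u^N$ supplied by Corollary \ref{cor:K^3}. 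The main obstacle is the weight bookkeeping: one must carefully manage the weight $(d(X)\wedge d(Y))^3$ across the products and quotients, checking that each cross-term, when expressed through the available weighted seminorms, contributes at most order $K^{1+2/\si}$. Since $\si\in(0,1)$ gives $K^{1+2/\si}+1\le C(K^{3/\si}+1)$, the loose but uniform bound stated in \eqref{eq:holder-uu} follows.
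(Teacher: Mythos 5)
Your reduction to Theorem \ref{second_Schauder_thm}, the verification of (B.1)--(B.3), and the route to \eqref{eq:5.25}, \eqref{eq:5.28}, \eqref{eq:5.27}, \eqref{eq:5.29} via the discrete quotient $a^0_{x,e}=(a_{x,e})^{-1}$ and the product rule closely match the paper's argument. The gap is in the H\"older estimate \eqref{eq:holder-uu}.

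After writing $\nabla^N_{e_1}\nabla^N_{e_2}u^N = \nabla^N_{e_1}a^0_{x,e_2}\cdot\W^N_{e_2}(\cdot+\tfrac{e_1}{N}) + a^0_{x,e_2}\cdot\nabla^N_{e_1}\W^N_{e_2}$, the H\"older difference decomposes into four terms $J_1,\ldots,J_4$ as in \eqref{J decomposition}. You correctly identify the piece $a^0\cdot\big(\nabla^N_{e_1}\W^N_{e_2}(X)-\nabla^N_{e_1}\W^N_{e_2}(Y)\big)$ (the paper's $J_4$) as controlled by $[\tilde\W_e^N]_{1+\si}^{(1)}\sim K^{1+2/\si}$ after the weight comparison $d^3\le T^{(1-\si)/2}d^{2+\si}$. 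But you then assert that the remaining cross-terms also contribute at most order $K^{1+2/\si}$, and this is false. The term $J_1 = \big(\nabla^N_{e_1}a^0_{x,e_2}(t)-\nabla^N_{e_1}a^0_{y,e_2}(s)\big)\cdot\W^N_{e_2}(X+\tfrac{e_1}{N})$ is the dominant one: bounding the H\"older difference of $\nabla^N a^0$ via the Taylor-expansion computation in Lemma \ref{lem:6.5} and Remark \ref{rem:6.3} produces a quadratic-in-gradient term of size $\sim K^{2/\si}/t\cdot|X-Y|^\si$ (two factors of $|\nabla^N u^N|\sim K^{1/\si}/\sqrt{t}$ appearing in the remainder $R^N$), and multiplying by $|\W^N_{e_2}|\sim K^{1/\si}/\sqrt{t}$ gives $K^{3/\si}/t^{3/2}\cdot|X-Y|^\si$. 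This is precisely why \eqref{eq:holder-uu} carries the weight $d^3$ (i.e.\ the seminorm is $[\cdot]_\si^{(3-\si)}$, not the stronger $[\cdot]_\si^{(2)}$) and the power $K^{3/\si}$: it is not a loose relaxation of $K^{1+2/\si}$. As written, your intermediate claim would establish a strictly stronger bound with exponent $1+2/\si$, which the paper's $J_1$ analysis does not yield and which you have not justified. You would need to carry out the explicit estimate on $\nabla^N_{e_1}a^0$ --- this is also where $\fa\in C^3$ enters essentially, through the $\fa'''$ terms in Lemma \ref{lem:6.5} --- to account for the exponent $3/\si$.
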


The H\"older seminorm $[\,\cdot\,]_\si^{(3-\si)}$ in \eqref{eq:holder-uu} is
weaker than $[\,\cdot\,]_\si^{(2)}$ which is expected from \eqref{eq:5.25}.
The reason is that a larger diverging factor appears from $\nabla_{e_1}^N\nabla_{e_2}^N a_{x,e}(t)$
in the Taylor expansion; see the estimates for $J_1$ in the proof. However, in the next 
two corollaries, the diverging factor matches better.

We remark that we can also get an estimate $\langle \nabla_e^N u^N\rangle_{1+\si}^{(1)} \leq C(K^{1+\tfrac{2}{\si}} +1)$ (and therefore for $\langle \nabla_e^N \tilde u^N\rangle^{(1)}_{1+\si}$ by Lemma \ref{lem:Eq-norms}), by elements of the proof of the bound \eqref{eq:holder-uu}, but this is not detailed here.  Similar estimates would also hold in the next two corollaries.

\begin{proof}
To apply Theorem \ref{second_Schauder_thm},
we estimate the right hand sides of \eqref{eq:5.22-B} and \eqref{eq:5.22-C}
in terms of $K$ in the context of \eqref{eq:2.1-X} or \eqref{eq:vN}.  First,
by the H\"older continuity of $u^N$ given in \eqref{eq:cor2.3-1} of Corollary \ref{cor:2.3}
with $\a=\si$, we have
\begin{align}
\label{bar a estimate}
[\bar a]_\si^{*,N} =[\varphi'(u^N)]_\si^{*,N} 
\leq \|\varphi''\|_\infty [u^N]_\si^{*,N} \leq C'(K+1)
\end{align}
where $C'=C'(n, c_\pm, T, \|f\|_\infty, \|\fa''\|_\infty, \|u^N(0)\|_\infty)$.
Next for $g$ in \eqref{eq:5.5-g}, by noting \eqref{eq:disc-der-D},
we have
\begin{align}
\label{G_0^1 estimate}  
|\nabla^N g|_0^{(1),N} \leq K C(\fa,f) |\nabla^N u^N|_0^{(1),N},
\end{align}
where $C(\fa,f) := \|\fa''\|_\infty\|f\|_\infty + \|\fa'\|_\infty\|f'\|_\infty$.
However, by Corollary \ref{cor:K^3}, 
\begin{align}  \label{eq:5.23-B}
|\nabla^N u^N|_0^{(1),N} \le C''(K^{\frac{1}\si} + 1)
\end{align}
where $C''=C''(n, c_\pm, T, \sigma, \|f\|_\infty, \|\fa''\|_\infty, u_\pm)$.
Thus, one can take $\bar A=C'(K+1)$ and $G_0^1= C(\fa, f)C''(K^{1+ \frac{1}\si} + K)$ in the assumptions (B.2) and (B.3), respectively.

Next, for $|\tilde \W_e^N|_0^{(1)}$, seen in \eqref{eq:5.22-B} and \eqref{eq:5.22-C},
\begin{align}  \label{eq:5.24-B}
|\W_e^N|_0^{(1),N} = |\nabla_e^N \varphi(u^N)|_0^{(1),N} 
\leq \|\fa'\|_\infty  |\nabla_e^N u^N|_0^{(1),N}.
\end{align}

Thus, by Lemma \ref{lem:Eq-norms} and \eqref{eq:5.23-B}, we have
\begin{align*}
|\tilde \W_e^N|_0^{(1)} = |\W_e^N|_0^{(1),N} 
 \leq C'''(K^{\frac{1}{\si}}+1)
\end{align*}
where $C'''=C'''(n, c_\pm, T, \sigma, \|f\|_\infty, \|\fa'\|_\infty, \|\fa''\|_\infty, u_\pm)$.
Therefore, we can apply Theorem \ref{second_Schauder_thm} to obtain the desired
bounds: \eqref{eq:5.25} from \eqref{eq:5.22-B}, \eqref{eq:5.28} from \eqref{eq:5.24-B} 
and \eqref{eq:5.27} from \eqref{eq:5.22-C}, respectively.

We now argue the bound \eqref{eq:5.29}.  As $\varphi$ has a strictly increasing $C^2$ inverse, \eqref{eq:5.29} follows from \eqref{eq:5.28} and \eqref{eq:5.27}.
Indeed, set
\begin{align}
\label{a^0 eq}
a^0_{x,e}(t) = \big(a_{x,e}(t)\big)^{-1} \equiv \big(a_{x,e}(u^N(t))\big)^{-1}.
\end{align}
Note, by the Lipschitz property of $\varphi$, and also the proof of Lemma \ref{lem:mvt} and  \eqref{eq:5.24-B}, that
\begin{align}
\label{a^0 calc}
&\tfrac{1}{c_+}\leq a^0_{x,e}\leq \tfrac{1}{c_-} \ \ {\rm and \ \ }\\
&|\nabla^N_e a^0_{x, e'}(t)| = \Big| \frac{\nabla^N_e a_{x, e'}(t)}{a_{x+e, e'}(t)a_{x, e'}(t)} \Big|
\leq C(c_\pm)\max_z|\nabla^N_e\fa(u^N(t, \tfrac{z}{N}) |\nonumber\\
&\hskip 20mm  \leq C(c_\pm, \|\fa'\|_\infty)\max_z |\nabla^N_eu^N(t, \tfrac{z}{N}|\leq \frac{C(c_\pm, \|\fa'\|_\infty)C'''}{\sqrt{t}}(K^{\frac{1}{\sigma}} + 1). \nonumber
\end{align}
Calculate, noting $\nabla^N_{e_2} u^N(X) =  \frac{\xi^N_{e_2}(X)}{a_{x, e}(u^N(t))}=a_{x, e}^0(t)\xi^N_{e_2}(X)$
and by \eqref{eq:disc-der-D},
 \begin{align} \label{second u derivative}
 \nabla^N_{e_1}\nabla^N_{e_2} u^N(X)
 & = \nabla^N_{e_1} \big( a^0_{x,e_2}(t) \W^N_{e_2}(X) \big) \\
   &= \nabla^N_{e_1}a^0_{x,e_2}(t)\cdot \W^N_{e_2}(X+\tfrac{e_1}{N}) + a^0_{x,e_2}(t)\cdot\nabla^N_{e_1}\W^N_{e_2}(X).  \nonumber
\end{align}
Now, the desired estimate \eqref{eq:5.29} holds by inputting \eqref{a^0 calc}, \eqref{eq:5.28} and \eqref{eq:5.27}
 into \eqref{second u derivative}.

Finally, to show the H\"older estimate in \eqref{eq:holder-uu}, by \eqref{second u derivative}, write
\begin{align}
\label{J decomposition}
&\nabla^N_{e_1}\nabla^N_{e_2}u^N(X) - \nabla^N_{e_1}\nabla^N_{e_2} u^N(Y)\\
&\ \ \ = \big(\nabla^N_{e_1}a^0_{x,e_2}(t) - \nabla^N_{e_1}a^0_{y,e_2}(s)\big) \W^N_{e_2}(X+\tfrac{e_1}{N})+ \nabla^N_{e_1}a^0_{y,e_2}(s)\big(\W^N_{e_2}(X+\tfrac{e_1}{N})-\W^N_{e_2}(Y+\tfrac{e_1}{N})\big)\nonumber\\
&\ \ \ \ + \big(a^0_{x,e_2}(t) - a^0_{y,e_2}(s)\big)\nabla^N_{e_1}\W^N_{e_2}(X)
 + a^0_{y,e_2}(s)\big(\nabla^N_{e_1} \W^N_{e_2}(X) - \nabla^N_{e_1}\W^N_{e_2}(Y)\big)\nonumber\\
&\ \ \  =: J_1+J_2+J_3+J_4.\nonumber
\end{align}
Recall, in the following, $|X-Y|^\si = \max\big\{ |\frac{x}{N}-\frac{y}{N}|^\si, |t-s|^{\frac{\si}{2}}\big\}$.

The term $J_4$, as $|a^0|_0 \leq c_-^{-1}$, is bounded via \eqref{eq:5.25}:
\begin{align}
\label{J_4 eq}
|J_4| = a^0_{y, e_2}(s)\big|\nabla^N_{e_1} \W^N_{e_2}(X) - \nabla^N_{e_1}\W^N_{e_2}(Y)\big| \leq \frac{C}{(t\wedge s)^{1+\frac{\si}{2}}}\big(K^{1+\frac{2}{\si}} +1\big)|X-Y|^\si.
\end{align}

To bound $J_3$, note
by the argument for Corollary \ref{cor:2.3} via Lemma \ref{lem:mvt}, that $[a^0]^{*,N}_\sigma \le C(n, c_\pm, T, \|f\|_\infty, \|\fa''\|_\infty)(K+1)$.  Hence,
\begin{align}
\label{a^0 holder}
\big|a^0_{x,e_2}(t)-a^0_{y, e_2}(s)\big| \leq \frac{C}{(t\wedge s)^{\frac{\si}{2}}}(K+1) |X-Y|^{\si}.
\end{align}
Then, by \eqref{a^0 holder} and also the gradient bound \eqref{eq:5.27}, we obtain
\begin{align}
\label{J_3 eq}
|J_3| = \big|a^0_{x, e_2}(t) - a^0_{y, e_2}(s)\big| |\nabla^N_{e_1}\W^N_{e_2}(X)| \leq \frac{C}{(t\wedge s)^{1+\frac{\si}{2}}} (K+1)(K^{\frac{2}{\sigma}}+1)|X-Y|^\sigma.
\end{align}

Recall the definition of $a_{x,e}(t)$ (cf. \eqref{eq:a_xe}).  To deal with the terms $J_1$ and $J_2$, it will be helpful to observe $\nabla^N_e \fa(u^N(t, \tfrac{x}{N})) = a_{x, e}(t)\nabla^N_e u^N(t, \tfrac{x}{N})$ and so, by the Lipschitz property of $\fa$,
that
\begin{align}
\label{phi to u}
&|\W^N_{e}(X)-\W^N_e(Y)| =|\nabla^N_e\fa(u^N(t, \tfrac{x_1}{N})) - \nabla^N_e \fa(u^N(s, \tfrac{x_2}{N}))| \\
&\ \ \leq |a_{x_1, e}(t)-a_{x_2, e}(s)||\nabla^N_e u^N(t, \tfrac{x_1}{N})| + a_{x_2, e}(s)|\nabla^N_e u^N(t, \tfrac{x_1}{N}) - \nabla^N_e u^N(s, \tfrac{x_2}{N})|.\nonumber
\end{align}
Moreover, by Corollary \ref{cor:2.3}, we have
$|a_{x_1, e}(t)-a_{x_2, e}(s)| \leq C(K+1)(t\wedge s)^{-\tfrac{\si}{2}}|X-Y|^\si$ and, by the Schauder estimate and gradient bound in Corollary \ref{cor:K^3}, we have $|\nabla^N_eu^N(X)-\nabla^N_e(Y)|\leq C(t\wedge s)^{-\tfrac{1+\si}{2}}(K^{1+\frac{1}{\si}} +1)|X-Y|^\si$ and $|\nabla^N_e u^N(X)|\leq Ct^{-\tfrac{1}{2}}(K^{\frac{1}{\si}} +1)$. 

 Then, equation \eqref{phi to u} is bounded
\begin{align}
\label{J_2 help}
&|\W^N_{e}(X)-\W^N_e(Y)| \leq 
 \frac{C}{(t\wedge s)^{\frac{\si}{2}}} (K+1)|X-Y|^{\si} \cdot \frac{C}{\sqrt{t}}(K^{\frac{1}{\si}} +1)\\
 &\ \ \ \ \  + \frac{C}{(t\wedge s)^{\frac{1+\si}{2}}} (K^{1+\frac{1}{\si}} +1)|X-Y|^\si. \nonumber
\end{align}

We now bound $J_2$.  By the bound $|\nabla^N_{e_1} a^0_{y,e_2}(s)|\leq C\max_z|\nabla^N_{e_1} u^N(s, \tfrac{z}{N})|\leq \frac{C}{\sqrt{s}}(K^{\frac{1}{\si}} +1)$
in \eqref{a^0 calc}, and \eqref{phi to u}, \eqref{J_2 help},
we have
\begin{align}
\label{J_2 eq}
&|J_2| = |\nabla^N_{e_1}a^0_{y, e_2}(x)| \big| \W^N_{e_2}(X+\tfrac{e_1}{N}) - \W^N_{e_2}(Y+\tfrac{e_1}{N})\big|\\
&\ \ \leq  |\nabla^N_{e_1}a^0_{y, e_2}(x)| \Big[
|a_{x_1, e}(t)-a_{x_2, e}(s)||\nabla^N_e u^N(t, \tfrac{x_1}{N})| \nonumber\\
&\ \ \ \ \ \ + a_{x_2, e}(s)|\nabla^N_e u^N(t, \tfrac{x_1}{N}) - \nabla^N_e u^N(s, \tfrac{x_2}{N})\Big]\nonumber\\
&\ \ \leq \frac{C}{(t\wedge s)^{1 + \frac{\si}{2}}}(K^{1+\frac{2}{\si}} +1)|X-Y|^\si. \nonumber
\end{align}

We now address the term $J_1$.  We rewrite the quantity $\nabla^N_{e_1}a^0_{x,e_2}(t) - \nabla^N_{e_1}a^0_{y,e_2}(s)$ into two terms, one where time $t$ is fixed, and the other where space $y$ is fixed:
\begin{align*}
\nabla^N_{e_1}a^0_{x,e_2}(t) - \nabla^N_{e_1}a^0_{y,e_2}(s)
&= \big(\nabla^N_{e_1}a^0_{x, e_2}(t) - \nabla^N_{e_1}a^0_{y,e_2}(t) \big)\\
& \ \ \ \ + \big(\nabla^N_{e_1}a^0_{y,e_2}(t) - \nabla^N_{e_1}a^0_{y, e_2}(s) \big)\ =: \ J_{1,1} + J_{1,2}.
\end{align*}
To bound $J_{1,1}$, we observe that it has the same form as $\nabla^N_{e_1}a_{x, e_2}(t) - \nabla^N_{e_1}a_{y, e_2}(t)$ except now the dependent variable is $\fa(u^N)$ in the composition $\fa^{-1}(\fa(u^N))$, instead of simply $u^N$.
Hence, the same proof of the H\"older estimate \eqref{eq:b-R} via Lemma \ref{lem:6.5} and Remark \ref{rem:6.3}, applied to $a^0$, gives 
\begin{align}
\label{J11 eq}
|J_{1,1}|
&\leq C(\|\fa''\|_\infty, \|\fa'''\|_\infty)\Big[\max_{e,z}|\nabla^N_{e}\fa\big(u^N(t, \tfrac{x+z}{N})\big) - \nabla^N_{e}\fa\big(u^N(t, \tfrac{y+z}{N})\big)| \\
&\ \ \ \ + \max_{e, e', z, z'} |\nabla^N_{e} \fa\big(u^N(t, \tfrac{z}{N})\big)||\nabla^N_{e'} \fa\big(u^N(t, \tfrac{z'}{N})\big)||X-Y|^\si\Big].\nonumber
\end{align}
Since $\W^N_e = \nabla^N_e u^N$, we may now bound the first term on the right-hand side of \eqref{J11 eq} by \eqref{phi to u} and \eqref{J_2 help}.  The second term on the right-hand side of \eqref{J11 eq}, by the Lipschitz property of $\fa$, is bounded by
$C\max_{e, e', z, z'} |\nabla^N_{e} u^N(t, \tfrac{z}{N})||\nabla^N_{e'} u^N(t, \tfrac{z'}{N})||X-Y|^\si$ and further bounded using the gradient bound for $|\nabla^N_eu^N(X)|\leq Ct^{-\frac{1}{2}}(K^{\frac{1}{\si}} + 1)$ in Corollary \ref{cor:K^3}. 

Then, together, we have
\begin{align}
\label{J11 description}
|J_{1,1}| &\leq 
C\Big[\max_{e, z} |a_{x,e}(t) - a_{y, e}(t)||\nabla^N_e u^N(t, \tfrac{z}{N})|\\
&\ \ + \max_{e,z} |\nabla^N_e u^N(t, \tfrac{x+z}{N})-\nabla^N_e u^N(t, \tfrac{y+z}{N})|\nonumber\\
&\ \ + \max_{e, e', z, z'} |\nabla^N_e u^N(t, \tfrac{z}{N})||\nabla^N_{e'} u^N(t, \tfrac{z'}{N})||X-Y|^\si\Big]\nonumber\\
&\leq \frac{C}{t^{\frac{1+\si}{2}}} (K^{1+\frac{1}{\si}} +1)|X-Y|^\si + \frac{C}{t}(K^{\frac{2}{\si}} +1)|X-Y|^\si \leq \frac{C}{t}(K^{\frac{2}{\si}} +1)|X-Y|^\si.\nonumber
\end{align}
When $|J_{1,1}|$ is multiplied by $|\W^N_{e_2}(X+ \frac{e_1}{N})|$, since 
\begin{align}
\label{J_2 help 1}
|\W^N_{e_2}(X)|\leq \|\fa'\|_\infty |\nabla^N_{e_2}u^N(X)|\leq \frac{C}{\sqrt{t}} (K^{\frac{1}{\si}}+1),
\end{align}
by the gradient bound in Corollary \ref{cor:K^3}, we have
\begin{align}
\label{J11 final eq}
|J_{1,1}|\cdot |\W^N_{e_2}(X+ \tfrac{e_1}{N})| \leq \frac{C}{t^{\frac{3}{2}}}(K^{\frac{3}{\si}}+1)|X-Y|^\si.
\end{align}

The argument with respect to $J_{1,2}$ is analogous.  By the proof of \eqref{eq:b-R time} discussed in Remark \ref{rem time}, we obtain
\begin{align*}
|J_{1,2}| &\leq C(\|\fa''\|_\infty, \|\fa'''\|_\infty)\Big[ \max_{e,z} |\nabla_e^N \fa(u^N(t, \tfrac{z}{N})) -\nabla_e^N \fa(u^N(s, \tfrac{z}{N}))|\\
&\ \ + \max_z \frac{|\fa(u^N(t, \tfrac{z}{N})) - \fa(u^N(s, \tfrac{z}{N}))|}{\sqrt{|t-s|}} \nonumber\\
&\ \ + \max_{e, e', z, z'}\max_{r\geq t\wedge s} |\nabla^N_{e} \fa(u^N(r, \tfrac{z}{N}))||\nabla^N_{e'} \fa(u^N(r, \tfrac{z'}{N}))||X-Y|^\si\Big].\nonumber
\end{align*}
The first term on the right-hand side is bounded via \eqref{phi to u} and \eqref{J_2 help} again.   By the Lipschitz property of $\fa$, the other two terms can be bounded in terms of $u^N$ and the H\"older estimate $|u^N(t, \tfrac{z}{N})-u^N(s, \tfrac{z}{N})| \leq C(t\wedge s)^{-\frac{1+\si}{2}} (K^{1+\frac{1}{\si}}+1)|t-s|^{\frac{1+\si}{2}}$ and gradient bound $|\nabla^N_e u^N(X)|\leq Ct^{-\frac{1}{2}}(K^{\frac{1}{\si}}+1)$ in Corollary \ref{cor:K^3}.

Then,
\begin{align}
\label{J12 help}
|J_{1,2}|& \leq C\Big[\max_{e, z} |a_{x,e}(t) - a_{y, e}(t)||\nabla^N_e u^N(t, \tfrac{z}{N})|\\
&\ \ + \max_{e,z} |\nabla^N_e u^N(t, \tfrac{x+z}{N})-\nabla^N_e u^N(t, \tfrac{y+z}{N})|\nonumber\\
&\ \ +\max_z \frac{|u^N(t, \tfrac{z}{N}) - u^N(s, \tfrac{z}{N})|}{\sqrt{|t-s|}}\nonumber\\
&\ \   + \max_{e, e', z, z'}\max_{r\geq t\wedge s} |\nabla^N_{e} u^N(r, \tfrac{z}{N})||\nabla^N_{e'} u^N(r, \tfrac{z'}{N})||X-Y|^\si\Big]\nonumber\\
&\leq \frac{C}{(t\wedge s)^{\frac{1+\si}{2}}} (K^{1+\frac{1}{\si}} +1)|t-s|^{\frac{\si}{2}} + \frac{C}{t\wedge s}
(K^{\frac{2}{\si}} + 1)|X-Y|^\si. \nonumber
 \end{align}
 Multiplying the estimate of $J_{1,2}$ in \eqref{J12 help} by $|\W^N_{e_2}(X+ \frac{e_1}{N})|$, noting the bound \eqref{J_2 help 1},
we obtain
\begin{align}
\label{J12 final eq}
|J_{1,2}| \cdot  |\W^N_{e_2}(X+ \tfrac{e_1}{N})| 
\leq \frac{C}{(t\wedge s)^{\frac{3}{2}}} (K^{\frac{3}{\si}} +1)|X-Y|^\si.
\end{align}

Hence, by combining the bounds \eqref{J11 final eq} and \eqref{J12 final eq}, we have
$$|J_1| \leq \frac{C}{(t\wedge s)^{\frac{3}{2}}} (K^{\frac{3}{\si}}+1)|X-Y|^\si =  \frac{C}{(t\wedge s)^{\frac{3-\sigma + \sigma}{2}}} (K^{\frac{3}{\si}}+1)|X-Y|^\si.$$

Collecting the bounds for the terms $J_1, J_2, J_3, J_4$ gives \eqref{eq:holder-uu} with respect to a constant $C=C(n, c_\pm, T, \sigma, \|f\|_\infty, \|\fa'\|_\infty, \|\fa''\|_\infty, \|\fa'''\|_\infty, u_\pm)$.  In fact, we remark that the estimate for $J_1$ is the dominant one.
\end{proof}

A better estimate can be derived when the initial value $u^N(0,\cdot)$ has some smoothness, namely when condition \eqref{eq:Th4.2-0} holds:  
$$
\sup_N \| u^N(0)\|_{C_N^2}\leq C_0.
$$
A sufficient condition for \eqref{eq:Th4.2-0}
is when the initial data is between $u_-$ and $u_+$ such that 
$u^N(0,\cdot) = u_0(\cdot)$ for $u_0\in C^2(\T^n)$.

Consider the weighted norm, for fixed $e$,
$$|\tilde \W_e^N|^*_{1+\si} := [\tilde \W_e^N]_{1+\si}^* + \langle \W^N_e\rangle_{1+\si}^* + |\nabla^N\tilde \W^N_e|_0^{(1)},$$
analogous to that given in \eqref{eq:4.3}.

\begin{cor} \label{cor:5.7}
Consider the solution of the discrete PDE \eqref{eq:2.1-X} satisfying \eqref{eq:1.u_pm}.
When $u^N(0,\cdot)$ satisfies the condition \eqref{eq:Th4.2-0}, we have
\begin{align*}
&|\tilde \W_e^N|^*_{1+\si} \leq C K_0^{1+\frac{2}{\si}},  \\
&|\W^N_e(X)| \leq C K_0^{\frac1\si},
\end{align*}
for every $X=(t,\tfrac{x}N)\in \Om_N$ and $e$, and
\begin{align*}
&|\nabla_{e_1}^N \W^N_{e_2}(X)| \leq \frac{C}{\sqrt{t}} K_0^{\frac{2}{\si}},\\ 
&|\nabla^N_{e_1}\nabla^N_{e_2} u^N(X)| \leq  \frac{C}{\sqrt{t}} K_0^{\frac{2}{\si}},
\end{align*}
for every  $e_1, e_2$, where $K_0=K+C_0+1$ and
$C=$ $C(n, c_\pm, T, \si, \|f\|_\infty, \|\fa^{(i)}\|_\infty, u_\pm: i=1,2)$.

Moreover, we have the H\"older estimate:
\begin{align}
\label{cor 5.9 holder}
[\nabla^N_{e_1}\nabla^N_{e_1} u^N]^{(1)}_{\si} \leq C K_0^{\tfrac{3}{\si}},
\end{align}
for every $e_1, e_2$ where
$C=C(n, c_\pm, T, \si, \|f\|_\infty, \|\fa^{(i)}\|_\infty, u_\pm: 1\le i \le 3)$.
\end{cor}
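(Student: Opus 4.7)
The plan is to parallel the proof of Corollary \ref{second_Schauder_cor}, but with two key substitutions enabled by the $C^2$-initial-data hypothesis \eqref{eq:Th4.2-0}. First, we invoke Corollary \ref{alternative_rmk} (with its $(\bar A+1)^{\frac{1}{\alpha}}$ weighting and $\frac{1}{\sqrt t}$ singularity in $|\nabla^N\W^N_\cdot|_0^{(1)}$) in place of Theorem \ref{second_Schauder_thm}. Second, and more crucially, we replace all inputs from Corollaries \ref{cor:2.3} and \ref{cor:K^3} (which carry a $t^{-\sigma/2}$ or $t^{-1/2}$ singularity) by the uniform-in-$t$ improved estimates of Corollaries \ref{cor:2.6} and \ref{extended_cor}.

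Concretely, in Step~1 I would record the four inputs:
$\bar A := [\bar a]_\sigma^{*,N} \leq \|\fa''\|_\infty [u^N]_\sigma^{*,N} \leq CK_0$ from \eqref{eq:cor2.6-2};
$\|\nabla^N u^N\|_\infty \leq CK_0^{1/\sigma}$ from \eqref{eq:cor4.4-2};
$\|\W^N_\cdot\|_\infty \leq \|\fa'\|_\infty \|\nabla^N u^N\|_\infty \leq CK_0^{1/\sigma}$; and, using \eqref{eq:5.5-g} with $K\le K_0$,
$\|\nabla^N g\|_\infty \leq K\,C(\fa,f)\|\nabla^N u^N\|_\infty \leq CK_0^{1+1/\sigma}$. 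Plugging these into the two bounds of Corollary \ref{alternative_rmk} yields directly
$|\tilde\W^N_\cdot|_{1+\sigma}^* \leq C[K_0^{1+1/\sigma}K_0^{1/\sigma}+K_0^{1+1/\sigma}] \leq CK_0^{1+2/\sigma}$
and $|\nabla^N\W^N_\cdot|_0^{(1)} \leq CK_0^{2/\sigma}$, producing the first three pointwise bounds of the corollary. The estimate on $|\nabla_{e_1}^N\nabla_{e_2}^N u^N(X)|$ then follows by the identity \eqref{second u derivative}, since $|a^0|\leq c_-^{-1}$ and $|\nabla^N a^0|\leq CK_0^{1/\sigma}$ by \eqref{a^0 calc}, combined with the already-established bounds on $\W^N$ and $\nabla^N\W^N$.

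For the H\"older estimate \eqref{cor 5.9 holder}, the strategy is to reuse the four-term decomposition $J_1+J_2+J_3+J_4$ from \eqref{J decomposition}, but now with \emph{unweighted} inputs. The weight on the left-hand side of $[\,\cdot\,]^{(1)}_\sigma$ is $(d(X)\wedge d(Y))^{1+\sigma}$, which will absorb the single remaining singularity $(t\wedge s)^{-(1+\sigma)/2}$ coming from $[\tilde\W^N_\cdot]^*_{1+\sigma}\leq CK_0^{1+2/\sigma}$ (i.e.\ from $J_4$). For $J_3$ I would use the uniform bound $|a^0_{x,e_2}(t)-a^0_{y,e_2}(s)|\leq CK_0|X-Y|^\sigma$ (no singularity, from \eqref{eq:cor2.6-2}) together with $|\nabla_{e_1}^N\W_{e_2}^N(X)|\leq Ct^{-1/2}K_0^{2/\sigma}$; for $J_2$, the uniform bounds $|\nabla^N a^0|\leq CK_0^{1/\sigma}$ and $[\nabla^N\tilde u^N]_\sigma\leq CK_0^{1+1/\sigma}$ from \eqref{eq:cor4.4-3}, processed through the analog of \eqref{phi to u}, give $|J_2|\leq CK_0^{1+2/\sigma}|X-Y|^\sigma$. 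The dominant term is $J_1$, which via the same mechanism as in the proof of \eqref{eq:holder-uu} (Lemma~6.5 / Remark~6.3) reduces to a sum of $[a]_\sigma^{*,N}\cdot\|\nabla^N u^N\|_\infty$, $[\nabla^N\tilde u^N]_\sigma$, and $\|\nabla^N u^N\|_\infty^2$, each bounded uniformly by $CK_0^{2/\sigma}$ after using $1\le 1/\sigma$; multiplication by $|\W^N_{e_2}(X+\tfrac{e_1}{N})|\leq CK_0^{1/\sigma}$ yields $CK_0^{3/\sigma}|X-Y|^\sigma$.

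The main obstacle I anticipate is in the H\"older estimate at the $J_1$ step: one must carefully verify that the analogue of the computation \eqref{J11 description}--\eqref{J12 help} goes through \emph{without} any residual factor of $(t\wedge s)^{-\beta}$ beyond what the $(d(X)\wedge d(Y))^{1+\sigma}$ weight can absorb. This requires using the unweighted H\"older control \eqref{eq:cor4.4-3} on $\nabla^N\tilde u^N$ (rather than the weighted $[\tilde u^N]_{1+\sigma}^*$ used in Corollary~\ref{second_Schauder_cor}), and showing that the elementary inequality $1+\tfrac{2}{\sigma}\le\tfrac{3}{\sigma}$ (valid since $\sigma\in(0,1)$) is indeed enough to absorb the contributions from $J_2$, $J_3$, $J_4$ into the dominant $CK_0^{3/\sigma}$ bound from $J_1$.
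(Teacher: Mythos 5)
Your proposal follows essentially the same route as the paper's proof: replace Theorem \ref{second_Schauder_thm} with Corollary \ref{alternative_rmk}, substitute the uniform-in-$t$ estimates of Corollaries \ref{cor:2.6} and \ref{extended_cor} for those of Corollaries \ref{cor:2.3} and \ref{cor:K^3}, plug into \eqref{rmk 5.1 bounds} to get the first three bounds, use \eqref{second u derivative} with \eqref{a^0 calc} for the fourth, and rerun the $J_1+J_2+J_3+J_4$ decomposition with unweighted inputs for the H\"older estimate. Your anticipated obstacle at the $J_1$ step is resolved exactly as you suspect — the paper reuses \eqref{J11 description}+\eqref{J12 help} multiplied by $\max_{e'}|\nabla^N_{e'}u^N|_0$ and invokes the uniform bounds of Corollary \ref{extended_cor}, which leaves $J_1, J_2$ with no residual singularity, so the $(d(X)\wedge d(Y))^{1+\sigma}$ weight only needs to absorb the $(t\wedge s)^{-(1+\sigma)/2}$ coming from the first term in \eqref{J_4, J_3} — and your absorption $1+\tfrac2\sigma \le \tfrac3\sigma$ at the end is exactly what the paper uses.
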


\begin{proof}
When the condition \eqref{eq:Th4.2-0} is satisfied, 
by Corollary \ref{extended_cor}, \eqref{eq:5.23-B} is improved as
\begin{align}  \label{eq:5.38-U}  
\|\nabla^N u^N\|_\infty = |\nabla^N \tilde u^N|_0^{*} 
 \le C K_0^{\frac1\si}.
\end{align}
Then, the desired bound for $\W^N_\cdot$ follows,
\begin{align}
\label{eq:5.38-A}
\|\W^N_\cdot\|_\infty \leq \|\fa'\|_\infty \|\nabla^N u^N\|_\infty\leq 
C K_0^{\frac1\si}
\end{align} 
and also \eqref{G_0^1 estimate} is improved to 
\begin{align}
\label{nabla_g infinity bound}
\|\nabla^N g\|_\infty 
\leq C(\fa, f)K\|\nabla^Nu^N\|_\infty \le 
CK K_0^{\frac1\si}.
\end{align}
In these bounds, $C=C(n, c_\pm, T, \si, \|f\|_\infty, \|\fa'\|_\infty, \|\fa''\|_\infty, u_\pm)$.

We have $\bar A= C(n, c_\pm, T, \|f\|_\infty, \|\fa''\|_\infty, u_\pm) K_0$ 
(cf.\ \eqref{bar a estimate}) in the estimate of $[\bar a]_\sigma^{*,N}$ by Corollary \ref{cor:2.6}.

By Corollary \ref{alternative_rmk}, and the bound \eqref{eq:5.38-A} shown for $\W^N_e$ above, 
 we obtain the desired estimates for $|\tilde \W_e^N|_{1+\si}^*$ and $|\nabla^N_{e_1} \W^N_{e_2}|_0^*$.  Indeed,  by \eqref{rmk 5.1 bounds}, we have
\begin{align*}
& |\tilde \W_e^N|^*_{1+\si}  \leq C K_0^{1+\frac{2}{\si}}
\intertext{and}
& |\nabla^N_{e_1} \W^N_{e_2}|_0^{(1)} \leq C K_0^{\frac{2}{\si}},
\end{align*}
with $C=C(n, c_\pm, T, \si, \|f\|_\infty, \|\fa'\|_\infty, \|\fa''\|_\infty, u_\pm)$.
 
   The desired bound on $\nabla^N_{e_1}\nabla^N_{e_2}u^N$ now follows as in the proof of Corollary \ref{second_Schauder_cor}. Indeed, in the present context, by \eqref{second u derivative}, we need to bound
   $$ \nabla^N_{e_1}a^0_{x,e_2}(t)\cdot \W^N_{e_2}(X+\tfrac{e_1}{N}) + a^0_{x,e_2}(t)\cdot\nabla^N_{e_1}\W^N_{e_2}(X).$$
   The first term, noting \eqref{a^0 calc}, is bounded $|\nabla^N_{e_1} a^0_{x,e_2}\cdot\W^N_{e_2}|\leq C(c_\pm)\max_e\|\W^N_e\|^2_\infty$.  The second term, also noting \eqref{a^0 calc}, is bounded $|a^0_{x,e_2} \nabla^N_{e_1}\W^N_{e_2}|\leq C(c_-)|\nabla_{e_1}\W^N_{e_2}|$.  The estimate follows now by inserting the already proven bounds for $\W^N_e$ and $\nabla^N_{e_1}\W^N_{e_2}$, with respect to a constant $C=C(n, c_\pm, T, \si, \|f\|_\infty, \|\fa'\|_\infty, \|\fa''\|_\infty, u_\pm)$.
   
   Finally, the H\"older bound \eqref{cor 5.9 holder} follows by scheme given in Corollary \ref{second_Schauder_cor}, using the proven bounds of $|\tilde \W_e^N|_{1+\si}^*$ and $|\nabla^N_{e_1}\W^N_{e_2}|_0^*$ and now Corollaries \ref{extended_cor} and \ref{cor:2.6} instead of Corollaries \ref{cor:K^3} and \ref{cor:2.3}. Indeed, consider the decomposition \eqref{J decomposition} of the difference
   $\nabla^N_{e_1}\nabla^N_{e_2}u^N(X) - \nabla^N_{e_1}\nabla^N_{e_2}(Y)= J_1+J_2+J_3+J_4$.  
   By \eqref{J_4 eq}, \eqref{J_3 eq}, and \eqref{J_2 eq}, we may bound $J_4, J_3$ and $J_2$ respectively.  
   By adding \eqref{J11 description} to \eqref{J12 help}, multiplied by $\max_{e'}|\W^N_{e'}|\leq C(\|\fa'\|_\infty)\max_{e'}|\nabla_{e'}^N u^N|_0$, we may bound $J_1$.

 In particular, noting \eqref{a^0 calc} and \eqref{a^0 holder}, with respect to \eqref{J_4 eq} and \eqref{J_3 eq}, we have
 \begin{align}
 \label{J_4, J_3}
 |J_4| + |J_3| & \leq C\Big[ |\nabla^N_{e_1}\W^N_{e_2}(X) - \nabla^N_{e_1}\W^N_{e_2}(Y)|\\
&\ \  + \max_z |u^N(t, \tfrac{z}{N}) - u^N(s, \tfrac{z}{N})|\cdot |\nabla^N_{e_1}\W^N_{e_2}(X)|\Big].\nonumber
\end{align}
 By Corollary \ref{extended_cor}, we may bound $|u^N(t, \tfrac{z}{N}) - u^N(s, \tfrac{z}{N})| \leq C K_0|t-s|^{\tfrac{\si}{2}}$.  Then, noting the already proven bounds for $[\nabla^N_{e_1}\W^N_{e_2}]_\si^{(1)}\leq |\tilde \W_\cdot^N|_{1+\si}^*$ and for $|\nabla^N_{e_1}\W^N_{e_2}(X)|$, we have the further bound
 $$
 |J_4|+|J_3| \leq \frac{C}{(t\wedge s)^{\frac{1+\si}{2}}} K_0^{\frac{3}{\si}}|X-Y|^\si.
 $$

 Also, note the estimate $|\nabla^N_{e} a^0_{x, e'}(t)|\leq C\max_z|\nabla^N_e u^N(t, \tfrac{z}{N})|$ in \eqref{a^0 calc}, $a_{x,e}\leq c_+$, and $|a_{x, e}(t)-a_{y, e}(s)|\leq C K_0|X-Y|^\si$ in Corollary \ref{cor:2.6}.  Then, by the uniform bounds for $u^N$ in Corollary \ref{extended_cor}, we have, with respect to \eqref{J_2 eq}, that
 \begin{align}
 \label{J2:cor5.9}
 |J_2| &\leq C|\nabla^N_{e_1}u^N|_0\Big[|a_{x_1, e}(t)-a_{x_2, e}(s)||\nabla^N_e u^N(t, \tfrac{x_1}{N})| \\
 &\hskip 25mm
  + a_{x_2, e}(s)|\nabla^N_e u^N(t, \tfrac{x_1}{N}) - \nabla^N_e u^N(s, \tfrac{x_2}{N})|\Big]\nonumber\\
 &\leq C K_0^{\frac{3}{\si}}|X-Y|^\si.\nonumber
 \end{align}
 
 Finally, from adding \eqref{J11 description} to \eqref{J12 help} and then multiplying by $\max_{e'}|\nabla_{e'}^Nu^N|_0$,
 we have
 \begin{align*}
 |J_1|&\leq C\max_{e'}|\nabla^N_{e'} u^N|\big(J_{1,1} + J_{1,2}\big)\\
 &\leq C\max_{e'}|\nabla^N_{e'} u^N|
\Big[\max_{e, z} |a_{x,e}(t) - a_{y, e}(t)||\nabla^N_e u^N(t, \tfrac{z}{N})|\\
&\ \ + \max_{e,z} |\nabla^N_e u^N(t, \tfrac{x+z}{N})-\nabla^N_e u^N(t, \tfrac{y+z}{N})|\nonumber\\
&\ \ +\max_z \frac{|u^N(t, \tfrac{z}{N}) - u^N(s, \tfrac{z}{N})|}{\sqrt{|t-s|}}\nonumber\\
&\ \   + \max_{e, e', z, z'}\max_{r\geq t\wedge s} |\nabla^N_{e} u^N(r, \tfrac{z}{N})||\nabla^N_{e'} u^N(r, \tfrac{z'}{N})||X-Y|^\si\Big].\nonumber
\end{align*}
By the bounds for $u^N$ in Corollary \ref{extended_cor}, we obtain
\begin{align}
\label{J1:cor5.9}
|J_1| \leq C K_0^{\frac{3}{\si}} |X-Y|^\si.
\end{align}

Note that the constant $C=C(n, c_\pm, T, \si, \|f\|_\infty, \|\fa^{(i)}\|_\infty, u_\pm: 1\le i \le 3)$, 
with respect to $J_1$, whereas with respect to $J_2, J_3, J_4$ the constant does not depend on $\|\fa'''\|_\infty$.
Therefore, combining the bounds for $J_1, J_2, J_3, J_4$, we arrive at the desired H\"older estimate for $\nabla^N_{e_1}\nabla^N_{e_2} u^N$, with constant $C=C(n, c_\pm, T, \si, \|f\|_\infty, \|\fa^{(i)}\|_\infty, u_\pm: 1\le i \le 3)$.
\end{proof}

Still more can be said with a more regular initial condition at the level of 
Theorem \ref{second_Schauder_time_thm}.  Consider the initial value 
\begin{align}
\label{initial_data}
\W_e^N(0,\tfrac{x}{N}) = \nabla^N_e \varphi(u^N(0,\tfrac{x}{N})),
\end{align}
where 
\begin{align}  \label{eq:5.40-A}
\sup_N \|u^N(0)\|_{C_N^4}\le \bar C_0<\infty.
\end{align}
Recall \eqref{eq:norm-uN} for the discrete $C^4$-norm.
A sufficient condition, when $\varphi\in C^4$, is that $u^N(0,\tfrac{x}{N})=u_0(\tfrac{x}{N})$ for $x\in \T^n_N$ and $u_0\in C^4(\T^n)$.
 
We now observe that $\sup_N \|\varphi(u^N(0, \cdot))\|_{C^4_N}\leq \mathcal{C}_0$ and so $\sup_N \|\W_e^N(0)\|_{C_N^3} \le \mathcal{C}_0$ where 
\begin{align}
\label{C_1 eq}
\mathcal{C}_0 = C(\|\fa^{(i)}\|_\infty: 1\leq i\leq 4)\big[\bar C_0 + \bar C_0^2 + \bar C_0^3 +\bar C_0^4\big].
\end{align}
Indeed, recalling \eqref{eq:a_xe} and denoting $a_{x,e_3}(0):= a_{x,e_3}(u^N(0))$, by \eqref{eq:disc-der-D},
the second derivative of $\xi_{e_3}^N(0)$ is written as
\begin{align*}
& \nabla^N_{e_1} \nabla^N_{e_2}  \xi_{e_3}^N(0,\tfrac{x}{N}) \\
& \quad = \nabla^N_{e_1}\nabla^N_{e_2}\nabla^N_{e_3} \varphi(u^N(0,\tfrac{x}{N})) \\
& \quad = \nabla^N_{e_1}\nabla^N_{e_2} \big[ a_{x,e_3}(0) \nabla^N_{e_3} u^N(0,\tfrac{x}{N}) \big] \\
& \quad = [\nabla^N_{e_1}\nabla^N_{e_2}a_{x,e_3}(0)]\nabla^N_{e_3}u^N(0,\tfrac{x+e_2+e_1}{N}) 
+ [\nabla^N_{e_2} a_{x,e_3}(0)]\nabla^N_{e_1}\nabla^N_{e_3}u^N(0,\tfrac{x+e_2}{N}) \\
&\qquad + [\nabla^N_{e_1} a_{x, e_3}(0)]\nabla^N_{e_2}\nabla^N_{e_3}u^N(0,\tfrac{x+e_1}{N})
+ a_{x, e_3}(0)\nabla^N_{e_1}\nabla^N_{e_2}\nabla^N_{e_3}u^N(0,\tfrac{x}{N}).
\end{align*}
An analogous but longer expression can be written for $\nabla^N_{e_1}\nabla^N_{e_2}\nabla^N_{e_3}\W^N_{e_4}(0, \tfrac{x}{N})$.

Uniform bounds over $e_1, e_2, e_3$ of $\nabla^N_{e_1} a_{x,e_4}(0)$, $\nabla^N_{e_1}\nabla^N_{e_2}a_{x,e_4}(0)$ and $\nabla^N_{e_1}\nabla^N_{e_2}\nabla^N_{e_3} a_{x, e_4}(0)$, in terms of $\varphi$ and $\bar C_0$, follow now from `mean-value' formulas \eqref{eq:3.nab-fa'} (or Lemma \ref{lem:mvt}) and \eqref{eq:3.nab-nab-fa'}, as well as Remark \ref{third derivative} in the next section.

We remark that we will use that $\fa\in C^5$ in the following corollary.  
This is the only place where the property that
$\fa$ is {\it five} times continuously differentiable is used.

\begin{cor}
\label{second_Schauder_time_cor}
Consider the solution of the discrete PDE \eqref{eq:2.1-X} satisfying \eqref{eq:1.u_pm}.
When $u^N(0,\cdot)$ satisfies the condition \eqref{eq:5.40-A}, we have, for every $e_1, e_2$, that
\begin{align}  \label{eq:5.32}
& |\tilde \W_{e_1}^N|_{1+\si}\leq C\big[\bar K_0^{1+\frac{2}{\si}} (1+\bar C_0^{24}) + 
\bar K_0^{\frac{2}{\si}} \bar C_0^{48}\big],\\
\label{eq:5.33}
&|\nabla^{N}_{e_1} \tilde \W^N_{e_2}|_0 \leq C\big[\bar K_0^{\frac{2}{\si}} (1+\bar C_0^{24}) + 
\bar K_0^{\frac{2}{\si}-1} \bar C_0^{48}\big],\\
\label{eq:5:36}
&|\nabla^N_{e_1}\nabla^N_{e_2} u^N(X)| \leq C\big[\bar K_0^{\frac{2}{\si}} (1+\bar C_0^{24}) + 
\bar K_0^{\frac{2}{\si}-1} \bar C_0^{48}\big],\\
\label{cor 5.10 holder}
&[\nabla^N_{e_1}\nabla^N_{e_2} u^N]_\si \leq C\big[\bar K_0^{1+\frac{2}{\si}} (1+\bar C_0^{24}) + 
\bar K_0^{\frac{2}{\si}} \bar C_0^{48} + \bar K_0^{\frac{3}{\si}}\big].
\end{align}
Here, $\bar K_0 = K + \bar C_0 +1$ and 
$C=C(n, c_\pm, T, \si, \|f\|_\infty, \|\fa^{(i)}\|_\infty, u_\pm: 1\leq i \leq 5)$.
\end{cor}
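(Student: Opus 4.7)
\medskip

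\noindent\emph{Proof proposal.} The plan is to apply Theorem \ref{second_Schauder_time_thm} in the specialized setting where $\bar a = \fa'(u^N)$, $g = K\bar a f(u^N)$, $\psi = \fa(u^N)$, and $\W^N_e = \nabla^N_e \psi$, so that \eqref{eq:we} is satisfied. This parallels the proof of Corollary \ref{cor:5.7}, which used Corollary \ref{alternative_rmk} under the $C^2$ assumption, except that we now invoke the improved (singularity-free) Schauder estimate valid under the stronger $C^4$ regularity of the initial data.

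First I would verify the hypotheses of Theorem \ref{second_Schauder_time_thm}, tracking the constants. Condition (B.1) holds with $c_\pm$ as in \eqref{eq:c_pm-N}. Condition \eqref{a-holder} gives $\bar B \le C(\|\fa''\|_\infty)(K+\bar C_0+1) = C\bar K_0$ from Corollary \ref{cor:2.6} applied to $\bar a = \fa'(u^N)$ (note that \eqref{eq:5.40-A} implies \eqref{eq:Th4.2-0} via \eqref{eq:norm-uN}). For \eqref{a-grad_bounds} and \eqref{psi_smooth_initial}, the Fa\`a di Bruno chain rule (discrete version, using iterated application of \eqref{eq:disc-der-D}) applied to $\fa'(u^N(0))$ and $\fa(u^N(0))$ together with \eqref{eq:5.40-A} yields
\[
\mathcal{C}_0, \mathcal{C}_1 \le C\bigl(\|\fa^{(i)}\|_\infty: 1\le i \le 5\bigr) \bigl(\bar C_0 + \bar C_0^2 + \bar C_0^3 + \bar C_0^4\bigr),
\]
in exact analogy to the expression \eqref{C_1 eq} preceding the statement (here the fifth derivative of $\fa$ enters because $\|\bar a(0)\|_{C^4_N}$ contains terms of the form $\fa^{(5)}(u^N(0))(\nabla^N u^N(0))^4$). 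For \eqref{g-bound} and \eqref{W-bound}, Corollary \ref{extended_cor} applied under \eqref{eq:5.40-A} (taking $C_0 \le \bar C_0$) gives
\[
W_\infty \le \|\fa'\|_\infty \, |\nabla^N u^N|_0 \le C\bar K_0^{1/\si}, \quad
G^1_\infty \le C(\fa,f)K|\nabla^N u^N|_0 \le C K \bar K_0^{1/\si},
\]
by the computations in \eqref{eq:5.38-U}--\eqref{nabla_g infinity bound}.

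Feeding these into Theorem \ref{second_Schauder_time_thm}, the arithmetic $\mathcal{C}_1^5 \lesssim \bar C_0^{20}$ and $\mathcal{C}_1^{11} \lesssim \bar C_0^{44}$ together with $\mathcal{C}_0 \lesssim \bar C_0^4$ produces
\[
\mathcal{C}_6 \le C(\bar B+1)^{1/\si}\mathcal{C}_0(1+\mathcal{C}_1^5) \le C\bar K_0^{1/\si}\bigl(1+\bar C_0^{24}\bigr),
\]
\[
\mathcal{C}_7 \le C(\bar B+1)^{2/\si}\mathcal{C}_0(1+\mathcal{C}_1^{11}) \le C\bar K_0^{2/\si}\bigl(1+\bar C_0^{48}\bigr).
\]
Substituting into \eqref{1+alpha bound} and \eqref{nabla bound} with $W_\infty + \mathcal{C}_6 \le C\bar K_0^{1/\si}(1+\bar C_0^{24})$ and $G^1_\infty + \mathcal{C}_7 \le C\bar K_0^{2/\si}(1+\bar C_0^{48})$ (absorbing $K\bar K_0^{1/\si} \le \bar K_0^{1+1/\si} \le \bar K_0^{2/\si}$ crudely) immediately gives \eqref{eq:5.32} and \eqref{eq:5.33}.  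The pointwise bound \eqref{eq:5:36} on $\nabla^N_{e_1}\nabla^N_{e_2} u^N$ then follows from \eqref{eq:5.33} via the identity \eqref{second u derivative} and the bounds \eqref{a^0 calc} on $a^0_{x,e}$, exactly as in the proof of Corollary \ref{cor:5.7}.

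The main remaining task is the H\"older estimate \eqref{cor 5.10 holder}, which will be the most delicate bookkeeping step. The strategy is to reuse the decomposition $\nabla^N_{e_1}\nabla^N_{e_2}u^N(X)-\nabla^N_{e_1}\nabla^N_{e_2}u^N(Y) = J_1+J_2+J_3+J_4$ from \eqref{J decomposition} in the proof of Corollary \ref{second_Schauder_cor}, but now substituting the \emph{singularity-free} replacements: Corollary \ref{extended_cor} in place of Corollary \ref{cor:K^3}, Corollary \ref{cor:2.6} in place of Corollary \ref{cor:2.3}, and the unweighted bounds \eqref{eq:5.32}--\eqref{eq:5:36} of the present corollary in place of \eqref{eq:5.25}--\eqref{eq:5.29}. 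For $J_4$ and $J_3$ one uses \eqref{eq:5.32} and \eqref{eq:5:36} together with the H\"older bound on $a^0$; for $J_2$ one combines \eqref{phi to u}, \eqref{J_2 help} (with the non-singular uniform versions), and \eqref{eq:cor4.4-2}; for $J_1$, which is dominant, one adapts \eqref{J11 description} and \eqref{J12 help} using the uniform bounds of Corollary \ref{extended_cor}, where the dependence on $\|\fa'''\|_\infty$ enters through Remark \ref{rem:6.3}. The hard part is bookkeeping: each term contributes either $\bar K_0^{1+2/\si}(1+\bar C_0^{24}) + \bar K_0^{2/\si}\bar C_0^{48}$ (from factors arising via \eqref{eq:5.32}/\eqref{eq:5:36}) or $\bar K_0^{3/\si}$ (from products of three gradient bounds), and one has to verify that no term exceeds this budget, yielding the stated bound.
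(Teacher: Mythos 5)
Your proposal is correct and follows essentially the same route as the paper: verify the hypotheses of Theorem \ref{second_Schauder_time_thm} in the quasilinear context (with $\bar a = \fa'(u^N)$, $g = K\bar a f(u^N)$, $\W^N_e = \nabla^N_e\fa(u^N)$), track that $\bar B \lesssim \bar K_0$, $\mathcal{C}_0 \wedge \mathcal{C}_1 \lesssim \bar C_0 + \bar C_0^4$, $W_\infty \lesssim \bar K_0^{1/\si}$, $G^1_\infty \lesssim K\bar K_0^{1/\si}$, substitute into \eqref{1+alpha bound}--\eqref{holder-bound}, recover the second discrete derivative of $u^N$ via the $a^0$ decomposition \eqref{second u derivative}, and finish the H\"older bound \eqref{cor 5.10 holder} by reusing the $J_1+J_2+J_3+J_4$ split from Corollary \ref{second_Schauder_cor} with the now--singularity-free ingredients. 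The only small imprecision is citing \eqref{eq:5:36} (the bound on $\nabla^N\nabla^N u^N$) rather than \eqref{eq:5.33} (the bound on $\nabla^N\W^N$) for the $J_3, J_4$ terms, but the needed quantities are all available and this does not affect the argument.
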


\begin{proof}
We apply Theorem \ref{second_Schauder_time_thm} with $\bar a = \varphi'(u^N)$ and $g = K \bar a  (t, \tfrac{x}N) f(u^N (t, \tfrac{x}N))$ as in \eqref{eq:5.3-a} and \eqref{eq:5.5-g}.
First, we see the condition \eqref{a-holder} for this $\bar a$ as
$$
[\bar a]_\sigma^{(-\sigma),N} \leq \bar B = C\big(K+\bar C_0 +1\big)=: C\bar K_0
$$ 
by the proof of Corollary \ref{cor:2.6}, where $C=C(n, c_\pm, T, \|f\|_\infty, \|\fa''\|_\infty, \|u^N(0)\|_\infty)$.  

Next, let us check \eqref{a-grad_bounds}.  We have
$|\nabla^{N}_e \bar a(0)|\leq \|\fa''\|_\infty |\nabla^N_e u^N(0)|$.  Also, we have
$|\nabla^{N}_{e'}\nabla^{N}_e \bar a(0)| =|\nabla^{N}_{e'} \big(a^1_{x, e}(0)\nabla^N_eu^N(0)\big)|$, $|\nabla^N_{e''}\nabla^N_{e'}\nabla^N_e \bar a(0)| = |\nabla^N_{e''}\nabla^N_{e'} \big(a^1_{x,e}(0)\nabla^N_e u^N(0)\big)|$ and in addition $|\nabla^N_{e'''}\nabla^N_{e''}\nabla^N_{e'}\nabla^N_e \bar a(0)| = |\nabla^N_{e'''}\nabla^N_{e''}\nabla^N_{e'} \big(a^1_{x,e}(0)\nabla^N_e u^N(0)\big)|$,
where 
\begin{align}
\label{a^1 eq}
a^1_{x,e} = \left\{\begin{array}{rl}
\frac{\fa'(u^N(X+\frac{e}{N}) - \fa'(u^N(X))}{u^N(X+\frac{e}{N}) - u^N(X)} & {\rm when \ } u^N(X+\frac{e}{N}) \neq u^N(X)\\
\fa''(u^N(X))& {\rm when \ }u^N(X+\frac{e}{N}) =u^N(X).
\end{array}\right.
\end{align}
Note $\|a^1_{x,e}\|_\infty \leq \|\fa''\|_\infty$ and $\|\nabla^{N}_{e'}a^1_{x,e}(0)\|_\infty \leq C(\|\fa'''\|_\infty)\|\nabla^N_{e'} u^N(0)\|_\infty$ by the proof of Lemma \ref{lem:mvt}.  Moreover, $|\nabla^N_{e''}\nabla^N_{e'}a^1_{x, e}(0)|  \leq C(\|\fa'''\|_\infty, \|\fa^{(4)}\|_\infty)\sum_{j=1}^2\|u^N(0)\|_{C^2_N}^j$ by the proof of Lemma \ref{lem:6.5}.  Also, we have $|\nabla^N_{e'''}\nabla^N_{e''}\nabla^N_{e'}a^1_{x, e}(0)|  \leq C(\|\fa^{(i)}\|_\infty: 2\leq i \leq 5)\sum_{j=1}^3\|u^N(0)\|_{C^3_N}^j$, applying Remark \ref{third derivative} to $a^1$.
Hence, by \eqref{eq:5.40-A}, we have \eqref{a-grad_bounds} in the form
\begin{align}
\label{5 times diff}
&\|\nabla^N_e \bar a(0)\|_\infty + \|\nabla^{N}_{e'}\nabla^N_e \bar a(0)\|_\infty + \|\nabla^N_{e''}\nabla^N_{e'}\nabla^N_e \bar a(0)\|_\infty +  \|\nabla^N_{e'''}\nabla^N_{e''}\nabla^N_{e'}\nabla^N_e \bar a(0)\|_\infty \\
&\ \ \ \ \leq C(\|\fa^{(i)}\|_\infty: 2\le i \le 5) \big(\bar C_0 +\bar C_0^2 + \bar C_0^3 + \bar C_0^4\big)\nonumber\\
&\ \ \ \  = \mathcal{C}_1\leq C(\fa)\mathcal{C}_0.\nonumber
\end{align}
 
Also, by \eqref{nabla_g infinity bound}, we see the condition \eqref{g-bound} for $g$:
\begin{align*}
\|\nabla^N g\|_\infty 
\leq C(\fa, f)K\|\nabla^Nu^N\|_\infty \le CK \bar K_0^{\frac1\si} = G^1_\infty
\end{align*}
 and, by \eqref{eq:5.38-A}, we have the condition \eqref{W-bound}:
 $$
 \|\W^N_e\|_\infty \leq \|\fa'\|_\infty \|\nabla_e^N u^N\|_\infty
 \leq C \bar K_0^{\frac1\si}= W_\infty.
 $$
 Here, $C=C(n, c_\pm, T, \si, \|f\|_\infty, \|\fa'\|_\infty, \|\fa''\|_\infty, 
u_\pm)$.
 
Now, by Theorem \ref{second_Schauder_time_thm}, recalling $\bar K_0$,
noting $\alpha=\si$ and  $\mathcal{C}_1\leq C(\fa)\mathcal{C}_0$, we have
\begin{align}  \label{eq:5.32-B}
& |\tilde \W_{e_1}^N|_{1+\si}\leq C\big[ (\bar K_0+1)^{1+\frac{1}{\si}}\big({W}_\infty + \mathcal{D}_6\big) + {G}^1_\infty + \mathcal{D}_7\big],\\
\label{eq:5.33-B}
&|\nabla^{N}_{e_1} \tilde \W^N_{e_2}|_0 \leq C\big[ (\bar K_0 +1)^{\frac{1}{\si}}\big({W}_\infty + \mathcal{D}_6\big)+ (\bar K_0 +1)^{-1}({G}^1_\infty + \mathcal{D}_7)\big],
\end{align}
with $\mathcal{D}_6 \leq (\bar K_0 + 1)^{\frac{1}{\si}}\mathcal{C}_0(1+\mathcal{C}_0^5)$ and
$\mathcal{D}_7 \leq (\bar K_0 +1)^{\frac{2}{\si}}\mathcal{C}_0(1+\mathcal{C}_0^{11}).$
Therefore, bounds \eqref{eq:5.32} and \eqref{eq:5.33} follow, noting $\bar K_0+1 \le 2\bar K_0$
and the forms of $G^1_\infty$ and $W_\infty$,  and $\mathcal{C}_0(1+\mathcal{C}_0^5)
\le C(\bar C_0+\bar C_0^{24})$, $\mathcal{C}_0(1+\mathcal{C}_0^{11})
\le C(\bar C_0+\bar C_0^{48})$.

To derive \eqref{eq:5:36}, as in the proof of Corollaries \ref{second_Schauder_cor} and \ref{cor:5.7}, we have
$$ 
\nabla^N_{e_1}\nabla^N_{e_2} u^N(X)
= \nabla^N_{e_1}a^0_{x,e_2}(t)\cdot \W^N_{e_2}(X+\tfrac{e_1}{N}) 
+ a^0_{x,e_2}(t)\cdot\nabla^N_{e_1}\W^N_{e_2}(X),
$$
where
$|\nabla^N_{e_1}a^0_{x,e_2}\cdot \W^N_{e_2}|\leq C(c_-,\|\fa''\|_\infty)\max_e \|\W^N_e\|^2_\infty$ and 
$|a^0_{x,e_2} \cdot \nabla^N_{e_1}\W^N_{e_2}|\leq C(c_-)|\nabla_{e_1}^N\W^N_{e_2}|_0$.  
In particular, by \eqref{eq:5.33-B}, we have
\begin{align}
\label{eq:5:36-B}
&|\nabla^N_{e_1}\nabla^N_{e_2} u^N(X)| \leq C\big[ {W}_\infty^2 + 
(\bar K_0+1)^{\frac{1}{\si}}\big({W}_\infty + \mathcal{D}_6\big)+ (\bar K_0 +1)^{-1}
({G}^1_\infty + \mathcal{D}_7)\big].
\end{align}
The bound \eqref{eq:5:36} now follows from the form of $W_\infty$ above.

Finally, the H\"older estimate \eqref{cor 5.10 holder} follows as in Corollary \ref{cor:5.7} (through the method given for Corollary \ref{second_Schauder_cor}), using now \eqref{eq:5.32-B} and \eqref{eq:5.33-B} to bound $J_3$ and $J_4$.  Indeed, $J_1, J_2$ have the same estimates \eqref{J1:cor5.9} and \eqref{J2:cor5.9} as in the proof of Corollary \ref{cor:5.7}.

To bound $J_3$ and $J_4$, by \eqref{J_4, J_3} and the estimate $|u^N(t, \cdot) - u^N(s, \cdot)| 
\leq C \bar K_0|t-s|^{\frac{\si}{2}}$ by Corollary \ref{extended_cor}, we have
\begin{align*}
|J_4| + |J_3| \leq C\Big[ |\nabla^N_{e_1}\W^N_{e_2}(X) - \nabla^N_{e_1}\W^N_{e_2}(Y)|
 + \bar K_0|\nabla^N_{e_1}\W^N_{e_2}(X)||X-Y|^\si\Big].
\end{align*}
We now input the H\"older bound for $\nabla^N_e \W^N_{e'}$ in \eqref{eq:5.32-B} and the gradient bound in \eqref{eq:5.33-B}, and note that the term with the H\"older bound is dominant:
\begin{align*}
& \bar K_0 \big[\bar K_0^{\frac{1}{\si}}(W_\infty + \mathcal{D}_6) 
+ \bar K_0^{-1}(G^1_\infty + \mathcal{D}_7)\big].
\end{align*}
Hence,  
\begin{align*}
|J_4|+|J_3| &\leq C\Big[\bar K_0^{1+\frac{1}{\si}}(W_\infty + \mathcal{D}_6) + G^1_\infty 
+ \mathcal{D}_7\Big] |X-Y|^\si.
\end{align*}
The following bound \eqref{cor 5.10 holder-B} now follows by adding the (equal) bounds \eqref{J1:cor5.9} and \eqref{J2:cor5.9} for $J_1$ and $J_2$ to the bound for $|J_4|+|J_5|$:
\begin{align}
\label{cor 5.10 holder-B}
[\nabla^N_{e_1}\nabla^N_{e_2} u^N]_\si 
\leq C\Big\{ \bar K_0^{1+\frac{1}{\si}}\big({W}_\infty + \mathcal{D}_6\big) + {G}^1_\infty + \mathcal{D}_7+(\bar K_0+1)^{\frac{3}{\si}} \Big\}
\end{align}
and this implies the bound \eqref{cor 5.10 holder} similarly as above.
\end{proof}

\section{Construction and estimates of fundamental solutions}
\label{sec:EELevi}

This section presents a different approach to the discrete Schauder estimates.
We rely on the fundamental solutions constructed by the parametrix method of E.E.\ Levi.
This route is also well-known in PDE theory to derive the Schauder
estimates; see Friedman \cite{Fri64} Chapter 1, Il'in,  Kalashnikov and Oleinik
\cite{IKO}, Lady\v{z}enskaja, Solonnikov and Ural'ceva \cite{LSU} p.356-- 
and Eidel'man \cite{Ei}.  Cannizzaro and Matetski \cite{CM} used Schauder estimates 
shown in this direction to study discrete KPZ equation.

We first construct the fundamental solutions of the linear difference operators 
of parabolic type and derive the estimate on its spatial discrete derivatives
in Section \ref{sec:13.1}.  Then, we apply it to the quasilinear discrete PDE 
\eqref{eq:2.1-X} in Section \ref{sec:13.2-Q}.

\subsection{Fundamental solutions of linear parabolic difference operators}
\label{sec:13.1}

Let $L\equiv L_{t,x}$ be a linear (elliptic) discrete difference operator of second order 
with coefficients depending on $(t,\tfrac{x}N)$ and let
$L^*$ be its dual operator with respect to the inner product $\lan f, g\ran = \frac1{N^n}
\sum_{x\in \T_N^n} f(\tfrac{x}N) g(\tfrac{x}N)$. 

\begin{defn}  \label{defn:fund}
{\rm (i)} We call $p = p(s,\tfrac{y}N;t,\tfrac{x}N), s\le t,$ a fundamental solution
(in the forward sense) of $L-\partial_t$, if $(L_{t,x}-\partial_t)p=0$
($L$ is acting on the variable $\tfrac{x}N$) and
$p(s,\tfrac{y}N;s,\tfrac{x}N)= N^n \de_{xy}$, where $\de_{xy}$ is the Kronecker's $\de$.\\
{\rm (ii)} 
We call $p^* = p^*(s,\tfrac{y}N;t,\tfrac{x}N)$, $s\le t$,
 a fundamental solution (in the backward sense) 
of $L^*+\partial_s$, if $(L_{s,y}^* + \partial_s)p^*=0$
and $p^*(t,\tfrac{y}N;t,\tfrac{x}N)= N^n \de_{xy}$.
\end{defn}

We recall the following identity from \cite{Fri64}, Chapter 1, Theorem 15.

\begin{lem} \label{lem:13.2}
$p(s,\tfrac{y}N;t,\tfrac{x}N)= p^*(s,\tfrac{y}N;t,\tfrac{x}N).$
\end{lem}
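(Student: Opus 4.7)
The plan is to prove this identity by the standard duality/Green's-identity argument, adapted to the discrete setting. The idea is to interpolate between the two fundamental solutions in their common time variable and use the defining PDEs together with the duality relation between $L$ and $L^*$.

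First I would fix $s < t$ and $y, x \in \T_N^n$ and consider, for $\tau \in [s, t]$, the function
\[
\phi(\tau) := \lan p(s,\tfrac{y}{N};\tau,\cdot),\, p^*(\tau,\cdot;t,\tfrac{x}{N}) \ran_N
= \tfrac{1}{N^n} \sum_{z \in \T_N^n} p(s,\tfrac{y}{N};\tau,\tfrac{z}{N})\, p^*(\tau,\tfrac{z}{N};t,\tfrac{x}{N}).
\]
Since the sum is finite (the state space $\T_N^n$ is finite) and both factors are smooth in $\tau$, differentiation under the sum is routine. Using the forward equation $\partial_\tau p = L_{\tau,z} p$ (acting on the second pair of variables) and the backward equation $\partial_\tau p^* = -L^*_{\tau,z} p^*$ (acting on the first pair of variables), I obtain
\[
\phi'(\tau) = \lan L_{\tau,z}\, p(s,\tfrac{y}{N};\tau,\cdot),\, p^*(\tau,\cdot;t,\tfrac{x}{N}) \ran_N
 - \lan p(s,\tfrac{y}{N};\tau,\cdot),\, L^*_{\tau,z}\, p^*(\tau,\cdot;t,\tfrac{x}{N}) \ran_N,
\]
which vanishes identically by the definition of $L^*$ as the dual of $L$ with respect to $\lan\,\cdot\,,\,\cdot\,\ran_N$. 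Hence $\phi$ is constant on $[s,t]$.

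Next I would evaluate $\phi$ at the two endpoints using the initial conditions built into the fundamental solutions. At $\tau = s$, the relation $p(s,\tfrac{y}{N};s,\tfrac{z}{N}) = N^n \delta_{y,z}$ collapses the sum and yields $\phi(s) = p^*(s,\tfrac{y}{N};t,\tfrac{x}{N})$. At $\tau = t$, the relation $p^*(t,\tfrac{z}{N};t,\tfrac{x}{N}) = N^n \delta_{z,x}$ yields $\phi(t) = p(s,\tfrac{y}{N};t,\tfrac{x}{N})$. Equating the two gives the claimed identity.

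There is no real obstacle here because the discrete setting is actually friendlier than the continuous one: the spatial integrals are finite sums, so no decay/integrability issues arise, and the summation-by-parts identity defining $L^*$ holds pointwise without boundary terms on the torus. The only thing one must verify is regularity in $\tau$ for differentiating $\phi$, but this follows from the ODE nature of the equations (recall that $p$ is constructed via the parametrix procedure in Section \ref{sec:EELevi} and is $C^1$ in $\tau$, and similarly for $p^*$). Thus the proof reduces to the one-line computation that $\phi'(\tau) \equiv 0$ combined with evaluation at the endpoints.
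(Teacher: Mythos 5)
Your proof is correct and follows essentially the same route as the paper: both define the interpolating quantity $\phi(\tau)=\lan p(s,\cdot;\tau,\cdot),p^*(\tau,\cdot;t,\cdot)\ran_N$, show it is constant by combining the forward and backward equations with duality, and evaluate at the endpoints. The extra remarks on differentiability and the absence of boundary terms are fine but not essential; the paper takes them for granted.
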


\begin{proof} For $s\le r \le t$, set
$$
q(r) := \tfrac1{N^n}\sum_z p(s,\tfrac{y}N;r,\tfrac{z}N) p^*(r,\tfrac{z}N;t,\tfrac{x}N).
$$
Then, its derivative in $r$ vanishes:
\begin{align*}
\partial_r q(r)  = \tfrac1{N^n}\sum_z \Big\{
L_{r,z} p(s,\tfrac{y}N;r,\tfrac{z}N) p^*(r,\tfrac{z}N;t,\tfrac{x}N)-
p(s,\tfrac{y}N;r,\tfrac{z}N) L_{r,z}^* p^*(r,\tfrac{z}N;t,\tfrac{x}N)\Big\}=0.
\end{align*}
Thus, we have
$p^*(s,\tfrac{y}N;t,\tfrac{x}N) = q(s) = q(t)= p(s,\tfrac{y}N;t,\tfrac{x}N)$.
\end{proof}

We specifically consider as $L=L_t$ the linear difference operator of the following form:
\begin{align}  \label{eq:L_t}
L_tu(\tfrac{x}N) = - \tfrac12 \sum_{|e|=1}a_e(t,\tfrac{x}N) \nabla_e^{N,*}\nabla_e^N u(\tfrac{x}N)
+ \sum_{|e|=1}b_e(t,\tfrac{x}N) \nabla_e^N u(\tfrac{x}N)
+ c_e(t,\tfrac{x}N) u(\tfrac{x}N).
\end{align}
The coefficients are continuous in $t$ and satisfy the following conditions: 
\begin{align} \label{eq:6.2-A}
\begin{aligned}
& 0<c_-\le a_e(t,\tfrac{x}N) \le c_+<\infty, \qquad a_e(t,\tfrac{x}N)=a_{-e}(t,\tfrac{x}N), \\
& |b_e(t,\tfrac{x}N)|, |c_e(t,\tfrac{x}N)| \le D_0,
\end{aligned}
\end{align}
and H\"older continuity:
\begin{align} 
& | a_e(t,\tfrac{x}N) - a_e(s,\tfrac{y}N)| \le C_H (|t-s|^{\a/2} + |\tfrac{x}N-\tfrac{y}N|^\a), 
\notag \\
& | b_e(t,\tfrac{x}N) - b_e(t,\tfrac{y}N)| \le C_H |\tfrac{x}N-\tfrac{y}N|^\a, 
\label{eq:13:Holder}\\
& | c(t,\tfrac{x}N) - c(t,\tfrac{y}N)| \le C_H |\tfrac{x}N-\tfrac{y}N|^\a,
\notag
\end{align}
for some $\a\in (0,1)$ and $D_0, C_H>0$, recall that $|\cdot|$ denotes the distance in $\T^n$
and defined by \eqref{eq:distanceTn}.
Writing $C_H$ for $C_H\vee D_0$, we assume $D_0=C_H$ for simplicity.
The symmetry of $a_e$ in $e$ is assumed without loss of generality, since we have
\begin{align*}
\sum_{|e|=1}a_e \nabla_e^{N,*}\nabla_e^N = \sum_{|e|=1}a_e \nabla_{-e}^N\nabla_e^N
= \sum_{|e|=1}a_e \nabla_{e}^N\nabla_{-e}^N = \sum_{|e|=1}a_{-e} \nabla_e^{N,*}\nabla_e^N
\end{align*}
so that we may take a symmetrization $\tfrac12 (a_e+a_{-e})$ for $a_e$.

Applying the classical parametrix method due to Levi,
we can construct the fundamental solution $p(s,\tfrac{y}N;t,\tfrac{x}N)$, $0\le s <t$, 
(in the forward sense) of the operator $\partial_t -L_t$ and obtain the following estimate
on its (discrete) derivative in $x$. 

\begin{prop} \label{prop:13.4}
The fundamental solution exists and we have the estimate
\begin{align}  \label{eq:prop-13.4}
\big|\nabla_{e,x}^N p(s,\tfrac{y}N;t,\tfrac{x}N)\big| 
\le \frac{C}{\sqrt{t-s}} \, e^{CC_H^{2/\a}} \, g(c(t-s),\tfrac{x-y}N),
 \quad 0\le s < t \le T,
\end{align}
for some $C=C(n, c_\pm, \alpha, T)$, where $g(t,\tfrac{x}N) := 
t^{-n/2} e^{-|\frac{x}N|^2/t}$ with $|\tfrac{x}N|$ defined in \eqref{eq:distanceTn}.
\end{prop}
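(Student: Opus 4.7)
My plan is to adapt the classical Levi parametrix construction to the discrete setting. For each fixed $(\sigma,\tfrac{\eta}{N}) \in \Omega_N$, I would first construct a \emph{parametrix} $Z=Z(s,\tfrac{y}{N};t,\tfrac{x}{N};\sigma,\tfrac{\eta}{N})$ as the fundamental solution of the constant-coefficient operator
\[
L^{(\sigma,\eta)} := -\tfrac{1}{2}\sum_{|e|=1} a_e(\sigma,\tfrac{\eta}{N}) \nabla_e^{N,*}\nabla_e^N,
\]
which is a discrete heat kernel with direction-dependent coefficients. By Fourier analysis on $\T_N^n$, or by Proposition \ref{prop:Nash} combined with the reflection/symmetry argument, $Z$ admits an explicit Gaussian-type bound $|Z| \le C g(c(t-s),\tfrac{x-y}{N})$ with the additional gradient bound
\[
|\nabla_{e,x}^N Z| \le \tfrac{C}{\sqrt{t-s}} g(c(t-s),\tfrac{x-y}{N}),
\]
both uniform in the frozen point and in $N$. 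Freezing at $(\sigma,\eta)=(s,y)$, I will look for
\[
p(s,\tfrac{y}{N};t,\tfrac{x}{N}) = Z(s,\tfrac{y}{N};t,\tfrac{x}{N};s,\tfrac{y}{N})
+\int_s^t \tfrac{1}{N^n}\sum_{z\in\T_N^n} Z(\sigma,\tfrac{z}{N};t,\tfrac{x}{N};\sigma,\tfrac{z}{N})\, \Phi(s,\tfrac{y}{N};\sigma,\tfrac{z}{N})\, d\sigma,
\]
with the auxiliary density $\Phi$ satisfying the Volterra-type integral equation $\Phi = K + K \ast \Phi$ where $K := (L_t - \partial_t)\, Z\big|_{(\sigma,\eta)=(s,y)}$ and $\ast$ is the space-time convolution on $\Omega_N$.

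The key point is that, thanks to the H\"older assumption \eqref{eq:13:Holder} and the bounds on $b_e,c_e$ in \eqref{eq:6.2-A}, the singularity of $K$ at the diagonal is weak: a direct computation, using that $Z$ solves the frozen equation and that the first/second discrete gradients of $Z$ satisfy Gaussian bounds with factors $(t-s)^{-1/2}$ and $(t-s)^{-1}$ respectively, yields
\[
|K(s,\tfrac{y}{N};t,\tfrac{x}{N})| \le C C_H (t-s)^{-1+\alpha/2}\, g(c(t-s),\tfrac{x-y}{N}).
\]
Iterating the Volterra equation, $\Phi = \sum_{k\ge 1} K^{(k)}$, the $k$-fold convolution satisfies a similar bound with constant $(CC_H)^k B(\alpha/2,\ldots)$ of Beta-function type, giving convergence of the series with sum bounded by $C\exp(C C_H^{2/\alpha} T)$ (the exponent $2/\alpha$ arises because each iteration effectively rescales time by $C_H^{2/\alpha}$, a standard Levi-type computation). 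This produces $p$ and shows $p$ itself satisfies a Gaussian upper bound with the same exponential prefactor.

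To obtain the gradient estimate \eqref{eq:prop-13.4}, I would differentiate the representation in $x$. The gradient of the leading $Z$-term directly gives $\tfrac{C}{\sqrt{t-s}} g(c(t-s),\tfrac{x-y}{N})$. For the convolution term, one splits the integral in $\sigma \in (s,t)$ into $(s,(s+t)/2)$ and $((s+t)/2,t)$; on the former interval the gradient falls on $\Phi$ (which one estimates by the already established Gaussian bound of $\Phi$), and on the latter it falls on $Z$ producing the integrable singularity $(t-\sigma)^{-1/2}$. Both pieces yield the claimed bound after carrying out the standard Gaussian convolution identity $\int_s^t (t-\sigma)^{-1/2}(\sigma-s)^{-1+\alpha/2}\, d\sigma \le C(t-s)^{-1/2+\alpha/2}$ and absorbing the $(t-s)^{\alpha/2}$ into the constant via $T^{\alpha/2}$.

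The main obstacle is to verify that the purely continuous parametrix machinery transfers to the discrete mesh $\tfrac1N\T_N^n$ uniformly in $N$. Two points will need care: (i) the discrete analogue of the identity $(\partial_t - L^{(s,y)}) Z = 0$ holds only up to discrete cross terms involving $\nabla_e^{N,*}\nabla_e^N$ applied at nearby lattice points, so the derivation of the Gaussian bound on $K$ must take into account differences such as $a_e(t,\tfrac{x}{N}) - a_e(s,\tfrac{y}{N})$ evaluated at shifted arguments $\tfrac{x+e}{N}$ rather than $\tfrac{x}{N}$; and (ii) the summation-convolution $\tfrac{1}{N^n}\sum_{z}$ must reproduce the continuous Gaussian semigroup identity $g(t_1)\ast g(t_2) = g(t_1+t_2)$ up to $N$-uniform constants, which follows from comparing Riemann sums with integrals and exploiting the $c$-scaled Gaussian decay. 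Once these discrete bookkeeping issues are handled, the remainder of the argument is a direct transcription of the continuous Levi scheme, and the estimate \eqref{eq:prop-13.4} follows with all constants depending only on $n$, $c_\pm$, $\alpha$, $T$.
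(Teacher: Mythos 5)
Your plan is essentially the proof given in the paper: both freeze coefficients at the base point to build a constant-coefficient parametrix $Z$ (for whose Gaussian bounds on $Z$, $\nabla^N Z$, $\nabla^N\nabla^N Z$ the paper cites Delmotte--Deuschel \cite{DD}), form the Levi--Friedman Volterra series for $\Phi$, whose iteration produces the factor $e^{C C_H^{2/\alpha}}$, and then differentiate the representation $p = Z + \int Z\,\Phi$. The paper tracks the singularity of $\mathcal{L} Z$ in the Friedman form $(t-s)^{-\mu}\,|\tfrac{x-y}{N}|^{-(n+2-2\mu-\alpha)}$ rather than in the Gaussian-weighted form you use, but these are equivalent bookkeeping choices.

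One small slip to correct: in your treatment of the gradient of the convolution term, the discrete gradient $\nabla_{e,x}^N$ can only land on $Z(\sigma,\tfrac{z}{N};t,\tfrac{x}{N})$; the factor $\Phi(s,\tfrac{y}{N};\sigma,\tfrac{z}{N})$ has no $x$-dependence and so cannot absorb the derivative, unless you first perform a summation by parts in $z$, which would in turn require gradient bounds on $\Phi$ that you have not established. Fortunately, your concluding estimate $\int_s^t (t-\sigma)^{-1/2}(\sigma-s)^{-1+\alpha/2}\,d\sigma \le C\,(t-s)^{-1/2+\alpha/2}$ is exactly what results from differentiating $Z$ alone and using the Gaussian bound on $\Phi$; the time-interval splitting and the remark that the gradient \lq\lq falls on $\Phi$\rq\rq\ are therefore unnecessary and should be dropped --- differentiate $Z$ throughout and integrate in $\sigma$.
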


\begin{proof}
For each fixed $(s, \tfrac{y}N)$, let $Z(s,\tfrac{y}N;t,\tfrac{x}N), t>s,$ be the 
fundamental solution of $\mathcal{L}_{a(s, \frac{y}N)} := \De^N_{a(s, \frac{y}N)}
-\partial_t$, where $\De^N_{a(s, \frac{y}N)}$ is the discrete Laplacian with
the constant coefficient $a(s, \tfrac{y}N) = (a_e(s, \tfrac{y}N))_{|e|=1}$ defined by
\eqref{eq:Lap-a-N}.  Recall that $\De_a^N$ is defined  for
$a=(a_e)_{|e|=1}\in \R^{2n}$ such that $c_-\le a_e\le c_+$ and $a_e=a_{-e}$.  
Namely, $\mathcal{L}_{a(s, \frac{y}N)}
Z(s,\tfrac{y}N;\cdot,\cdot)=0$ for $t>s$, and $Z(s,\tfrac{y}N;s,\cdot)= N^n \de_{\frac{y}N}(\cdot)$.  
Since $\T_N^n$ is spatially homogeneous, let us define $\bar p$ by
$\bar p(t-s, \tfrac{x-y}N) = Z(s,\tfrac{y}N;t,\tfrac{x}N)$.
Note that $\bar p(t,\tfrac{x}N) = N^{n} \sum_{z\in \T_N^n}p^*(N^2 t,x+z)$ 
with $p^*$ in \cite{DD}.  By (1.9) of \cite{DD}
for $p^* (=p_b)$, we have Aronson estimate:
$$
\bar p(t,\tfrac{x}N) \le c g(kt,\tfrac{x}N).
$$
Moreover, Gaussian bounds are known on the 
first and second discrete derivatives of $Z$:
\begin{align}  \label{eq:Z-der-1-Q}
& |\nabla_{e,x}^N Z(s,\tfrac{y}N;t,\tfrac{x}N)| \le c_1 \frac{g(k_1(t-s),\frac{x-y}N)}{\sqrt{t-s}}, \\
& |\nabla_{e',x}^N\nabla_{e,x}^N Z(s,\tfrac{y}N;t,\tfrac{x}N)| \le c_2 \frac{g(k_2(t-s),\frac{x-y}N)}{t-s},
\label{eq:Z-der-2-Q}
\end{align}
which follow from (1.4) and (1.5) of \cite{DD}; we take $N^2(t-s)$
for $t-s$ and estimate $1 \vee N^2(t-s) \ge N^2(t-s)$.  Note that here we are 
in a very simple situation with the discrete Laplacian $\De_a^N$ with constant 
coefficient $a$, and without a random environment as in \cite{DD}.
The condition $a\in \mathcal{A}(c_-,c_+,n,\Ga)$ in \cite{DD} is satisfied due to
the symmetry assumption $a_e=a_{-e}$ in our case. 
The constants $c, c_1, c_2, k, k_1, k_2$ depend only on $n$ and $c_\pm$.

Let $\Phi(s,\tfrac{y}N;t,\tfrac{x}N)$ be the solution of the integral equation
\begin{align}  \label{eq:Phi-5-5-Q}
\Phi(s,\tfrac{y}N;t,\tfrac{x}N) = \mathcal{L} Z(s,\tfrac{y}N;t,\tfrac{x}N) 
+ \int_s^t \tfrac1{N^n}\sum_z \mathcal{L} Z(r,\tfrac{z}N;t,\tfrac{x}N)
\cdot \Phi(s,\tfrac{y}N;r,\tfrac{z}N) dr,
\end{align}
where $\mathcal{L} = L_t-\partial_t$
acts on the variable $(t,\tfrac{x}N)$ (cf.\ (4.1) of \cite{Fri64}).
Then, from (2.8) of \cite{Fri64} (or (11.13) of \cite{LSU}),
\begin{align}  \label{eq:13.16-p}
p(s,\tfrac{y}N;t,\tfrac{x}N) := Z(s,\tfrac{y}N;t,\tfrac{x}N) + 
\int_s^t \tfrac1{N^n}\sum_z Z(r,\tfrac{z}N;t,\tfrac{x}N)
\cdot \Phi(s,\tfrac{y}N;r,\tfrac{z}N) dr
\end{align}
is the fundamental solution of $\mathcal{L} = L_t-\partial_t$.  

The equation \eqref{eq:Phi-5-5-Q} can be solved as
\begin{align}  \label{eq:13.17}
\Phi(s,\tfrac{y}N;t,\tfrac{x}N) = \sum_{k=1}^\infty (\mathcal{L} Z)_k(s,\tfrac{y}N;t,\tfrac{x}N),
\end{align}
where $(\mathcal{L} Z)_1=\mathcal{L} Z$ and
\begin{align}  \label{eq:13.18}
(\mathcal{L} Z)_{k+1} (s,\tfrac{y}N;t,\tfrac{x}N) = \int_s^t \tfrac1{N^n}\sum_z \mathcal{L} Z(r,\tfrac{z}N;t,\tfrac{x}N)
\cdot (\mathcal{L} Z)_k(s,\tfrac{y}N;r,\tfrac{z}N) dr;
\end{align}
see (4.4) and (4.5) of \cite{Fri64} (or (11.23) and (11.24) of \cite{LSU}).
Then, noting that $\mathcal{L} Z= L_t Z - \De^N_{a(s,\frac{y}N)} Z$, 
from the bounds with $c_\pm, D_0 (=C_H)$ and H\"older continuity \eqref{eq:13:Holder} of
the coefficients of $L_t$ in \eqref{eq:L_t} and using \eqref{eq:Z-der-2-Q}, we have
\begin{align}  \label{eq:13.19-1}
|\mathcal{L} Z(s,\tfrac{y}N;t,\tfrac{x}N)| \le 
C(n,c_\pm)(1+C_H)
\Big\{ \big| t-s \big|^{\frac{\a}2}
+ \big| \tfrac{x-y}N \big|^\a \Big\}
\frac{g(k_2(t-s),\tfrac{x-y}N)}{t-s}.
\end{align}
However, one can estimate $g$ as
\begin{align*}  
g(k_2t,z) = (k_2t)^{-\frac{n}2} e^{-\frac{|z|^2}{k_2t}}
\le C_a t^{-\frac{n}2} \Big(\tfrac{|z|^2}t\Big)^{-a}
\end{align*}
for every $a>0$.  First, taking $a=\tfrac12(d+2-2\mu-\a)$, in terms of a parameter $\mu$,
\begin{align}\label{eq:13.19-2}
\big| t-s \big|^{\frac{\a}2}
\frac{g(k_2(t-s),\tfrac{x-y}N)}{t-s}
& \le C_a(t-s)^{\frac{\a}2-1-\frac{n}2}
\Big(\tfrac{|\tfrac{x-y}N|^2}{t-s}\Big)^{-\tfrac12(n+2-2\mu-\a)}\\
&= C_a\frac1{(t-s)^\mu} \frac1{|\tfrac{x-y}N|^{n+2-2\mu-\a}}.   \notag
\end{align}
Next,  taking $a=\tfrac12(d+2-2\mu)$,
\begin{align}\label{eq:13.19-3}
\big|\tfrac{x-y}N\big|^\a
\frac{g(k_2(t-s),\tfrac{x-y}N)}{t-s}
& \le C_a\big|\tfrac{x-y}N\big|^\a (t-s)^{-1-\frac{n}2}
\Big(\tfrac{|\tfrac{x-y}N|^2}{t-s}\Big)^{-\tfrac12(n+2-2\mu)}\\
&= C_a\frac1{(t-s)^\mu} \frac1{|\tfrac{x-y}N|^{n+2-2\mu-\a}}.   \notag
\end{align}
Thus, from the estimates \eqref{eq:13.19-1}, \eqref{eq:13.19-2} and \eqref{eq:13.19-3},
we have shown
\begin{align}  \label{eq:tildeLZ-Q}
|\mathcal{L} Z(s,\tfrac{y}N;t,\tfrac{x}N)| \le \frac{C(n, c_\pm, \mu)(1+C_H)}
{(t-s)^\mu}
\frac1{|\tfrac{x}N-\tfrac{y}N|^{n+2-2\mu-\a}}
\qquad (1-\tfrac{\a}2<\mu<1),
\end{align}
which corresponds to (4.3) of \cite{Fri64}.  

Then, by induction using \eqref{eq:13.18}, one can get for some $\la_0^*, C_1, C_2>0$,
depending on $n, c_\pm, \alpha$:
\begin{align*} 
|(\mathcal{L} Z)_m(s,\tfrac{y}N;t,\tfrac{x}N)| \le C_1 \frac{(C_2C_H)^m}{\Ga(\tfrac{m\a}2)}
(t-s)^{\frac{m\a}2-\frac{n}2-1} 
\exp\Big\{ -\frac{\la_0^*|\tfrac{x}N-\tfrac{y}N|^2}{4(t-s)}\Big\};
\end{align*}
see (11.25) of \cite{LSU} p.362 and (4.14) of \cite{Fri64}.
Indeed, we replace the integral estimates such as Lemmas 2 and 3
in \cite{Fri64} by those on Riemann sums uniformly in $N$.  Also the estimates
on $\frac{\partial Z}{\partial x_i}, \frac{\partial^2 Z}{\partial x_i \partial x_j}$
in (4.10), (4.11) of \cite{Fri64} are replaced by those on discrete derivatives
$\nabla_{e_i}^N Z, \nabla_{e_i}^N \nabla_{e_j}^N Z$ given in \eqref{eq:Z-der-1-Q},
\eqref{eq:Z-der-2-Q}.  We note that two $m\a$ in (4.14) of \cite{Fri64}
 should be $\frac{m\a}2$ as in \cite{LSU}.  In fact, the estimate on
$(\mathcal{L} Z)_2$ above (4.14) is shown with $\frac{2\a}2$.  In our computation,
we are especially concerned with the dependence of the estimate on 
the constant $C_H$, which should be $C_H^m$ as above from \eqref{eq:13.18}.
Therefore, from \eqref{eq:13.17}, we obtain (cf.\ (4.15) of \cite{Fri64}),
in terms of $C_3, C_4$ depending on $n, c_\pm, \alpha, T$,
\begin{align} \label{eq:13.20}
|\Phi(s,\tfrac{y}N;t,\tfrac{x}N)| \le \frac{C_3 e^{C_4 C_H^{2/\a}} }{(t-s)^{(n+2-\a)/2}}
\exp\Big\{ -\frac{\la_0^*|\tfrac{x}N-\tfrac{y}N|^2}{4(t-s)}\Big\},
\end{align}
by estimating $(t-s)^{\frac{m\a}2} \le C_T^m (t-s)^{\frac{\a}2}$ for $m\ge 1$ and
$0\le t-s\le T$, and
\begin{align*} 
\sum_{m=1}^\infty \frac{(C_2C_H)^m}{\Ga(\tfrac{m\a}2)}
\le C \sum_{n=1}^\infty \frac{(C_2C_H)^{\frac2{\a} n}} {n!}
\le C e^{(C_2C_H)^{2/\a}}.
\end{align*} 

Further estimates are similar to \cite{Fri64}. We only note that
the estimates on the Riemann sums are uniform in $N$ as noted above,
and by \eqref{eq:13.16-p}, we get the estimate \eqref{eq:prop-13.4} on $\nabla_{e,x}^N p$. 
\end{proof}

\begin{rem}
A weaker estimate than \eqref{eq:prop-13.4} under the average in $x$ 
(i.e., under $N^{-n}\sum_x$) might be available due to the result of \cite{DD}. Indeed,
such an estimate can be shown at least if the coefficient $a$ is independent of $t$
so that it is temporally homogeneous, because the spatial shift acts ergodically.
\end{rem}

\begin{rem}
If the coefficients have a singularity $t^{-\si/2}$ at $t=0$ in \eqref{eq:13:Holder} 
like in \eqref{eq:cor2.3-2}, we face the following difficulty.  Estimates \eqref{eq:13.19-1}
and \eqref{eq:tildeLZ-Q} can be shown with an additional factor $s^{-\si/2}$ in
the right hand side.  Thus, from \eqref{eq:Phi-5-5-Q},
$\Phi(s,\tfrac{y}N;r,\tfrac{z}N)$ has a singularity $s^{-\si/2}$ near $s=0$.
This is inherited by $p^N$ so that the estimates diverge at $s=0$.
\end{rem}

\subsection{Application to the nonlinear problem}
\label{sec:13.2-Q}

We now apply Proposition \ref{prop:13.4} to show the Schauder estimate
for the solution $u^N=u^N(t,\tfrac{x}N), x \in \T_N^n$ of the
quasilinear discrete PDE \eqref{eq:2.1-X}.  One can rewrite
the operator $L_{a(u^N(t))}^N$ in \eqref{eq:1-L_a},  determined from 
$\De^N\fa(u^N(t))$ as in \eqref{eq:De-Lta}, into the form of \eqref{eq:L_t}.  
Indeed, recalling $\nabla^{N,*}_e= \nabla^{N}_{-e}$ and then using 
\eqref{eq:disc-der-D}, we have
\begin{align}  \label{eq:6.33-Q}
L_{a(t)}^Nu(\tfrac{x}N) &
= -\tfrac12 \sum_{|e|=1} \nabla^{N}_{-e} \big(a_{x,e}(t) \nabla^N_e u\big)(\tfrac{x}N) \\
& = -\tfrac12 \sum_{|e|=1} \Big\{ a_{x-e,e}(t) \nabla^{N}_{-e}\nabla^N_e u 
(\tfrac{x}N)+ \nabla^{N}_{-e} a_{x,e}(t) \cdot \nabla^N_e u (\tfrac{x}N)\Big\}.
\notag
\end{align}
Therefore, we may take 
$a_e(t,\tfrac{x}N)=\tfrac14(a_{x-e,e}(t) + a_{x+e,-e}(t))$, $b_e(t,\tfrac{x}N)=
\tfrac12 \nabla^{N}_{-e} a_{x,e}(t)$ and $c_e(t,\tfrac{x}N)=0$ in \eqref{eq:L_t}.
Unfortunately, as $b_e$ is given as a discrete derivative of $a_{x,e}(t)$, we need to use the first Schauder estimate for $u^N(t)$ to estimate it.

In the following, assuming the first Schauder estimate, we illustrate how one can
derive the second Schauder estimate based on the estimates for the fundamental solutions.
Constants $C$ may change line to line, but these are referenced to earlier results.

We will assume the initial value $u_0\in C^2(\T^n)$.  Then, $\|u^N(0)\|_{C_N^2}$ is 
uniformly bounded in $N$ and $K$.  By \eqref{eq:cor2.6-2} in 
Corollary \ref{cor:2.6}, $a_e(X)$ satisfies
\begin{equation}  \label{eq:a-H-Q-R}
|a_e(t_1,\tfrac{x_1}N) - a_e(t_2,\tfrac{x_2}N)|
\le CK \Big\{ \big| t_1-t_2 \big|^{\frac{\si}2}
+ \big| \tfrac{x_1}N - \tfrac{x_2}N \big|^\si \Big\}, \quad t_1, t_2 \ge 0,
\end{equation}
where we simply write $K$ instead of $K+1$ by assuming $K\ge 1$.
Furthermore, by Corollary \ref{extended_cor}, under the assumption that
$u_0\in C^2(\T^n)$, we have
$[\tilde u^N]_{1+\si} (\le |\tilde u^N|_{1+\si}) \le CK^{1+\frac1\si}$ and $|\nabla^N_e u^N|\leq CK^{\frac{1}{\si}}$.
From this, as we will indicate in Remarks \ref{rem:6.3} and \ref{rem time},
$b_e$ satisfies the H\"older continuity
\begin{align}  \label{eq:b-R}
&|b_e(t,\tfrac{x_1}N) - b_e(t,\tfrac{x_2}N)|
\le CK^{\frac2\si} \big| \tfrac{x_1}N - \tfrac{x_2}N \big|^\si, \quad t \ge 0, \\
\label{eq:b-R time}
&|b_e(t,\tfrac{x}N) - b_e(s,\tfrac{x}N)|
\le CK^{\frac2\si} |t-s|^{\tfrac{\si}{2}}, \quad x\in \T^n_N.
\end{align}
The constant $D_0$ in \eqref{eq:6.2-A} can be taken as $D_0=CK^{\frac1\si}$ by 
\eqref{eq:3.nab-fa'} below and Corollary \ref{extended_cor}.
Note that \eqref{eq:b-R time} is unnecessary when we apply Proposition \ref{prop:13.4}.
It is used to estimate $J_{1,2}$ in the proof of Corollary \ref{second_Schauder_cor}.

Thus, $a_e$ and $b_e$ satisfy the conditions \eqref{eq:6.2-A} and \eqref{eq:13:Holder}
with $C_H=CK^{\frac2\si}$ and $\a=\si$.  Therefore, by Proposition \ref{prop:13.4}
and also by Lemma \ref{lem:13.2} noting that $L_{a(u^N(t))}^N$ is
symmetric, we have the estimates on the discrete derivatives
both in $x$ and $y$ of the fundamental solution $p(s,\tfrac{y}N;t,\tfrac{x}N)$
of $L_{a(u^N(t))}^N-\partial_t$:
\begin{align}  \label{eq:3.35-A}
\big|\nabla_{e,x}^N p(s,\tfrac{y}N;t,\tfrac{x}N)\big|,
\big|\nabla_{e,y}^N p(s,\tfrac{y}N;t,\tfrac{x}N)\big|
\le \frac{C}{\sqrt{t-s}} \, e^{CK^{4/\si^2}} \, g(c(t-s),\tfrac{x-y}N),
\end{align}
for $0\le s < t \le T$. 

With the gradient estimate on $p$ in $y$ in hand, we can derive
the estimates on $\nabla_{e_1}^N\nabla_{e_2}^N u^N$ for the solution $u^N$ of \eqref{eq:2.1-X}
based on Duhamel's formula and computations on commutators
of difference operators.  The gradient estimate of $p$ in $x$ would be useful to show the first 
Schauder estimate, but here we have assumed it.

Our result is the following.  Note that this bound is much worse compared to that obtained in Corollary 
\ref{second_Schauder_time_cor}, though we assume only $u_0\in C^2$ not in $C^5$ here.

\begin{prop}  \label{prop.6.4}
Suppose the condition \eqref{eq:1.u_pm} holds at $t=0$ and $u_0\in C^2(\T^n)$.  Then, we have
\begin{align}
|\nabla^N_{e_1}\nabla_{e_2}^Nu^N(t,\tfrac{x}N)|\le C(\|\nabla^N\nabla^N u^N(0)\|_{L^\infty}
+1) \exp\{C e^{CK^{4/\si^2}}\}, \label{eq:lem3.3-2}  
\end{align}
for all $t\in [0,T]$, $x\in \T_N^n$, and unit vectors $e_1$, $e_2$.

\noindent 
Here, the constant $C=C(n, c_\pm, T, \sigma, \|f\|_\infty, \|\fa''\|_\infty, \|\fa'''\|_\infty, u_\pm)>0$.
\end{prop}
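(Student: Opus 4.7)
The plan is to apply Duhamel's formula using the fundamental solution $p(s,\tfrac{y}N;t,\tfrac{x}N)$ of $L^N_{a(u^N(t))}-\partial_t$ constructed in Proposition \ref{prop:13.4}, and then exploit the gradient bound \eqref{eq:3.35-A}, which holds \emph{both} in the forward variable $\tfrac{x}{N}$ and the backward variable $\tfrac{y}{N}$. I will write
\[
u^N(t,\tfrac{x}{N}) \;=\; \tfrac{1}{N^n}\sum_y p(0,\tfrac{y}{N};t,\tfrac{x}{N})\,u^N(0,\tfrac{y}{N})\;+\;K\!\int_0^t \tfrac{1}{N^n}\sum_y p(s,\tfrac{y}{N};t,\tfrac{x}{N})\,f(u^N(s,\tfrac{y}{N}))\,ds,
\]
apply $\nabla^N_{e_2,x}$ directly to absorb one integrable $(t-s)^{-1/2}$ factor, and then transfer the remaining derivative $\nabla^N_{e_1}$ onto the initial data $u^N(0,\cdot)$ and onto $f(u^N(s,\cdot))$ via discrete summation by parts in $y$ (Lemma \ref{lem:DG-Q-2}).

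The central device enabling the transfer is the (approximate) identity $\nabla^N_{e,x}p(s,\tfrac{y}{N};t,\tfrac{x}{N}) = \nabla^N_{-e,y}p(s,\tfrac{y}{N};t,\tfrac{x}{N}) + R_e$, which is \emph{exact} for the Levi parametrix $Z$ (since $Z(s,\tfrac{y}{N};t,\tfrac{x}{N}) = \bar p(t-s,\tfrac{x-y}{N})$ is translation-invariant in $x-y$), and for the correction term $\int_s^t \tfrac{1}{N^n}\sum_z Z(r,\tfrac{z}{N};t,\tfrac{x}{N})\Phi(s,\tfrac{y}{N};r,\tfrac{z}{N})dr$ in the expansion \eqref{eq:13.16-p} picks up an error controlled by \eqref{eq:13.20}. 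After the transfer, the initial-data contribution is bounded by $Ce^{CK^{4/\si^2}}\|\nabla^N\nabla^N u^N(0)\|_\infty$, using that the discrete Gaussian kernel satisfies $\tfrac{1}{N^n}\sum_y g(c(t-s),\tfrac{x-y}{N})\le C$. For the forcing term, Leibniz and the chain rule give $\nabla^N_{e_1}\nabla^N_{e_2}f(u^N) = f''(\cdot)\nabla^N_{e_1}u^N\cdot\nabla^N_{e_2}u^N + f'(\cdot)\nabla^N_{e_1}\nabla^N_{e_2}u^N$ (up to discrete corrections), the first piece being bounded by the first Schauder estimate of Corollary \ref{extended_cor} (giving $\|\nabla^Nu^N\|_\infty\le CK^{1/\si}$), while the second piece feeds back into the quantity we wish to estimate.

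Setting $M(t):=\max_{e_1,e_2,x}|\nabla^N_{e_1}\nabla^N_{e_2}u^N(t,\tfrac{x}{N})|$, these estimates yield an integral inequality of the form
\[
M(t) \;\le\; C_1\bigl(\|\nabla^N\nabla^N u^N(0)\|_\infty+1\bigr)\,e^{CK^{4/\si^2}} \;+\; C_2\,e^{CK^{4/\si^2}}\!\int_0^t \frac{M(s)}{\sqrt{t-s}}\,ds,
\]
where the $K$-dependence enters through \eqref{eq:3.35-A} and through the bounds $D_0\sim K^{1/\si}$, $C_H\sim K^{2/\si}$ on the coefficients of the rewritten operator \eqref{eq:6.33-Q}, verified via \eqref{eq:a-H-Q-R}, \eqref{eq:b-R}, \eqref{eq:b-R time} and the gradient bound from Corollary \ref{extended_cor}. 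A singular Gronwall inequality for the kernel $(t-s)^{-1/2}$ on $[0,T]$ then produces the stated estimate $M(t)\le C(\|\nabla^N\nabla^N u^N(0)\|_\infty+1)\exp\{Ce^{CK^{4/\si^2}}\}$, where the double exponential reflects one Gronwall iteration whose constant already carries the factor $e^{CK^{4/\si^2}}$ from \eqref{eq:3.35-A}.

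The main obstacle will be the rigorous control of the error $R_e$ arising from the non-translation-invariance of $p$ under the transfer of derivatives from $x$ to $y$. Concretely, after writing $\nabla^N_{e,x}p-\nabla^N_{-e,y}p$ via the Levi expansion, one obtains a double integral in $(r,z)$ involving $Z$ and $(\nabla_z-\nabla_y)\Phi$; making this integrable in $r\in(s,t)$ against the singular behavior \eqref{eq:13.20} of $\Phi$ requires moving derivatives judiciously (one onto $Z$ via summation by parts, one onto $\Phi$ via its integral equation \eqref{eq:Phi-5-5-Q}), and the arithmetic must be organized so the resulting kernel integrates to a factor $Ce^{CK^{4/\si^2}}$, absorbable into the Gronwall step. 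This is also the place where the mild extra regularity $\fa\in C^3$ (producing $\fa'''$ in the constant) is consumed by the differentiation of $b_e=\tfrac12\nabla^{N}_{-e}a_{x,e}$ in estimates \eqref{eq:b-R}--\eqref{eq:b-R time}.
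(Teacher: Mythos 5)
Your route is genuinely different from the paper's, but as sketched it has structural problems that would prevent it from closing. The paper does \emph{not} start from Duhamel's formula for $u^N$ and then differentiate twice; instead it first differentiates the \emph{equation} twice. The commutator identity \eqref{eq:3.nabla-De} (Lemma \ref{lem:3.4}) is the crucial structural fact: $[\nabla^N_e,L^N_a]$ is a pure spatial gradient. Applying it twice yields a closed evolution equation $\partial_t v_{e_1,e_2}= L_a^N v_{e_1,e_2}+Q^N$ for $v_{e_1,e_2}=\nabla^N_{e_1}\nabla^N_{e_2}u^N$ in which the entire remainder $Q^N$ (including the $K$-forcing) is written as a discrete divergence. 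Duhamel is then applied to \emph{this} equation, the initial term is $\tfrac{1}{N^n}\sum_y p(0,\tfrac{y}{N};t,\tfrac{x}{N})\,v_{e_1,e_2}(0,\tfrac{y}{N})$ (no gradient on $p$, hence no singularity at $t=0$), and a single summation by parts moves the gradient from $Q^N$ onto one $y$-derivative of $p$, which is exactly what \eqref{eq:3.35-A} and the symmetry Lemma \ref{lem:13.2} control, giving the integrable $(t-s)^{-1/2}$.

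Your version has three concrete gaps. First, you take $\nabla^N_{e_2,x}$ on $p$ \emph{and} transfer $\nabla^N_{e_1}$ to the $y$-variable, which makes the initial-data contribution $\tfrac{1}{N^n}\sum_y\nabla^N_{e_2,x}p(0,\tfrac{y}{N};t,\tfrac{x}{N})\nabla^N_{e_1,y}u^N(0,\tfrac{y}{N})$, bounded only with a prefactor $t^{-1/2}$; this cannot give the stated bound, which is uniform on $[0,T]$. Moreover you then expand $\nabla^N_{e_1}\nabla^N_{e_2}f(u^N)$ with \emph{two} derivatives landing on $f$, which is inconsistent with transferring only one derivative. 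Second, the transfer identity $\nabla^N_{e,x}p=\nabla^N_{-e,y}p+R_e$ is not exact even for the parametrix $Z$: the paper's $Z(s,\tfrac{y}{N};t,\tfrac{x}{N})$ is the kernel of $\De^N_{a(s,y/N)}-\partial_t$, so the frozen coefficient $a(s,\tfrac{y}{N})$ introduces an extra $y$-dependence beyond the difference $x-y$, and the identity already has a correction proportional to $\nabla^N_y a$; this is manageable but is not acknowledged in your sketch, and controlling $\nabla_y\Phi$ inside the Levi expansion requires an argument that does not obviously stay within the $(t-s)^{-1/2}$ integrability budget. Third, Duhamel for $u^N$ hides the quasilinear structure: the coefficient $a=a(u^N(t))$ is itself differentiated when one differentiates the equation, and the only clean way to keep the resulting terms in a form amenable to summation by parts (a gradient on $p$ in $y$) is precisely Lemma \ref{lem:3.4}/\ref{lem:6.5}; your scheme circumvents that structure and it is not clear it can recover it. Your final singular-Gronwall step and the resulting double exponential are the same as the paper's, but the route to the integral inequality \eqref{eq:3.20} needs to go through the differentiated equation rather than differentiating Duhamel.
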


We defer the proof of Proposition \ref{prop.6.4} until after we develop some estimates.
We now compute the commutator $[\nabla_e^N,L_a^N]$ and see that it
has a gradient form.

\begin{lem}  \label{lem:3.4}
We have
\begin{equation}  \label{eq:3.nabla-De}
\nabla_e^N L_a^Nu(\tfrac{x}N) = L_a^N \nabla_e^Nu(\tfrac{x}N)
+ \sum_{|e'|=1, e'>0} \nabla_{e'}^N \left\{ \nabla_e^Na_{x,e'}(u) \t_{e-e'}\nabla_e^N u(\tfrac{x}N)\right\},
\end{equation}
and the bound
\begin{equation}  \label{eq:3.nab-fa'}
|\nabla_e^Na_{x,e'}(u)| \le C \{ |\nabla_e^N u(\tfrac{x}N)| 
+ |\nabla_e^Nu(\tfrac{x-e'}N)|\},
\end{equation}
where $C=\frac12 \|\fa''\|_{L^\infty([u_-,u_+])}$.
\end{lem}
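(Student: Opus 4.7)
The plan is to derive \eqref{eq:3.nabla-De} by computing the commutator $[\nabla_e^N, L_a^N]$ term by term from the divergence form \eqref{eq:2.6} of $L_a^N$, and then to obtain \eqref{eq:3.nab-fa'} as an immediate consequence of the `mean-value' Lemma \ref{lem:mvt} applied to the definition \eqref{eq:a_xe} of $a_{x,e'}(u)$.

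First, I would start from $L_a^N u = -\sum_{|e'|=1,\,e'>0}\nabla_{e'}^{N,*}(a_{x,e'}\nabla_{e'}^N u)$ and apply $\nabla_e^N$, using the commutation relation $[\nabla_e^N, \nabla_{e'}^{N,*}]=0$ from \eqref{eq:2.4}. This reduces the task to computing $\nabla_e^N(a_{x,e'}\nabla_{e'}^N u)$, which by the discrete Leibniz rule \eqref{eq:disc-der-D} decomposes as
\[
\nabla_e^N(a_{x,e'}\nabla_{e'}^N u)(\tfrac{x}N)
= a_{x,e'}(u)\,\nabla_e^N\nabla_{e'}^N u(\tfrac{x}N) + (\tau_e\nabla_{e'}^N u)(\tfrac{x}N)\,\nabla_e^N a_{x,e'}(u).
\]
The first piece reassembles (again using $[\nabla_e^N,\nabla_{e'}^{N,*}]=0$ and $[\nabla_e^N,\nabla_{e'}^N]=0$) into $-L_a^N\nabla_e^N u$, and so
\[
\nabla_e^N L_a^N u - L_a^N\nabla_e^N u = -\sum_{|e'|=1,\,e'>0}\nabla_{e'}^{N,*}\bigl\{(\tau_e\nabla_{e'}^N u)\cdot\nabla_e^N a_{\cdot,e'}(u)\bigr\}.
\]

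Second, I would convert the remainder to the form asserted in \eqref{eq:3.nabla-De}. Using $\nabla_{e'}^{N,*}=-\tau_{-e'}\nabla_{e'}^N$ from \eqref{eq:2.4} together with the symmetry $a_{x,e'}=a_{x+e',-e'}$ from \eqref{eq:a-1-N}, and the elementary identity $\nabla_{e'}^N u(\tfrac{z}N) = -\nabla_{-e'}^N u(\tfrac{z+e'}N)$, one re-expresses $\tau_e\nabla_{e'}^N u$ as $\tau_{e-e'}\nabla_e^N u$ up to signs which are absorbed into $\nabla_{e'}^{N,*}\mapsto\nabla_{e'}^N$ and into the relabeling of $a$. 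The main obstacle here is purely bookkeeping: carefully tracking the shift and sign changes so that the resulting sum indexes over $e'>0$ with the shift $\tau_{e-e'}$ precisely as written; I would verify the match by expanding both sides on a fixed index $e'$ and checking they agree pointwise.

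For \eqref{eq:3.nab-fa'}, I would apply Lemma \ref{lem:mvt} directly to the definition \eqref{eq:a_xe}, noting that (on the set where the denominators do not vanish)
\[
\nabla_e^N a_{x,e'}(u) = N\Big\{\tfrac{\fa(u(\tfrac{x+e+e'}N))-\fa(u(\tfrac{x+e}N))}{u(\tfrac{x+e+e'}N)-u(\tfrac{x+e}N)} - \tfrac{\fa(u(\tfrac{x+e'}N))-\fa(u(\tfrac{x}N))}{u(\tfrac{x+e'}N)-u(\tfrac{x}N)}\Big\}.
\]
Lemma \ref{lem:mvt} with constant $\tfrac12\|\fa''\|_{L^\infty([u_-,u_+])}$ bounds the bracket by $\tfrac12\|\fa''\|_\infty(|u(\tfrac{x+e+e'}N)-u(\tfrac{x+e'}N)| + |u(\tfrac{x+e}N)-u(\tfrac{x}N)|)$; multiplying by $N$ converts both differences into $\nabla_e^N u$ evaluated at two nearest-neighbor sites, yielding the claimed bound (the apparent choice between $\tfrac{x+e'}N$ and $\tfrac{x-e'}N$ is immaterial up to the symmetry of the indexing). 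Degenerate cases where denominators vanish are handled exactly as in the proof of Lemma \ref{lem:mvt} by passing to the Taylor form $\fa'(u(\cdot))$, which only changes constants.
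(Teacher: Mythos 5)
Your approach is essentially the same as the paper's for both parts: for \eqref{eq:3.nabla-De}, write $L_a^N$ in divergence form \eqref{eq:2.6}, commute $\nabla_e^N$ past the outer difference operator via \eqref{eq:2.4}, apply the discrete Leibniz rule \eqref{eq:disc-der-D}, recognize $L_a^N\nabla_e^N u$ as the leading piece, and re-express the remainder; for \eqref{eq:3.nab-fa'}, write $\nabla_e^N a_{x,e'}(u)$ as a difference of two difference quotients and invoke Lemma \ref{lem:mvt}. That matches the paper's proof scheme.

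There is, however, a concrete misstep in the bookkeeping you defer. You claim to re-express $\t_e\nabla_{e'}^N u$ as $\t_{e-e'}\nabla_e^N u$, but a discrete difference in direction $e'$ cannot be converted into one in direction $e$ by any shift or sign manipulation. If you carry out the verification you propose (expanding both sides at a fixed $e'$), you will find that what your remainder actually equals after substituting $-\nabla_{e'}^{N,*}=\t_{-e'}\nabla_{e'}^N$ and pulling $\t_{-e'}$ inside is
\begin{equation*}
\sum_{|e'|=1,\,e'>0}\nabla_{e'}^N\bigl\{\t_{-e'}\nabla_e^N a_{x,e'}(u)\cdot\t_{e-e'}\nabla_{e'}^N u(\tfrac{x}N)\bigr\},
\end{equation*}
that is, with $\nabla_{e'}^N u$ and a $\t_{-e'}$-shifted $\nabla_e^N a_{x,e'}(u)$. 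This is precisely what the paper's own chain of equalities produces in its last line, which shows that \eqref{eq:3.nabla-De} as displayed appears to carry a typo ($\nabla_e^N u$ should read $\nabla_{e'}^N u$), and that the $\nabla_e^N a_{x,e'}(u)$ written there is implicitly the shifted quantity $\t_{-e'}\nabla_e^N a_{x,e'}(u)$, consistent with the index pattern $x,\,x-e'$ appearing in \eqref{eq:3.nab-fa'} and with the mean-value expansion \eqref{eq:6.38}. Your derivation of \eqref{eq:3.nab-fa'} via Lemma \ref{lem:mvt} is correct, and your remark that the $x+e'$ vs.\ $x-e'$ discrepancy in the final indexing is a harmless shift is accurate.
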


\begin{proof}
The identity \eqref{eq:3.nabla-De} is shown as
\begin{align*}
&\nabla_e^NL_a^Nu(\tfrac{x}N) 
 = \sum_{|e'|=1, e'>0} \nabla_e^N\nabla_{e'}^N 
   \left\{ a_{x,e'}(u) \t_{-e'}\nabla_{e'}^N u(\tfrac{x}N)\right\} \\
&\ \  = \sum_{|e'|=1, e'>0} \nabla_{e'}^N\nabla_e^N 
   \left\{ a_{x,e'}(u) \t_{-e'}\nabla_{e'}^N u(\tfrac{x}N)\right\} \\
&\ \  = \sum_{|e'|=1, e'>0} \nabla_{e'}^N \left\{  
   a_{x,e'}(u) \nabla_e^N \t_{-e'}\nabla_{e'}^N u(\tfrac{x}N)
  + \t_{e} \t_{-e'}\nabla_{e'}^N u(\tfrac{x}N) \cdot \nabla_e^N a_{x,e'}(u) \right\} \\
&\ \  = L_a^N \nabla_e^N u(\tfrac{x}N) + \sum_{|e'|=1, e'>0} \nabla_{e'}^N\{
  \t_{e -e'}\nabla_{e'}^N u(\tfrac{x}N) \cdot \nabla_e^N a_{x,e'}(u)\}.
\end{align*}
Next, to show the bound \eqref{eq:3.nab-fa'}, we write
\begin{align}  \label{eq:6.38}
\nabla_e^N a_{x,e'}(u) 
& = N \left\{ \frac{\fa(u(x+e))-\fa(u(x+e-e'))}{u(x+e)-u(x+e-e')}
-  \frac{\fa(u(x))-\fa(u(x-e'))}{u(x)-u(x-e')}  \right\},
\end{align}
where we omit $\tfrac1N$ for the variables of $u$ in the right hand side.
Then, applying Lemma \ref{lem:mvt} with $a= u(x+e)$, $b=u(x+e-e')$,
$c=u(x)$, $d=u(x-e')$, we obtain the bound \eqref{eq:3.nab-fa'}.
\end{proof}

For the solution $u^N=u^N(t,\frac{x}N)$ of the discrete PDE \eqref{eq:2.1-X}, using
\eqref{eq:3.nabla-De} with $e=e_2$, we have
\begin{align}  \label{eq:3.t-nab-nab}
\partial_t \nabla_{e_1}^N \nabla_{e_2}^N u^N
& = \nabla_{e_1}^N \nabla_{e_2}^N L_a^N u^N 
+ K \nabla_{e_1}^N \nabla_{e_2}^N f(u^N)    \\
& = \nabla_{e_1}^N L_a^N \nabla_{e_2}^N u^N
+ \sum_{e'} \nabla_{e_1}^N \nabla_{e'}^N \{ \nabla_{e_2}^N a_{x,e'}(u^N)  \t_{e_2-e'} \nabla_{e_2}^N u^N\} 
\notag\\ & \qquad
+ K \nabla_{e_1}^N \{g_{x,e_2}(u^N) \nabla_{e_2}^N u^N\}.   \notag
\end{align}
Here, $g_{x,e}(u)$ is defined from $f(u)$ in a similar way to $a_{x,e}(u)$ in \eqref{eq:a_xe}. 
For the second term, by \eqref{eq:disc-der-D}, we expand
\begin{align}  \label{eq:3.18}
& \nabla_{e_1}^N \{ \nabla_{e_2}^N a_{x,e'}(u^N)  \t_{e_2-e'} \nabla_{e_2}^N u^N\} \\
& =\nabla_{e_1}^N \nabla_{e_2}^N a_{x,e'}(u^N)  \t_{e_2-e'+e_1} \nabla_{e_2}^N u^N
+ \nabla_{e_2}^N a_{x,e'}(u^N)  \t_{e_2-e'} \nabla_{e_1}^N\nabla_{e_2}^N u^N.  \notag
\end{align}
We now state the next lemma,
in which $e_1\in \Z^d$ does not need to have norm $1$.

\begin{lem}  \label{lem:6.5}
We have
\begin{align}  \label{eq:3.nab-nab-fa'}
\nabla_{e_1}^N \nabla_{e_2}^N a_{x,e'}(u) 
=  & \tfrac12 \fa''(u(\tfrac{x}N)) \big( \nabla_{e_1}^N \nabla_{e_2}^N u(\tfrac{x}N) 
- \nabla_{e_1}^N \nabla_{e_2}^N u(\tfrac{x-e'}N)\big) \\
& + \fa''(u(\tfrac{x+e_1}N)) \nabla_{e_1}^N \nabla_{e_2}^N u(\tfrac{x}N)
+ R^N(x).  \notag
\end{align}
where $R^N(x)$ is a sum of quadratic functions of  $\{\nabla_i^Nu^N(\frac{z}N); i=e_1, e_2, 
e_1+e_2, -e', z=x, x+e_1, x+e_2, x+e_1+e_2\}$, which is explicitly given in the proof.
In particular, in $R^N(x)$, the squares of $\nabla_{e_1}^Nu^N(\frac{z}N)$
and $\nabla_{e_1+e_2}^Nu^N(\frac{z}N)$ do not appear.
\end{lem}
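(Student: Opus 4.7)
The plan is to derive the expansion \eqref{eq:3.nab-nab-fa'} by writing $a_{x,e'}(u)$ in an integral ``mean-value'' form and then applying $\nabla_{e_1}^N \nabla_{e_2}^N$ via the discrete Leibniz rule \eqref{eq:disc-der-D} combined with Taylor expansion of $\fa$ up to third order. Concretely, I would use
$$
a_{x,e'}(u) \;=\; \Theta\big(u(\tfrac{x+e'}{N}),u(\tfrac{x}{N})\big), \qquad \Theta(a,b) := \int_0^1 \fa'\big(b+s(a-b)\big)\,ds,
$$
which agrees with both branches of \eqref{eq:a_xe} and, since $\fa\in C^5$, defines a $C^3$ function of two variables satisfying $\Theta(b,b)=\fa'(b)$, $\Theta_a(b,b)=\Theta_b(b,b)=\tfrac12\fa''(b)$, and $\Theta_a(a,b)+\Theta_b(a,b)=\int_0^1\fa''(b+s(a-b))\,ds$. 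These identities are precisely the source of the coefficients $\tfrac12\fa''(u(\tfrac{x}{N}))$ and $\fa''(u(\tfrac{x+e_1}{N}))$ appearing in the claimed expansion.

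The computation then proceeds in two steps. First, I would apply $\nabla_{e_2}^N$ and Taylor-expand $\Theta$ to first order around $(u(\tfrac{x+e'}{N}),u(\tfrac{x}{N}))$, obtaining
$$
\nabla_{e_2}^N a_{x,e'}(u) \;=\; \Theta_a\,\nabla_{e_2}^N u(\tfrac{x+e'}{N})+\Theta_b\,\nabla_{e_2}^N u(\tfrac{x}{N}) + Q_1(x),
$$
with $Q_1(x)$ the second-order Taylor remainder, a bounded bilinear form in $\tfrac1N\nabla_{e_2}^N u$ at $\tfrac{x+e'}{N}$ and $\tfrac{x}{N}$. Next, I would apply $\nabla_{e_1}^N$ and invoke \eqref{eq:disc-der-D} on each of the three resulting products: the outer derivative either (i) acts on the $\nabla_{e_2}^N u$ factor, producing the two leading second-difference contributions $\Theta_a\,\nabla_{e_1}^N\nabla_{e_2}^N u$ and $\Theta_b\,\nabla_{e_1}^N\nabla_{e_2}^N u$, which after aligning sites and using $\Theta_a+\Theta_b=\fa''$ at the collapsed base point yield exactly the two explicit terms in \eqref{eq:3.nab-nab-fa'}; (ii) acts on the coefficients $\Theta_a,\Theta_b$, whose discrete derivatives expand (via another Taylor step applied to the $C^2$-functions $\Theta_a,\Theta_b$) into bilinear remainders of the form $\nabla_{e_1}^N u\cdot\nabla_{e_2}^N u$ or $\nabla_{e_1}^N u\cdot\nabla_{-e'}^N u$ evaluated at the shifted sites $x,x\pm e_1,x\pm e_2, x\pm e'$; or (iii) differentiates $Q_1$, producing additional quadratic-in-first-difference remainders of the same type. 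All contributions from (ii) and (iii) are collected into $R^N(x)$.

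The main obstacle is not the derivation itself but the structural claim that the remainder $R^N(x)$ never contains a pure square $(\nabla_{e_1}^N u)^2$ nor $(\nabla_{e_1+e_2}^N u)^2$. To check this I would track the provenance of every quadratic monomial: each such monomial arises from differentiating either the inner $e'$-difference in $\Theta$ (forcing a factor $\nabla_{-e'}^N u$) or the inner $\nabla_{e_2}^N u$ factor produced at the previous step (forcing a factor $\nabla_{e_2}^N u$ at some shifted site); consequently every bilinear term couples the single outer $\nabla_{e_1}^N u$ to one of $\nabla_{e_2}^N u$ or $\nabla_{-e'}^N u$, ruling out $(\nabla_{e_1}^N u)^2$. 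Moreover, since the only elementary differences that ever appear are taken along the unit directions $e_1$, $e_2$, $-e'$ (at the sites $x,\,x+e_1,\,x+e_2,\,x+e_1+e_2,\,x-e'$ listed in the lemma), the combined direction $e_1+e_2$ never occurs as a single discrete gradient, excluding $(\nabla_{e_1+e_2}^N u)^2$. Writing out $R^N(x)$ as the explicit sum of these bilinear terms, with coefficients bounded by $\|\fa''\|_\infty$ and $\|\fa'''\|_\infty$, is then routine bookkeeping, and this explicit form is exactly what is needed afterwards (for instance to establish \eqref{eq:b-R} in Corollary \ref{second_Schauder_cor}).
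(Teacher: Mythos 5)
Your plan takes a genuinely different route from the paper. The paper expands each of the four divided differences in the explicit second-difference formula
$\nabla_{e_1}^N\nabla_{e_2}^N a_{x,e'} = N^2\{a_{x+e_1+e_2,e'} - a_{x+e_2,e'} - a_{x+e_1,e'} + a_{x,e'}\}$
via a \emph{one-variable} third-order Taylor expansion of $\tfrac{\fa(a)-\fa(b)}{a-b}$ around its first argument, and then regroups the $\fa'$-terms, $\fa''$-terms and $\fa'''$-terms separately. You instead write the divided difference as the two-variable function $\Theta(a,b)=\int_0^1\fa'(b+s(a-b))\,ds$ and apply $\nabla_{e_2}^N$, then $\nabla_{e_1}^N$, doing a two-variable Taylor step at each stage. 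Both are legitimate; yours is more symmetric and arguably more systematic, but it naturally produces a \emph{different} explicit decomposition.

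That mismatch is where the proposal has a genuine gap. Your contribution (i) yields the leading second-difference terms
$\Theta_a\cdot\nabla_{e_1}^N\nabla_{e_2}^Nu(\tfrac{x+e'}N)+\Theta_b\cdot\nabla_{e_1}^N\nabla_{e_2}^Nu(\tfrac{x}N)$,
with $\Theta_a,\Theta_b \approx \tfrac12\fa''$, i.e.\ second differences of $u$ at the sites $x+e'$ and $x$ with coefficient weights roughly $(\tfrac12,\tfrac12)$. The lemma's stated leading terms involve second differences at the sites $x$ and $x-e'$, with coefficient weights roughly $(\tfrac32,-\tfrac12)$ (namely $\tfrac12\fa''(u(\tfrac{x}N))+\fa''(u(\tfrac{x+e_1}N))$ at $x$ and $-\tfrac12\fa''(u(\tfrac{x}N))$ at $x-e'$). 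These two decompositions are not related by a ``trivial site alignment'': converting $\nabla_{e_1}^N\nabla_{e_2}^Nu(\tfrac{x+e'}N)$ to $\nabla_{e_1}^N\nabla_{e_2}^Nu(\tfrac{x}N)$ (or $x-e'$) introduces a third discrete difference, which then has to be absorbed back into the remainder using one more Taylor/mean-value step. So the phrase ``after aligning sites \ldots yield exactly the two explicit terms'' asserts a non-trivial rearrangement without carrying it out, and as written the plan produces a valid identity of the claimed \emph{shape}, but not literally the expansion \eqref{eq:3.nab-nab-fa'}.

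A second point to be careful about: your argument for excluding $(\nabla_{e_1+e_2}^Nu)^2$ rests on the claim that ``the combined direction $e_1+e_2$ never occurs as a single discrete gradient.'' That is true for the $R^N$ your scheme would produce, but it is \emph{not} true for the paper's $R^N$: the term $R_3^N(x)$ in the paper's proof explicitly contains $\nabla_{e_1+e_2}^Nu(\tfrac{x}N)\cdot\nabla_{-e'}^Nu(\tfrac{x+e_1+e_2}N)$, which is precisely why the lemma lists $i=e_1+e_2$ among the allowed directions. The lemma forbids only the \emph{square} of $\nabla_{e_1+e_2}^Nu$. If you intend your $R^N$ to be a different explicit decomposition than the paper's, you should say so rather than arguing as though the two coincide; if you intend to reproduce the paper's $R^N$, the premise of your exclusion argument is false and you need the paper's more direct observation that $\fa'''(u^*)$-terms always couple a $\nabla_{-e'}^N$-factor or a $\nabla_{e_2}^N$-factor to the other gradient. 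Finally, the $\nabla_{e_1}^NQ_1$ bookkeeping (contribution (iii)) is also glossed over: naively it produces terms with an explicit $\tfrac1N$-prefactor multiplying a cubic-in-gradients or a gradient-times-second-difference monomial, and one must spell out how the $\tfrac1N$ is traded against one of the gradients (via $\tfrac1N\nabla_{e}^Nu = u(\cdot+e)-u(\cdot)$) to land back in the quadratic-in-gradients class.
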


\begin{proof}
Omitting $\frac1N$ in the variables again,
 the left hand side of \eqref{eq:3.nab-nab-fa'} is rewritten as
\begin{align*} 
& 
N^2 \Big\{ 
\frac{\fa(u(x+e_1+ e_2))-\fa(u(x+e_1+ e_2-e'))}{u(x+e_1+ e_2)-u(x+e_1+ e_2-e')}
 - \frac{\fa(u(x+e_2))-\fa(u(x+e_2-e'))}{u(x+e_2)-u(x+e_2-e')}     \\
& \qquad\qquad   -  \frac{\fa(u(x+e_1))-\fa(u(x+e_1-e'))}{u(x+e_1)-u(x+e_1-e')}
+ \frac{\fa(u(x))-\fa(u(x-e'))}{u(x)-u(x-e')}  
\Big\}   \\
&  = 
N^2 \Big\{
 \fa'(u(x+e_1+ e_2)) \\
 &\hskip 15mm + \tfrac12 \fa''(u(x+e_1+ e_2)) (u(x+e_1+ e_2)-u(x+e_1+ e_2-e'))  \\
& \hskip 15mm
+ \tfrac16 \fa'''(u_1^*) (u(x+e_1+ e_2)-u(x+e_1+ e_2-e'))^2   \\
& \qquad - \fa'(u(x+e_2)) - \tfrac12 \fa''(u(x+e_2)) (u(x+e_2)-u(x+ e_2-e'))  \\
& \hskip 15mm- \tfrac16 \fa'''(u_2^*) (u(x+ e_2)-u(x+ e_2-e'))^2  \\
& \qquad- \fa'(u(x+e_1)) - \tfrac12 \fa''(u(x+e_1)) (u(x+e_1)-u(x+e_1-e')) \\
& \hskip 15mm- \tfrac16 \fa'''(u_3^*) (u(x+e_1)-u(x+e_1-e'))^2    \\
 &\qquad+ \fa'(u(x)) + \tfrac12 \fa''(u(x)) (u(x)-u(x-e'))
+ \tfrac16 \fa'''(u_4^*) (u(x)-u(x-e'))^2
\Big\},
\end{align*}
by the mean value theorem for some $u_1^*,.., u_4^*\in \R$.

The sum of the terms containing $\frac16 \fa'''$, written as $R_1^N(x)$, is given by
\begin{align*} 
R_1^N(x)= &  \tfrac16 \fa'''(u_1^*) (\nabla_{-e'}^{N} u(x+e_1+ e_2))^2 
- \tfrac16 \fa'''(u_2^*) (\nabla_{-e'}^{N} u(x+e_2))^2  \\
&  - \tfrac16 \fa'''(u_3^*) (\nabla_{-e'}^{N} u(x+e_1))^2 
+ \tfrac16 \fa'''(u_4^*) (\nabla_{-e'}^{N} u(x))^2.
\end{align*}

The terms containing $\fa'$ are summarized as
\begin{align*} 
& N^2\{ \fa'(u(x+e_1+ e_2))  - \fa'(u(x+e_1)) - \fa'(u(x+e_2))+ \fa'(u(x)) \}\\
& = N^2\Big\{ 
\fa''(u(x+e_1)) (u(x+e_1+ e_2)-u(x+e_1)) \\
&\qquad\qquad + \tfrac12 \fa'''(u_5^*)(u(x+e_1+ e_2)-u(x+e_1))^2 \\
& \qquad\qquad - \fa''(u(x)) (u(x+e_2)-u(x)) - \tfrac12 \fa'''(u_6^*)(u(x+ e_2)-u(x))^2\Big\} \\
& = N\Big\{ \fa''(u(x+e_1)) \nabla_{e_2}^N u(x+e_1) - \fa''(u(x)) \nabla_{e_2}^N u(x) \Big\} \\
&\qquad\qquad + \tfrac12 \fa'''(u_5^*)(\nabla_{e_2}^N u(x+e_1))^2 
- \tfrac12 \fa'''(u_6^*)(\nabla_{e_2}^N u(x))^2.
\end{align*}
However, the first term is rewritten as
\begin{align*} 
& \fa''(u(x+e_1)) \nabla_{e_1}^N\nabla_{e_2}^N u(x) 
+ N \big(\fa''(u(x+e_1)) - \fa''(u(x))\big) \nabla_{e_2}^N u(x) \\
&\qquad = \fa''(u(x+e_1)) \nabla_{e_1}^N\nabla_{e_2}^N u(x) 
+ \fa'''(u_7^*) \nabla_{e_1}^N u(x) \nabla_{e_2}^N u(x).
\end{align*}
Thus, the terms containing $\fa'$ are rewritten as
\begin{align*} 
 \fa''(u(x+e_1)) \nabla_{e_1}^N\nabla_{e_2}^N u(x)  + R_2^N(x),
\end{align*}
where 
\begin{align*} 
R_2^N(x)= \tfrac12 \fa'''(u_5^*)(\nabla_{e_2}^N u(x+e_1))^2 
- \tfrac12 \fa'''(u_6^*)(\nabla_{e_2}^N u(x))^2
+ \fa'''(u_7^*) \nabla_{e_1}^N u(x) \cdot \nabla_{e_2}^N u(x).
\end{align*}

Finally, the terms containing $\frac12\fa''$ are summarized as
\begin{align*} 
& \tfrac12  N^2 \Big\{ \fa''(u(x+e_1+ e_2)) (u(x+e_1+ e_2)-u(x+e_1+ e_2-e'))  \\
& \qquad\qquad  - \fa''(u(x+e_2)) (u(x+e_2)-u(x+ e_2-e'))  \\
& \qquad\qquad -  \fa''(u(x+e_1)) (u(x+e_1)-u(x+e_1-e')) 
+ \fa''(u(x)) (u(x)-u(x-e'))  \Big\}  \\
&= \tfrac12  N^2\fa''(u(x)) \Big\{ (u(x+e_1+ e_2)-u(x+e_1+ e_2-e'))\\
&\hskip 30mm
- (u(x+e_2)-u(x+ e_2-e')) \\
& \hskip 30mm
- (u(x+e_1)-u(x+e_1-e')) + (u(x)-u(x-e'))  \Big\} + R_3^N(x)  \\
& = \tfrac12 \fa''(u(x))\big\{ \nabla_{e_1}^N\nabla_{e_2}^N u(x) - \nabla_{e_1}^N\nabla_{e_2}^N u(x-e')\big\}
 + R_3^N(x),
\end{align*}
where $R_3^N(x)$ is given and rewritten as
\begin{align*}
R_3^N(x) = 
& - \tfrac12  N \big\{ \fa''(u(x+e_1+ e_2)) - \fa''(u(x))\big\} \nabla_{-e'}^Nu(x+e_1+ e_2)\\
&\quad + \tfrac12  N \big\{ \fa''(u(x+ e_2)) - \fa''(u(x))\big\} \nabla_{-e'}^Nu(x+ e_2)\\
& \quad + \tfrac12  N \big\{ \fa''(u(x+e_1)) - \fa''(u(x))\big\} \nabla_{-e'}^Nu(x+e_1)\\
=
& - \tfrac12  \fa'''(u_8^*) \nabla_{e_1+ e_2}^N u(x) \nabla_{-e'}^Nu(x+e_1+ e_2)\\
& \quad + \tfrac12  \fa'''(u_9^*) \nabla_{e_2}^N u(x) \nabla_{-e'}^Nu(x+ e_2)\\
& \quad + \tfrac12  \fa'''(u_{10}^*) \nabla_{e_1}^N u(x) \nabla_{-e'}^Nu(x+ e_1).
\end{align*}
This completes the proof of \eqref{eq:3.nab-nab-fa'} with $R^N(x) = R_1^N(x)+R_2^N(x)+R_3^N(x)$.
\end{proof}

\begin{rem} \label{rem:6.3}\rm
First recall that \eqref{eq:3.nab-nab-fa'} holds for any $e_1\in \Z^d$, i.e., not necessarily
$|e_1|=1$.  Taking $e_1=x_2-x_1$, $e_2=-e$, $e'=e$, $x=x_1$ and multiplying by $\frac1N$, 
\eqref{eq:3.nab-nab-fa'} implies that
\begin{align*}
b_e(t,\tfrac{x_2}N) - b_e(t,\tfrac{x_1}N)
=  & \tfrac14 \fa''(u(\tfrac{x_1}N)) \big\{\nabla_{-e}^N u(\tfrac{x_2}N) 
- \nabla_{-e}^N u(\tfrac{x_1}N) \} \\ 
& - \tfrac14 \fa''(u(\tfrac{x_1}N)) \big\{\nabla_{-e}^N u(\tfrac{x_2-e}N) 
- \nabla_{-e}^N u(\tfrac{x_1-e}N) \} \\ 
& + \tfrac12\fa''(u(\tfrac{x_2}N)) \big\{\nabla_{-e}^N u(\tfrac{x_2}N) 
- \nabla_{-e}^N u(\tfrac{x_1}N) \} 
+ \tfrac1{2N} R^N(\tfrac{x_1}N).
\end{align*}
For the first three terms, we recall \eqref{eq:1.u_pm}.  Then,
we can apply the first Schauder estimate
$[\tilde u^N]_{1+\si} (\le |\tilde u^N|_{1+\si}) \le CK^{1+\frac1\si}$
shown in Corollary \ref{extended_cor} under the assumption that
$u_0\in C^2(\T^n)$, and see that these three terms are bounded by 
$CK^{1+\frac1\si} \big| \tfrac{x_1}N - \tfrac{x_2}N \big|^\si$.
The last term is a sum of quadratic functions of $\{\nabla_i^Nu^N(\tfrac{z}N)\}$
as indicated in Lemma \ref{lem:6.5}.  The term with $i=e_1=x_2-x_1$ is bounded as
\begin{align*}
|\nabla_{e_1}^N u^N(\tfrac{z}N)| 
& = \Big|\sum_{k=1}^{|x_2-x_1|} \nabla_{\tilde e_k}u^N(\tfrac{z_k}N)\Big|
\le CK^{\frac1\si} |x_2-x_1|,
\end{align*}
by Corollary \ref{extended_cor}   
by choosing $\tilde e_k:|\tilde e_k|=1$ and $z_k$ properly, where
$|x_1-x_2|= |x_1-x_2|_{L^\infty}$.  Similar for the term
with $i=e_1+e_2$.  These terms do not appear in squares as we noted.
From these observation, for $x_1\not= x_2$,
\begin{align*}
\tfrac1N |R^N(\tfrac{x_1}N)| \le \tfrac1N C \{K^{\frac1\si} |x_1-x_2|
\cdot K^{\frac1\si} + (K^{\frac1\si})^2 \}
\le 2CK^{\frac2\si} |\tfrac{x_1}N - \tfrac{x_2}N|.
\end{align*}
Noting $\si<1$ so that $K^{1+\frac1\si}\le K^{\frac2\si}$,
we obtain \eqref{eq:b-R} when the time variable is fixed.
\end{rem}

\begin{rem}  \label{rem time}
\rm
We comment, by the same style of proof given for Lemma \ref{extended-Holder-thm}, we obtain
\begin{align}
\label{eq:rem1}
&\big|\nabla^N_{e_2}a^N_{x, e_1}(t) - \nabla^N_{e_2}a^N_{x, e_1}(s)\big|
\leq C(\|\fa''\|_\infty) \max_{e, y} \big| \nabla^N_{e} u^N(t, \tfrac{y}{N}) - \nabla^N_{e}u^N(s, \tfrac{y}{N})\big| \\
&\hskip 35mm 
+ \frac{C(\|\fa'''\|_\infty)}{N}\max_{e,e', z, z'}\max_{r\geq t\wedge s}\big|\nabla^N_{e}u^N(r, \tfrac{z}{N})\big|\big|\nabla^N_{e'}u^N(r, \tfrac{z'}{N})\big|.\nonumber
\end{align}
When $\tfrac{1}{N} \leq |t-s|^{\frac{\si}{2}}$, \eqref{eq:rem1} and Corollary \ref{extended_cor} give the formula \eqref{eq:b-R time} when the space variable is fixed.
On the ther hand, when $\tfrac{1}{N} \geq |t-s|^{\frac{\si}{2}}$, 
we reduce, by the estimate on $\langle u^N\rangle_{1+\si}$ in Corollary \ref{extended_cor}
and also Lemma \ref{lem:mvt}, that
\begin{align}
\label{eq:rem2}
\big|\nabla^N_{e_2}&a^N_{x, e_1}(t)  - \nabla^N_{e_2}a^N_{x, e_1}(s)\big|\\
&\leq C(\|\fa'\|_\infty)N\big[ |u^N(t, \tfrac{x+e_1}{N}) - u^N(s, \tfrac{x+e_1}{N})| + |u^N(t, \tfrac{x}{N})-u^N(s, \tfrac{x}{N})|\big]\nonumber\\
&\leq C(K^{1+\frac{1}{\si}} +1)\cdot N |t-s|^{\frac{1+\si}{2}} \leq C(K^{1+\frac{1}{\si}} +1) |t-s|^{\tfrac{1}{2}}  \notag \\
& \leq C(K^{1+\frac{1}{\si}} +1) |t-s|^{\frac{\si}{2}},\nonumber
\end{align}
yielding \eqref{eq:b-R time} in this case also.
\end{rem}

\begin{rem}
\label{third derivative}
\rm
Moreover, recalling $\|\fa^{(i)}\|_\infty= \|\fa^{(i)}\|_{L^\infty([u_-,u_+])}$ as in 
Corollary \ref{cor:2.3}, we claim that
\begin{align}
\label{third derivative eq}
|\nabla^N_{e_1}\nabla^N_{e_2}\nabla^N_{e_3} a_{x, e_4}(t)| &\leq
C(\|\fa^{(i)}\|_\infty: 1\leq i\leq 4) \sum_{j=1}^3\|u^N(t, \cdot)\|_{C_N^3}^j.
\end{align}
Indeed, omitting $\tfrac{1}{N}$ in the variables again, write
\begin{align*}
a_{x, e_4}(t) &= \fa'(u^N(t,x)) + \frac{1}{2N}\fa''(u^N(t,x))\nabla^N_{e_4}u^N(t,x)\\
& + \frac{1}{3! N^2}\fa'''(u^N(t,x))\big[\nabla^N_{e_4}u^N(t,x)\big]^2 + \frac{1}{4! N^3} \fa^{(4)}(z^N(x; t, e_4))\big[\nabla^N_{e_4} u^N(t,x)\big]^3\\
&= I_1+I_2+I_3 + I_4
\end{align*}
where
$z^N(x; t, e_4): \T^n_N\rightarrow \R$ is the value between $u^N(t, x)$ and $u^N(t, x+e_4)$ (closest to $u^N(t,x)$ if more than one) with respect to the mean-value theorem.
Then, we need to bound $\nabla^N_{e_1}\nabla^N_{e_2}\nabla^N_{e_3} I_i$ for $1\leq i\leq 4$.

Recall the notation for $a^1$ in \eqref{a^1 eq}.  Note, by \eqref{eq:disc-der-D},
\begin{align*}
\nabla^N_{e_1}\nabla^N_{e_2}\nabla^N_{e_3} I_1 &= \nabla^N_{e_1}\nabla^N_{e_2} (a^1_{x, e_3} 
\nabla^N_{e_3} u^N(t, x))\\
& = \nabla^N_{e_1}\big[\nabla^N_{e_2} a^1_{x, e_3} \cdot \nabla^N_{e_3} u^N(t, x+e_2) + a^1_{x, e_3}\nabla^N_{e_2}\nabla^N_{e_3} u^N(t,x)\big].
\end{align*}
By the proof of Lemma \ref{lem:6.5}, 
$|\nabla^N_{e_1}\nabla^N_{e_2}a^1_{x, e_3}| \leq C\sum_{j=1}^2 \|u^N(t, \cdot)\|_{C_N^2}^j$, with $a^1_{\cdot, \cdot}$ in place of $a_{\cdot, \cdot}$.  Also, by the proof of Lemma \ref{lem:mvt}, $|\nabla^N_{e_1}a^1_{x, e_3}| \leq C\|\nabla^N u^N\|_\infty  \equiv C \max_e \|\nabla_e^N u^N\|_\infty$.
Hence, using \eqref{eq:disc-der-D} again, 
$|\nabla^N_{e_1}\nabla^N_{e_2} \nabla^N_{e_3}I_1| \leq C\sum_{j=1}^3\|u^N(t,\cdot)\|_{C_N^3}^j$.

To same bound for $\nabla^N_{e_1}\nabla^N_{e_2}\nabla^N_{e_3}I_2$, because of the prefactor $N^{-1}$ and $\nabla^N_{e_1} u(x) = N\big(u(x+e_1)-u(x)$, would follow from the estimate
$|\nabla^N_{e_2}\nabla^N_{e_3}\big[ \fa''(u^N(t,x))\nabla^N_{e_4}u^N(t,x)\big]| \leq C\sum_{j=1}^3 \|u^N(t,\cdot)\|_{C_N^3}^j$.  
We may write $\nabla^N_{e_3} \fa''(u^N(t,x)) = a^2_{x, e_3} \nabla^N_{e_3} u^N(t,x)$ where $a^2$ is defined with $\fa''$ instead of $\fa'$ as in the definition of $a^1$.  Then, the same type of argument with respect to the term $I_1$ gives the desired bound.
 
The same bound for $\nabla^N_{e_1}\nabla^N_{e_2}\nabla^N_{e_3} I_3$, due to the prefactor $N^{-2}$, follows from the estimate $|\nabla^N_{e_3}\big\{\fa'''(u^N(t,x)) \big[\nabla^N_{e_4} u^N(t,x)\big]^2\big\}| \leq C\sum_{j=1}^3 \|u^N(t,\cdot)\|_{C_N^3}^j$, shown by analogous arguments used for the term $I_2$.  

The same bound for $\nabla^N_{e_1}\nabla^N_{e_2}\nabla^N_{e_3} I_3$, due to the prefactor is $N^{-3}$, follows from the estimate $|\fa^{(4)}(z^N(x))\big[\nabla^N_{e_4} u^N(t,x)\big]^3|\leq \|\fa^{(4)}\|_\infty \|\nabla^N u^N\|_\infty^3$.

Hence, \eqref{third derivative eq} is established.
We comment that the estimate \eqref{third derivative eq} was used to show \eqref{C_1 eq}.
\end{rem}

\begin{proof} [Proof of Proposition \ref{prop.6.4}]
We apply \eqref{eq:3.nabla-De} again taking $e=e_1$ and $u=\nabla_{e_2}^Nu^N$ for the
first term of \eqref{eq:3.t-nab-nab}.  
Then, we have
$$
\partial_t \nabla_{e_1}^N \nabla_{e_2}^N u^N = L_a^N \nabla_{e_1}^N \nabla_{e_2}^N u^N
+ Q^N(x),
$$
where the remainder term $Q^N(x)$ is the sum of the second and third terms in the right 
hand side of \eqref{eq:3.t-nab-nab} and the last term in \eqref{eq:3.nabla-De} with
$e=e_1$ and $u=\nabla_{e_2}^Nu^N$.

Then, by Duhamel's formula, we obtain
for $v_{e_1,e_2}(t,\tfrac{x}N) := \nabla_{e_1}^N \nabla_{e_2}^N u^N(t,\tfrac{x}N)$,
\begin{align*}
&v_{e_1,e_2}(t,\tfrac{x}N) 
=   N^{-n}\sum_y v_{e_1,e_2}(0,\tfrac{y}N) p(0,\tfrac{y}N;t,\tfrac{x}N)  \\
& \ \ + \int_0^t ds \, N^{-n}\sum_y 
\sum_{e'} \{\t_{e_1-e'}\nabla_{e_1}^N \nabla_{e_2}^N u^N(s,\tfrac{y}N)\cdot 
\nabla_{e_1}^N a_{y,e'}(u^N(s)) \} \nabla_{e',y}^{N,*} p(s,\tfrac{y}N;t,\tfrac{x}N)  \\
& \ \ + \int_0^t ds \, N^{-n}\sum_y \sum_{e'}
\nabla_{e_1}^N  \{ \nabla_{e_2}^N a_{y,e'}(u^N(s)) \t_{e_2-e'}\nabla_{e_2}^N u^N\}(s,\tfrac{y}N)
\nabla_{e',y}^{N,*}  p(s,\tfrac{y}N;t,\tfrac{x}N) \\
& \ \ + K \int_0^t ds\, N^{-n}\sum_y g_{y,e_2}(u^N(s)) \nabla_{e_2}^N u^N(s,\tfrac{y}N)
\nabla_{e_1}^{N,*}p(s,\tfrac{y}N;t,\tfrac{x}N).
\end{align*}

Let $v(t,\tfrac{x}N) = \sum_{e_1,e_2} |v_{e_1,e_2}(t,\tfrac{x}N)|$ and note that
$|\nabla^Nu^N(s,\tfrac{y}N)| \le CK^{\frac1\si}$ from the first Schauder estimate.  
Also, note that $N^{-n}\sum_y p =1$ by the symmetry of $p$ for
the first term.  In addition, note that $g_{y,e_2}(u)$ is bounded and $K\cdot K^{1/\si}\le K^{3/\si}$
for the last term.  Then,
we have from \eqref{eq:3.35-A}, \eqref{eq:3.nab-fa'},
\eqref{eq:3.18} and \eqref{eq:3.nab-nab-fa'} that
\begin{align}  \label{eq:3.20}
\|v(t)\|_{L^\infty} 
\le & C \|v(0)\|_{L^\infty} 
+ CK^{\frac1\si} \cdot e^{CK^{4/\si^2}}
\Big( \int_0^t \frac{ds}{\sqrt{t-s}} \|v(s)\|_{L^\infty} + K^{\frac{2}{\si}} \sqrt{t}\Big),
\end{align}
for $t\in [0,T]$.   This implies, by the argument in \cite{F83}, p.\ 144 (letting $n\to\infty)$,
that
\begin{align}  \label{eq:3.21}
\|v(t)\|_{L^\infty} 
\le  C (\|v(0)\|_{L^\infty} +K^{3/\si}e^{CK^{4/\si^2}}) 
\exp \Big\{Ct\,\big(K^{\frac1\si} \cdot e^{CK^{4/\si^2}}
\big)^2\Big\}
\end{align}
concluding the proof of \eqref{eq:lem3.3-2} by absorbing $K^{\frac1\si}$ 
and also $K^{3/\si}e^{CK^{4/\si^2}}$ in the higher order exponential by changing $C$.
\end{proof}

\begin{rem}\rm
For the linear Laplacian (i.e., $\fa(u)=cu$), we have $\nabla_x^N p^N=\nabla_y^Np^N$
due to $[\nabla_e^N,\De^N]=0$ or $p^N=p^N(t-s,x-y)$ so that the computations made in 
Lemma \ref{lem:3.4} are unnecessary.  In Lemma \ref{lem:3.4}, 
especially \eqref{eq:3.nabla-De}, the second term, which is the
error term obtained by computing $[\nabla_e^N, L_a^N]$,
is in the form of a gradient.  This is important to make the
summation by parts in $y$ and move the discrete derivative
$\nabla_{e'}^N$ to $p$.
\end{rem}

\section*{Acknowledgements}

The authors thank Jean-Dominique Deuschel, Takashi Kumagai and Stefan Neukamm
for their advice on related references.
T.\ Funaki was supported in part by JSPS KAKENHI, Grant-in-Aid for Scientific Researches (A) 18H03672 and (S) 16H06338.
S.\ Sethuraman was supported by grant ARO W911NF-181-0311.


\begin{thebibliography}{99}


\bibitem{AN}{\sc S.\ Andres and S.\ Neukamm},
{\it Berry-Esseen theorem and quantitative homogenization for the random 
conductance model with degenerate conductances},
Stoch.\ Partial Differ.\ Equ.\ Anal.\ Comput., {\bf 7} (2019), 240--296.

\bibitem{AKM}{\sc S.\ Armstrong, T.\ Kuusi and J.-C.\ Mourrat},
{\it Quantitative stochastic homogenization and large-scale regularity},
Grundlehren der Mathematischen Wissenschaften, {\bf 352},
Springer, 2019, xxxviii+518 pp.

\bibitem{BB}{\sc I.\ Bailleul and F.\ Bernicot},
{\it Heat semigroup and singular PDEs},
J.\ Funct.\ Anal., {\bf 270} (2016), 3344--3452.

\bibitem{BH}{\sc  M.T.\ Barlow and B.M.\ Hambly},
{\it Parabolic Harnack inequality and local limit theorem for percolation clusters},
Electron.\ J.\ Probab., {\bf 14} (2009), 127. 

\bibitem{CM}{\sc G.\ Cannizzaro and K.\ Matetski},
{\it Space-time discrete KPZ equation},
Comm.\ Math.\ Phys., {\bf 358} (2018), 521--588.

\bibitem{DG}{\sc P.\ Dario and C.\ Gu}, 
{\it Quantitative homogenization of the parabolic and elliptic Green's 
functions on percolation clusters},
Ann.\ Probab., {\bf 49} (2021), 556--636.


\bibitem{Del}{\sc T.\ Delmotte},
{\it Parabolic Harnack inequality and estimates of Markov chains on graphs},
Rev.\ Mat.\ Iberoamericana, {\bf 15} (1999), 181--232. 

\bibitem{DD}{\sc T.\ Delmotte and J.-D.\ Deuschel},
{\it On estimating the derivatives of symmetric diffusions in stationary
random environment, with applications to $\nabla\phi$ interface model},
Probab.\ Theory Related Fields, {\bf 133} (2005), 358--390.

\bibitem{DGI}{\sc J.-D.\ Deuschel, G.\ Giacomin and D.\ Ioffe}, 
{\it Large deviations and concentration properties for $\nabla\varphi$ 
interface models}, 
Probab.\ Theory Related Fields, {\bf 117} (2000), pp.\ 49--111.

\bibitem{Ei}{\sc S.D.\ Eidel'man},
{\it Parabolic Systems}, North-Holland, 1969, v+469 pp.

\bibitem{EFHPS-1}
{\sc P.\ El Kettani, T.\ Funaki, D.\ Hilhorst, H.\ Park and S.\ Sethuraman},
{\it Mean curvature interface limit from Glauber+Zero-range interacting 
particles},  arXiv:2004.05276v2.

\bibitem{EFHPS-2}
{\sc P.\ El Kettani, T.\ Funaki, D.\ Hilhorst, H.\ Park and S.\ Sethuraman},
{\it Singular limit of an Allen-Cahn equation with nonlinear diffusion},
arXiv:2112.13081.

\bibitem{Evans}{\sc L.C.\ Evans}, 
{\it Partial differential equations}, Graduate Studies in Mathematics, {\bf 19},
American Mathematical Society, 1998. xviii+662 pp.

\bibitem{Fri64}{\sc A.\ Friedman},
{\it Partial differential equations of parabolic type},
Prentice-Hall, Inc., 1964,  {\rm xiv}+347 pp.

\bibitem{F83}{\sc T.\ Funaki},
{\it Random motion of strings and related stochastic evolution
equations}, Nagoya Math.\ J., {\bf 89} (1983), 129--193.

\bibitem{FMST}{\sc T.\ Funaki, P.\ van Meurs, S.\ Sethuraman and K.\ Tsunoda}, 
{\it Motion by mean curvature from Glauber-Kawasaki dynamics 
with speed change}, in preparation.

\bibitem{FT}{\sc T.\ Funaki and K.\ Tsunoda}, 
{\it Motion by mean curvature from Glauber-Kawasaki dynamics},
J.\ Stat.\ Phys., {\bf 177} (2019), 183--208.

\bibitem{GOS}{\sc G.\ Giacomin, S.\ Olla and H.\ Spohn}, 
{\it Equilibrium fluctuations for $\nabla\fa$ interface model},
Ann.\ Probab., {\bf 29} (2001), 1138--1172.

\bibitem{GT}{\sc  D.\ Gilbarg and N.S.\ Trudinger},
{\it Elliptic partial differential equations of second order},
Reprint of the 1998 edition, Classics in Mathematics,
Springer, 2001, xiv+517 pp.

\bibitem{GNO-14}{\sc A.\ Gloria, S.\ Neukamm and F.\ Otto},
{\it An optimal quantitative two-scale expansion in stochastic homogenization
of discrete elliptic equations}, 
ESAIM: Math.\ Model.\ Numer.\ Anal., {\bf 48} (2014), 325--346.

\bibitem{GNO-15}{\sc A.\ Gloria, S.\ Neukamm and F.\ Otto},
{\it Quantification of ergodicity in stochastic homogenization: optimal 
bounds via spectral gap on Glauber dynamics},
Invent.\ Math., {\bf 199} (2015), 455--515. 

\bibitem{GNO-20}{\sc A.\ Gloria, S.\ Neukamm and F.\ Otto},
{\it A regularity theory for random elliptic operators},
Milan J.\ Math., {\bf 88} (2020), 99--170.

\bibitem{Gri}{\sc A.\ Grigor'yan, L.\ Yong and Y.\ Yunyan}, 
{\it Yamabe type equations on finite graphs}, 
J.\ Diff.\ Eq., {\bf 261} (2016) 4924--4943.

\bibitem{Gu}{\sc J.\ Guerand},
{\it Quantitative regularity for parabolic De Giorgi classes},
arXiv:1903.07421v2.

\bibitem{HN}{\sc L.\ Hou and P.\ Niu}, 
{\it A Nash type result for divergence parabolic equation related to 
H\"ormander's vector fields},
J.\ Partial Differ.\ Equ., {\bf 33} (2020), 341--376.

\bibitem{IKO}{\sc A.M.\ Il'in,  A.S.\ Kalashnikov and O.A.\ Oleinik},
{\it Second-order linear equations of parabolic type},
J.\ Math.\ Sci., {\bf 108} (2002), 435--542,
Russian: Uspehi Mat.\ Nauk, {\bf 17} (1962), 3--146.

\bibitem{Krylov}{\sc N.V.\ Krylov}, 
{\it Lectures on elliptic and parabolic equations in H\"older spaces}, 
Graduate Studies in Mathematics, {\bf 12}, American Mathematical Society, 
1996, xii+164 pp. 

\bibitem{Kuma}{\sc T.\ Kumagai},
{\it Random walks on disordered media and their scaling limits},
\'Ecole d'\'Et\'e Probabilit\'es de Saint-Flour XL, 2010. Lect.\ Notes Math., {\bf 2101},
Springer, 2014, x+147 pp. 

\bibitem{LSU}{\sc O.A.\ Lady\v{z}enskaja, V.A.\ Solonnikov and N.N.\ Ural'ceva},
{\it Linear and Quasi-linear Equations of Parabolic Type},
Translations Math. Monographs, {\bf 23}, American Mathematical Society, 1968.

\bibitem{Li96}{\sc G.M.\ Lieberman},
{\it Second Order Parabolic Differential Equations},
World Scientific, 1996, 452 pp.

\bibitem{SZ}{\sc D.W.\ Stroock and W.\ Zheng},
{\it Markov chain approximations to symmetric diffusions},
Ann.\ Inst.\ H.\ Poincar\'e Probab.\ Statist., {\bf 33} (1997), 619--649.

\end{thebibliography}
\end{document}